\let\oldtocsubsection=\tocsubsection
\let\oldtocsubsubsection=\tocsubsubsection
\renewcommand{\tocsubsection}[2]{\hspace{1em}\oldtocsubsection{#1}{#2}}
\renewcommand{\tocsubsubsection}[2]{\hspace{2em}\oldtocsubsubsection{#1}{#2}}
\newtheorem{maintheorem}{Theorem}
\renewcommand{\themaintheorem}{\Alph{maintheorem}}
\xapptocmd{\maintheorem}{\pdfbookmark[subsubsection]{Theorem~\themaintheorem}{Theorem\themaintheorem}}{}{}
\xapptocmd{\maincor}{\pdfbookmark[subsubsection]{Corollary~\themaintheorem}{Corollary\themaintheorem}}{}{}
\theoremstyle{plain}
\newtheorem*{theorem*}{Theorem}
\newtheorem{lemma}{Lemma}[subsection]
\newtheorem{proposition}[lemma]{Proposition}
\newtheorem{theorem}[lemma]{Theorem}
\newtheorem{corollary}[lemma]{Corollary}
\newtheorem{prop}[lemma]{Proposition}
\newtheorem{lem}[lemma]{Lemma}
\newtheorem{cor}[lemma]{Corollary}
\theoremstyle{definition}
\newtheorem{definition}[lemma]{Definition}
\newtheorem{defn}[lemma]{Definition}
\newtheorem{notn}[lemma]{Notation}
\newtheorem{example}[lemma]{Example}
\newtheorem*{example*}{Example}
\theoremstyle{definition}
\newtheorem*{remark*}{Remark}
\newtheorem{remark}[lemma]{Remark}
\newcommand{\mf}[1]{\mathfrak{#1}}
\sloppy \theoremstyle{plain}
\newcommand{\WO}{\operatorname{WO}}
\newcommand{\fX}{\mathfrak{X}}
\newcommand{\bs}{\backslash}
\newcommand{\A}{\mathbb{A}}
\newcommand{\eps}{\varepsilon}
\newcommand{\Q}{{\mathbb Q}}
\newcommand{\R}{{\mathbb R}}
\newcommand{\C}{{\mathbb C}}
\newcommand{\K}{{\mathbb K}}
\newcommand{\proofend}{\hfill$\Box$\smallskip}
\newcommand{\Exp}{\operatorname{Exp}}
\newcommand{\Id}{\operatorname{Id}}
\newcommand{\alp}{{\alpha}}
\newcommand{\lam}{{\lambda}}
\newcommand{\bfG}{{\mathbf{G}}}
\newcommand{\cA}{{\mathcal{A}}}
\newcommand{\cF}{{\mathcal{F}}}
\newcommand{\g}{{\mathfrak{g}}}
\newcommand{\fa}{{\mathfrak{a}}}
\newcommand{\fg}{{\mathfrak{g}}}
\newcommand{\fh}{{\mathfrak{h}}}
\newcommand{\fn}{{\mathfrak{n}}}
\newcommand{\fu}{{\mathfrak{u}}}
\newcommand{\fv}{{\mathfrak{v}}}
\newcommand{\cO}{{\mathcal{O}}}
\newcommand{\GL}{\operatorname{GL}}
\newcommand{\SL}{\operatorname{SL}}
\newcommand{\SO}{\operatorname{SO}}
\newcommand{\Spin}{\operatorname{Spin}}
\newcommand{\ad}{\operatorname{ad}}
\newcommand{\Ad}{\operatorname{Ad}}
\newcommand{\cM}{\mathcal{M}}
\newcommand{\Chi}{\mathfrak{X}}
\newcommand{\fl}{\mathfrak{l}}
\newcommand{\fm}{\mathfrak{m}}
\newcommand{\fz}{\mathfrak{z}}
\newcommand{\cW}{\mathcal{W}}
\newcommand{\onto}{{\twoheadrightarrow}}
\newcommand{\into}{{\hookrightarrow}}
\newcommand{\ints}{\mathbb{Z}}
\newcommand{\reals}{\mathbb{R}}
\newcommand{\WS}{\operatorname{WS}}
\renewcommand{\sl}{\mathfrak{sl}}
\newcommand{\lie}[1]{\mathfrak{#1}}
\NewDocumentCommand{\intl}{g}{
    \IfNoValueTF{#1}{\int\limits}{\int\limits_{\mathclap{#1}}}
}
\NewDocumentCommand{\suml}{g}{
    \IfNoValueTF{#1}{\sum\limits}{\sum\limits_{\mathclap{#1}}}
}
\newcommand{\Oh}{\mathcal{O}}
\numberwithin{equation}{section}
\begin{document}

\author[D. Gourevitch]{Dmitry Gourevitch}
\address{Dmitry Gourevitch,
Faculty of Mathematics and Computer Science,
Weizmann Institute of Science,
POB 26, Rehovot 76100, Israel }
\email{dmitry.gourevitch@weizmann.ac.il}
\urladdr{\url{http://www.wisdom.weizmann.ac.il/~dimagur}}

\author[H. Gustafsson]{Henrik P. A. Gustafsson}
\address{Henrik Gustafsson, Department of Mathematics, Stanford University, Stanford, CA 94305}
\email{gustafsson@stanford.edu}
\urladdr{\url{http://hgustafsson.se}}

\author[A. Kleinschmidt]{Axel Kleinschmidt}
\address{Axel Kleinschmidt,  {\it Max-Planck-Institut f\"{u}r Gravitationsphysik (Albert-Einstein-Institut)},
Am M\"{u}hlenberg 1, DE-14476 Potsdam, Germany, and 
{\it International Solvay Institutes}, 
ULB-Campus Plaine CP231, BE-1050, Brussels, Belgium}
\email{axel.kleinschmidt@aei.mpg.de}

\author[D. Persson]{Daniel Persson} 
\address{Daniel Persson, Chalmers University of Technology, Department of Mathematical Sciences\\
SE-412\,96 Gothenburg, Sweden}
\email{daniel.persson@chalmers.se}

\author[S. Sahi]{Siddhartha Sahi}
\address{Siddhartha Sahi, Department of Mathematics, Rutgers University, Hill Center -
Busch Campus, 110 Frelinghuysen Road Piscataway, NJ 08854-8019, USA}
\email{sahi@math.rugers.edu}

\subjclass[2010]{11F30, 11F70, 22E55, 20G45}

\keywords{automorphic function, small representations, minimal representation, next-to-minimal representation, Fourier coefficient, Whittaker coefficient, Whittaker support, nilpotent orbit, wave-front set, string theory}

\date{\today}
\title[Fourier coefficients of minimal and next-to-minimal representations]{Fourier coefficients of minimal and next-to-minimal automorphic representations of simply-laced groups}
\maketitle
\begin{abstract}
In this paper we analyze Fourier coefficients of automorphic forms on a finite cover $G$ of an adelic split simply-laced group. Let $\pi$ be  a minimal or next-to-minimal automorphic representation of $G$. We prove that any $\eta\in \pi$ is completely determined by its Whittaker coefficients with respect to (possibly degenerate) characters of the unipotent radical of a fixed Borel subgroup, analogously to the Piatetski-Shapiro--Shalika formula for cusp forms on $\GL_n$. We also derive explicit formulas expressing the form, as well as all its maximal parabolic Fourier coefficient in terms of these Whittaker coefficients. A consequence of our results is the non-existence of cusp forms in the minimal and next-to-minimal automorphic spectrum. We provide detailed examples for $G$ of type $D_5$ and $E_8$ with a view towards
applications to scattering amplitudes in string theory.
\end{abstract}

\pagebreak 
\setcounter{tocdepth}{2}
\tableofcontents

\section{Introduction and main results}
\subsection{Introduction}
\label{intro}

Let $\K$ be a number field and $\A=\A_{\K}=\Pi'\K_{\nu}$ its ring of adeles. 
Let ${\bf G}$ be a  reductive group defined over $\K$, ${\bf G}(\mathbb{A})$ the group of adelic points of ${\bf G}$ and $G$ be a finite central extension of ${\bf G}(\A)$.  
We assume that there exists a section ${\bf G}(\K)\to G$ of the covering $G\onto {\bf G}(\A)$,  fix such a section and denote its image by $\Gamma$.
This generality includes the covering groups defined in \cite{BryDel}. By \cite[Appendix I]{MWCov}, the covering $G\onto {\bf G}(\A)$ canonically splits over unipotent subgroups, and thus we will consider unipotent subgroups of ${\bf G}(\A)$ as subgroups of G.
 
Let $\eta$ be an automorphic form on $G$. Let $U$ be a unipotent subgroup of $G$ and $\chi_U$ be a unitary character of $U$ that is trivial on $U\cap \Gamma$. We define the \textit{Fourier coefficient} of $\eta$ associated with $U$ and $\chi_U$ as
\begin{equation}
\label{eq:FCU}
\mathcal{F}_{\chi_U}[\eta](g):=\int_{[U]} \eta(ug)\chi_U(u)^{-1} du\,,
\end{equation}
where $[U]:= (U\cap\Gamma)\backslash U$ denotes the compact quotient of $U$. Well-studied special cases of this definition arise when $U$ is the unipotent $N$ of a Borel subgroup and in that case the Fourier coefficients are called \textit{Whittaker coefficients}, see~\eqref{=cW} below. Another common case is when $U$ is the unipotent of a (non-minimal) parabolic subgroup $P=LU\subset G$ and we shall refer to~\eqref{eq:FCU} in that case as a \textit{parabolic Fourier coefficient}.

Generally, when $U$ is non-abelian, the coefficient $\mathcal{F}_{\chi_U}$ only captures a part of the Fourier expansion of $\eta$. To reconstruct $\eta$ from its coefficients one needs to consider a series of subgroups $U_{i_0}=\{1\}\subset U_{i_0-1}\subset \dots \subset U_1=U$ with { successive abelian  quotients}   $U_i/U_{i{+}1}$. Two examples are the derived series of $U$, and the lower central series of $U$. Denote by $\Chi_i$ the set all non-trivial unitary characters of $U_i$ that are trivial on $U_{i{+}1}$ and on $U_i\cap \Gamma$.
The complete Fourier expansion of $\eta$ with respect to $U$ takes the form 
\begin{equation}
\label{eq:NAFE}
\eta=\mathcal{F}_0[\eta]+\sum_{\chi\in{\Chi_{1}}}\mathcal{F}_{\chi_{U_{1}}}[\eta]+\sum_{\chi\in{\Chi_{1}}}\mathcal{F}_{\chi_{U_{1}}}[\eta]+\cdots + \sum_{\chi\in{\Chi_{i_0}}}\mathcal{F}_{\chi_{U_{i_0}}}[\eta] \, .
\end{equation}
The simplest case of a non-abelian $U$ is one that admits a Heisenberg structure, {\it i.e.} $[U,U]$ is a one-dimensional group, and this will be an important tool for us when we analyse groups of type $E_8$ that do not admit any abelian unipotents $U$ as radicals of parabolic subgroups.
In this case, the lower central series coincide with the derived series. Namely, we take $i_0=3$ and $U_2$ to be $[U,U]$ and call the Fourier coefficients $\mathcal{F}_{\chi_{U{1}}}[\eta]$ the \textit{abelian} Fourier coefficients and those for $U_{2}$  the \textit{non-abelian} Fourier coefficients.

For the most general Fourier coefficient~\eqref{eq:FCU} and automorphic form $\eta$ not much is known about its reduction theory and explicit formulas. In particular, $\mathcal{F}_{\chi_{U}}$ is non-Eulerian and no analogues of the Casselman--Shalika~\cite{CasselmanShalika} or Piatetski-Shapiro--Shalika formula~\cite{PiatetskiShapiro,Shalika} are known. The problem becomes more tractable when restricting to coefficients given by Whittaker pairs~\cite{GGS,GGS:support}, a technique that we have used in the companion paper~\cite{Part1} for studying the reduction theory.

In this paper we will analyze Fourier coefficients and expansions in the case of special classes of automorphic forms on split, 
simply-laced Lie groups. Specifically we consider automorphic forms $\eta$ attached to  so-called \emph{minimal} or \emph{next-to-minimal} automorphic representations $\pi_{\textnormal{min}}$ and $\pi_{\textnormal{ntm}}$ of the adelic group $G$. This means that all Fourier coefficients  attached to nilpotents outside of a union of Zariski closures of minimal or next-to-minimal nilpotent orbits vanish. We refer to \S \ref{subsec:small} below for the precise definitions.
We note that in type $D$ there are two next-to-minimal complex orbits, while in types A and E the next-to-minimal orbit is unique. Minimal orbits are unique in all simple Lie algebras.
 A sufficient condition for $\pi$ to be minimal or next-to-minimal is that one of its local components is minimal or next-to-minimal, see Lemma \ref{lem:GlobLoc} below. For minimal representations, this condition is also shown to be necessary under some additional assumptions on $G$, see 
 \cite{MR2123125,KobayashiSavin}.

Even though we shall not rely on explicit automorphic realizations of minimal and next-to-minimal representations, it might be instructive to indicate how they can be obtained. 
Minimal representations have been studied extensively in the literature, in particular due to their crucial role in establishing functoriality in the form of theta correspondences and~\cite{GRS2} discusses them as residues of degenerate principal series. Moreover, in a series of works \cite{GRS,GRS2,Ginz,MR3161096}, $\pi_{\textnormal{min}}$ was used to construct global Eulerian integrals. Next-to-minimal representations have not been analyzed as extensively though in recent years this has started to change, partly due to their importance  in understanding scattering amplitudes in string theory \cite{GMV,P,FKP2013,GKP,FGKP}; see \S\ref{sec:string} below for more details on this connection. Next-to-minimal representations
exist for all next-to-minimal orbits, see {\it e.g.}  \S\ref{sec:examples}, below and ~\cite{FGKP}. They can be obtained as different residues of degenerate principal series, see \cite{GMV,P} for type $E$. In types $A$, $E_6$, and for one of the orbits in type $D$ there are one-parameter families of next-to-minimal representations.

In \cite{GGS,GGS:support} it was shown that there exist $G$-equivariant epimorphisms between different spaces of Fourier coefficients, thus determining their vanishing properties in terms of nilpotent orbits. In \cite{Part1} we determined exact relations (instead of only showing the existence of such) between different types of Fourier coefficients. In this paper we apply the techniques of \cite{Part1}, and  reduce maximal parabolic Fourier coefficients that are difficult to compute into more manageable class of coefficients such as the known Whittaker coefficients with respect to the unipotent radical of a Borel subgroup.
Furthermore, we express minimal and next-to-minimal automorphic forms through their Whittaker coefficients.

In the next subsection we discuss the class of Fourier coefficients studied in \cite{GGS,GGS:support,Part1}. This class includes parabolic  coefficients,  coefficients of lower central series (but not the derived series) for unipotent radicals of parabolics, and the coefficients considered in \cite{GRS,Ginz,MR3161096,JLS}.

\subsection{Fourier coefficients associated to Whittaker pairs}
\label{subsec:FCWP}

Assume throughout this paper that ${\bf G}$ is a split simply-laced reductive group defined over $\K$. 
In order to explain our main results in more detail, we briefly introduce some terminology. Denote by $\fg$  the Lie algebra of {${\bf G}(\K)$}.
A \emph{Whittaker pair} is an ordered pair $(S,\varphi)\in \fg\times \fg^*$, where $S$ is a semi-simple element with eigenvalues of $\ad(S)$ in $\Q$, and $\ad^*(S)(\varphi)=-2\varphi$. 
This implies that $\varphi$ is necessarily nilpotent and corresponds to a unique nilpotent element $f=f_{\varphi}\in \fg$ by the Killing form pairing.
Each Whittaker pair $(S,\varphi)$ defines a unipotent subgroup $N_{S,\varphi}\subset G$ given by \eqref{=Nsphi} below and a unitary character $\chi_\varphi$ on $N_{S,\varphi}$ by $\chi_\varphi(n) = \chi(\varphi(\log n))$ for $n \in N_{S,\varphi}$. 

Our results are applicable to a wide space of functions on $G$, that we denote by $C^{\infty}(\Gamma\backslash G)$ and call the space of automorphic functions. This space consists of functions $f$ that are left $\Gamma$-invariant, finite under the right action of the preimage in $G$ of $\prod_{\text{finite }\nu}{\bf G}(\cO_\nu),$ and smooth when restricted to the preimage in $G$ of $\prod_{\text{infinite }\nu}{\bf G}(\K_\nu)$. In other words, we remove the usual requirements  of moderate growth and finiteness under the center $\fz$ of the universal enveloping algebra. Such cases arise in applications in string theory~\cite{Green:2005ba,DHoker:2015gmr,FGKP}.

Following \cite{MW,GRS2,GRS,GGS} we attach to each Whittaker pair $(S,\varphi)$ and automorphic function $\eta$ on $G$ the following Fourier coefficient 
\begin{equation} 
\mathcal{F}_{S,\varphi}[\eta](g)=\intl_{[N_{S,\varphi}]} \eta(ng)\, {\chi_\varphi(n)}^{-1}\, dn.
\end{equation}

We note that the integrals we consider in this paper are well-defined for automorphic functions as they are either compact integrals or represent Fourier expansions of periodic functions.

\begin{remark}
Note  that the unipotent group $N_{S,\varphi}$ is not necessarily the unipotent radical of a parabolic subgroup of $G$. Consider, for example, the case of $G=E_8$ and let $P=LU\subset E_8$ be the Heisenberg parabolic such that the Levi is $L=E_7\times \GL_1$ and the unipotent radical $U$ is the $57$-dimensional Heisenberg group with one-dimensional center $C=[U,U]$. Then the Fourier coefficient $\mathcal{F}_{S,\varphi}$  can include the ``non-abelian'' coefficient corresponding to $N_{S,\varphi}=C$ and $\chi_\varphi$ a non-trivial character on $C$. This case is relevant for applications to physics; see \S\ref{sec:string} below. 
\end{remark}

If a Whittaker pair $(h, \varphi)$ corresponds to a Jacobson--Morozov $\mathfrak{sl}_2$-triple $(e, h, f_\varphi)$ we say that it is a \emph{neutral} Whittaker pair, and call the corresponding coefficient a \emph{neutral Fourier coefficient}. This is the class studied in \cite{GRS,Ginz,MR3161096,JLS} and referred to simply as  a Fourier coefficient. 

We denote by $\operatorname{WO}(\eta)$  the set of nilpotent orbits $\mathcal{O}$ such that there exists a neutral pair $(h, \varphi)$ such that  $\mathcal{F}_{h,\varphi}[\eta]\not\equiv0$ and $\varphi \in \Oh$, see Definition \ref{def:WO} below. It was shown in \cite[Theorem C]{GGS} that if $\mathcal{F}_{h,\varphi}[\eta] = 0$ then $\mathcal{F}_{S,\varphi}[\eta] = 0$ for any Whittaker pair $(S, \varphi)$, not necessarily neutral. We denote the set of maximal elements in $\WO(\eta)$ by $\WS(\eta)$ and call it the \emph{Whittaker support} of $\eta$. We refer to automorphic functions $\eta_{\textnormal{min}}$ whose Whittaker support consists of the minimal nilpotent orbit as minimal automorphic functions and write likewise $\eta_{\textnormal{ntm}}$ for next-to-minimal automorphic functions.

\subsection{Statement of Theorem \ref{thm:min-rep}}\label{subsec:ThA}
Choose a $\K$-split maximal torus $T\subset G$ and a set of positive roots.
Let $\fh$ be the Lie algebra of $T\cap \Gamma$.
 For a simple root  $\alpha$ we denote by $P_{\alp}$ the corresponding maximal parabolic subgroup, by $L_{\alp}$ is standard Levi subgroup, and by $U_{\alp}$ its unipotent radical. In other words, 
 $\fu_{\alp}:=\operatorname{Lie} U_\alpha$ is spanned by the root spaces whose expression in terms of simple roots contains $\alp$ with positive coefficient. 
Define $S_{\alp}\in \fh$ by 
\begin{equation}
\alp(S_\alpha) = 2 \text{ and }\beta(S_\alpha) = 0 \text{ for all other simple roots } \beta.
\end{equation}
It will follow from the definition of $N_{S,\varphi}$ that for any $\varphi\in \fg^*$ such that $\ad^*(S_{\alp})\varphi=-2\varphi$, we have
that $N_{S_\alpha,\varphi} = U_\alpha$. 
This means that the Fourier coefficient $\cF_{S_\alpha, \varphi}$ is the {parabolic} Fourier coefficient with respect to the unipotent subgroup $U_\alpha$ and the character $\chi_\varphi$.
{Let $S_{\Pi}:=\sum_{\alp\in \Pi}S_{\alp},$ where $\Pi$ is the set of all simple roots. {Then the associated unipotent subgroup is the radical $N$ of the Borel subgroup defined by the choice of simple roots.} For any $\varphi\in \fg^*$ with $\ad^*(S_{\alp})\varphi=-2\varphi$, and any automorphic function $\eta$ {define the Whittaker coefficient by}
\begin{equation}\label{=cW}
\cW_{\varphi}[\eta]:=\cF_{S_{\Pi},\varphi}[\eta]\,.
\end{equation}
}

\begin{maintheorem}\label{thm:min-rep}
    Let $\eta_\textnormal{min}$ be a minimal automorphic function on 
    a simply-laced split group $G$ and $(S_\alpha, \varphi)$ a Whittaker pair with $S_\alpha$ determined by a simple root $\alpha$ as above. Depending on the orbit of $\varphi$, we have the following statements for the corresponding Fourier coefficient. 
    \begin{enumerate}[(i)]
    \item \label{it:min0phi}
   {The restriction of $\mathcal{F}_{S_\alpha, 0}[\eta_\textnormal{min}]$ to the Levi subgroup $L_{\alp}$ is a minimal {or a trivial} automorphic function.}
           \item \label{it:min-min} If $\varphi$ is minimal, then there exists $\gamma_0\in \Gamma\cap L_\alpha$ that conjugates 
            $\varphi$ to an element $\varphi'$ of weight ${-\alpha}$ by $\Ad^*({\gamma_0}) \varphi =\varphi'$ and for any such $\gamma_0$ we have
            \begin{equation}
            \label{eq:FWmin}
                \mathcal{F}_{S_\alpha, \varphi}[\eta_\textnormal{min}](g) = \mathcal{W}_{\varphi'}[\eta_\textnormal{min}](\gamma_0 g) \,.
            \end{equation}
        \item \label{it:min0} If $\varphi$ is not minimal and not zero then $\mathcal{F}_{S_\alpha, \varphi}[\eta_\textnormal{min}] = 0$.
    \end{enumerate}
\end{maintheorem}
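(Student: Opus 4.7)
The plan is to dispatch the three parts in reverse order of difficulty, handling the vanishing claim (iii) first and then treating (ii) and (i) via the root-exchange and composition techniques developed in the companion paper \cite{Part1}.

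For part (iii), since $\Oh_\textnormal{min}$ is the unique nonzero nilpotent orbit of minimal dimension in the simple Lie algebra $\fg$, minimality of $\eta_\textnormal{min}$ forces $\WO(\eta_\textnormal{min})=\{0,\Oh_\textnormal{min}\}$. If $\varphi$ is neither zero nor minimal, then the $G$-orbit of $\varphi$ is not in $\WO(\eta_\textnormal{min})$, so the neutral Fourier coefficient $\cF_{h,\varphi}[\eta_\textnormal{min}]$ vanishes for any Jacobson--Morozov completion $(e,h,f_\varphi)$. By \cite[Theorem~C]{GGS}, this forces $\cF_{S_\alpha,\varphi}[\eta_\textnormal{min}]=0$.

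For part (ii) I would first reduce to a standard $\varphi'$ of weight $-\alpha$. Since $\gamma_0\in L_\alpha$ centralizes $S_\alpha$ and normalizes $U_\alpha=N_{S_\alpha,\varphi}$, the change of variables $n\mapsto\gamma_0^{-1}n\gamma_0$ together with left $\Gamma$-invariance of $\eta_\textnormal{min}$ yields $\cF_{S_\alpha,\varphi}[\eta_\textnormal{min}](g)=\cF_{S_\alpha,\varphi'}[\eta_\textnormal{min}](\gamma_0 g)$ whenever $\Ad^*(\gamma_0)\varphi=\varphi'$. The existence of such a $\gamma_0\in\Gamma\cap L_\alpha$ rests on the rationality fact that, for split simply-laced $G$, the intersection of $\Oh_\textnormal{min}$ with the $S_\alpha$-weight-$(-2)$ subspace of $\fg^*$ is a single $L_\alpha(\K)$-orbit which contains the character dual to $e_{-\alpha}$; this uses splitness and the uniform long-root length. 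Once $\varphi'$ is standardized, the remaining task is the identity $\cF_{S_\alpha,\varphi'}[\eta_\textnormal{min}]=\cW_{\varphi'}[\eta_\textnormal{min}]$, which I would prove by interpolating between the Whittaker pairs $(S_\alpha,\varphi')$ and $(S_\Pi,\varphi')$ through a finite sequence of intermediate admissible pairs and applying the root-exchange lemmas of \cite{Part1} at each step. The extra Fourier modes introduced between consecutive pairs are indexed by perturbations $\varphi'+\psi$ with $\psi$ lying in specific root subspaces; by part (iii) these vanish unless $\varphi'+\psi\in\Oh_\textnormal{min}$, and the surviving characters can be conjugated back to $\varphi'$ by stabilizer elements in $\Gamma$, producing no net contribution.

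For part (i), the constant term $\cF_{S_\alpha,0}[\eta_\textnormal{min}]$ is $\Gamma\cap P_\alpha$-invariant, so its restriction to $L_\alpha$ is an automorphic function on the inverse image of $L_\alpha$. To bound its Whittaker support I would use a Fubini-type composition identity from \cite{Part1}: for any Whittaker pair $(S_L,\varphi_L)$ in $\fl_\alpha$, the sum $(S_\alpha+S_L,\varphi_L)$ is a Whittaker pair in $\fg$ (since $S_L$ commutes with $S_\alpha$ and $\varphi_L$ annihilates the root spaces with nontrivial $\alpha$-coefficient), and the composition formula gives $\cF_{S_L,\varphi_L}\bigl[\cF_{S_\alpha,0}[\eta_\textnormal{min}]\rest_{L_\alpha}\bigr]=\cF_{S_\alpha+S_L,\varphi_L}[\eta_\textnormal{min}]$. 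Invoking part (iii), the right-hand side vanishes unless the $\fg$-orbit of $\varphi_L$ is $0$ or $\Oh_\textnormal{min}$, which controls the Whittaker support of the restriction and yields the claimed minimal-or-trivial dichotomy.

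The main obstacle is the root-exchange step in part (ii): one must track carefully which unipotent subgroups are added or deleted as $S$ interpolates from $S_\alpha$ to $S_\Pi$, verify that the simply-laced structure keeps the relevant commutators manageable so that only abelian expansions appear at each stage, and dispose of every extra Fourier mode either by minimality or by stabilizer symmetry. The rationality claim that $\gamma_0$ lies in $\Gamma$ rather than only in $L_\alpha(\A)$ is a secondary but nontrivial input, depending on the explicit structure of $\Oh_\textnormal{min}(\K)$ for split simply-laced $G$.
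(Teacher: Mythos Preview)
Your treatment of part~(iii) is fine and matches the paper's use of Proposition~\ref{prop:domin} (equivalently \cite[Theorem~C]{GGS}).

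For part~(ii) your approach is more laborious than necessary, and the ``surviving characters'' step is both unnecessary and, as stated, would not work. The paper observes that $(S_\alpha,\varphi')$ dominates $(S_\Pi,\varphi')$, that $\fg^{S_\alpha}_1=\fg^{S_\Pi}_1=0$, and that $\fg^{S_\alpha}_{>1}\cap\fg^{S_\Pi}_{<1}=0$ (every positive root has $S_\Pi$-weight $\geq 2$); hence Theorem~\ref{thm:IntTrans} gives $\cF_{S_\alpha,\varphi'}[\eta_{\min}]=\cW_{\varphi'}[\eta_{\min}]$ in one step with trivial $V$. In your step-by-step expansion there are in fact no surviving nonconstant modes: any nonzero $\psi$ supported on root spaces of $\fl_\alpha$ makes $\varphi'+\psi$ either next-to-minimal (if the root is orthogonal to $\alpha$) or of type $A_2$ (if adjacent), never minimal, so part~(iii) already kills everything. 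Had there been surviving minimal modes, ``conjugating them back to $\varphi'$ by stabilizer elements'' would not make them cancel; distinct Fourier modes are linearly independent.

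For part~(i) there are two gaps. First, your composition identity $\cF_{S_L,\varphi_L}\bigl[\cF_{S_\alpha,0}[\eta]\rest_{L_\alpha}\bigr]=\cF_{S_\alpha+S_L,\varphi_L}[\eta]$ need not hold as written: $S_L$ can have large negative eigenvalues on $\fu_\alpha$, so $N_{S_\alpha+S_L,\varphi_L}$ may fail to contain $U_\alpha$. The paper fixes this by using $H+TS_\alpha$ for $T$ large enough that the $S_\alpha$-grading dominates on $\fu_\alpha$. Second, and more substantively, you conclude only that the $\fg$-orbit of $\varphi_L$ is zero or $\cO_{\min}$; what is needed is that the $\fl_\alpha$-orbit of $\varphi_L$ is zero or minimal in $\fl_\alpha^*$. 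This requires the geometric input that $\cO_{\min}\cap\fl_\alpha^*$ is either empty or the minimal orbit of $\fl_\alpha^*$, which the paper establishes separately (Lemma~\ref{lem:min2minGeo}) via a Slodowy-slice transversality argument. Without this, the Whittaker support of the restriction is not controlled. (A minor point: $\fg$ need not be simple, so $\cO_{\min}$ need not be unique.)
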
 

{For part \eqref{it:min0phi} we remark that $\mathcal{F}_{S_\alpha, \varphi}[\eta_\textnormal{min}]$ is the usual constant term in a maximal parabolic. For Eisenstein series it can be computed using the results of~\cite{MWCov}.
It can also be expressed through Whittaker coefficients using Theorem \ref{thm:G0min} below.}

\begin{remark}
We note that the formula (\ref{eq:FWmin}) is compatible with the expected equivariance of the Fourier coefficient $\mathcal{F}_{S_\alpha, \varphi}[\eta_\textnormal{min}](g)$, i.e. it satisfies 
\begin{equation}
\mathcal{F}_{S_\alpha, \varphi}[\eta_\textnormal{min}](ug)=\chi_\varphi(u)\mathcal{F}_{S_\alpha, \varphi}[\eta_\textnormal{min}](g),
\end{equation} 
for all $u\in U$. For this to hold one requires that $\gamma_0^{-1} u\gamma_0\in N$ for all $u\in U$ and 
\begin{equation}
\chi_{\varphi}(u)=\chi_{\varphi^{\prime}}(\gamma_0^{-1} u \gamma_0),
\end{equation}
which indeed holds due to the fact that $\gamma_0\in \Gamma \cap L_\alpha$.
\end{remark}

\begin{remark}
The notation $\cW_{S,\varphi}$ and $\cW_\varphi$ is used in \cite{GGS,GGS:support} to denote something quite different. It was chosen for the current paper since this is the notation in \cite{FGKP}.
\end{remark}

One can also obtain an expression for the minimal automorphic function itself.
This is the subject of the next subsection.

\subsection{Statement of Theorem~\ref{thm:G0min}}
\label{subsec:ThB}
For any root $\eps$ denote by $$\fg^*_{\eps}{:=\{\omega\in\fg^* \,|\, \ad^*(h)\omega = \eps(h) \omega\,\,\text{for all $h\in\mathfrak{h}$}\}}$$ the corresponding subspace of $\fg^*$ and by $\fg^{\times}_{\eps}$ the set of non-zero elements of this subspace. Note that $\fg^*_{\eps}$ is a one dimensional linear space over $\K$. 
We say that a simple root  $\alpha$ is an abelian or a Heisenberg simple root in $\lie g$ if $\lie u_\alpha$ is an abelian or Heisenberg Lie algebra, respectively, or, equivalently, if $[\lie u_\alpha, \lie u_\alpha]$ has dimension zero or one. 
If $\alpha$ is either abelian or Heisenberg we call it \emph{quasi-abelian}.
The classification of such roots reduces to simple components of $\lie g$, where we have the following explicit answer in terms of Bourbaki numbering.

\begin{table}[h]
\centering
\caption{\label{tab:QA} Quasi-abelian roots}
\begin{tabular}{|c|c|c|c|c|c|}
\hline
& $A_n$ & $D_n$ & $E_6$ & $E_7$ & $E_8$ \\ \hline
\text{abelian} & \text{all} & $\alpha_1, \alpha_{n-1}, \alpha_n$ & $\alpha_1, \alpha_6$ & $\alpha_7$ & - \\ \hline
\text{Heisenberg} & - & $\alpha_2$ & $\alpha_2$ & $\alpha_1$ & $\alpha_8$ \\ \hline
\end{tabular}
\end{table}

{To derive Table \ref{tab:QA} we note that the abelian roots are those that appear with coefficient one in the highest root. The Heisenberg roots are determined in \cite[Lemma 5.1.2]{Part1}. There are no such roots in type $A_n$, while in types $D_n$ or $E_n$ this is the unique root that connects to the affine node in the affine Dynkin diagram.}

{By \cite[\S VIII.3]{Bou} the abelian roots are precisely those that can be conjugated to the affine node by an automorphism of the affine Dynkin diagram.}

Let $I = (\beta_1, \ldots, \beta_n)$ be an enumeration of the simple roots of $\lie g$ in some order, and let $\lie l_i$ be the Levi subalgebra with simple roots $\{\beta_1, \ldots, \beta_i\}$. We will say that $I$ is \emph{abelian} if each $\beta_i$ is abelian in $\lie l_i$, and that $I$ is \emph{quasi-abelian} if each $\beta_i$ is {quasi-abelian}  in $\lie l_i$. 
From the table we see that the Bourbaki enumeration is quasi-abelian if $\lie g = E_8$  and abelian if $\fg$ is simple (simply-laced) and different from $E_8$. We also note that $\mf{l}_i\subset \mf{l}_j$ for $i<j$.

Let $I= (\beta_1, \ldots, \beta_n)$ be any quasi-abelian enumeration of the simple roots of $\fg$. Given an automorphic function $\eta$ on $\Gamma\backslash G$ we define functions $A_i[\eta]$, $B_i[\eta]$ and $C_i[\eta]$ on $G$ as follows.

Let $L_{i-1}$ be the Levi subgroup of $G$ with Lie algebra $\lie l_{i-1}$, and {let $Q_{i-1}$ be the parabolic subgroup of $L_{i-1}$ with Lie algebra $(\fl_{i-1})^{\beta_i^{\vee}}_{\leq 0}$. In Lemma  \ref{lem:LineStab} below we show that $Q_{i-1}$ is the stabilizer in $L_{i-1}$ of the root space $\lie g^*_{-\beta_i}$, as an element of the projective space of $\fl_i^*$.
 We} let $\Gamma_{i-1} = (L_{i-1}\cap \Gamma) / ({Q}_{i-1}\cap \Gamma)$ , and put {for $i\in \{1,\ldots, n\}$
\begin{equation}\label{=Ai}
    A_i[\eta](g) := \sum_{\gamma \in \Gamma_{i-1}} \sum_{\varphi \in \lie g^\times_{-\beta_i}} \mathcal{W}_{\varphi}[\eta](\gamma g) \, ,
\end{equation}
where $\Gamma_0=\{1\}$.
{\begin{remark}Note that although $\gamma$ is a coset, the inner sum $\sum_{\varphi \in \lie g^\times_{-\beta_i}} \mathcal{W}_{\varphi}[\eta](\gamma g)$ is independent of the choice of a representative for $\gamma$, since ${Q}_{i-1}\cap \Gamma$ stabilizes $\lie g^\times_{-\beta_i}$. Thus $ A_i[\eta]$ is well-defined. We will use similar summations over cosets in the future without further comment.
\end{remark}}

If $\beta_i$ is a Heisenberg root of $\lie l_i$, then we define
\begin{equation}\label{=Omega}
 \mathcal{B}_{\beta_i} := {\{\text{positive roots }\beta{\text{ of }\fl_i} : \langle \beta_i, \beta\rangle =1 \}\,},\quad \quad
        \Omega_i := \Exp(\bigoplus_{\beta \in \mathcal{B}_{\beta_i}} \lie g_{-\beta})\,.
 \end{equation}
 
{Note that $\Omega_i$ is a commutative subgroup of $\Gamma$. 
Denote by  $\alpha_\text{max}^i$  the highest root for the simple component of $\lie l_i$ containing $\beta_i$, and let $s_{\beta_i}$ and $s_{\alp_{\max}^i}$ denote the reflections with respect to the roots ${\beta_i}$ and ${\alp_{\max}^i}$. Then $s_{\beta_i}s_{\alp_{\max}^i}s_{\beta_i}$ is an involutive Weyl group element that switches $\beta_i$ and $\alp_{\max}^i$.
We fix  a representative $\gamma_{i}\in \Gamma$ for $s_{\beta_i}s_{\alp_{\max}^i}s_{\beta_i}$ and define
}

\begin{equation}\label{=Bi}
    B_i[\eta](g) := \sum_{\omega \in \Omega_i} \sum_{\varphi \in \lie g^\times_{-\beta_i}} \mathcal{W}_{\varphi}[\eta](\omega{\gamma_{i}} g) \, .
\end{equation}

Finally, we define
\begin{equation}
    C_i[\eta] :=
    \begin{cases}
        A_i[\eta] & \text{if } \beta_i \text{ is abelian} \\
        A_i[\eta] + B_i[\eta] & \text{if } \beta_i \text{ is Heisenberg.} \\
    \end{cases}
\end{equation}

\begin{maintheorem}\label{thm:G0min}
Let $\eta_\textnormal{min}$ be a minimal automorphic function on 
     $G$. Then, for any choice of a quasi-abelian enumeration we have
\begin{equation}
    \eta_\mathrm{min} = \mathcal{W}_0[\eta_\mathrm{min}] + \sum_{i = 1}^n C_i[\eta_\mathrm{min}]\,.
\end{equation}
\end{maintheorem}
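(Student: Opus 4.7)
The plan is a downward induction on $i$, running from $n$ to $1$, along the flag of Levi subgroups $L_0 \subset L_1 \subset \cdots \subset L_n = G$. At step $i$, having already extracted the contributions $C_j[\eta_\textnormal{min}]$ for $j > i$, one works with the iterated constant term $\eta^{(i)}$ of $\eta_\textnormal{min}$ along $U^{(n)}, U^{(n-1)}, \ldots, U^{(i+1)}$, where $U^{(j)} \subset L_j$ denotes the unipotent radical of the maximal parabolic of $L_j$ with Levi $L_{j-1}$, and Fourier expands $\eta^{(i)}$ along $U^{(i)} \subset L_i$. Theorem~\ref{thm:min-rep}(i) guarantees that the new constant term $\eta^{(i-1)}$ remains minimal or trivial on $L_{i-1}$, so the induction can continue, while Theorem~\ref{thm:min-rep}(ii)--(iii) reduce the non-constant contributions at this step to Whittaker coefficients of weight $-\beta_i$, which will assemble into $C_i[\eta_\textnormal{min}]$. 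Compatibility of Whittaker coefficients along the chain follows from the factorization $N \cap L_i = (N \cap L_{i-1}) U^{(i)}$ combined with the triviality on $U^{(i)}$ of any character of weight $-\beta_j$ for $j < i$, which gives $\cW_\varphi[\eta^{(i-1)}] = \cW_\varphi[\eta_\textnormal{min}]$ for such $\varphi$, and hence the iterated constant term at the end of the recursion equals $\cW_0[\eta_\textnormal{min}]$.

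For abelian $\beta_i$, $U^{(i)}$ is itself abelian and the Fourier expansion is a sum over characters parameterized by $(\fu^{(i)})^*(\K)$. Theorem~\ref{thm:min-rep}(iii) kills all non-minimal non-zero $\varphi$, and Theorem~\ref{thm:min-rep}(ii) conjugates each surviving $\varphi$ by an element of $\Gamma \cap L_{i-1}$ to the line $\fg^*_{-\beta_i}$, with Lemma~\ref{lem:LineStab} identifying the stabilizer of that line as $Q_{i-1}$; summing representatives then yields $A_i[\eta_\textnormal{min}]$. For Heisenberg $\beta_i$, one invokes the derived-series decomposition~\eqref{eq:NAFE} with $U^{(i)} \supset [U^{(i)}, U^{(i)}] = \Exp(\fg_{\alp_{\max}^i}) \supset 1$. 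The constant and abelian pieces are handled as in the abelian case and produce $A_i[\eta_\textnormal{min}]$. The non-abelian piece has central character in $\fg^*_{-\alp_{\max}^i}$; conjugating the integrand by $\gamma_i$, a representative of the Weyl element $s_{\beta_i} s_{\alp_{\max}^i} s_{\beta_i}$, swaps $\beta_i$ with $\alp_{\max}^i$ and converts the central character into one of weight $-\beta_i$. Unfolding the remaining integral over a Lagrangian complement of the center then expresses it as a sum over $\omega \in \Omega_i$ of Whittaker-type integrals, to which Theorem~\ref{thm:min-rep}(ii) applies, producing $B_i[\eta_\textnormal{min}]$.

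The principal technical obstacle lies in the Heisenberg case: one must verify that the non-abelian Fourier coefficient, after conjugation by $\gamma_i$, collapses precisely to the prescribed sum over $\Omega_i$ of Whittaker coefficients of weight $-\beta_i$. This requires careful bookkeeping of the action of $\gamma_i$-conjugation on the root subspaces of $U^{(i)}$, in particular showing that the Lagrangian complement to the center corresponds under conjugation to $\bigoplus_{\beta \in \mathcal{B}_{\beta_i}} \fg_{-\beta}$, together with tracking the commutation relations inside the Heisenberg group and confirming that the transformed character has weight exactly $-\beta_i$ so that Theorem~\ref{thm:min-rep}(ii) applies without modification. This Heisenberg analysis is what allows the inductive approach to extend to type $E_8$, which admits no abelian simple root.
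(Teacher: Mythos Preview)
Your proposal is correct and follows essentially the same route as the paper's proof: both argue by induction through the chain $L_0\subset L_1\subset\cdots\subset L_n=G$, establish the one-step identity $\eta=\cF_{S_{\beta_n},0}[\eta]+C_n[\eta]$ via Theorem~\ref{thm:min-rep}, and then pass to $L_{n-1}$ using Theorem~\ref{thm:min-rep}\eqref{it:min0phi} together with the compatibility $\cW'_\varphi[\cF_{S_{\beta_n},0}[\eta]]=\cW_\varphi[\eta]$ that you note. The only notable difference is packaging of the Heisenberg step: the paper invokes Proposition~\ref{prop:Heis} (imported from \cite{Part1}) as a black box, whereas you sketch its mechanism by hand (conjugation by $\gamma_i$, unfolding over the Lagrangian $\Omega_i$); the subsequent reduction to $B_i$ then uses, in both approaches, Corollary~\ref{cor:EasyMin}\eqref{it:Min2} to kill the $\psi\neq 0$ terms and Lemma~\ref{lem:1_1} to pass to Whittaker coefficients.
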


\begin{example}
    \label{ex:ThmB} 
    Let $G = \SO_{4,4}(\A)$ with $\Gamma = \SO_{4,4}(\K)$ and $\eta_\text{min}$ a minimal automorphic function on $G$. We take the quasi-abelian enumeration $I = (\beta_1, \beta_2, \beta_3, \beta_4) = (\alpha_1, \alpha_3, \alpha_4, \alpha_2)$ where $\alpha_i$ are given by the Bourbaki labeling. Note that $\beta_4 = \alpha_2$ is a Heisenberg root in $G$, while $\beta_i$ for $1 \leq i \leq 3$ is an abelian root for the Levi subgroup $L_i$ with simple roots $\beta_1, \ldots, \beta_i$. Using Theorem~\ref{thm:G0min} we get that
    \begin{equation}
        \label{eq:D4-min-example}
        \begin{split}
            \eta_\text{min}(g) &= \cW[\eta_\text{min}](g) + B_4[\eta_\text{min}](g) + \sum_{i=1}^4 A_i[\eta_\text{min}](g) \\
                               &= \cW[\eta_\text{min}](g) + \sum_{\omega \in \Omega_4} \sum_{\varphi \in \lie g^\times_{-\beta_4}} \!\!\! \cW_\varphi[\eta_\text{min}](\omega \gamma_4 g) + \sum_{i=1}^4 \sum_{\gamma \in \Gamma_{i-1}} \sum_{\varphi \in \lie g^\times_{-\beta_i}} \!\!\! \cW[\eta_\text{min}](\gamma g)\, ,
        \end{split}
    \end{equation}
    where $\Omega_4$ is defined in \eqref{=Omega}, $\gamma_4$ is defined above \eqref{=Bi}, and $\Gamma_{i-1}$ above \eqref{=Ai}. For this example we get that the Lie algebra of $\Omega_4$ is $\lie g_{-\alp_2-\alp_1} \oplus \lie g_{-\alp_2-\alp_3} \oplus \lie g_{-\alp_2-\alp_4} \oplus \lie g_{-2\alp_2-\alp_1-\alp_3-\alp_4}$, $\gamma_4$ is a representative of the Weyl word $s_1 s_3 s_2 s_4 s_2 s_1 s_3$ in $\Gamma$ with simple reflections $s_i$, and $\Gamma_0 = \Gamma_1 = \Gamma_2 = \{1\}$ while $\Gamma_3 \cong (\mathbb{P}^1(\K))^3$.

    We picked this example to demonstrate the Heisenberg term $B_4$ and because the right-hand side of \eqref{eq:D4-min-example} is manifestly triality invariant. 
\end{example}

Let us now formulate analogs of Theorems \ref{thm:min-rep} and \ref{thm:G0min} for next-to-minimal automorphic functions. 

\subsection{{Statement of Theorem \ref{thm:ntm-rep}}}

{As before, l}et $\alpha$ be a simple root of $\lie g$, and let $(S_\alpha, {\psi})$ be a Whittaker pair such that $\psi \in \lie g^\times_{-\alpha}$ and $S_\alpha$ defines the maximal parabolic subgroup corresponding to $\alpha$.  
Let $I^{(\perp\alpha)} = (\beta_1, \ldots, \beta_{m} )$ be a quasi-abelian enumeration of the simple roots orthogonal to $\alpha$ which is always possible to find{, see Table \ref{tab:QA}}. For any $1\leq i \leq {m}$, we also define $\Gamma_{i-1}$ {and $\gamma_i$} as above, but with the enumeration $I^{(\perp\alpha)}$, and given an automorphic function $\eta$ on $\Gamma \backslash G$ we set
\begin{equation}
    A_i^\psi[\eta](g) = \sum_{\gamma \in \Gamma_{i-1}} \sum_{\varphi \in \lie g^\times_{-\beta_i}} \mathcal{W}_{\psi + \varphi}[\eta](\gamma g) \, .
\end{equation}
For any $1\leq i \leq  {m}$ with $\beta_i$ a Heisenberg root in the Levi subalgebra given by $\beta_1,\dots , \beta_i$, we furthermore  set
\begin{equation}
    B_i^\psi[\eta](g) = \sum_{\omega \in \Omega_i}\sum_{\varphi\in \fg^{\times}_{\beta_i}}\cW_{\psi+\varphi}[\eta](\omega{\gamma_i} g).
\end{equation}

Finally, we define
\begin{equation}
    C^{\psi}_i[\eta] =
    \begin{cases}
        A_i^{\psi}[\eta] & \text{if } \beta_i \text{ is abelian} \\
        A_i^{\psi}[\eta] + B_i^{\psi}[\eta] & \text{if } \beta_i \text{ is Heisenberg.} \\
    \end{cases}
\end{equation}

Furthermore, let $\overline{\lie b}$ be the Lie algebra of the negative Borel spanned by $\lie h$ and the root spaces of negative roots. For an element $\gamma \in \Gamma$, we define 
\begin{equation}
    \label{eq:V}
\lie v_{\gamma} := \lie g^{\gamma S_{\alp} \gamma^{-1}}_{>1}  \cap \overline{\lie b} \quad\text{and}\quad    V_{\gamma} := \Exp(\lie v_{\gamma}(\A)) \, .
\end{equation}

 \begin{remark}
 {Since $\Gamma_i$ is a partial flag variety for $L_{i}$, it coincides with the group of $\K$-points of the corresponding projective algebraic  variety. By the valuation criterion for properness (\cite[Ch. II, Theorem 4.7]{Hartshorne}), it then coincides with the (integral) $O_\K$-points of the same variety. }
 \end{remark}
 
 \begin{maintheorem}\label{thm:ntm-rep}
    Let $\eta_\textnormal{ntm}$ be a next-to-minimal automorphic function on
 $G$, let $(S_\alpha, \varphi)$ be a Whittaker pair with $S_\alpha$ as above and $I^{(\perp\alpha)} = (\beta_1, \ldots, \beta_m )$ a quasi-abelian enumeration as above. Depending on the orbit of $\varphi$, we have the following statements for the corresponding Fourier coefficient.     \begin{enumerate}[(i)]
    \item \label{it:ntm0phi} {For trivial $\varphi=0$ the restriction of $\mathcal{F}_{S_\alpha, 0}[\eta_\textnormal{ntm}]$ to the Levi subgroup $L_{\alp}$ is a {trivial, or }minimal, or next-to-minimal automorphic function.}
    \item 
    \label{it:Pt2}
            For $\varphi$ in the minimal orbit there exists $\gamma_0 \in L_\alpha \cap \Gamma$ such that ${\psi:=}\Ad^*(\gamma_0)\varphi \in \lie g^\times_{-\alpha}$. {For any such $\gamma_0 \in L_\alpha \cap \Gamma$} , we have
            \begin{equation}
                \cF_{S_\alpha, \varphi}[\eta_\textnormal{ntm}](g) = \mathcal{W}_{{\psi}}[\eta_\textnormal{ntm}](\gamma_0 g) + \sum_{i=1}^m C_i^{{\psi}}[\eta_\textnormal{ntm}](\gamma_0 g) \, .
            \end{equation}

        \item \label{itm:ntm} If $\varphi$ is next-to-minimal, then there exist  orthogonal simple roots $\alpha'$ and $\alpha''$, and an element $\gamma_0 \in \Gamma$ that is a product of an element of $L_{\alp}\cap \Gamma$ and a Weyl group representative, such that ${\psi:=}\Ad^*(\gamma_0) \varphi \in \lie g^{\times}_{-\alpha'} + \lie g^{\times}_{-\alpha''}$.
        {For any such  $\gamma_0$, $\alpha'$ and $\alpha''$, we have}
            \begin{equation}
                \mathcal{F}_{S_{\alp}, \varphi}[\eta_\textnormal{ntm}](g) = \intl_{V_{\gamma_0}} \mathcal{W}_{\psi}[\eta_\textnormal{ntm}](v \gamma_0 g) \, dv \, . 
                \label{eq:ntmint}
            \end{equation}
        \item \label{itm:larger} If $\varphi$ is not in the closure of any complex next-to-minimal orbit, then $\mathcal{F}_{S_{\alp}, \varphi}[\eta_\textnormal{ntm}] = 0$.
    \end{enumerate}
\end{maintheorem}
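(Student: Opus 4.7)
The plan is to combine the exact reduction formulas developed in \cite{Part1} with the vanishing theorem \cite[Theorem C]{GGS}, treating the four cases in reverse order of difficulty.

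Part \eqref{itm:larger} is immediate: by definition $\WS(\eta_\textnormal{ntm})$ consists of next-to-minimal orbits, so if $\varphi$ is not in the closure of any such orbit then \cite[Theorem C]{GGS} forces $\mathcal{F}_{S_\alpha,\varphi}[\eta_\textnormal{ntm}]=0$. For Part \eqref{it:ntm0phi}, I would observe that $\mathcal{F}_{S_\alpha,0}[\eta_\textnormal{ntm}]$ restricted to $L_\alpha$ is an automorphic function on $L_\alpha$, and the relation between the Whittaker support of an automorphic form and of its parabolic constant term (via \cite{GGS,Part1}) shows that its Whittaker support lies in the set of nilpotent $L_\alpha$-orbits whose $G$-saturation lies in the closure of an orbit in $\WS(\eta_\textnormal{ntm})\cup\{0\}$. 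A case-by-case check on simply-laced types shows that these are exactly the trivial, minimal, and next-to-minimal orbits of $\lie l_\alpha$.

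Parts \eqref{it:Pt2} and \eqref{itm:ntm} both proceed in two steps: first, conjugate $\varphi$ to a standard form $\psi$ supported on simple negative root spaces; then use the exact relations of \cite{Part1} to pass from $\mathcal{F}_{S_\alpha,\psi}$ (over $U_\alpha$) to the Borel Whittaker coefficient $\mathcal{W}_\psi$ (over $N$). For Part \eqref{it:Pt2}, since the minimal orbit is the orbit of any long root vector and $\ad^*(S_\alpha)\varphi=-2\varphi$, an element $\gamma_0\in L_\alpha\cap\Gamma$ sends $\varphi$ into $\lie g^\times_{-\alpha}$; the ``missing'' integration is then along the unipotent radical of the Borel inside $L_\alpha$, and expanding it using the enumeration $I^{(\perp\alpha)}$ via the proof of Theorem~\ref{thm:G0min} applied inside $L_\alpha$ produces exactly the $C_i^\psi$ terms (this is where the quasi-abelian hypothesis enters, by control of both abelian and Heisenberg steps). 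For Part \eqref{itm:ntm}, the classification of next-to-minimal orbits in simply-laced simple Lie algebras as orbits of sums of two root vectors for orthogonal roots provides $\gamma_0$ (generally a product of an $L_\alpha\cap\Gamma$-element and a Weyl representative) carrying $\varphi$ into $\lie g^\times_{-\alpha'}+\lie g^\times_{-\alpha''}$. The passage from $N_{S_\alpha,\psi}$ to $N_{S_\Pi,\psi}$ is then an integration along $V_{\gamma_0}$ followed by a Fourier expansion; the key point is that all non-trivial characters in this expansion would correspond to orbits strictly larger than ntm in the closure order, and hence produce vanishing Fourier coefficients by the ntm-maximality of $\WS(\eta_\textnormal{ntm})$. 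Only the trivial character survives, yielding the bare integral \eqref{eq:ntmint}.

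The main obstacle will be Part \eqref{itm:ntm}. Two points require care: (a) showing that the conjugator $\gamma_0$ can actually be realized in $\Gamma$, not merely in the complex group, which requires combining $\K$-rational orbit representatives with Weyl representatives chosen in $\Gamma$; and (b) precisely identifying $V_{\gamma_0}=\Exp(\lie g^{\gamma_0 S_\alpha\gamma_0^{-1}}_{>1}\cap\overline{\lie b})$ as the correct ``completion'' group between $N_{S_\alpha,\psi}$ and $N_{S_\Pi,\psi}$, and verifying that the intermediate Fourier expansion along $V_{\gamma_0}$ indeed has only characters whose orbits strictly dominate ntm — this is where the machinery of Whittaker pairs and root exchange from \cite{Part1} does the heavy lifting, and where the simply-laced hypothesis is essential to control the weight decompositions.
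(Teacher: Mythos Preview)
Your proposal is essentially correct and matches the paper's strategy, but one step in Part~\eqref{it:Pt2} needs sharpening. The Levi on which Theorem~\ref{thm:G0min} is applied is not $L_\alpha$ but the smaller subgroup $G'$ generated by the simple roots \emph{orthogonal} to $\alpha$ (consistent with your use of $I^{(\perp\alpha)}$). The paper proceeds in two steps: first it replaces $\cF_{S_\alpha,\psi}$ by $\cF_{S,\psi}$, where $S$ equals $2$ on $\alpha$ and on simple roots adjacent to $\alpha$ and $0$ on those orthogonal to $\alpha$ (Proposition~\ref{prop:1_2}, an instance of Theorem~\ref{thm:IntTrans} with trivial $V$); second, it proves via a Slodowy-slice argument (Lemma~\ref{lem:minmin}, Proposition~\ref{prop:2to1}) that $\cF_{S,\psi}[\eta_\textnormal{ntm}]|_{G'}$ is \emph{minimal} on $G'$. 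This minimality is precisely what justifies invoking Theorem~\ref{thm:G0min}, and your sketch does not supply it.

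For Part~\eqref{itm:ntm} your description of the mechanism (Fourier expand along the complement, only the trivial mode survives by maximality) is exactly what underlies Theorem~\ref{thm:IntTrans}, which the paper cites as a black box; your obstacle (a) is handled by Lemma~\ref{lem:AppGeo} (rational conjugation into $\fg^\times_{-\alpha}+\fg^\times_{-\beta}$ within $L_\alpha\cap\Gamma$) together with Corollary~\ref{cor:2RootConjSimple}. Parts~\eqref{it:ntm0phi} and~\eqref{itm:larger} match the paper's argument.
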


Colloquially, we will refer to the condition in (\ref{itm:larger}) as $\varphi$ being in an orbit larger than next-to-minimal.

\begin{remark}\hfill
\label{rmk:CC}

\begin{enumerate}[(i)]
\item For Theorem~\ref{thm:ntm-rep}\eqref{it:ntm0phi} we remark that the coefficient $\mathcal{F}_{S_\alpha, {0}}[\eta_\textnormal{ntm}]$ is the usual constant term that can be determined {for Eisenstein series} using the results of~\cite{MWCov}.
We note also that the restriction of $\mathcal{F}_{S_\alpha, 0}[\eta_\textnormal{ntm}]$ to the Levi subgroup $L_{\alp}$ can be expressed through Whittaker coefficients using Theorem \ref{thm:G0min} above and Theorem \ref{thm:ntm-rep2} below.

\item 
 {As will become clear in the proof presented in \S\ref{sec:pfC}, the expression in~\eqref{eq:ntmint} does not depend on the choice of $\gamma_0$. However, different choices of  $\gamma_0$ can lead to different expressions, some of which may be simpler, see for example \eqref{eq:Vij}. }
\label{rmk:indp}

\item 
We stress that, similarly to (\ref{eq:FWmin}), the right-hand side of the formula (\ref{eq:ntmint}) is compatible with the equivariance of the Fourier coefficient, i.e. satisfies 
\begin{equation}
 \mathcal{F}_{S_{\alp}, \varphi}[\eta_\textnormal{ntm}](ug)=\chi_\varphi(u) \mathcal{F}_{S_{\alp}, \varphi}[\eta_\textnormal{ntm}](g) 
 \end{equation}
 for all $u\in U$. However, in contrast to the minimal case, here we need no additional constraint on $\gamma_0$ since the equivariance is automatically ensured by the integration over $V_{\gamma_0}$.
\end{enumerate}
\end{remark}

\begin{example}
    Let $G = \SO_{4,4}(\A)$ with $\Gamma = \SO_{4,4}(\K)$ and $\eta_\text{ntm}$ a next-to-minimal automorphic function on $G$. Let $\alpha = \alpha_1$ and take the abelian enumeration $I^{(\perp \alpha_1)} = (\beta_1, \beta_2) = (\alpha_3, \alpha_4)$. Fix a minimal element $\varphi_\text{min} \in \lie g^\times_{-\alpha_1-\alpha_2}$ and let $\gamma_0^\text{min}$ be a representative of the simple reflection $s_2$ in $\Gamma$ which means that $\psi_\text{min} := \Ad^*(\gamma_0^\text{min}) \varphi_\text{min} \in \lie g^\times_{-\alpha_1}$.

    From Theorem~\ref{thm:ntm-rep}(\ref{it:Pt2}) we get that
    \begin{align}
            \cF_{S_{\alpha_1}, \varphi_\text{min}}[\eta_\text{ntm}](g) 
            &= \cW_{\psi_\text{min}}[\eta_\text{ntm}](\gamma_0^\text{min} g) + 
            \sum_{i=1}^2 A_i^{\psi_\text{min}}[\eta_\text{ntm}](\gamma_0^\text{min} g)\nonumber\\
           & = \cW_{\psi_\text{min}}[\eta_\text{ntm}](\gamma_0^\text{min} g) +
            \sum_{i=1}^2 \sum_{\gamma \in \Gamma_{i-1}} \sum_{\varphi \in \lie g^\times_{-\beta_i}} \mathcal{W}_{\psi_\text{min} + \varphi}[\eta_\text{ntm}](\gamma \gamma_0^\text{min} g)\nonumber\\
            &=
\cW_{\psi_\text{min}}[\eta_\text{ntm}](\gamma_0^\text{min} g) +
            \sum_{\varphi \in \lie g^\times_{-\alp_3}} \mathcal{W}_{\psi_\text{min} + \varphi}[\eta_\text{ntm}]( \gamma_0^\text{min} g)\\
            &\hspace{10mm}+ \sum_{\varphi \in \lie g^\times_{-\alp_4}} \mathcal{W}_{\psi_\text{min} + \varphi}[\eta_\text{ntm}](\gamma_0^\text{min} g)
 \, .\nonumber
    \end{align}
In order to obtain the last line we note that $\Gamma_{i-1}$ is defined above \eqref{=Ai} replacing $I$ with $I^{(\perp\alpha_1)}$, and evaluates to $\Gamma_0 = \Gamma_1 = \{1\}$ in this case.

    Now, fix a next-to-minimal element $\varphi_\text{ntm} \in \lie g^\times_{-\alpha_1 - \alpha_2 - \alpha_3} + \lie g^\times_{-\alpha_1 - \alpha_2 - \alpha_4}$ and let $\gamma_0^\text{ntm}$ be a representative of the Weyl word $s_2 s_1$ such that $\psi_\text{ntm} := \Ad^*(\gamma_0^\text{ntm}) \varphi_\text{ntm} \in \lie g^\times_{-\alpha_3} + \lie g^\times_{-\alpha_4}$.
     
    Using Theorem~\ref{thm:ntm-rep}(\ref{itm:ntm}) we get that
    \begin{equation}
        \label{eq:D4-ntm-ex}
        \mathcal{F}_{S_{\alpha_1}, \varphi_\text{ntm}}[\eta_\text{ntm}](g) = \intl{V_{\gamma_0^\text{ntm}}} \mathcal{W}_{\psi_\text{ntm}}[\eta_\text{ntm}](v \gamma_0^\text{ntm} g) \, dv \, ,
    \end{equation}
    where $V_{\gamma_0^\text{ntm}}$ is defined in \eqref{eq:V} and its Lie algebra here evaluates to $\lie g_{-\alpha_2}(\A) \oplus \lie g_{-\alpha_1 -\alpha_2}(\A)$.

    There are in fact three next-to-minimal (complex) orbits which are all related by triality. If the Whittaker support of $\eta_\text{ntm}$ does not include the orbit of $\varphi_\text{ntm}$ the corresponding Fourier coefficient $\mathcal{F}_{S_{\alpha_1}, \varphi_\text{ntm}}[\eta_\text{ntm}]$ is trivial and so is also the Whittaker coefficient $\mathcal{W}_{\psi_\text{ntm}}[\eta_\text{ntm}]$. The result \eqref{eq:D4-ntm-ex} is therefore only non-trivial for when the Whittaker support includes this orbit.
\end{example}

\begin{example}
    Let us also consider $G$ and $\eta_\text{ntm}$ as above, but now with $\alpha = \alpha_2$. We have that $I^{(\perp \alpha_2)}$ is empty. Thus, for any minimal $\varphi_\text{min}$, with an associated element $\gamma_0^\text{min} \in \Gamma$ and canonical form $\psi_\text{min} := \Ad^*(\gamma_0^\text{min}) \varphi_\text{min} \in \lie g^\times_{-\alpha_2}$, we get from Theorem~\ref{thm:ntm-rep}(\ref{it:Pt2}) that
    \begin{equation}
        \cF_{S_{\alpha_2}, \varphi_\text{min}}[\eta_\text{ntm}](g) = \cW_{\psi_\text{min}}[\eta_\text{ntm}](\gamma_0^\text{min} g) \, .
    \end{equation}
\end{example}

\begin{remark} 
It is interesting to ask which Fourier coefficients are Eulerian~\cite{Ginz,MR3161096}. The expectation, based on the reduction formula of~\cite{FKP2013} for Eisenstein series and explicit examples checked there, is that Whittaker coefficients $\mathcal{W}_{\varphi}[\eta]$ of an Eisenstein series $\eta$ on a group $G$ are Eulerian if the orbit of $\varphi$ is lies in $\mathrm{WS}(\eta)$. In general, the reduction formula expresses $\mathcal{W}_{\varphi}[\eta]$ through a \textit{sum of generic} Whittaker coefficients on a semi-simple group determined by $\varphi$. If $\Gamma\varphi\in\mathrm{WS}(\eta)$, this sum collapses to a single term in all known examples and since generic Whittaker coefficients on the subgroup are Eulerian this implies the same for $\mathcal{W}_{\varphi}[\eta]$.

For example, in the case of Eisenstein series attached  to the minimal representation of $E_6, E_7, E_8$ it was shown in~\cite{FKP2013} that $\mathcal{W}_{\varphi}[\eta]$ is given by just a single Whittaker coefficient on $\SL_2$, which is well known to be Eulerian. See also \cite[Ch.~10]{FGKP} for more details on these and other examples. {By Theorem \ref{thm:min-rep} this implies that   the parabolic Fourier coefficient $\mathcal{F}_{S_\alpha,\varphi}[\eta_{\textnormal{min}}]$ of an Eisenstein series in the minimal representation calculated in the unipotent of a maximal parabolic determined by $\alpha$ should be Eulerian for simply-laced split groups.

Conversely, \cite{KobayashiSavin} show  that if $G$ is linear, simply-connected and absolutely simple, and the form $\eta_{\min}$ generates an irreducible representation $\pi=\bigotimes \pi_{\nu}$ with all local components $\pi_{\nu}$ minimal
then $\mathcal{F}_{S_\alpha,\varphi}[\eta_{\textnormal{min}}]$ is Eulerian for any abelian root $\alp$ and non-zero $\varphi$ with $\ad^*(S_{\alp})\varphi=-2\varphi$. By Theorem \ref{thm:min-rep} this implies that the corresponding Whittaker coefficient is Eulerian.

We expect that Theorem \ref{thm:ntm-rep} will be useful to prove similar Eulerianity results for next-to-minimal representations.
By contrast, if $\Gamma \varphi\notin\WS(\eta)$ the Whittaker coefficients and Fourier coefficients corresponding to $\varphi$ are not expected to be Eulerian.}

\end{remark}

We can also express any next-to-minimal automorphic function in terms of its Whittaker coefficients, similar to Theorem \ref{thm:G0min} that treats the case of minimal automorphic functions. 
{This is the subject of the next subsection.}

\subsection{{Statement of Theorem \ref{thm:ntm-rep2}}}

{
\begin{notn}\label{not:D} Let $\alp$ be a simple root.
\begin{enumerate}[(i)]
\item Let $Q_{\alp}$ denote the parabolic subgroup of $L_{\alp}$ with Lie algebra $(\fl_{\alp})^{\alp^{\vee}}_{\leq 0}$. By Lemma \ref{lem:LineStab} below, $Q_{\alp}$ is the stabilizer in $L_{\alp}$ of the line $\fg^*_{-\alpha}$ as an element of the projective space of $\fg^*$.
Let $\Gamma_\alpha$ denote the {quotient of $L_{\alp}\cap \Gamma$ by $Q_{\alp}\cap \Gamma$.}
    
\item Let $\bfG_{\alp}$ denote the subgroup of $\bfG$ corresponding to the simple component of $\fg$ corresponding to $\alp$. Let $\alpha_\text{max}$ denote the highest root of $\bfG_{\alp}$.

\item We say that $\alp$ is \emph{nice} if one of the following holds:
\begin{enumerate}[(a)]
\item $\alp$ is an abelian root. 
\item $\bfG_{\alp}$ is of type $E$ and $\alp$ is a Heisenberg root.
\end{enumerate}
We exclude the Heisenberg root in type $D_n$ for several reasons. One is that it does not correspond to an extreme node in the Dynkin diagram.
We shall explain others in \S\ref{subsec:PfPropFab} below, see in particular Remark~\ref{rmk:alpha2} and Lemma~\ref{lem:An}.

\item\label{it:delta} {If $\alp$ is an abelian root, define $\delta_{\alp}:=\alp_{\max}$.}
If $\alp$ is a nice Heisenberg root, define $\delta_{\alp}:=\alp_{\max}-\alp-\beta_{\alp}$, where $\beta_{\alp}$ is the only simple root non-orthogonal to $\alp$. One can see that $\beta_{\alp}$ is unique by Table \ref{tab:QA}. For more details on $\delta_{\alp}${, and the proof that it is a root,}  see \S \ref{subsec:PfPropFab} below. 

\item \label{it:Lam} Let  $R_{\alp}$ denote the parabolic subgroup of $L_{\alp}$ with Lie algebra $(\fl_{\alp})^{\delta_{\alp}^{\vee}}_{\leq 0}$. Denote $\Lambda_{\alp}:=(L_{\alp}\cap \Gamma)/(Q_{\alp}\cap R_{\alp}\cap \Gamma)$. In \S\ref{subsec:PfPropStab} below we show that $Q_{\alp}\cap R_{\alp}\cap \Gamma$ is a subgroup of index two in the stabilizer in $L_{\alp}\cap \Gamma$ of the plane $\fg^*_{-\alp}\oplus \fg^*_{-\delta_{\alp}}$ as a point in the Grassmanian of planes in $\fg^*$.

\item Let $M_{\alp}$ denote the Levi subgroup of $G$ given by simple roots orthogonal to $\alp$. 
Denote $\cM_{\alp}:=(M_{\alp}\cap \Gamma)/(M_{\alp}\cap R_{\alp}\cap \Gamma)$.
In \S \ref{sec:D} below we show that $M_{\alp}\cap R_{\alp}\cap \Gamma$ is the stabilizer in $M_{\alp}\cap \Gamma$ of the plane $\fg^*_{-\alp}\oplus \fg^*_{-\delta_{\alp}}$.

\item If $\alp$ is a Heisenberg root we define 
\begin{equation}\label{=Omega2}
        \mathcal{B}_{\alp} := {\{\text{positive roots }\beta : \langle \alp, \beta \rangle = 1\}}, \quad \quad
        \Omega_\alp = \Exp(\bigoplus_{\beta \in \mathcal{B}_{\alp}} \lie g_{-\beta})\,.
\end{equation}
{We also fix  a representative $\gamma_{\alp}\in \Gamma$ for the Weyl group element $s_{\alp}s_{\alp_{\max}}s_{\alp}$, where $s_{\alp}$ and $s_{\alp_{\max}}$ denote the corresponding reflections. }
\end{enumerate}
\end{notn}
}
\begin{maintheorem}\label{thm:ntm-rep2}
Let $\eta_\textnormal{ntm}$ be a next-to-minimal automorphic function on $G$, and let $\alp$ be a nice simple root of $\fg$. 
\begin{enumerate}[(i)]
\item\label{it:D1}  If $\alp$ is an abelian root and $\langle \alp, \alp_{\max}\rangle >0$ then 
\begin{equation}\label{=ntm2AbEasy}
\eta_\textnormal{ntm}=\cF_{S_{\alp},0}[\eta_\textnormal{ntm}]+ \sum_{\gamma \in \Gamma_{\alp}}\sum_{ \varphi\in \fg^{\times}_{-\alp}}\mathcal{F}_{S_\alpha, \varphi}[\eta_\textnormal{ntm}](\gamma g)\,.
\end{equation}
{Denote the right-hand side of \eqref{=ntm2AbEasy} by $\cA'$, for any nice $\alp$.}
\item\label{it:2Ab} If $\alp$ is an abelian root and $\langle \alp, \alp_{\max}\rangle =0$ then

\begin{equation}\label{=ntm2Ab}
    \eta_\textnormal{ntm}=\cA'+{\frac{1}{2}}\sum_{\gamma\in \Lambda_{\alp}}\sum_{\substack{\varphi\in \fg^{\times}_{-\alp} \\ \psi\in \fg^{\times}_{-\delta_\alp}}}\cF_{S_\alpha,\varphi+\psi}[\eta_\textnormal{ntm}](\gamma g)\,.
\end{equation}
{Denote the right-hand side of \eqref{=ntm2Ab} by $\cA$.}

\item\label{it:2H} If $\alp$ is a nice Heisenberg root then
\begin{align}\label{=ntm2H}
    \eta_\textnormal{ntm}=\cA
  +  \sum_{\omega\in \Omega_{\alp}}\Biggl(\sum_{\varphi\in \fg^{\times}_{-\alp}}\cF_{S_{\alp},\varphi}[\eta_\textnormal{ntm}](\omega \gamma_{\alp}g)+\sum_{\gamma\in \cM_{\alp}} \sum_{\substack{\varphi\in \fg^{\times}_{-\alp} \\ \psi\in \fg^{\times}_{-\delta_\alp}}} \cF_{S_{\alp},\varphi+\psi}[\eta_\textnormal{ntm}](\gamma \omega \gamma_{\alp}g)\Biggr)\,.
\end{align}

\end{enumerate}
\end{maintheorem}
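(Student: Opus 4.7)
The starting point is the Fourier expansion of $\eta_\textnormal{ntm}$ along the unipotent radical $U_\alpha$. When $\alpha$ is abelian (cases (i) and (ii)), $U_\alpha$ is commutative and Pontryagin duality gives
\[
\eta_\textnormal{ntm}(g) = \sum_{\varphi} \mathcal{F}_{S_\alpha, \varphi}[\eta_\textnormal{ntm}](g),
\]
where $\varphi$ ranges over all $\varphi\in \fg^*$ with $\ad^*(S_\alpha)\varphi = -2\varphi$. By Theorem~\ref{thm:ntm-rep}(iv) only those $\varphi$ whose orbit is zero, minimal, or next-to-minimal contribute. Using the equivariance
\[
\mathcal{F}_{S_\alpha, \Ad^*(\gamma)\varphi}[\eta_\textnormal{ntm}](g) = \mathcal{F}_{S_\alpha, \varphi}[\eta_\textnormal{ntm}](\gamma g), \qquad \gamma \in L_\alpha \cap \Gamma,
\]
I would group the surviving characters into $L_\alpha \cap \Gamma$-orbits and pick canonical representatives in each class, converting each orbit sum into a double sum over a coset space times the chosen representative family.

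For the minimal contribution I would show that every non-zero minimal $\varphi \in \fu_\alpha^*$ is $L_\alpha\cap\Gamma$-conjugate to a non-zero element of $\fg^*_{-\alpha}$, whose stabilizer as a line in $L_\alpha$ equals the parabolic $Q_\alpha$ (Lemma~\ref{lem:LineStab}). Summing over $\Gamma_\alpha = (L_\alpha\cap\Gamma)/(Q_\alpha\cap\Gamma)$ produces the minimal term in $\cA'$. This proves (i), once one checks under the hypothesis $\langle \alpha, \alpha_{\max}\rangle > 0$ that no next-to-minimal $\varphi$ survives in $\fu_\alpha^*$ at all. For (ii) I would prove that every next-to-minimal $\varphi \in \fu_\alpha^*$ is $L_\alpha\cap\Gamma$-conjugate to an element of $\fg^\times_{-\alpha}+\fg^\times_{-\delta_\alpha}$ with $\delta_\alpha = \alpha_{\max}$. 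Its pointwise stabilizer in $L_\alpha$ is $Q_\alpha \cap R_\alpha$, while the setwise stabilizer of the unordered plane $\fg^*_{-\alpha}\oplus\fg^*_{-\delta_\alpha}$ is twice as large because a Weyl lift of $s_\alpha s_{\alpha_{\max}} s_\alpha$ swaps the two lines; the sum over $\Lambda_\alpha$ therefore double-counts each orbit element, producing the factor $\tfrac12$ in \eqref{=ntm2Ab}.

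For the Heisenberg case (iii) I would first Fourier-expand $\eta_\textnormal{ntm}$ along the one-dimensional centre $Z = \Exp(\fg_{\alpha_{\max}})$ of $U_\alpha$. The $Z$-invariant part descends to the commutative quotient $U_\alpha/Z$, and the abelian argument above applied there reproduces $\cA$ with $\delta_\alpha = \alpha_{\max} - \alpha - \beta_\alpha$. For the non-$Z$-invariant part, the non-trivial characters of $[Z]$ are indexed by $\fg^\times_{-\alpha_{\max}}$ and form a single $L_\alpha\cap\Gamma$-orbit; after conjugation by the fixed Weyl representative $\gamma_\alpha$ of $s_\alpha s_{\alpha_{\max}} s_\alpha$, which exchanges $\alpha$ and $\alpha_{\max}$, the character is transported to one attached to $\fg^\times_{-\alpha}$. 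Fourier-expanding the conjugated function along the abelian unipotent with Lie algebra $\bigoplus_{\beta\in\mathcal{B}_\alpha}\fg_{-\beta}$ realises the non-abelian part of the expansion via the sum over $\Omega_\alpha$, and then applying Theorem~\ref{thm:ntm-rep}(iv) inside the Levi $M_\alpha$ and repeating the minimal and next-to-minimal orbit bookkeeping with $\cM_\alpha$ in place of $\Lambda_\alpha$ yields the two summands inside the $\sum_{\omega\in\Omega_\alpha}$ bracket of \eqref{=ntm2H}.

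The main obstacle will be the orbit analysis: (a) classifying which $L_\alpha\cap\Gamma$-orbits of minimal and next-to-minimal elements intersect $\fu_\alpha^*$, in particular verifying the non-appearance of the next-to-minimal case when $\langle\alpha,\alpha_{\max}\rangle>0$, (b) exhibiting the canonical representatives in $\fg^\times_{-\alpha}$ and in $\fg^\times_{-\alpha}+\fg^\times_{-\delta_\alpha}$, and (c) identifying both pointwise and setwise stabilizers to justify the cosets $\Gamma_\alpha$, $\Lambda_\alpha$, $\cM_\alpha$ and the index-two factor $\tfrac12$. These calculations rest on the simply-laced structure and on the ``nice'' hypothesis, which is precisely what excludes the type-$D_n$ Heisenberg root; the preparatory material in \S\ref{subsec:PfPropFab} and \S\ref{subsec:PfPropStab} will be needed to make the stabilizer identifications uniform across cases.
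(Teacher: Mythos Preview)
Your overall strategy matches the paper's proof almost exactly: Fourier expand along $U_\alpha$ (via Proposition~\ref{prop:Heis} in the Heisenberg case), discard characters beyond next-to-minimal, and reparametrize the minimal and next-to-minimal sums using the $L_\alpha\cap\Gamma$-orbit structure supplied by Corollary~\ref{cor:EasyMin}, Proposition~\ref{prop:nice}, Proposition~\ref{prop:RQ}, and (for the non-abelian term) Proposition~\ref{prop:Fab} together with Lemma~\ref{lem:Malp}.

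One concrete slip: the element $s_\alpha s_{\alpha_{\max}} s_\alpha$ does \emph{not} witness the index-two phenomenon in part~(ii). In that case $\langle\alpha,\alpha_{\max}\rangle=0$, so $s_\alpha s_{\alpha_{\max}} s_\alpha=s_{\alpha_{\max}}$, which fixes $\alpha$ rather than swapping it with $\delta_\alpha=\alpha_{\max}$; moreover $s_{\alpha_{\max}}\notin W_{L_\alpha}$ since $\alpha_{\max}$ has nonzero $\alpha$-coefficient. The correct swapping element lies in $W_{L_\alpha}$ and is constructed in Lemma~\ref{lem:wLa}: it is the longest element $w_0$ of $W_{L_\alpha}$ when $\alpha$ is abelian, and $s_{\beta_\alpha}w_0$ when $\alpha$ is Heisenberg. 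Relatedly, you should explain why the $\cM_\alpha$-sum in \eqref{=ntm2H} carries \emph{no} factor $\tfrac12$: by Lemma~\ref{lem:Malp}, $M_\alpha\cap R_\alpha\cap\Gamma$ is already the full stabilizer in $M_\alpha\cap\Gamma$ of the plane, because the swapping element from Lemma~\ref{lem:wLa} involves $\beta_\alpha$ and hence does not lie in $M_\alpha$. Finally, in the non-abelian term you should invoke Corollary~\ref{cor:EasyMin}\eqref{it:Min2} to ensure that the only \emph{minimal} elements of the form $\varphi+\psi$ with $\varphi\in\fg^\times_{-\alpha}$ and $\psi\in\bigoplus_{\eps\in\Psi_\alpha}\fg^*_{-\eps}$ are those with $\psi=0$.
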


{Part~\eqref{it:D1} of the above theorem only arises in type $A$ when $\alpha$ is an extreme root of the diagram, part~\eqref{it:2Ab} applies to all other roots in type $A$ and to all abelian roots in types $D$ and $E$. Part~\eqref{it:2H} only applies to type $E$ and more specifically to root $\alpha_2$ for $E_6$, root $\alpha_1$ for $E_7$ and root $\alpha_8$ for $E_8$ using Bourbaki numbering. Note that $\delta_\alpha$ appearing in $\mathcal{A}$ in parts~\eqref{it:2Ab} and~\eqref{it:2H}  is as defined in Notation~\ref{not:D}\eqref{it:delta} and differs in the two parts.}

The right-hand sides of \eqref{=ntm2AbEasy}, \eqref{=ntm2Ab} and \eqref{=ntm2H} can be expressed in terms of Whittaker coefficients. Indeed, $\cF_{S_{\alp},\varphi+\psi}[\eta_\textnormal{ntm}]$ and $\cF_{S_{\alp},\varphi}[\eta_\textnormal{ntm}]$ can be expressed using Theorem \ref{thm:ntm-rep},
while $\cF_{S_{\alp},0}[\eta_\textnormal{ntm}]$ defines a next-to-minimal function on $L_{\alp}$, that can then be further decomposed using Theorem \ref{thm:ntm-rep2} by induction on the rank of $G$. To present this decomposition we will need some further notation.

\subsection{{Statements of Theorems \ref{thm:ntmFull}, \ref{thm:ntmFullSimple} and \ref{thm:ntmNoCusp}}}\label{subsec:EFG}

\begin{notn} {Let  $\beta_1,\dots,\beta_n$ be a quasi-abelian enumeration such that $\beta_1,\dots,\beta_{n-1}$ is an abelian enumeration for $L_{n-1}$}.  
In this notation we define the terms $A_{ij},B_{nj}$ to be used in the next two theorems.

\begin{enumerate}[(i)]

\item {For any $i\leq n$ we define $A_{ii}$ in the following way. Let $\alp_{\max}^i$ denote the highest root for the simple component of $\lie l_i$ containing $\beta_i$. If $\beta_i$ is abelian {in $\mf{l}_i$} and  $\alp_{\max}^i$ is not orthogonal to $\beta_i$ we set $A_{ii}=0$. Otherwise we define $\delta_i$ to be the root $\delta_{\beta_i}$ of $\fl_i$, and 
 fix 
a ${g_i}\in \Gamma$ that normalizes the torus and conjugates $\beta_i$ and $\delta_i$ to orthogonal simple roots. Such a ${g_i}$ exists by Corollary \ref{cor:2RootConjSimple}. Define $V_{{g_i}}$ as in \eqref{eq:V} and set 
 \begin{equation}\label{=Aii}
A_{ii}:={\frac{1}{2}} \sum_{\tilde \gamma \in\Lambda_{\beta_i}}\sum_{\varphi\in \fg^{\times}_{-\beta_i}}\sum_{ \psi\in \fg^{\times}_{-\delta_i}}\intl_{V_{{g_i}}} \mathcal{W}_{{\Ad^*(g_i)(\varphi+\psi)}}[\eta_\textnormal{ntm}](v {g_i} \tilde\gamma g) \, dv,
\end{equation}
where $\Lambda_{\beta_i}$ is the quotient of $L_{i-1}\cap \Gamma$ defined as in Notation \ref{not:D}\eqref{it:Lam} above. {As in Remark~\ref{rmk:CC}\eqref{rmk:indp}, the definition is independent of the choice of $g_i$.}}
\item Let $j<i$ such that $\langle \beta_i,\beta_j\rangle=0$. We define 
\begin{equation}A_{ij}[\eta]={\sum_{\gamma'\in\Gamma_{i-1}}}\sum_{\varphi\in \fg^{\times}_{-\beta_i}}\sum_{\gamma \in \Gamma_{j-1}} \sum_{\psi \in \lie g^\times_{-\beta_j}} \mathcal{W}_{\varphi + \psi}[\eta_\textnormal{ntm}](\gamma {\gamma'}g).
\end{equation}

\item {If $\beta_n$ is Heisenberg, 
fix  a representative $\gamma_{n}\in \Gamma$ for the Weyl group element $s_{\beta_n}s_{\alp_{\max}^n}s_{\beta_n}$, where $s_{\beta_n}$ and $s_{\alp_{\max}^n}$ denote the corresponding reflections.} 
\item {We will write $j\bot i$ if $\langle \beta_j,\beta_i\rangle=0$.}

\item {For any index $j$ with $j\bot n$ we define $B_{nj}$ in the following way.}
If $\beta_n$ is abelian we set $B_{nj}:=0$. For Heisenberg $\beta_n$ we define
$L'_{j}$ to be the Levi subgroup of $G$ given by the roots $\beta_k$ with $k< j$ and $k\bot n$, ${Q}'_{j{-1}}$ to be the subgroup of $L'_{j{-1}}$ that stabilizes the root space $\fg^*_{-\beta_j}$, and $\Gamma'_{j{-1}}:=(L'_{j{-1}}\cap \Gamma)/({Q}'_{j{-1}}\cap \Gamma)$. Set 

 \begin{equation}
 B_{nj}:=
\sum_{\omega\in \Omega_n} \sum_{\varphi\in \fg^{\times}_{-\beta_{{n}}}} 
     \sum_{\gamma \in \Gamma'_{j{-1}}}\sum_{\psi \in \lie g_{-\beta_j}^{\times} }
\mathcal{W}_{\varphi+\psi}[\eta_\textnormal{ntm}](\gamma  \omega{\gamma_{n}} g)\,.
 \end{equation}

\item If $\beta_n$ is abelian, we  define $B_{nn}$ to be zero. If $\beta_n$ is Heisenberg and nice, we {define 

 \begin{equation}
 B_{nn}:= \sum_{\omega\in \Omega_n}\sum_{\tilde \gamma \in \cM_{\beta_n}}\sum_{\varphi\in\fg^{\times}_{-\beta_n}}\sum_{\psi\in\fg^{\times}_{-\delta_n}} \, \intl_{V_{g_n}} \mathcal{W}_{{\Ad^*(g_n)(\varphi+\psi)}}[\eta_\textnormal{ntm}](v{g_n}\tilde \gamma \omega {\gamma_{n}}g) dv\,,
 \end{equation}}
 {which is again independent of the choice of $g_n$.}
\end{enumerate}
\end{notn}

Recall also the notation $A_i,B_i$ from \eqref{=Ai} and \eqref{=Bi}.
Applying Theorem \ref{thm:ntm-rep2} by induction and using {also Theorems \ref{thm:G0min} and \ref{thm:ntm-rep}, we obtain the following theorem.}
\begin{maintheorem}\label{thm:ntmFull}
{ Fix a quasi-abelian enumeration $\beta_1,\dots,\beta_n$ such that $\beta_1,\dots,\beta_{n-1}$ is an abelian enumeration for $L_{n-1}$, and $\beta_n$ is a nice quasi-abelian root.}
Let $\eta_\textnormal{ntm}$ be a next-to-minimal automorphic function on $G$. Then 
\begin{equation}\label{=ntmCor1}
\eta_\textnormal{ntm}=\cW_{0}[\eta_{\textnormal{ntm}}]+ \sum_{i} (A_i+A_{ii}+\sum_{j<i, j\bot i}A_{ij})+B_n+B_{nn}+\sum_{j\bot n}B_{nj}\,.
\end{equation}
\end{maintheorem}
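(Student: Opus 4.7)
The plan is to proceed by induction on the rank $n$, applying Theorem~\ref{thm:ntm-rep2} with $\alpha=\beta_n$. Since $\beta_n$ is nice by hypothesis, Theorem~\ref{thm:ntm-rep2} decomposes $\eta_\textnormal{ntm}$ into the constant term $\cF_{S_{\beta_n},0}[\eta_\textnormal{ntm}]$, summed Fourier coefficients $\cF_{S_{\beta_n},\varphi}[\eta_\textnormal{ntm}]$ with minimal $\varphi\in\fg^{\times}_{-\beta_n}$, and (in cases (ii) and (iii)) summed coefficients $\cF_{S_{\beta_n},\varphi+\psi}[\eta_\textnormal{ntm}]$ with $\psi\in\fg^{\times}_{-\delta_n}$; in case (iii) there are additionally $\Omega_{\beta_n}\gamma_{\beta_n}$-translated versions of the same pieces. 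Each of these pieces will then be further expanded via Theorems~\ref{thm:ntm-rep} and~\ref{thm:G0min} and the resulting sums will be matched against the right-hand side of \eqref{=ntmCor1}.

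First, Theorem~\ref{thm:ntm-rep}(\ref{it:ntm0phi}) asserts that the restriction of $\cF_{S_{\beta_n},0}[\eta_\textnormal{ntm}]$ to $L_{\beta_n}=L_{n-1}$ is trivial, minimal, or next-to-minimal on $L_{n-1}$. Since $\beta_1,\dots,\beta_{n-1}$ is an abelian enumeration of $L_{n-1}$, invoke the inductive hypothesis in the next-to-minimal case (with all $B$-type terms vanishing because the inner enumeration is purely abelian) and Theorem~\ref{thm:G0min} in the minimal case. The Whittaker coefficients of the restriction to $L_{n-1}$ coincide with those of $\eta_\textnormal{ntm}$ on $G$, because the constant-term integration over $U_{\beta_n}$ combined with the Borel-unipotent integration of $L_{n-1}$ is exactly integration over the full Borel unipotent of $G$. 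This contribution produces $\cW_0[\eta_\textnormal{ntm}]+\sum_{i<n}(A_i+A_{ii}+\sum_{j<i,\,j\bot i}A_{ij})$, which is the portion of \eqref{=ntmCor1} with index $i<n$.

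Next, expand the remaining Fourier coefficients by Theorem~\ref{thm:ntm-rep}. Summing Theorem~\ref{thm:ntm-rep}(\ref{it:Pt2}) over $\gamma\in\Gamma_{\beta_n}$ and $\varphi\in\fg^{\times}_{-\beta_n}$ decomposes each $\cF_{S_{\beta_n},\varphi}$ into a single Whittaker piece plus $C_i^\psi$-terms indexed by a quasi-abelian enumeration $I^{(\perp\beta_n)}$ of the roots of $M_{\beta_n}$, producing $A_n$ together with $\sum_{j<n,\,j\bot n}A_{nj}$. Summing Theorem~\ref{thm:ntm-rep}(\ref{itm:ntm}) over $\gamma\in\Lambda_{\beta_n}$ and $(\varphi,\psi)\in\fg^{\times}_{-\beta_n}\times\fg^{\times}_{-\delta_n}$ expands $\cF_{S_{\beta_n},\varphi+\psi}$ as an integral over $V_{g_n}$ of a Weyl-translated Whittaker coefficient, producing $A_{nn}$; the factor $\tfrac12$ matches $\Lambda_{\beta_n}$ being the quotient by an index-two subgroup of the plane stabilizer (Notation~\ref{not:D}(v)). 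In case (iii), the same two expansions applied to the $\Omega_{\beta_n}\gamma_{\beta_n}$-translated pieces yield respectively $B_n$ together with $\sum_{j\bot n}B_{nj}$, and $B_{nn}$.

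The principal obstacle is the combinatorial bookkeeping required to match summation domains exactly. One must verify that the coset space $\Gamma'_{j-1}$ appearing in $B_{nj}$ coincides with the quotient produced by Theorem~\ref{thm:ntm-rep}(\ref{it:Pt2}) inside the Heisenberg translate, which requires identifying the Levi $L'_j$ of $M_{\beta_n}$ inside $G$, and that $I^{(\perp\beta_n)}$ can be chosen compatibly so that its indices reindex cleanly to the $j<n$ with $j\bot n$. A further delicate point is that $\delta_n$ takes different values in the abelian versus the Heisenberg subcases of Notation~\ref{not:D}(iv), which forces a case-sensitive choice of the element $g_n$ in the definition of $A_{nn}$ compatible with the $\gamma_0$ arising from Theorem~\ref{thm:ntm-rep}(\ref{itm:ntm}). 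As an internal consistency check, when $\beta_n$ is abelian with $\langle\beta_n,\alp_{\max}^n\rangle>0$ (case (i) of Theorem~\ref{thm:ntm-rep2}) both $A_{nn}$ and all $B$-terms vanish by definition, and \eqref{=ntmCor1} reduces exactly to the decomposition provided directly by Theorem~\ref{thm:ntm-rep2}(\ref{it:D1}) composed with Theorem~\ref{thm:ntm-rep}(\ref{it:Pt2}).
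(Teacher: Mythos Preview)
Your proposal is correct and follows essentially the same approach as the paper's proof: induction on rank, applying Theorem~\ref{thm:ntm-rep2} with $\alpha=\beta_n$ to split off the constant term (handled by induction and Theorem~\ref{thm:G0min}), then using Theorem~\ref{thm:ntm-rep}(\ref{it:Pt2}) and (\ref{itm:ntm}) to convert the remaining minimal and next-to-minimal Fourier coefficients into the $A$- and $B$-terms. The bookkeeping concerns you flag (matching $\Gamma'_{j-1}$, choosing $I^{(\perp\beta_n)}$ compatibly, the case split for $\delta_n$) are exactly the points the paper handles, though somewhat more tersely than your outline.
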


{We note that if $\fg$ has at most one component of type $E_8$ then an enumeration as in Theorem \ref{thm:ntmFull} is always possible. For example one can take the Bourbaki enumeration on each component.
Note that the right-hand side of \eqref{=ntmCor1} is entirely expressed in terms of Whittaker coefficients.

One can simplify the expression in \eqref{=ntmCor1} by allowing oneself} to use in the final expression not only Whittaker coefficients, but also constant terms with respect to parabolic nilradicals, that in turn can be determined for Eisenstein series using \cite{MWCov}.
In this way one obtains the following statement.
\pagebreak

\begin{maintheorem}\label{thm:ntmFullSimple}
Assume that $[\fg,\fg]$ is simple {of rank $n$}, and fix the Bourbaki enumeration of its simple roots.  Let $\eta_\textnormal{ntm}$ be a next-to-minimal automorphic function on $G$. Then 
\begin{enumerate}[(i)]
\item In type $A$ we have $$\eta_\textnormal{ntm}=\cF_{S_{\alp_n},0}[\eta_\textnormal{ntm}]+ A_n+\sum_{{j\bot n}}A_{nj}\,.$$
\item In types $D,E_6$ and $E_7$ we have 
$$\eta_\textnormal{ntm}=\cF_{S_{\alp_n},0}[\eta_\textnormal{ntm}]+ A_n+\sum_{ j\bot n}A_{nj}+A_{nn}\,.$$
\item In type $E_8$ we have 
$$\eta_\textnormal{ntm}=\cF_{S_{\alp_n},0}[\eta_\textnormal{ntm}]+A_n+\sum_{ j\bot n}A_{nj}+A_{nn}+B_n+\sum_{ j\bot n}B_{nj}+B_{nn}\,.$$
\end{enumerate}
\end{maintheorem}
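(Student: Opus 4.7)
The plan is to apply Theorem \ref{thm:ntm-rep2} exactly once, with $\alp = \alp_n$ the last Bourbaki simple root, and then use Theorem \ref{thm:ntm-rep} to convert each resulting Fourier coefficient into Whittaker coefficients; the constant term $\cF_{S_{\alp_n}, 0}[\eta_\textnormal{ntm}]$ is kept as is rather than being expanded inductively, which is precisely what distinguishes this statement from Theorem \ref{thm:ntmFull}. First I would verify which case of Theorem \ref{thm:ntm-rep2} applies to $\alp_n$ in each type: from Table \ref{tab:QA}, $\alp_n$ is abelian in types $A$, $D$, $E_6$, $E_7$, and is the Heisenberg root $\alp_8$ in $E_8$. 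For type $A_n$ a direct calculation gives $\langle \alp_n, \alp_{\max}\rangle = -1 + 2 = 1 > 0$, placing us in case \eqref{it:D1}; for types $D_n, E_6, E_7$ one finds $\langle \alp_n, \alp_{\max}\rangle = -2 + 2 = 0$, placing us in case \eqref{it:2Ab}; and for $E_8$ case \eqref{it:2H} applies. In each case Theorem \ref{thm:ntm-rep2} yields $\eta_\textnormal{ntm} = \cF_{S_{\alp_n}, 0}[\eta_\textnormal{ntm}] + \mathrm{(remainder)}$, where the remainder is a sum of Fourier coefficients $\cF_{S_{\alp_n}, \varphi}[\eta_\textnormal{ntm}](\gamma g)$ for minimal $\varphi \in \fg^{\times}_{-\alp_n}$ summed over $\Gamma_{\alp_n}$, together with $\Lambda_{\alp_n}$-summed terms for ntm $\varphi + \psi$ in types $D, E_6, E_7, E_8$, and additional terms involving the Weyl representative $\gamma_{\alp_n}$ and the subgroup $\Omega_{\alp_n}$ for $E_8$.

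Next I would expand each Fourier coefficient via Theorem \ref{thm:ntm-rep}. For minimal $\varphi \in \fg^{\times}_{-\alp_n}$ I would take $\gamma_0 = 1$ (so $\psi = \varphi$ is already in the canonical form required by Theorem \ref{thm:ntm-rep}\eqref{it:Pt2}), yielding $\cF_{S_{\alp_n}, \varphi}[\eta_\textnormal{ntm}] = \cW_\varphi[\eta_\textnormal{ntm}] + \sum_{i=1}^m C_i^\varphi[\eta_\textnormal{ntm}]$ for a suitably chosen quasi-abelian enumeration $I^{(\perp\alp_n)} = (\beta_1, \ldots, \beta_m)$ of the simple roots orthogonal to $\alp_n$. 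For each ntm term $\varphi + \psi \in \fg^{\times}_{-\alp_n} + \fg^{\times}_{-\delta_{\alp_n}}$, Theorem \ref{thm:ntm-rep}\eqref{itm:ntm} furnishes precisely the element $g_n$ used in the definition of $A_{nn}$, yielding the integral formula $\cF_{S_{\alp_n}, \varphi+\psi}[\eta_\textnormal{ntm}](g) = \int\limits_{V_{g_n}} \cW_{\Ad^*(g_n)(\varphi+\psi)}[\eta_\textnormal{ntm}](v g_n g) \, dv$.

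Finally I would match the resulting iterated sums with $A_n, A_{nj}, A_{nn}$, and for $E_8$ with $B_n, B_{nj}, B_{nn}$. The identification $\sum_{\gamma \in \Gamma_{\alp_n}} \sum_\varphi \cW_\varphi[\eta_\textnormal{ntm}](\gamma g) = A_n[\eta_\textnormal{ntm}](g)$ is immediate from \eqref{=Ai}, since $\Gamma_{\alp_n} = \Gamma_{n-1}$: both equal $L_{n-1}/Q_{n-1}$ with $Q_{n-1}$ stabilizing $\fg^*_{-\alp_n}$. The $C_i^\varphi$ contributions should match $\sum_{j \bot n} A_{nj}[\eta_\textnormal{ntm}](g)$ after aligning enumerations; the $V_{g_n}$ integral together with the sum over $\Lambda_{\alp_n}$ yields $A_{nn}$; and for $E_8$ the extra Weyl conjugation together with sums over $\Omega_{\alp_8}$ and $\cM_{\alp_8}$ produces $B_n, \sum B_{nj}, B_{nn}$. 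For type $A_n$ this last matching is essentially immediate, since $I^{(\perp\alp_n)}=(\alp_1,\ldots,\alp_{n-2})$ is just the Bourbaki enumeration truncated and the relevant Levi subgroups coincide on the nose. For types $D, E$ the orthogonal simple roots include roots further from $\alp_n$ in the diagram, and I expect the hardest part to be precisely this reconciliation: the Levi used inside $C_i^\varphi$, generated by the first $i-1$ elements of $I^{(\perp\alp_n)}$, need not coincide with the Bourbaki Levi $L_{j-1}$ appearing in the definition of $A_{nj}$, so the equality will have to be established either by a careful choice of enumeration for $I^{(\perp\alp_n)}$ (potentially differing from the order inherited from Bourbaki) or by a Bruhat-type decomposition of the flag variety $\Gamma_{\alp_n}$ that absorbs the missing factors.
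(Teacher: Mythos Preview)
Your approach is correct and matches the paper's exactly: its proof of this theorem is the one-line remark that the argument for Theorem~\ref{thm:ntmFull} carries over verbatim once the inductive expansion of the constant term is dropped and the resulting vanishing terms are omitted, which is precisely what you have outlined. Your flagged concern about reconciling the $I^{(\perp\alpha_n)}$-Levi quotients with the Bourbaki $\Gamma_{j-1}$ is a fair point of care; in types $A$, $E_6$, $E_7$, $E_8$ it evaporates because the simple roots orthogonal to $\alpha_n$ already form an initial Bourbaki segment $\alpha_1,\dots,\alpha_m$, so the two enumerations literally coincide, and in type $D_n$ (where $\alpha_{n-1}$ is the lone outlier) the paper does not spell out the matching either.
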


 Using Lemma \ref{lem:GlobLoc} below on the connection of Fourier coefficients {to wave-front sets of} local components we derive from Theorem \ref{thm:ntmFull} the following one.
\begin{maintheorem}
\label{thm:ntmNoCusp}
Let $\pi$ be an irreducible representation of $G$ and let $\pi=\bigotimes \pi_{\nu}$ be a decomposition of $\pi$ to local components. Suppose that there exists $\nu$ such that $\pi_{\nu}$ is minimal or next-to-minimal. Then $\pi$ cannot be realized in cuspidal automorphic forms on $G$.
\end{maintheorem}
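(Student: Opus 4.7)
\emph{Proof plan.} Assume for contradiction that $\pi$ is realized in the cuspidal spectrum and fix a non-zero cuspidal $\eta\in\pi$. By Lemma~\ref{lem:GlobLoc}, the hypothesis that some $\pi_\nu$ is minimal or next-to-minimal forces $\eta$ itself to be a minimal or next-to-minimal automorphic function. Hence Theorem~\ref{thm:G0min} or Theorem~\ref{thm:ntmFull} expresses $\eta$ as an explicit finite sum of Whittaker coefficients $\mathcal{W}_\varphi[\eta](\gamma g)$, together with the constant term $\mathcal{W}_0[\eta]$. The plan is to show that every summand on the right-hand side vanishes on a cusp form, forcing $\eta=0$ and yielding the desired contradiction.

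The key step is the following cuspidality criterion for Whittaker coefficients. Suppose $\varphi\in\fg^*$ is supported on root spaces $\fg^*_{-\alpha}$ for $\alpha$ in a proper subset $S\subsetneq\Pi$ of the simple roots, and let $P_S$ be the standard parabolic whose Levi is generated by $T$ and $S$. Then $U_{P_S}$ is normal in $N$, and $\chi_\varphi$ is trivial on $U_{P_S}$ because $\varphi$ pairs trivially with the root spaces complementary to $S$. Factoring the Whittaker integral accordingly gives
\[
    \mathcal{W}_\varphi[\eta](g)=\int_{[N/U_{P_S}]}\Big(\int_{[U_{P_S}]}\eta(u_P u' g)\,du_P\Big)\chi_\varphi(u')^{-1}\,du' \, ,
\]
where the inner integral is a constant term of $\eta$ along the proper parabolic $P_S$ and therefore vanishes by cuspidality. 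In particular $\mathcal{W}_0[\eta]\equiv 0$ (take $S=\emptyset$), and more generally $\mathcal{W}_\varphi[\eta]\equiv 0$ for every such $\varphi$.

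To finish it remains to verify that all Whittaker coefficients appearing in Theorems~\ref{thm:G0min} and~\ref{thm:ntmFull} satisfy the hypothesis of the criterion. In $A_i$ and $B_n$ the character has support on a single simple negative root space $\fg^*_{-\beta_i}$; in $A_{nj}$ and $B_{nj}$ on a sum $\fg^*_{-\beta_n}\oplus\fg^*_{-\beta_j}$ with $\beta_n\perp\beta_j$; and in $A_{ii}$ and $B_{nn}$ on the $\Ad^*(g_i)$-image of such a sum, which by the defining property of $g_i$ from Corollary~\ref{cor:2RootConjSimple} again consists of two orthogonal simple negative root spaces. The translates $\gamma,\gamma_n$ and integration variables $\omega\in\Omega_i$, $v\in V_{g_i}$ act only on the argument $g$ of the Whittaker coefficient and leave its character untouched. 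For $\fg$ simply-laced simple of rank at least two any such support is a proper subset of $\Pi$; the rank-one case $A_1$ is vacuous because there the minimal orbit coincides with the regular one and no next-to-minimal orbit exists, and the general semisimple case reduces to its simple components. The main obstacle is precisely this combinatorial bookkeeping — verifying, after the built-in conjugations by $g_i$ and $\gamma_n$, that the resulting characters are still supported on simple negative root spaces — but once it is in place the cuspidality criterion annihilates every summand and yields $\eta=0$.
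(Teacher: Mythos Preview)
Your proposal is correct and follows essentially the same route as the paper: assume a non-zero cuspidal $\eta\in\pi$, use Lemma~\ref{lem:GlobLoc} to see that $\eta$ is minimal or next-to-minimal, invoke the Whittaker expansions of Theorems~\ref{thm:G0min} and~\ref{thm:ntmFull}, and observe that every Whittaker coefficient $\mathcal W_\varphi$ appearing there has $\varphi$ supported on at most two simple negative root spaces and hence factors through a constant term along a proper standard parabolic, which vanishes by cuspidality.

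The one place where the paper is slightly more careful than you is the reduction to a simple factor. Theorem~\ref{thm:ntmFull} requires a quasi-abelian enumeration with $\beta_1,\dots,\beta_{n-1}$ abelian in $L_{n-1}$ and $\beta_n$ nice, and such an enumeration may fail to exist when $\fg$ has several $E_8$ components. The paper therefore restricts $\eta$ to the subgroup $G'$ corresponding to a single $E_8$ component (or takes $G'=G$ if there is none), notes that the restriction is still cuspidal on $G'$, applies the expansion there to deduce $\eta(1)=\eta'(1)=0$, and then right-shifts to conclude $\eta\equiv 0$. Your phrase ``the general semisimple case reduces to its simple components'' is gesturing at the same maneuver, but the paper makes the mechanism explicit.
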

For statements on the possibility of decomposing $\pi=\bigotimes \pi_{\nu}$ into local factors for covering groups, see \cite[\S 8]{Weis}.

{
\subsection{Illustrative examples}

Theorems \ref{thm:min-rep} and \ref{thm:G0min} build upon and extend the results of \cite{GRS2,MS,Ahlen:2017agd} for automorphic forms in the minimal representation. For the next-to-minimal representation, Theorems \ref{thm:ntm-rep} and \ref{thm:ntm-rep2} were established in \cite{Ahlen:2017agd} for $\SL_n$ and {are} here generalized to arbitrary simply-laced split Lie groups $G$. Together with Theorem~\ref{thm:ntm-rep2} they provide explicit expressions for the complete Fourier expansions of next-to-minimal automorphic forms on all split simply-laced groups.

In order to illustrate the types of explicit expansions one obtains we here give the expansion of minimal and next-to-minimal automorphic forms on $E_8$ using the $\alpha_8$ parabolic. For a minimal automorphic form one obtains
\begin{equation}
\eta_{\textnormal{min}}(g)=\mathcal{F}_{S_{\alpha_8} , 0}[\eta_{\textnormal{min}}] (g)+\sum_{\gamma\in\Gamma_7}\sum_{\varphi\in\mathfrak{g}_{-\alpha_8}^\times}\mathcal{W}_{\varphi}[\eta_{\textnormal{min}}](\gamma g)+\sum_{\omega\in \Omega_8} \sum_{\varphi\in \mathfrak{g}_{-\alpha_8}^{\times}}\mathcal{W}_\varphi[\eta_{\textnormal{min}}](\omega \gamma_8g)\,,
\end{equation}
while for a next-to-minimal automorphic form we have a slightly more complicated expression
\begin{align}
\eta_{\textnormal{ntm}}(g) &=\mathcal{F}_{S_{\alpha_8}, 0}(g)+\underbrace{\sum_{\gamma\in\Gamma_7}\sum_{\varphi\in\mathfrak{g}_{-\alpha_8}^\times}\mathcal{W}_{\varphi}(\gamma g)}_{A_8}
+\sum_{j=1}^{6} \underbrace{\sum_{\gamma'\in \Gamma_7}\sum_{\varphi\in \mathfrak{g}_{-\alpha_8}^{\times}} \sum_{\gamma \in \Gamma_{j-1}}\sum_{\psi\in \mathfrak{g}_{-\beta_j}^{\times}}\mathcal{W}_{\varphi+\psi}(\gamma \gamma' g)}_{A_{8j}} \nonumber
\\  \nonumber
&\quad+ \underbrace{\frac{1}{2}\sum_{\tilde{\gamma}\in \Lambda_{\alpha_8}}\sum_{\varphi\in \mathfrak{g}_{-\alpha_8}^{\times}}\sum_{\psi\in \mathfrak{g}_{-\delta_8}^{\times}}\int_{V_{g_8}} \mathcal{W}_{{\Ad^*(g_8)(\varphi+\psi)}}(vg_8\tilde{\gamma}g)dv}_{A_{88}}
+\underbrace{\sum_{\omega\in \Omega_8} \sum_{\varphi\in \mathfrak{g}_{-\alpha_8}^{\times}}\mathcal{W}_\varphi(\omega \gamma_8g)}_{B_8}
\\ \nonumber
&\quad+ \underbrace{\sum_{\omega\in \Omega_8} \sum_{\tilde{\gamma}\in \mathcal{M}_{\alpha_8}}\sum_{\varphi\in \mathfrak{g}_{-\alpha_8}^{\times}}\sum_{\psi\in \mathfrak{g}_{-\delta_8}^{\times}}\int_{V_{g_8}} \mathcal{W}_{{\Ad^*(g_8)(\varphi+\psi)}}(v{g_8}\tilde{\gamma}\omega \gamma_{8} g)dv}_{B_{88}}
\\ 
& \quad +\sum_{j=1}^6\underbrace{\sum_{\omega\in \Omega_8}\sum_{\varphi\in \mathfrak{g}_{-\alpha_8}^{\times}}\sum_{\gamma \in \Gamma'_{j-1}}\sum_{\psi\in \mathfrak{g}_{-\beta_j}^\times}\mathcal{W}_{\varphi+\psi}(\gamma \omega \gamma_{8} g)}_{B_{8j}}\,.
\end{align}
All coefficients are evaluated for the automorphic form $\eta=\eta_{\textnormal{ntm}}$. The elements $g_8$ and $\gamma_8$ are defined in \S\ref{subsec:EFG} and \S\ref{subsec:ThB}, respectively,

We shall compare these to other results available in the literature in \S\ref{subsec:compar}.

}

\subsection{Motivation from string theory}
\label{sec:string}

The results of this paper have applications in string theory. In short, string theory predicts certain quantum corrections to Einstein's theory theory of general relativity. These quantum corrections come in the form of an expansion in curvature tensors and their derivatives. The first non-trivial correction is of fourth order in the Riemann tensor, denoted schematically $\mathcal{R}^4$, and has a coefficient which is a function $\eta_n: E_n/K_n \to \mathbb{R}$, where $E_n/K_n$  is a particular symmetric space, the classical moduli space of the theory. The parameter $n=d+1$ contains the number of spacetime dimensions $d$ that have been compactified on a torus $T^d$. The groups $E_n$ are all split real forms of rank $n$ complex Lie groups (see Table \ref{tab:CJ}).

\begin{table}[h]
\centering
\caption{\label{tab:CJ} Table of Cremmer--Julia symmetry groups $E_n(\reals), \, n=d+1,$ with compact subgroup $K_n(\reals)$ and U-duality groups $E_n(\ints)$ for compactifications of  IIB string theory on a $d$-dimensional torus $T^d$ to $D = 10 - d$ dimensions.}
\begin{tabular}{|c|c|c|c|}
\hline
$d$ & $E_{d+1}(\reals)$ & $K_{d+1}(\reals)$ & $E_{d+1}(\ints)$ \\
\hline
$0$ & $\SL_2(\reals)$ & $\SO_2(\reals)$ & $\SL_2(\ints)$ \\
$1$ & $\SL_2(\reals)\times \reals_+$ & $\SO_2(\reals)$ & $\SL_2(\ints)$ \\
$2$ & $\SL_2(\reals)\times \SL_3(\reals)$ & $\SO_2(\reals)\times \SO_3(\reals)$ & $\SL_2(\ints)\times \SL_3(\ints)$\\
$3$ & $\SL_5(\reals)$ & $\SO_5(\reals)$ & $\SL_5(\ints)$ \\
$4$ & $\Spin_{5,5}(\reals)$ & $\Spin_5(\reals)\times \Spin_5(\reals)$ & $\Spin_{5,5}(\ints)$\\
$5$ & $E_6(\reals)$ & $\operatorname{USp}_8(\reals)/\ints_2$ & $E_6(\ints)$ \\
$6$ & $E_7(\reals)$ & $\operatorname{SU}_8(\reals)/\ints_2$ & $E_7(\ints)$\\
$7$ & $E_8(\reals)$ & $\operatorname{Spin}_{16}(\reals)/\ints_2$ & $E_8(\ints)$\\
\hline
\end{tabular}
\end{table}

In the full quantum theory the classical symmetry $E_n(\mathbb{R})$ is broken to an arithmetic subgroup $E_n(\mathbb{Z})$, called the \emph{U-duality group}, which is the Chevalley group of integer points of $E_n$~\cite{Hull:1994ys}. Thus, the coefficient functions $\eta_n$ are really functions on the double coset $E_n(\mathbb{Z})\backslash E_n(\mathbb{R})/K_n$ and in certain cases they can be uniquely determined. For the two leading order quantum corrections, corresponding to $\mathcal{R}^4$ and $\partial^4\mathcal{R}^4$, the coefficient functions $\eta_n$ are respectively attached to the minimal and next-to-minimal automorphic representations of $E_n$ \cite{P,GMV}. Fourier expanding $\eta_n$ with respect to various unipotent subgroups $U\subset E_n$  reveals interesting information about perturbative and non-perturbative quantum effects. Of particular interest are the cases when $U$ is the unipotent radical of a maximal parabolic $P_\alpha\subset G$ corresponding to a simple root $\alpha$ at an ``extreme'' node (or end node) in the Dynkin diagram. Consider the sequence of groups $E_n$ displayed in Table \ref{tab:CJ}, and the associated Dynkin diagram in ``Bourbaki labelling''. The extreme simple roots are then $\alpha_1, \alpha_2$ and $\alpha_n$ (this is slightly modified for the low rank cases where the Dynkin diagram becomes disconnected). Fourier expanding the automorphic form $\eta$ with respect to the corresponding maximal parabolics then have the following interpretations (see Figure \ref{fig:limits} for the associated labelled Dynkin diagrams):
\begin{itemize}
\item $P=P_{\alpha_1}$: {\bf String perturbation limit.}  In this case the constant term of the Fourier expansion corresponds to perturbative terms (tree level, one-loop etc.) with respect to an expansion around small string coupling, $g_s\to 0$. The non-constant Fourier coefficients encode non-perturbative effects of the order $e^{-1/g_s}$ and $e^{-1/g_s^2}$ arising from so-called D-instantons and NS5-instantons.
\item $P=P_{\alpha_2}$: {\bf M-theory limit}. This is an expansion in the limit of large volume of the M-theory torus $T^{d+1}$. The non-perturbative effects arise from M2- and M5-brane instantons.
\item $P=P_{\alpha_n}$: {\bf Decompactification limit.}   This is an expansion in the limit of large volume of a single circle $S^1$ in the  torus $T^d$ (or $T^{d+1}$ in the M-theory picture). The non-perturbative effects encoded in the non-constant Fourier coefficients correspond to so called BPS-instantons and Kaluza--Klein instantons. 
\end{itemize}

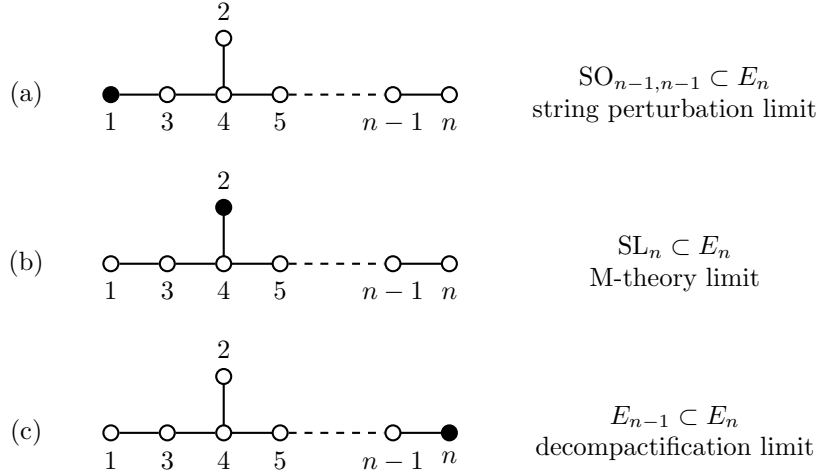
\begin{figure}[t!]
\centering
\begin{tikzpicture}[font=\small, scale=0.75]
\def\nodedist{1}
    \tikzstyle{dot}=[circle, draw, thick, fill=white, inner sep=2pt]
    \tikzstyle{fdot}=[circle, draw, thick, fill=black, inner sep=2pt]
    \tikzstyle{line}=[thick, shorten >=-2pt, shorten <=-2pt]
    \node (n1) at (0, 0) [dot, label=below:1] {};
    \node (n2) at (1*\nodedist, 0) [dot, label=below:3] {};
    \node (n3) at (2*\nodedist, 0) [dot, label=below:4] {};
    \node (n4) at (2*\nodedist, 1*\nodedist) [dot, label=above:2] {};
    \node (n5) at (3*\nodedist, 0) [dot, label=below:5] {};
     \node (n7) at (5*\nodedist, 0) [dot, label=below:$n-1$] {};
    \node (n8) at (6*\nodedist, 0) [fdot, label=below:$n$] {};

    \begin{scope}[on background layer] 
        \draw [line] (n1) -- (n2) -- (n3) -- (n5);
        \draw [line] (n3) -- (n4);
        \draw [dashed, thick] (n5) -- (n7);
        \draw [line] (n7) -- (n8);
    \end{scope}

    \node at (-1.5*\nodedist,0){(c)};
    \node at (10*\nodedist,0) {\begin{tabular}{cc}$E_{n-1}\subset E_{n}$\\decompactification limit\end{tabular}};

\def\yshift{6}

    \node (n1) at (0, \yshift) [fdot, label=below:1] {};
    \node (n2) at (1*\nodedist, \yshift) [dot, label=below:3] {};
    \node (n3) at (2*\nodedist, \yshift) [dot, label=below:4] {};
    \node (n4) at (2*\nodedist, 1*\nodedist+\yshift) [dot, label=above:2] {};
    \node (n5) at (3*\nodedist, \yshift) [dot, label=below:5] {};
     \node (n7) at (5*\nodedist, \yshift) [dot, label=below:$n-1$] {};
     \node (n8) at (6*\nodedist, \yshift) [dot, label=below:$n\vphantom{1}$] {};

    \begin{scope}[on background layer] 
        \draw [line] (n1) -- (n2) -- (n3) -- (n5);
        \draw [line] (n3) -- (n4);
        \draw [dashed, thick] (n5) -- (n7);
        \draw [line] (n7) -- (n8);
    \end{scope}
    
    \node at (-1.5*\nodedist,\yshift){(a)};
    \node at (10*\nodedist,\yshift) {\begin{tabular}{cc}$\SO_{n-1,n-1}\subset E_{n}$\\string perturbation limit\end{tabular}};

\def\yshift{3}

    \node (n1) at (0, \yshift) [dot, label=below:1] {};
    \node (n2) at (1*\nodedist, \yshift) [dot, label=below:3] {};
    \node (n3) at (2*\nodedist, \yshift) [dot, label=below:4] {};
    \node (n4) at (2*\nodedist, 1*\nodedist+\yshift) [fdot, label=above:2] {};
    \node (n5) at (3*\nodedist, \yshift) [dot, label=below:5] {};
     \node (n7) at (5*\nodedist, \yshift) [dot, label=below:$n-1$] {};
    \node (n8) at (6*\nodedist, \yshift) [dot, label=below:$n\vphantom{1}$] {};

    \begin{scope}[on background layer] 
        \draw [line] (n1) -- (n2) -- (n3) -- (n5);
        \draw [line] (n3) -- (n4);
        \draw [dashed, thick] (n5) -- (n7);
        \draw [line] (n7) -- (n8);
    \end{scope}

    \node at (-1.5*\nodedist,\yshift){(b)};
    \node at (10*\nodedist,\yshift) {\begin{tabular}{cc}$\SL_n\subset E_{n}$\\M-theory limit\end{tabular}};

\end{tikzpicture}
\caption{\label{fig:limits}The various string theory limits associated with different maximal parabolic subgroups $P_\alpha$. Roots are labeled in the Bourbaki ordering.}
\end{figure}

For the reasons presented above, it is of interest in string theory to have general techniques for explicitly  calculating Fourier coefficients of automorphic forms with respect to arbitrary unipotent subgroups. 

In string theory the abelian and non-abelian Fourier coefficients of the type defined in~\eqref{eq:FCU} typically reveal different types of non-perturbative effects (see for instance \cite{PP,BKNPP,Pe}). The archimedean and non-archimedean parts of the adelic integrals have different interpretations in terms of combinatorial properties of instantons and the instanton action, respectively. For example, in the simplest case of an Eisenstein series on $\SL_2$ the non-archimedean part is a divisor sum $\sigma_k(n)= \sum_{d|n} d^k$ and corresponds to properties of D-instantons~\cite{Green:1997tv,Green:1997tn,Kostov:1998pg,Moore:1998et} (see also \cite{FGKP} for a detailed discussion in the present context).
Theorem~\ref{thm:ntmFullSimple} provides explicit expressions for the Fourier coefficients of the automorphic coupling of the next-to-minimal $\partial^4 \mathcal{R}^4$ higher derivative correction in various limits; see section~\ref{sec:E8ex} for a more detailed discussion in the case of $E_8$.

\begin{remark}
Theorem \ref{thm:ntmNoCusp} resolves a long-standing question in string theory which concerns the possibility of having contributions from cusp forms in the $\mathcal{R}^4$ and $\partial^4\mathcal{R}^4$ amplitudes. The theorem ensures that this can never happen as there are no cusp forms in the minimal or next-to-minimal spectrum. 
\end{remark}

\subsection{Structure of the paper}

In \S \ref{sec:prel} we give the definitions of the notions mentioned above. 

In \S \ref{subsec:Relations} we introduce the results of \cite{Part1} that relate Fourier coefficients corresponding to different Whittaker pairs, in particular Theorem  \ref{thm:IntTrans}, which is  the main tool of the current paper. Two more results from  \cite{Part1} that we recall in \S \ref{subsec:Relations} and heavily use in the rest of the paper are Proposition \ref{prop:Heis} that expresses any automorphic function through  Heisenberg parabolic Fourier coefficients, and a geometric Lemma \ref{lem:SameOrbit3}.

In \S \ref{sec:ABC} we deduce Theorems \ref{thm:min-rep}-\ref{thm:ntm-rep} from \S\ref{subsec:Relations}. 
For  Theorem \ref{thm:min-rep}\eqref{it:min0phi} we show that any Fourier coefficient $\cF_{S,\psi}$ of the constant term equals a Fourier coefficient $\cF_{H,\psi}$ of $\eta$ for some $H$, and thus vanishes unless $\psi$ is minimal or zero. We first deduce from Lemma \ref{lem:SameOrbit3} that any minimal $\varphi\in (\fg^*)^{S_{\alp}}_{-2}$ can be conjugated into $\fg^{\times}_{-\alp}$ using $L_{\alp}\cap \Gamma$ (Corollary \ref{cor:EasyMin}). This, together with Theorem \ref{thm:IntTrans}, implies Theorem \ref{thm:min-rep}\eqref{it:min-min}. Part \eqref{it:min0} of Theorem \ref{thm:min-rep} follows from the definition of minimality and Corollary \ref{cor:domin}, which says that any Fourier coefficient is linearly determined by a neutral Fourier coefficient corresponding to the same orbit.  

To prove Theorem \ref{thm:G0min}, assume first that $\alp:=\beta_n$ is an abelian root. In this case we decompose the form $\eta_\textnormal{min}$ into Fourier series with respect to $U_{\alp}$. Each Fourier coefficient is of the form $\cF_{S_{\alp},\varphi}$. For $\varphi=0$,  the restriction of this coefficient to $L_{\alp}$ is minimal and we use the theorem for $L_{\alp}$ (by induction on rank). For non-zero and non-minimal $\varphi$, $\cF_{S_{\alp},\varphi}$ vanishes by Theorem \ref{thm:min-rep}\eqref{it:min0}. For minimal $\varphi$ the expressions for $\cF_{S_{\alp},\varphi}$ are given by Theorem \ref{thm:min-rep}\eqref{it:min-min}. We group them together using Corollary \ref{cor:EasyMin}. If $\alp$ is a Heisenberg root, we express $\eta_\textnormal{min}$ through parabolic Fourier coefficients $\cF_{S_{\alp},\varphi}$ using Proposition \ref{prop:Heis}. For $\varphi\neq 0$, $\cF_{S_{\alp},\varphi}$ is given by Theorem \ref{thm:min-rep}, and for $\varphi=0$ by induction.

Theorem \ref{thm:ntm-rep}\eqref{it:ntm0phi} is proven similarly to Theorem \ref{thm:min-rep}\eqref{it:min0phi}.
{To prove Theorem \ref{thm:ntm-rep}\eqref{it:Pt2} we restrict $\cF_{S_{\alp},\varphi}[\eta_\textnormal{ntm}]$ to $L_{\alp}$, show that it is a minimal automorphic function and apply Theorem \ref{thm:G0min}. 
Theorem \ref{thm:ntm-rep}\eqref{itm:ntm} and \ref{thm:ntm-rep}\eqref{itm:larger} follow from Theorem \ref{thm:IntTrans} and Corollary  \ref{cor:domin} respectively.
For Theorem \ref{thm:ntm-rep}\eqref{itm:ntm} we also use a geometric lemma saying that any next-to-minimal $\varphi\in (\fg^*)^{S_{\alp}}_{-2}$ can be conjugated into $\fg^{\times}_{-\alp}+\fg^{\times}_{-\beta}$ for some positive root $\beta$ orthogonal to $\alp$ using $L_{\alp}\cap \Gamma$ (Lemma \ref{lem:AppGeo}).}

 In \S \ref{sec:D} we {first} prove Theorem \ref{thm:ntm-rep2} using the same strategy as in the proof of Theorem \ref{thm:G0min}. However, we need two additional geometric propositions {(Propositions \ref{prop:nice} and \ref{prop:Fab})} that describe the action of $L_{\alp}$ on next-to-minimal elements of $(\fg^*)^{S_{\alp}}_{-2}$.
We prove these in \S \ref{subsec:PfPropFab}.  {In \S \ref{subsec:PfEFG} we derive Theorems \ref{thm:ntmFull}, \ref{thm:ntmFullSimple}, and \ref{thm:ntmNoCusp} from Theorems \ref{thm:G0min}, \ref{thm:ntm-rep}, and \ref{thm:ntm-rep2}.}

In \S \ref{sec:examples} we provide examples of Theorems \ref{thm:min-rep}-\ref{thm:ntm-rep2} for groups of type $D_5$ and $E_8$ computing the expansions of automorphic function and Fourier coefficients with respect to different parabolic subgroups of interest in string theory and compare our $E_8$ results to the available literature~\cite{Bossard:2016hgy,GKP, KazhdanPolishchuk}.

\subsection{Acknowledgements} 
We are  {grateful} for helpful discussions with Guillaume Bossard, Ben Brubaker, Daniel Bump, Solomon Friedberg, David Ginzburg, Joseph Hundley, Stephen D. Miller, Manish Patnaik, Boris Pioline and Gordan Savin. 
{We also thank the Banff International Research Station for Mathematical Innovation and Discovery and the Simons Center for Geometry and Physics for their hospitality during different stages of this project.} D.G. was partially supported by ERC StG grant 637912. H.G was supported by the Knut and Alice Wallenberg Foundation. D.P. was supported by the Swedish Research Council (Vetenskapsr\aa det), grant nr. 2018-04760.
S.S. was partially supported by Simons Foundation grant 509766.

\section{{Definitions and preliminaries}}\label{sec:prel}
Let $\K$ be a number field and let $\A=\A_{\K}$ be its ring of adeles. {Fix a non-trivial unitary character $\chi$ of} $\A$, which is trivial on $\K$. Then $\chi$ defines an isomorphism between $\A$ and $\hat{\A}$ via the map $a\mapsto \chi_{a}$, where $\chi_{a}(b)=\chi(ab)$ for all $b\in \A$. This isomorphism restricts to an  isomorphism
\begin{equation}\label{eq:chi_isomorphism}
 \widehat{\A/\K}\cong \{r\in \hat{\A}\, : \,|r|_{\K}\equiv 1\}=\{\chi_{a} : a \in \K\}\cong \K.
\end{equation}

Let ${\bf G}$ be a  reductive group defined over $\K$, ${\bf G}(\mathbb{A})$ the group of adelic points of ${\bf G}$ and $G$ be a finite central extension of ${\bf G}(\A)$. We assume that there exists a section ${\bf G}(\K)\to G$ of the covering $p:G\to {\bf G}(\A)$,  fix such a section and denote its image by $\Gamma$. 
By \cite[Appendix I]{MWCov}, for any unipotent subgroup $\bf U \subset G$, $p$ has a canonical section on $\bf U(\A)$. We will always use this  to identify $\bf U(\A)$ with a subgroup of $G$.
Let $\fg$ denote the Lie algebra of $\bfG(\K) \cong\Gamma$.

For a nilpotent subalgebra $\fv\subset \fg$, we denote by $\Exp(\fv)$ the unipotent subgroup of $\Gamma$ obtained by exponentiation of $\fv$. Similarly, we denote by $V:=\Exp(\fv(\A))$ the unipotent subgroup of $G$ obtained by exponentiation of the adelization $\fv(\A):=\fv\otimes_{\K}\A$.

\begin{definition}
\label{def:pair}
A \emph{Whittaker pair} is an ordered pair $(S,\varphi)\in \fg\times \fg^*$ such that $S$ is a rational semi-simple element (that is, with eigenvalues of the adjoint action $\ad(S)$ in $\Q$), and $\ad^*(S)(\varphi)=-2\varphi$. \end{definition}

{We will say that an element of $\fg^*$ is \emph{nilpotent} if it is given by the Killing form pairing with a nilpotent element of $\fg$. Equivalently, $\varphi\in \fg^*$ is nilpotent if and only if the Zariski closure of its coadjoint orbit includes zero. For example, if $(S,\varphi)$ is a Whittaker pair then $\varphi$ is nilpotent. 

For any rational semi-simple $S\in \fg$ and $i\in \Q$ we set 
\begin{equation}
\fg_{i}^{S}:=\{X\in \fg \, : \, [S,X]=iX\},\quad \fg_{> i}^{S}:=\bigoplus_{j>i\in \Q}\fg_{j}^S, \text{ and } \fg_{\geq i}^S :=\fg_{i}^{S}\oplus\fg_{>i}^S\,.
\end{equation}
We will also use similar notation for $(\fg^*)^S_{i}$.

For any $\varphi\in \fg^*$ we define an anti-symmetric form $\omega_{\varphi}$ of $\fg$ by 
\begin{equation}
\omega_{\varphi}(X,Y)=\varphi([X,Y]).
\end{equation}

Given a Whittaker pair $(S,\varphi)$ on $\g$, we set $\fu:=\fg_{>1}^S \oplus \fg_1^S$ and define 
\begin{equation}\label{=Nsphi}
\fn_{S,\varphi}:=\{ X \in \fu \,:\, \omega_\varphi(X,Y)=0 \,\,\textrm{for all $Y\in \fu$}\}\quad\text{and }N_{S,\varphi}:=\Exp \fn_{S,\varphi}(\A) 
\end{equation} 

By \cite[Lemma 3.2.6]{GGS}, 
\begin{equation}
    \label{eq:N_Sphi}
    \lie n_{S,\varphi} = \lie g^S_{>1} \oplus (\lie g^S_1 \cap \lie g_\varphi),
\end{equation}
where $\lie g_\varphi$ is} the centralizer of $\varphi$ in $\lie g$ under the coadjoint action.
 Note that $\fn_{S,\varphi}$ is an ideal in $\fu$ with abelian quotient, and that $\varphi$ defines a character of $\fn_{S,\varphi}$. Define an automorphic character on $N_{S,\varphi}$ by  $\chi_{\varphi}(\exp X):=\chi(\varphi(X))$.
For a unipotent subgroup $U\subset G$ we denote by $[U]$ the quotient $(U\cap \Gamma)\backslash U$.

{We call a function on $G$ an \emph{automorphic function} if  it is left $\Gamma$-invariant, finite under the right action of the preimage in $G$ of $\prod_{\text{finite }\nu}{\bf G}(\cO_\nu),$ and smooth when restricted to the preimage in $G$ of $\prod_{\text{infinite }\nu}{\bf G}(\K_\nu)$. We denote the space of all automorphic functions by $C^{\infty}(\Gamma\backslash G)$.}
\begin{definition}\label{def:Whit}
  For an automorphic function $\eta$, we define the \emph{Fourier coefficient} of $\eta$ with respect to a Whittaker pair $(S,\varphi)$ to be 
    \begin{equation}\label{eq:Whittaker-Fourier-coefficient}
        \cF_{S,\varphi}[\eta](g):=\intl_{[N_{S,\varphi}]}\eta(ng)\, \chi_{\varphi}(n)^{-1}\, dn.
    \end{equation}
\end{definition}

\begin{definition}
A Whittaker pair $(H,\varphi)$ is called a neutral Whittaker pair if either $(H,\varphi)=(0,0)$, or $H$ can be completed to an $\sl_2$-triple $(e,H,f)$ such that $\varphi$ is the Killing form pairing with $f$. Equivalently, the coadjoint action on $\varphi$ defines an epimorphism $\fg^H_{0}\onto(\fg^*)^H_{-2} $, and also $H$ can be completed  to an $\sl_2$-triple.
For more details on $\sl_2$-triples over arbitrary fields of characteristic zero see \cite[\S 11]{Bou}. 
\end{definition}

{
\begin{definition} 
\label{def:principal}

We call a Whittaker pair $(S,\varphi)$ \emph{standard} if $N_{S,\varphi}$ is the unipotent radical of a Borel subgroup of $G$.  By \cite[Corollary 2.1.5]{Part1}, a nilpotent $\varphi\in \fg^*$ can be completed to a standard Whittaker pair if and only if it is a principal nilpotent element of some $\K$-Levi subgroup of $G$. Here, principal means that the dimension of its centralizer equals the rank of the group. We call such $\varphi$ \emph{PL-nilpotent}, and their orbits \emph{PL-orbits}. For a standard pair $(S,\varphi)$, we call the Fourier coefficient $\cF_{S,\varphi}$ a \emph{Whittaker} coefficient {and denote it $\cW_{S,\varphi}$, or $\cW_{\varphi}$ if $S$ is defines the fixed Borel subgroup, see \eqref{=cW}}.
\end{definition}
}
\begin{remark}
\begin{enumerate}[(i)]
\item In \cite[\S 6]{GGS} the integral~\eqref{eq:Whittaker-Fourier-coefficient} above is called a Whittaker--Fourier coefficient, but in this paper we call it Fourier coefficient for short.
The  Whittaker coefficients are called in \cite[\S 6]{GGS} principal degenerate Whittaker--Fourier coefficients. {The notation $\cW_{S,\varphi}$ and $\cW_\varphi$ is used in \cite{GGS,GGS:support} to denote something quite different.}
\item Note that for $G=\GL_n$ all orbits $\mathcal{O}$ are PL-orbits. In general this is, however, not the case, see \cite[Appendix~A]{Part1}.
{
\item We refer the readers interested in the definitions of principal nilpotents, PL-nilpotents, and standard pairs for non-quasi-split groups to \cite[\S 2.1]{Part1}.}
\end{enumerate}
\end{remark}
\begin{defn}\label{def:WO}
For an automorphic function $\eta$, we define $\mathrm{WO}(\eta)$ to be the set of nilpotent orbits $\cO$ under $\Gamma$ in $\fg^*$ such that $\cF_{h,\varphi}[\eta]\neq 0$ for some neutral Whittaker pair $(h,\varphi)$ with $\varphi\in \cO$. We define the \emph{Whittaker support} $\WS(\eta)$ to be the set of maximal elements in $\mathrm{WO}(\eta)$.
\end{defn}

The following well known lemma relates these notions to the local notion of wave-front set. For a survey on this notion, and its relation to degenerate Whittaker models we refer the reader to \cite[\S 4]{GS_survey}.
\begin{lem}\label{lem:GlobLoc}
Suppose that $\eta$ is an automorphic form in the classical sense, and that it generates an irreducible representation $\pi$ of $G$. Let $\pi=\bigotimes_{\nu}\pi_{\nu}$ be the decomposition of $\pi$ to local factors. Let $\cO\in \mathrm{WO}(\eta)$. Then, for any $\nu$, there exists an orbit $\cO'_{\nu}$ in the wave-front set of $\pi_{\nu}$ such that $\cO$ lies in the Zariski closure of $\cO'_{\nu}$. Moreover, if $\nu$ is non-archimedean, then $\cO$ lies in the closure of $\cO'_{\nu}$ in the topology of $\fg^*(\K_{\nu})$.
\end{lem}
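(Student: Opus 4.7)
The plan is to reduce the global assertion to local assertions, one for each place $\nu$, via the tensor factorization of $\pi$, and then to invoke existing results that relate degenerate Whittaker models to the wave-front set at each place.

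First I would extract from the hypothesis $\cO\in\WO(\eta)$ a non-zero continuous equivariant functional on the abstract representation $\pi$. By Definition~\ref{def:WO}, there is a neutral Whittaker pair $(h,\varphi)$ with $\varphi\in\cO$ such that $\cF_{h,\varphi}[\eta]\not\equiv 0$. Consequently, the map $\xi\mapsto \cF_{h,\varphi}[\xi](e)$ defines a non-zero linear functional $\ell$ on $\pi$ which is $(N_{h,\varphi}(\A),\chi_\varphi)$-equivariant, i.e.\ $\ell(n\cdot \xi)=\chi_\varphi(n)\ell(\xi)$ for all $n\in N_{h,\varphi}(\A)$.

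Next I would factorize locally. Since $\pi$ is irreducible and automorphic, we have a restricted tensor product decomposition $\pi\cong\widehat{\bigotimes}'_\nu \pi_\nu$, under which the unitary character $\chi_\varphi$ corresponds to the product $\prod_\nu \chi_{\varphi,\nu}$ and the adelic unipotent subgroup $N_{h,\varphi}(\A)$ decomposes as the restricted product $\prod_\nu' N_{h,\varphi}(\K_\nu)$. A standard argument (elementary at finite places, and using the smoothness and admissibility of the archimedean factors at the infinite ones) then shows that the non-vanishing of $\ell$ forces, for \emph{every} place $\nu$, the existence of a non-zero $(N_{h,\varphi}(\K_\nu),\chi_{\varphi,\nu})$-equivariant functional on $\pi_\nu$.

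Finally I would invoke the local results relating degenerate Whittaker models to the wave-front set. At a non-archimedean place, the theorem of Moeglin--Waldspurger asserts that if $\pi_\nu$ admits a non-zero $(N_{h,\varphi},\chi_\varphi)$-equivariant functional for a neutral pair $(h,\varphi)$, then there is a maximal orbit $\cO'_\nu$ in the wave-front set $\WF(\pi_\nu)$ such that $\varphi$ lies in the closure of $\cO'_\nu$ in the $p$-adic topology of $\fg^*(\K_\nu)$; this in particular contains it in the Zariski closure. At archimedean places, the analogous statement (Zariski closure version only) follows from Matumoto's theorem, as surveyed in \cite[\S 4]{GS_survey}. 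Combining these local statements across all $\nu$ yields exactly the conclusion of the lemma.

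The main obstacle is the delicate tensor-factorization step at the archimedean places: $(N_{h,\varphi},\chi_\varphi)$-equivariant functionals on the smooth Fréchet representations $\pi_\nu$ must be extracted from the global functional in a way compatible with the restricted tensor product, which requires a density/continuity argument rather than a purely algebraic one. Once this is in place, the appeals to Moeglin--Waldspurger and Matumoto are essentially black-box citations.
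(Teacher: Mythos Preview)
Your proposal is correct and follows essentially the same route as the paper's proof: extract a global $(N_{h,\varphi},\chi_\varphi)$-equivariant functional from the non-vanishing of $\cF_{h,\varphi}[\eta]$, pass to local functionals via the tensor factorization, and then cite M\oe{}glin--Waldspurger and Matumoto.

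The one point worth flagging is precisely the step you call the ``main obstacle.'' The paper does not attempt any density/continuity argument to factor a general functional through the restricted tensor product. Instead it first writes $\eta$ as a (finite) sum of pure tensors; since $\cF_{h,\varphi}$ is linear, some pure tensor summand $\eta'=\bigotimes'_\mu v_\mu$ still has $\cF_{h,\varphi}[\eta'](1)\neq 0$ (after translating by $G$ so that the value at $1$ is non-zero). Then for a fixed place $\nu$ one simply freezes the components $v_\mu$ for $\mu\neq\nu$ and lets the $\nu$-component vary: $v\mapsto \cF_{h,\varphi}\big[v\otimes\bigotimes'_{\mu\neq\nu}v_\mu\big](1)$. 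This is manifestly a non-zero $(N_{h,\varphi}(\K_\nu),\chi_{\varphi,\nu})$-equivariant functional on $\pi_\nu$, with no topological subtleties at archimedean places. This elementary device is exactly what dissolves the obstacle you identified. The paper also cites \cite{Var} (residual characteristic~$2$) alongside \cite{MW}, and \cite{Ros} alongside \cite{Mat} in the archimedean case, which you may wish to include for completeness.
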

\begin{proof}
Acting by $G$ on the argument of $\eta$ we can assume that there exists a neutral pair $(h,\varphi)$ with $\varphi\in \cO$ such that $\cF_{h,\varphi}[\eta](1)\neq 0$. Moreover, decomposing $\eta$ to a sum of pure tensors, and replacing $\eta$ by one of the summands, we can assume that $\eta$ is a pure tensor and  $\cF_{h,\varphi}[\eta](1)\neq 0$ still holds. 
Let $\eta= \bigotimes'_{\mu}v_{\mu}$ be the decomposition of $\eta$ to local factors. Consider the functional $\xi$ on $\pi_{\nu}$ given by $\xi(v):=\cF_{h,\varphi}(v_{{\nu}}\otimes (\bigotimes'_{\mu\neq \nu}v_{\mu}))(1)$. Substituting the vector $v_{\nu}$ we see that this functional is non-zero. It is easy to see that this $\xi$ is $(\exp({\fn}_{h,\varphi}(\K_{\nu})),\chi_{\varphi})$-equivariant. The theorem follows now from \cite[Proposition I.11]{MW} and \cite{Var} for non-Archimedean $\nu$, and from \cite[Theorem D]{Ros} and \cite{Mat} for archimedean $\nu$.
\end{proof}

{For convenience, we fix a complex embedding  $\sigma:\K\into\C$. This embedding will allow us to speak about the complex nilpotent orbit corresponding to an orbit $\cO$ of $\Gamma$ in $\fg$.} {One can show using \cite{Dok} that the complex orbit corresponding to $\cO$ does not depend on $\sigma$, although we shall not use this fact. None of our statements depends on the choice of complex embedding $\sigma$.}

\subsection{Minimal and next-to-minimal representations}\label{subsec:small}

{
We call a non-zero complex orbit in $\fg^*(\C)$ \emph{minimal} if its Zariski closure  $\overline{\cO}$ is a disjoint union of $\cO$ and the zero orbit.  We call a complex orbit $\cO$ \emph{next-to-minimal} if $\cO$ does not intersect any component of $\fg$ of type $A_2$, and  
 $\overline{\cO}$ is a disjoint union of $\cO$,  minimal orbits, and the zero orbit.

\begin{lemma}\label{lem:BCsmall}
Let $\fg$ be simple and let $\cO\subset \fg^*(\C)$ be a complex nilpotent orbit. 
Then  $\cO$ is minimal if and only if it has Bala-Carter label $A_1$ and next-to-minimal if and only if it has Bala-Carter label $A_1\times A_1$.
\end{lemma}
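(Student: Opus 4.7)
The plan is to reduce the lemma to a case-by-case inspection of the Hasse diagram of complex nilpotent orbits in each simply-laced simple Lie algebra, using the Bala--Carter classification. Since for simple $\fg$ the set of complex nilpotent orbits and its closure order are determined by the type of $\fg$, and Bala--Carter labels parameterize these orbits uniformly, both directions of the equivalence amount to verifying that the bottom of the Hasse diagram has the claimed shape.

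First I would handle the minimal case. The minimal orbit in $\fg^*(\C)$ is the orbit corresponding (via the Killing form) to a highest-root vector $e_\theta$; the associated standard $\sl_2$-triple $(e_\theta, h_\theta, f_\theta)$ shows this orbit is Richardson-induced from the regular nilpotent in a Levi with derived algebra of type $A_1$, so its Bala--Carter label is $A_1$. Conversely, the label $A_1$ is attached, by definition, to the unique orbit obtained as the saturation of the principal nilpotent of an $A_1$-Levi, and since all such Levis are conjugate this orbit is uniquely determined; a short computation (or invocation of the standard description of the closure) shows its closure is $\cO \cup \{0\}$. This proves the first equivalence.

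For the next-to-minimal case, I would argue as follows. By definition the label $A_1 \times A_1$ corresponds to an orbit obtained from a Levi $L$ with $[L,L]$ of type $A_1 \times A_1$; in the simply-laced types this means two orthogonal (long) roots, so a representative is $e_{\beta_1} + e_{\beta_2}$ for orthogonal roots $\beta_1, \beta_2$. One then verifies, using the standard tables (e.g.\ Collingwood--McGovern, Chapters~5 and~8, or the partition description in type $A$/$D$), that in each simply-laced type distinct from $A_2$ such an orbit exists and its closure is exactly $\cO \cup \cO_{\min} \cup \{0\}$. Conversely, scanning the Hasse diagrams one sees that the orbit(s) covering the minimal orbit carry the label $A_1 \times A_1$ in every simply-laced type $\neq A_2$ (with a unique such orbit in type $A$ and type $E$, and two orbits in type $D$, as already noted in the introduction). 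The exclusion of $A_2$ components in the definition of next-to-minimal precisely removes the one case where the orbit covering $A_1$ is the regular orbit with label $A_2$ rather than $A_1 \times A_1$.

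The main obstacle is not conceptual but bookkeeping: one must verify the shape of the bottom of the Hasse diagram for each of $A_n$ $(n\geq 3)$, $D_n$ $(n\geq 4)$, $E_6$, $E_7$, $E_8$, including the special triality-related phenomenon in $D_4$ and the fact that $D_n$ for $n \geq 5$ has two covers of the minimal orbit. I would dispatch types $A$ and $D$ directly via the partition-type parameterization and cite the standard Hasse diagrams (e.g.\ \cite{Collingwood-McGovern}) for types $E_6, E_7, E_8$; the key uniform observation making the case analysis short is that for any two orthogonal roots the sum of root vectors lies in a single orbit whose closure meets only the minimal and zero orbits, so the label $A_1 \times A_1$ is well defined and picks out precisely the next-to-minimal stratum.
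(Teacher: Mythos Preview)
Your proposal is correct and takes essentially the same approach as the paper: the paper's proof is simply the one-line sentence ``Follows from the Hasse diagrams for the closure order on nilpotent orbits,'' and your argument is a more detailed unpacking of exactly that case check via the Bala--Carter classification.
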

\begin{proof}
Follows  from the Hasse diagrams for the closure order on nilpotent orbits.   
\end{proof}
\begin{remark}
\begin{enumerate}[(i)]
\item Lemma \ref{lem:BCsmall} only holds for simply-laced Lie algebras. Indeed, already for $C_2$ the minimal orbit is represented by the long root, and the next-to-minimal by the short root. Both roots of course lie in Levi subalgebras of type $A_1$. 
\item We exclude the regular orbit of $A_2$ because it does not behave like a next-to-minimal orbit. This behaviour is manifested by Lemma \ref{lem:BCsmall}.
\end{enumerate}
\end{remark}

\begin{lem}\label{lem:MinProd}
Let $\fg=\bigoplus_{i=1}^k \fg_i$, with  $\lie g_i$ simple. {Then} the minimal orbits of $\fg^*(\C)$ are of the form $\bigtimes_{i=1}^j\{0\}\times \cO\times \bigtimes_{i=j+1}^k\{0\}$, with $\cO$ a minimal orbit. The next-to-minimal orbits of $\fg^*(\C)$ are either of the same form  with $\cO$ next-to-minimal, or of the form $\bigtimes_{i=1}^{j-1}\{0\}\times \cO\times \bigtimes_{i=j+1}^{l-1}\{0\}\times \cO'\times \bigtimes_{i=l+1}^k\{0\},$
where $\cO$ and $\cO'$ are minimal orbits in $\fg_j$ and $\fg_l$ respectively.
\end{lem}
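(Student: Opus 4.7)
The plan is to leverage the product structure of the complex group $\bfG(\C) = \prod_{i=1}^k \bfG_i(\C)$ acting componentwise on $\fg^*(\C) = \bigoplus_{i=1}^k \fg_i^*(\C)$. First, I would observe that every complex nilpotent orbit $\cO$ in $\fg^*(\C)$ splits as a product $\cO = \cO_1 \times \cdots \times \cO_k$ with $\cO_i$ a nilpotent orbit in $\fg_i^*(\C)$, and that the Zariski closure respects this decomposition, $\overline{\cO} = \overline{\cO_1} \times \cdots \times \overline{\cO_k}$, so the orbits contained in $\overline{\cO}$ are precisely the products $\cO_1' \times \cdots \times \cO_k'$ with $\cO_i' \in \overline{\cO_i}$.

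For the minimal case, suppose $\cO$ is minimal, so $\overline{\cO} = \cO \sqcup \{0\}$. Let $I := \{i : \cO_i \neq 0\}$. If two distinct indices $i,j$ lie in $I$, then replacing $\cO_i$ by $\{0\} \in \overline{\cO_i}$ produces an orbit in $\overline{\cO}$ that is neither $\cO$ (it differs in the $i$-th factor) nor zero (its $j$-th factor is nonzero), a contradiction. Hence $|I| = 1$, and the unique nonzero factor $\cO_j$ satisfies $\overline{\cO_j} = \cO_j \sqcup \{0\}$, so it is minimal in $\fg_j$. The converse is immediate from the closure formula.

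For the next-to-minimal case, the closure condition reads $\overline{\cO} \subseteq \{\cO\} \cup (\text{minimal orbits of }\fg) \cup \{0\}$, together with the $A_2$ exclusion on $\cO$. I would split on $|I|$: if $|I| \geq 3$, then zeroing one factor produces an orbit in $\overline{\cO}$ with at least two nonzero factors, which by the minimal case analysis cannot be minimal and is plainly neither $\cO$ nor zero, a contradiction; if $|I| = 2$ with $I = \{j_1, j_2\}$, then zeroing either factor must yield a minimal orbit of $\fg$, which forces each $\cO_{j_l}$ to be minimal in $\fg_{j_l}$; if $|I| = 1$ with $I = \{j\}$, then $\overline{\cO_j} \subseteq \{\cO_j\} \cup (\text{minimal in }\fg_j) \cup \{0\}$, and the $A_2$ clause restricts to $\cO_j$ not being the regular orbit of an $A_2$ factor, so $\cO_j$ is next-to-minimal in $\fg_j$ by Lemma \ref{lem:BCsmall}.

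For the converse I would verify that an orbit of each listed form satisfies the definition by expanding the closure product: a single next-to-minimal factor produces a closure of the required shape directly, while a two-factor product $\cO_{j_1} \times \cO_{j_2}$ of minimals has closure $(\cO_{j_1} \sqcup \{0\}) \times (\cO_{j_2} \sqcup \{0\})$ consisting of $\cO$, the two minimal orbits of $\fg$ obtained by zeroing either factor, and $\{0\}$; moreover, in the two-factor case $\cO$ is not supported in a single simple component, so the $A_2$ exclusion is automatic. The only mildly delicate point, which I expect to be the main bookkeeping obstacle rather than a real difficulty, is translating the $A_2$ exclusion clause for $\cO$ in $\fg$ into the corresponding clause for the simple factors via Lemma \ref{lem:BCsmall}.
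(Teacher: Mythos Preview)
Your approach is correct and is essentially the same as the paper's: the paper's entire proof is the single observation that for $\fg=\fg_1\times\fg_2$ and $\cO=\cO_1\times\cO_2$ one has $\overline{\cO}=\overline{\cO_1}\times\overline{\cO_2}$, leaving the elementary case analysis you spell out to the reader.

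One small correction to your converse discussion (which, incidentally, is not asserted by the lemma as stated): in the two-factor case the $A_2$ exclusion is \emph{not} automatic. If $\fg_{j}$ is of type $A_2$ and $\cO_j$ is its minimal orbit, then $\cO_j\times\cO_l$ does intersect the $A_2$ component $\fg_j$ and so fails the definition of next-to-minimal. Thus not every product of two minimal orbits is next-to-minimal; one needs both supporting factors to avoid type $A_2$. Since the lemma is used only in the forward direction, this does not affect your argument.
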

\begin{proof}
If $\fg = \fg_1\times \fg_2$ and $\cO=\cO_1\times \cO_2$ then $\overline{\cO} =\overline{\cO}_1\times  \overline{\cO}_2$.
\end{proof}

}

We call a (rational) element of $\fg^*$ or a rational orbit in $\fg^*$ minimal/next-to-minimal if its complex orbit is minimal/next-to-minimal.

We say that an automorphic function $\eta$ is \emph{minimal} if $\WS(\eta)$ consists of minimal orbits. By \cite[Theorem C]{GGS} (or by  
Proposition \ref{prop:domin} below), this implies that $\cF_{H,\varphi}[\eta]=0$ for any Whittaker pair $(H,\varphi)$ with $\varphi$ non-zero and non-minimal. We call an automorphic function $\eta$ \emph{trivial} if $\WS(\eta)=\{0\}$.
By \cite[Corollary 8.2.2]{GGS}, the semi-simple part of $G$ acts on any trivial automorphic function by $\pm\Id$.
We call a  representation of $G$ in automorphic functions \emph{minimal} if all the functions in this representation are minimal or trivial.

We say that an automorphic function $\eta$ is \emph{next-to-minimal} if $\WS(\eta)$
consists of next-to-minimal orbits.
 Again, by \cite[Theorem C]{GGS} (or by  
Proposition \ref{prop:domin} below), this implies that $\cF_{H,\varphi}[\eta]=0$ for any Whittaker pair $(H,\varphi)$ with $\varphi$ higher than next-to-minimal. 
We call a representation $\pi$ of $G$ in automorphic functions \emph{next-to-minimal} if it includes a next-to-minimal function, and all the functions in this representation are next-to-minimal, minimal or trivial. By Lemma \ref{lem:GlobLoc}, if $\pi$ consists of automorphic forms in the classical sense, is non-trivial, irreducible and has a minimal local factor then it is minimal. Similarly, if it has a next-to-minimal local factor then it is minimal or next-to-minimal.

\subsection{Relating different Whittaker pairs}\label{subsec:Relations}

\begin{lemma}[{\cite[Lemma 3.3.1]{Part1}}]
    \label{lem:conjugation-translation}
    Let $(S, \varphi)$ be a Whittaker pair, $\eta$ an automorphic function and $\gamma \in \Gamma$. Then,
    \begin{equation}
        \label{eq:conjugation}
        \cF_{S, \varphi}[\eta](g) = \cF_{\Ad(\gamma) S, \Ad^*(\gamma)\varphi}[\eta](\gamma g) \, .
    \end{equation}
\end{lemma}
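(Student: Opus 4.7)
The plan is to verify this by a direct change of variables in the defining integral. First, I would observe that $N_{S,\varphi}$ was defined in terms of $S$ and $\varphi$ via the adjoint data $\fg^S_{>1}\oplus (\fg^S_1\cap \fg_\varphi)$, so from the functoriality of $\Exp$ and the $\Ad$-equivariance of the centralizer one has
\begin{equation}
N_{\Ad(\gamma)S,\,\Ad^*(\gamma)\varphi} \;=\; \gamma\, N_{S,\varphi}\, \gamma^{-1},
\end{equation}
with the character satisfying $\chi_{\Ad^*(\gamma)\varphi}(\gamma n \gamma^{-1}) = \chi_\varphi(n)$ for $n\in N_{S,\varphi}$.

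Next, I would unfold the right-hand side of the claimed identity. Writing out
\begin{equation}
\cF_{\Ad(\gamma)S,\,\Ad^*(\gamma)\varphi}[\eta](\gamma g)
= \intl_{[\gamma N_{S,\varphi}\gamma^{-1}]} \eta(n'\gamma g)\,\chi_{\Ad^*(\gamma)\varphi}(n')^{-1}\,dn',
\end{equation}
I would substitute $n' = \gamma n \gamma^{-1}$. Since $\gamma \in \Gamma$ is rational and conjugation is a measure-preserving automorphism of the unipotent group sending $[N_{S,\varphi}]$ onto $[\gamma N_{S,\varphi}\gamma^{-1}]$, the measure transforms cleanly and the character identity above turns the factor into $\chi_\varphi(n)^{-1}$. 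The integrand then reads $\eta(\gamma n g)$.

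Finally, by left $\Gamma$-invariance of the automorphic function $\eta$, we have $\eta(\gamma n g) = \eta(n g)$, so the integral collapses to $\cF_{S,\varphi}[\eta](g)$, giving the desired equality. There is no genuine obstacle here; the only care needed is checking that the three ingredients (conjugation of $N_{S,\varphi}$, compatibility of the characters under conjugation, and the measure-preserving property) all hold as stated, which follows immediately from the definitions in \eqref{=Nsphi}, from $\chi_\varphi(\exp X) = \chi(\varphi(X))$, and from $\gamma$ being rational so that $[\gamma N_{S,\varphi}\gamma^{-1}] = \gamma [N_{S,\varphi}]\gamma^{-1}$.
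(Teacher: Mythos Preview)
Your proof is correct. The paper itself does not give a proof of this lemma but merely cites it from \cite[Lemma 3.3.1]{Part1}; your direct change-of-variables argument is exactly the standard verification one expects, and all three ingredients you identify (conjugation of $N_{S,\varphi}$, compatibility of characters, and the measure) behave as you claim.
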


\begin{defn}\label{def:dominate}
    Let $(H,\varphi)$ and $(S,\varphi)$ be Whittaker pairs with the same $\varphi$. We will say that $(H,\varphi)$ \emph{dominates} $(S,\varphi)$ if $H$ and $S$ commute and 
\begin{equation}\label{=domin}
        \fg_{\varphi}\cap \fg^H_{\geq 1}\subseteq \fg^{S-H}_{\geq0}\,.
    \end{equation}
\end{defn}

The following lemma provides two fundamental special cases of domination.

\begin{lemma}{\cite[Corollary 3.2.2 and Proposition 3.2.3]{Part1}}
\label{lem:domin} Let $(S,\varphi)$ be a 
 Whittaker pair. Then 
 \begin{enumerate}[(i)]
 \item $(S,\varphi)$ is dominated by a neutral Whittaker pair.
 \item If $\varphi$ is a PL-nilpotent then $(S,\varphi)$ dominates a standard Whittaker pair.
\end{enumerate}
\end{lemma}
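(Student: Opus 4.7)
The plan is to treat the two parts separately, since they involve different constructions but share the common strategy of finding a second semisimple element that commutes with $S$ and satisfies the desired structural condition.

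For part (i), let $f = f_\varphi \in \fg^S_{-2}$ denote the nilpotent corresponding to $\varphi$ via the Killing form. The key step is to produce an $\sl_2$-triple $(e, h, f)$ whose neutral element $h$ commutes with $S$. I would obtain this by a graded form of Jacobson--Morozov: beginning with any completion of $f$ to an $\sl_2$-triple, one uses that $f$ is homogeneous of $S$-weight $-2$ to adjust $e$ and $h$ within the orbit of triples through $f$ until $h$ falls into $\fg^S_0$. The resulting pair $(h, \varphi)$ is neutral by construction, and dominance is then automatic: under the Killing identification $\fg_\varphi$ becomes $\ker \operatorname{ad}(f)$, which by elementary $\sl_2$-representation theory consists of lowest-weight vectors and hence has $h$-weights $\leq 0$. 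Therefore $\fg_\varphi \cap \fg^h_{\geq 1} = 0$ and the inclusion $\fg_\varphi \cap \fg^h_{\geq 1} \subseteq \fg^{S-h}_{\geq 0}$ holds vacuously.

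For part (ii), suppose $\varphi$ is PL-nilpotent, so $\varphi$ is principal in some $\K$-Levi $\fl \subseteq \fg$. The goal is to construct a standard pair $(S', \varphi)$ whose $S'$ commutes with $S$. First, I would apply part (i) to obtain a neutral $h$ commuting with $S$, so that an $\sl_2$-triple through $\varphi$ lies in the reductive centralizer $\fg^S$; this lets me arrange for $\fl$ to sit inside $\fg^S$. Next, I would choose a Borel $\fb$ of $\fg$ together with a Cartan $\fh \subseteq \fb \cap \fg^S$, and take $S' \in \fh$ to be the element assigning weight $2$ to every simple root of $\fl$ and appropriate non-negative weights to the remaining simple roots of $\fg$, so that $N_{S', \varphi}$ equals the unipotent radical of $\fb$. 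By construction $S' \in \fg^S_0$ commutes with $S$, and $(S', \varphi)$ is standard.

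The remaining step is to verify dominance, namely $\fg_\varphi \cap \fg^S_{\geq 1} \subseteq \fg^{S' - S}_{\geq 0}$. This reduces to a comparison of the $S$- and $S'$-weights on $\fg_\varphi$, both of which are controlled by the principal $\sl_2$-structure on $\fl$, and the required inequality can be checked root-by-root. The main obstacle throughout is simultaneously controlling commutation with $S$ and the structural condition on $S'$ (neutrality in part (i), standardness in part (ii)); this is resolved by first passing to the reductive subalgebra $\fg^S$ via the graded Jacobson--Morozov theorem, and then working inside $\fg^S$, where the structure of $\fl$ and the Borel can be chosen independently of $S$.
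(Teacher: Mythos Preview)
The paper does not give its own proof of this lemma; it merely cites \cite[Corollary 3.2.2 and Proposition 3.2.3]{Part1}. So I can only evaluate your proposal on its own merits.

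Your treatment of part (i) is correct and is the standard argument: graded Jacobson--Morozov produces $h\in\fg^S_0$, and since $\fg_\varphi=\fg^f\subseteq\fg^h_{\leq 0}$ by $\sl_2$-theory, the dominance inclusion is vacuous.

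Part (ii), however, contains a genuine error. You write that the $\sl_2$-triple through $\varphi$ ``lies in the reductive centralizer $\fg^S$'' and that this ``lets [you] arrange for $\fl$ to sit inside $\fg^S$.'' This is false: graded Jacobson--Morozov gives $h\in\fg^S_0$, but $f\in\fg^S_{-2}$ and $e\in\fg^S_{2}$, so the triple is $S$-\emph{graded}, not contained in the centralizer $\fg^S_0$. Since $f\in\fl$, the Levi $\fl$ can never lie inside $\fg^S_0$ either; at best it can be chosen $\ad(S)$-stable. Your subsequent construction of $S'$ then presupposes that $\fl$ is a standard Levi with respect to a Cartan $\fh\subseteq\fg^S_0$ and a Borel $\fb$, with $f$ equal to the sum of negative simple root vectors of $\fl$ --- but none of this is arranged, and it does not follow from what you have.

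Even granting a correct construction of $S'$, the dominance check $\fg_\varphi\cap\fg^S_{\geq 1}\subseteq\fg^{S'-S}_{\geq 0}$ is the actual content of the statement and cannot be dismissed as ``checked root-by-root''; it requires a specific relationship between $S$, $h$, and the element defining $\fl$. The usual route (as in \cite{Part1}) is to set $Z:=S-h$, observe that $Z$ commutes with $h$ and centralizes $\varphi$, and then build $S'$ by deforming along a suitable regular direction inside $\fg^h_0\cap\fg_\varphi$; the dominance inequality then follows from the way $Z$ interacts with this deformation. Your sketch does not supply this mechanism.
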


The importance of the domination relation is due to the next three statements.
{
\begin{prop}[{\cite[Proposition 4.0.1]{Part1}}]\label{prop:domin}
Let $(H,\varphi)$ and $(S,\varphi)$ be Whittaker pairs such that $(H,\varphi)$ dominates $(S,\varphi)$, and let $\eta$ be an automorphic function with $\cF_{H,\varphi}[\eta]=0$. Then $\cF_{S,\varphi}[\eta]=0$.
\end{prop}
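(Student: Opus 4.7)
My plan is to express $\cF_{S,\varphi}[\eta]$ as an iterated integral of $\cF_{H,\varphi}[\eta]$ by interpolating the two Whittaker pairs through a continuous family, so that vanishing of the latter forces vanishing of the former. Concretely, I would work with the one-parameter family $H_t := (1-t)H + tS$ for rational $t\in [0,1]$. Since $H$ and $S$ commute and both satisfy $\ad^*(\cdot)\varphi = -2\varphi$, every $(H_t,\varphi)$ is again a Whittaker pair. The grading of $\fg$ by $\ad(H_t)$ is piecewise constant in $t$, so there are finitely many rational breakpoints $0 = t_0 < t_1 < \cdots < t_k = 1$ at which $\lie n_{H_t,\varphi}$ jumps. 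Decomposing $\fg_\varphi$ into common $(H,S)$-weight spaces $\fg_\varphi^{a,b}$, a short check shows that \eqref{=domin} forces $b\geq a$ on any nonzero $\fg_\varphi^{a,b}$ with $a\geq 1$, and hence propagates to every consecutive pair $(H_{t_i},\varphi), (H_{t_{i+1}},\varphi)$ in the chain. The proposition then reduces by induction on $i$ to the one-step implication $\cF_{H_{t_i},\varphi}[\eta]=0 \Longrightarrow \cF_{H_{t_{i+1}},\varphi}[\eta]=0$.

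To handle a single breakpoint $t_i$, I would use the explicit formula $\lie n_{H_t,\varphi} = \fg^{H_t}_{>1}\oplus(\fg^{H_t}_1\cap \fg_\varphi)$ from \eqref{eq:N_Sphi} to identify the root spaces whose $H_t$-weight crosses $1$ at $t_i$. These sit inside a common enclosing unipotent subgroup that carries the alternating form $\omega_\varphi$. Specialised to the crossing root spaces, \eqref{=domin} should say exactly that the ``lost'' and ``gained'' subalgebras form a pair of complementary Lagrangians for $\omega_\varphi$ modulo its radical, together with the requisite matching of the $\cap\,\fg_\varphi$ piece at weight~$1$. A Fourier-inversion / root-exchange argument on the common enclosing group then yields a relation of the form
\[
\cF_{H_{t_{i+1}},\varphi}[\eta](g) \;=\; \intl_{[A_i]} \cF_{H_{t_i},\varphi}[\eta](a g)\, \chi_\varphi(a)^{-1}\, da,
\]
with $[A_i]$ the compact quotient attached to the exchanged root spaces. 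Vanishing of the inner Fourier coefficient forces vanishing of the outer integral, closing the induction.

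The main obstacle I expect is verifying at each breakpoint that the crossing root spaces really do assemble into a Heisenberg-type configuration: the lost and gained subspaces must pair perfectly under $\omega_\varphi$ with no leftover contribution from the radical $\fg_\varphi$ outside the intended Lagrangian, and the centralizer condition $\cap\,\fg_\varphi$ at weight $1$ must swap cleanly with the new weight ordering. This bookkeeping is especially delicate when several common weight vectors cross weight~$1$ simultaneously, or when a weight-$1$ vector moves above $1$ so that the ``$\cap\,\fg_\varphi$'' qualifier drops away on one side. I anticipate that the inclusion \eqref{=domin} is precisely the minimal hypothesis that makes every such local jump amount to a standard Heisenberg Fourier-inversion, and that assembling these elementary steps into a global argument is the technical heart of the proof.
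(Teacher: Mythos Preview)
This proposition is not proved in the present paper; it is quoted from the companion paper \cite[Proposition 4.0.1]{Part1}, so there is no in-paper proof to compare against. That said, your outline is precisely the deformation argument used in \cite{Part1} (and earlier in \cite[\S6]{GGS}): interpolate via $H_t=(1-t)H+tS$, reduce to finitely many critical values where some $\ad(H_t)$-weight crosses $1$, and at each crossing relate the two Fourier coefficients by a Lagrangian/root-exchange step inside the common enclosing unipotent. Your verification that the domination condition \eqref{=domin} propagates along the chain is correct (for a weight-$(a,b)$ vector in $\fg_\varphi$, either $a\geq 1$ and then $b\geq 0$ by hypothesis, or $a<1$ and then $a+t_ib\geq 1$ forces $b>0$), and you have identified the genuine technical core: checking that at each critical $t$ the ``lost'' and ``gained'' pieces pair isotropically under $\omega_\varphi$ so that the passage is a clean further integration over a compact quotient. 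One small correction to your displayed relation: the extra integration variables lie in the kernel of $\chi_\varphi$ at each step (this is part of what the Lagrangian condition buys), so the twist $\chi_\varphi(a)^{-1}$ is in fact trivial on $[A_i]$; the formula is simply $\cF_{H_{t_{i+1}},\varphi}[\eta](g)=\int_{[A_i]}\cF_{H_{t_i},\varphi}[\eta](ag)\,da$.
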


\begin{corollary}
    \label{cor:domin}
Let $\eta$ be an automorphic function and let $(S,\varphi)$ be Whittaker pair, with $\Gamma\varphi \notin \mathrm{WO}(\eta)$. Then $\cF_{H,\varphi}[\eta]=0$.
\end{corollary}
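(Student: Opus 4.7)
The plan is to derive this corollary as a direct combination of the two preceding results: Lemma \ref{lem:domin}(i) (existence of a dominating neutral pair) and Proposition \ref{prop:domin} (vanishing propagates under domination). The result should read ``$\cF_{S,\varphi}[\eta]=0$'' (the $H$ in the statement appears to be a typo, since $H$ is not introduced in the hypothesis).

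First I would apply Lemma \ref{lem:domin}(i) to the Whittaker pair $(S,\varphi)$ to produce a neutral Whittaker pair $(H,\varphi)$ (with the same $\varphi$) that dominates $(S,\varphi)$. This gives me a neutral witness associated to the orbit $\Gamma\varphi$.

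Next I would unpack the hypothesis $\Gamma\varphi \notin \mathrm{WO}(\eta)$. By Definition \ref{def:WO}, $\mathrm{WO}(\eta)$ consists of those orbits $\cO$ for which there exists \emph{some} neutral pair $(h,\varphi')$ with $\varphi'\in\cO$ and $\cF_{h,\varphi'}[\eta]\neq 0$. Contrapositively, if $\Gamma\varphi\notin\mathrm{WO}(\eta)$, then $\cF_{h,\varphi'}[\eta]\equiv 0$ for \emph{every} neutral pair $(h,\varphi')$ whose nilpotent component lies in $\Gamma\varphi$. Applying this to the particular neutral pair $(H,\varphi)$ from the previous step (with $\varphi\in\Gamma\varphi$ tautologically), I conclude $\cF_{H,\varphi}[\eta]=0$.

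Finally, since $(H,\varphi)$ dominates $(S,\varphi)$ and $\cF_{H,\varphi}[\eta]=0$, Proposition \ref{prop:domin} immediately yields $\cF_{S,\varphi}[\eta]=0$, completing the proof. There is no real obstacle here; the only subtlety worth flagging is the correct reading of Definition \ref{def:WO} as an existential statement, so that its negation supplies vanishing of $\cF_{H,\varphi}[\eta]$ for \emph{any} chosen neutral pair with nilpotent part in $\Gamma\varphi$, which is exactly what is needed to feed into Proposition \ref{prop:domin}.
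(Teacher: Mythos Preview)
Your proof is correct and is exactly the intended argument: the paper states this as an immediate corollary of Proposition~\ref{prop:domin} and Lemma~\ref{lem:domin}(i), with no separate proof given. You have also correctly identified the typo in the conclusion (it should read $\cF_{S,\varphi}[\eta]=0$).
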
}

\begin{theorem}[{\cite[Theorem {C}(i)]{Part1}}]\label{thm:IntTrans}
Let $\eta$ be an automorphic function on $G$, and let $\varphi\in \mathrm{WS}(\eta)$.
Let $(H,\varphi)$ and $(S,\varphi)$ be Whittaker pairs such that $(H,\varphi)$ dominates $(S,\varphi)$.
    Denote 
    \begin{equation}
        \fv:= \fg^H_{> 1}\cap \fg^{S}_{<1}, \text{ and }\, {V}:=\Exp(\fv(\A)).
    \end{equation}

            If $\fg^H_{1}= \fg^{S}_{1}=0$ then 
            \begin{equation}\label{=HfromS_easy}
                \cF_{H,\varphi}[\eta](g) = 
                \intl{V} \cF_{S,\varphi}[\eta](vg) \, dv \, .
            \end{equation}
\end{theorem}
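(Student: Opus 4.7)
Since $H$ commutes with $S$, I decompose $\fg$ into joint eigenspaces $\fg^{(a,b)}$ of $(H,Z)$ with $Z:=S-H$. Because $\ad^*(Z)\varphi=0$, the functional $\varphi$ lies in $(\fg^*)^{(-2,0)}$. Under the hypothesis $\fg^H_1=\fg^S_1=0$, formula~\eqref{eq:N_Sphi} gives $\fn_{H,\varphi}=\fg^H_{>1}$ and $\fn_{S,\varphi}=\fg^S_{>1}$. Setting $\fn_0:=\fn_{H,\varphi}\cap\fn_{S,\varphi}$ and $\fw:=\fg^H_{<1}\cap\fg^S_{>1}$ yields vector-space decompositions $\fn_{H,\varphi}=\fn_0\oplus\fv$ and $\fn_{S,\varphi}=\fn_0\oplus\fw$. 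The bidegree of $\varphi$ shows immediately that $\chi_\varphi$ is trivial on both $V$ and $W:=\Exp(\fw(\A))$, so after choosing a suitable ordering of the exponential each Fourier coefficient decouples as an iterated integral over $[U_0]$ (where $U_0:=\Exp(\fn_0(\A))$) and the respective complement.

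The strategy is to interpolate between $(H,\varphi)$ and $(S,\varphi)$ along the family $S_t:=H+tZ$ for $t\in[0,1]\cap\Q$; each $S_t$ yields a Whittaker pair with the same $\varphi$ since $\ad^*(S_t)\varphi=-2\varphi$. The $S_t$-weights of roots depend linearly on $t$, so outside a finite set of critical values $t_1<\cdots<t_k\in[0,1]$ the subspace $\fn_{S_t,\varphi}$ is unchanged; at each critical $t_j$ exactly one joint eigenspace $\fg^{(a,b)}\subset\fv$ leaves the integration domain and a partner space $\fg^{(a',b')}\subset\fw$ enters. I would prove a one-step exchange identity at each critical $t_j$ expressing $\cF_{S_{t_j-\varepsilon},\varphi}[\eta](g)$ as the integral of $\cF_{S_{t_j+\varepsilon},\varphi}[\eta](v'g)$ over $v'\in\Exp(\fg^{(a,b)}(\A))/\Exp(\fg^{(a,b)}(\K))$, and then iterate; the chained identities assemble the one-dimensional integrations into the single integral over $V$ claimed in~\eqref{=HfromS_easy}.

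The single-step identity is proved by Fourier expanding the integrand in the direction of $\fg^{(a,b)}$ via the isomorphism~\eqref{eq:chi_isomorphism}. The zero-frequency piece provides the desired identity, while the nonzero-frequency pieces are Fourier coefficients of the form $\cF_{S_{t_j},\varphi+\psi}[\eta]$, where $\psi$ runs over the nontrivial characters of $\fg^{(a,b)}$. The crux is to show that all such $\psi$-contributions vanish: for this one dominates each $(S_{t_j},\varphi+\psi)$ by a neutral pair via Lemma~\ref{lem:domin}, and the dominance inclusion $\fg_\varphi\cap\fg^H_{\geq 1}\subseteq\fg^Z_{\geq 0}$ forces the orbit of $\varphi+\psi$ to sit strictly above $\Gamma\varphi$ in the closure order, so outside $\WO(\eta)$ by maximality of $\Gamma\varphi$ in $\WS(\eta)$; Corollary~\ref{cor:domin} then supplies the vanishing.

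The subtle step is precisely this orbit-growth claim: the dominance hypothesis must be read as a statement about which nilpotent deformations $\psi$ of $\varphi$ are essential, in the sense that $\varphi+\psi$ cannot be conjugated by $\Gamma$ back into the orbit of $\varphi$. I expect this to be the main technical obstacle, because at each critical $t_j$ one has to classify all characters $\psi$ appearing in the Fourier expansion and verify, via the containment $\fg_\varphi\cap\fg^H_{\geq 1}\subseteq\fg^Z_{\geq 0}$, that each corresponding nilpotent element is non-tangent to $\Gamma\varphi$. Once this classification is carried out and combined with Propositions~\ref{prop:domin} and Corollary~\ref{cor:domin}, the iteration across $t_1,\ldots,t_k$ is then a bookkeeping exercise that produces~\eqref{=HfromS_easy}.
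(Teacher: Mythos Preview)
Your deformation strategy along $S_t=H+tZ$, together with Fourier expansion at each critical value and elimination of the nonzero modes via the maximality of $\Gamma\varphi$ in $\WS(\eta)$, is exactly the framework used in \cite{Part1} to prove this result. You have also correctly located the delicate point: showing that each $\varphi+\psi$ arising from a nonzero mode lies outside $\WO(\eta)$.

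However, there is a genuine gap. Your single-step identity expresses $\cF_{S_{t_j-\varepsilon},\varphi}$ as a \emph{compact} integral of $\cF_{S_{t_j+\varepsilon},\varphi}$ over $\Exp(\fg^{(a,b)}(\A))/\Exp(\fg^{(a,b)}(\K))$. Iterating such compact integrals would yield an integral over $[V]$, not the \emph{non-compact} adelic integral over $V=\Exp(\fv(\A))$ that appears in \eqref{=HfromS_easy}. These are not the same: $\cF_{S,\varphi}(vg)$ is in general not left $V(\K)$-invariant (indeed $V$ need not even normalise $N_{S,\varphi}$), so one cannot pass from one to the other by unfolding. What is missing is the mechanism by which the adelic integral is produced at each critical step. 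In \cite{Part1} this comes from the fact that at $t_j$ the space $\fg^{S_{t_j}}_1$ carries the symplectic form $\omega_\varphi$, and the passage from $\cF_{S_{t_j-\varepsilon},\varphi}$ to $\cF_{S_{t_j+\varepsilon},\varphi}$ amounts to exchanging one isotropic complement for another; this exchange of polarisations is what introduces a genuine adelic (non-compact) integral over the $\fv$-piece. Your description, which only invokes a compact Fourier expansion and the vanishing of nonzero modes, accounts for the enlargement of the integration domain by the entering piece $\fw_j$ but does not explain how the leaving piece $\fv_j$ becomes an outer adelic integral.

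A secondary imprecision: it is not true that ``exactly one joint eigenspace leaves and a partner enters'' at each $t_j$; several eigenspaces can have $S_{t_j}$-weight $1$ simultaneously, and the pieces of $\fv$ and $\fw$ crossing at $t_j$ need not be one-dimensional or pair off. The argument must treat the whole of $\fg^{S_{t_j}}_1$ at once, using that the domination hypothesis forces $\fg^{S_{t_j}}_1\cap\fg_\varphi$ to lie entirely in the $\fw$-side.
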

We emphasize that the integral over $V$ is an adelic integral.

{For the next proposition, recall from \S\ref{subsec:ThB} that we say that a simple root $\alp$ is a Heisenberg root if the nilradical of the maximal parabolic subalgebra defined by $\alp$ is a Heisenberg Lie algebra. All such roots for simple (simply-laced) Lie algebras are listed in the second row of Table \ref{tab:QA} in \S \ref{subsec:ThB} above.}

\begin{prop}[{\cite[Proposition 5.1.5]{Part1}}]\label{prop:Heis}
Let
 $\alp$ be a Heisenberg  root, and let $\alp_{\max}$ denote the highest root of the component of $\fg$ corresponding to $\alp$. Let  $\Omega_{\alp}$ denote the {abelian} group obtained by exponentiation of the {abelian} Lie algebra given by the direct sum of the root spaces of negative roots $\beta$ satisfying $\langle \alp, \beta\rangle=1$.
Let $\gamma_{\alp}$ be a representative of a Weyl group element that conjugates $\alp$ to $\alp_{\max}$. Let $$\Psi_\alpha :=\{ \textnormal{ root } \eps \mid \langle \eps, \alp \rangle \leq 0, \,\eps(S_\alp) =2 \}.$$ Then
\begin{equation}
\eta(g)=\sum_{\varphi\in (\fg^*)^{S_{\alp}}_{-2}}\cF_{S_{\alp},\varphi}[\eta](g)+\sum_{\varphi\in \fg^{\times}_{-\alp}}\sum_{\omega\in \Omega_{\alp}}\sum_{ \psi\in \bigoplus_{\eps\in \Psi_\alpha}\fg^*_{-\eps}}\cF_{S_{\alp},\varphi+\psi}[\eta](\omega \gamma_{\alp}g)\,.
\end{equation}
\end{prop}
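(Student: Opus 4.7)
The proposition is a two-stage Fourier expansion of $\eta$ along the Heisenberg unipotent $U_\alpha=\exp(\mathfrak{g}^{S_\alpha}_{\geq 2})$, which is the nilradical of the maximal parabolic attached to the Heisenberg simple root $\alpha$. My plan begins with recording the structural facts that (a) $\mathfrak{g}^{S_\alpha}_1=0$ in the Heisenberg case, so $N_{S_\alpha,\varphi}=U_\alpha$ for every $\varphi\in(\mathfrak{g}^*)^{S_\alpha}_{-2}$ by \eqref{eq:N_Sphi}; (b) the center $Z=\exp(\mathfrak{g}_{\alpha_{\max}})$ is one-dimensional; and (c) $U_\alpha/Z$ is abelian and carries a symplectic form $\omega_\tau$ induced by the Lie bracket for each non-trivial $\tau\in\mathfrak{g}^*_{-\alpha_{\max}}$.

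In the first stage I would Fourier expand along $[Z]$,
\begin{equation*}
\eta(g)=\int_{[Z]}\eta(zg)\,dz+\sum_{0\neq\tau\in\mathfrak{g}^*_{-\alpha_{\max}}}\int_{[Z]}\eta(zg)\chi_\tau(z)^{-1}\,dz.
\end{equation*}
The trivial-central summand is left $[Z]$-invariant, so it descends to $[U_\alpha/Z]$, and further abelian Fourier expansion is indexed by characters of $U_\alpha$ trivial on $Z$ and on $U_\alpha\cap\Gamma$; these are parametrized exactly by $(\mathfrak{g}^*)^{S_\alpha}_{-2}$. Reassembling (and including $\varphi=0$) yields the first sum in the claimed formula.

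For the non-trivial central character summands I would use the Weyl element $\gamma_\alpha$ sending $\alpha\mapsto\alpha_{\max}$: by Lemma~\ref{lem:conjugation-translation}, integrating $\eta$ against $\chi_\tau$ over $[Z]$ at $g$ equals integrating $\eta$ against $\chi_\varphi$ over $[\exp(\mathfrak{g}_\alpha)]$ at $\gamma_\alpha g$ for a corresponding $\varphi\in\mathfrak{g}^\times_{-\alpha}$. To convert this partial coefficient into the full $U_\alpha$-integral $\mathcal{F}_{S_\alpha,\varphi+\psi}[\eta](\omega\gamma_\alpha g)$ appearing on the right, one integrates over the remaining positive-root directions of $\mathfrak{g}^{S_\alpha}_2$, which split by their pairing with $\mathfrak{g}_\alpha$ under $\omega_\tau$. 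The roots $\varepsilon\in\Psi_\alpha$ (those with $\langle\varepsilon,\alpha\rangle\leq 0$) commute with $\mathfrak{g}_\alpha$ modulo the center and contribute the inner abelian expansion indexed by $\psi\in\bigoplus_{\varepsilon\in\Psi_\alpha}\mathfrak{g}^*_{-\varepsilon}$, whereas the roots $\beta$ with $\langle\alpha,\beta\rangle=1$ pair non-trivially with $\mathfrak{g}_\alpha$ and are absorbed via a Heisenberg polarization, which in the adelic setting produces the sum over $\omega\in\Omega_\alpha$ through translation by the rational points of the Lagrangian complement.

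The main obstacle will be verifying the polarization: one must check that $\mathfrak{g}_\alpha\oplus\bigoplus_{\varepsilon\in\Psi_\alpha}\mathfrak{g}_\varepsilon$ and $\bigoplus_{\beta\text{ pos.},\,\langle\alpha,\beta\rangle=1}\mathfrak{g}_\beta$ (or rather their $\gamma_\alpha$-conjugates) form a transverse pair of Lagrangian subspaces of $\mathfrak{g}^{S_\alpha}_2$ with respect to $\omega_\tau$, and that conjugation by $\gamma_\alpha$ interchanges this polarization with the one canonically adapted to $Z$. This reduces to combinatorial statements about the action of $\gamma_\alpha$ on roots, which can be extracted from the fact that $\gamma_\alpha$ realizes the reflection $s_\alpha s_{\alpha_{\max}} s_\alpha$ as in Notation~\ref{not:D}. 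Once the polarization is in place, the standard adelic Heisenberg sum--integral identity (a rational Stone--von Neumann decomposition) yields the second sum and completes the expansion.
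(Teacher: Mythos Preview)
The paper does not prove this proposition; it is quoted without proof from the companion paper \cite[Proposition~5.1.5]{Part1}, so there is no in-paper argument to compare against.

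Your outline is the standard two-step Heisenberg expansion and is correct in its architecture: expand along the center $Z=\Exp(\fg_{\alp_{\max}})$, treat the trivial-central part by abelian Fourier analysis on $U_\alpha/Z$ to obtain the first sum, and for each non-trivial central character use the Weyl element $\gamma_\alpha$ to transport the problem so that the center is replaced by $\Exp(\fg_\alpha)$, after which a second abelian expansion and a Lagrangian summation produce the second sum. Your observation that $\fg^{S_\alpha}_1=0$ (all $S_\alpha$-eigenvalues are even), hence $N_{S_\alpha,\varphi}=U_\alpha$ for every $\varphi\in(\fg^*)^{S_\alpha}_{-2}$, is exactly what makes each $\cF_{S_\alpha,\varphi}$ an integral over the full $[U_\alpha]$.

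One point in your sketch is imprecise and would need correcting in a full proof. You write that the roots $\beta$ with $\langle\alpha,\beta\rangle=1$ ``pair non-trivially with $\fg_\alpha$'' under $\omega_\tau$. In fact, for such $\beta$ with $\beta(S_\alpha)=2$ one has $\langle\alpha,\beta\rangle>0$, so $\alpha+\beta$ is \emph{not} a root and $[\fg_\alpha,\fg_\beta]=0$; these directions commute with $\fg_\alpha$ just as the $\Psi_\alpha$-directions do. The correct role of $\Omega_\alpha$ is not as a symplectic complement to $\fg_\alpha$ but rather as the set of $\Gamma$-points of a Lagrangian complement in $U_\alpha/Z$ \emph{after} transport by $\gamma_\alpha$: one checks that $\gamma_\alpha=s_\alpha s_{\alp_{\max}} s_\alpha$ sends the positive roots $\beta$ with $\langle\alpha,\beta\rangle=1$ and $\beta(S_\alpha)=2$ to negative roots, and it is this sign flip that produces the $\K$-rational sum over $\Omega_\alpha$ when one unfolds the compact integral. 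If you rewrite that passage in terms of how $\gamma_\alpha$ acts on the root spaces of $\fg^{S_\alpha}_2$ (rather than in terms of pairing with $\fg_\alpha$), the polarization check becomes the routine combinatorial verification you anticipate, and the argument goes through.
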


\begin{lem}[{\cite[Lemma B.0.3]{Part1}}]\label{lem:SameOrbit3}
    Let $S,Z\in \fg$ be rational semi-simple commuting elements, let $\varphi\in \fg^Z_0\cap \fg^S_{-2} $ and $\varphi'\in \fg^Z_{>0}\cap\fg^S_{-2}$. Assume that $\varphi$ is conjugate to $\varphi+\varphi'$ by ${\bf G}(\C)$. Then there exist $X\in \fg^{Z}_{>0}\cap\fg^S_{0}$ and  $v\in \Exp(\fg^{Z}_{>0}\cap\fg^S_{0})$ such that $\ad^*(X)(\varphi)=\varphi'$ and 
$v(\varphi)=\varphi+\varphi'$.
\end{lem}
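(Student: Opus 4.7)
My plan is to exploit the cocharacter $\lambda : \mathbb{G}_m\to\bfG$ attached to $Z$ (after scaling $Z$ so that all its $\fg$-eigenvalues are integers). Since $[Z,S]=0$ this cocharacter preserves the $S$-grading on $\fg$, and since $\varphi\in\fg^Z_0$ we have $\lambda(t)\varphi=\varphi$ for all $t$. Decomposing $\varphi'=\sum_{k\geq k_0}\varphi'_k$ into $Z$-homogeneous components, with $k_0$ the minimum weight on which $\varphi'$ is non-zero, I would form the polynomial curve
\begin{equation*}
\gamma(t):=\lambda(t)(\varphi+\varphi')=\varphi+\sum_{k\geq k_0}t^k\,\varphi'_k\,,
\end{equation*}
which by hypothesis lies in $\bfG(\C)\cdot\varphi$ for every $t\in\C^\times$ and satisfies $\gamma(0)=\varphi$. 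Because $\varphi$ itself belongs to this orbit, the orbit is smooth at $\varphi$ with tangent space $\ad^*(\fg(\C))\varphi$, and hence $\gamma'(0)=\varphi'_{k_0}\in\ad^*(\fg(\C))\varphi$. Projecting onto the $(Z,S)$-bigraded piece of bidegree $(k_0,0)$, I would obtain $X_{k_0}\in\fg^Z_{k_0}(\C)\cap\fg^S_0(\C)$ with $\ad^*(X_{k_0})\varphi=\varphi'_{k_0}$; since this is a $\K$-linear equation whose solution set is a non-empty $\K$-affine subspace, $X_{k_0}$ descends to $\fg^Z_{k_0}\cap\fg^S_0$.

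With $X_{k_0}$ in hand I would set $v_0:=\exp(X_{k_0})\in\Exp(\fg^Z_{>0}\cap\fg^S_0)$ and compute
\begin{equation*}
v_0^{-1}\cdot(\varphi+\varphi')\;=\;\varphi+\tilde\varphi',
\qquad
\tilde\varphi'\in\fg^Z_{>k_0}\cap\fg^S_{-2},
\end{equation*}
where the weight-$k_0$ component is killed by construction and all further coadjoint contributions from $-X_{k_0}$ land in strictly higher $Z$-weight. The pair $(\varphi,\varphi+\tilde\varphi')$ again satisfies the hypotheses of the lemma, but the perturbation now has a strictly larger minimum $Z$-weight. Since the $Z$-grading on $\fg$ is bounded, iterating this step yields $\tilde v\in\Exp(\fg^Z_{>k_0}\cap\fg^S_0)$ with $\tilde v\cdot\varphi=\varphi+\tilde\varphi'$, and then $v:=v_0\tilde v\in\Exp(\fg^Z_{>0}\cap\fg^S_0)$ satisfies $v\cdot\varphi=\varphi+\varphi'$, giving the second conclusion.

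For the element $X$, a separate bookkeeping is needed because $\log v$ generally fails the equation $\ad^*(\cdot)\varphi=\varphi'$ due to the second- and higher-order terms in the Baker--Campbell--Hausdorff expansion of $\exp(Y)\varphi-\varphi$. Instead I would construct $X=\sum_k X_k$ weight by weight with $X_k\in\fg^Z_k\cap\fg^S_0$ and $\ad^*(X_k)\varphi=\varphi'_k$. The base weight $k_0$ is handled above. At a later weight $k>k_0$, after conjugating away the lower weight components one is left with a reduced $\tilde\varphi'_k$ differing from $\varphi'_k$ by correction terms of the form $\ad^*(X_{k_0})\varphi'_{k-k_0}$ and similar iterated BCH contributions; applying the tangent-space argument to the conjugate orbit point $\exp(X_{k_0})\cdot\varphi$ and rescaling by $\lambda$ shows that each such correction again lies in $\ad^*(\fg^Z_k\cap\fg^S_0)\varphi$, so that $\varphi'_k$ itself lies there and $X_k$ can be chosen.

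The main technical obstacle will be exactly this bookkeeping for $X$: one must verify, at every $Z$-weight, that the iterated BCH corrections produced in the inductive step genuinely lie in $\ad^*(\fg^Z_{>0}\cap\fg^S_0)\varphi$, and not merely in $\ad^*(\fg(\C))\varphi$. This hinges on the scaling freedom afforded by $\lambda$, which ensures that each intermediate perturbation of $\varphi$ encountered during the induction again lies in $\bfG(\C)\cdot\varphi$, so that the tangent-space inclusion propagates uniformly through all weights occurring in the expansion.
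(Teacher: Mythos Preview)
The paper does not prove this lemma; it is quoted from the companion paper \cite{Part1} (Lemma~B.0.3 there), so there is no in-paper argument to compare against. I will therefore comment on the correctness of your proposal.

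Your construction of $v$ is essentially sound: the inductive reduction of the minimal $Z$-weight of the perturbation, combined with the descent from $\C$ to $\K$ via linearity, is the standard way to obtain the group element. One small imprecision: you write $\gamma'(0)=\varphi'_{k_0}$, but if $k_0>1$ then $\gamma'(0)=0$. What you actually need is the (easy) fact that for a curve in a smooth subvariety through a point $p$, the \emph{lowest-order} Taylor coefficient of $\gamma(t)-p$ lies in $T_p$; this follows by writing the subvariety locally as a graph over its tangent space.

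The argument for $X$, however, has a genuine gap that you yourself flag but do not close. After conjugating by $\exp(X_{k_0})$ the base point is no longer $\varphi$ but $\exp(X_{k_0})\cdot\varphi$, and the tangent-space argument at that point yields membership in $\ad^*(\fg)\bigl(\exp(X_{k_0})\cdot\varphi\bigr)$, not in $\ad^*(\fg^Z_{>0}\cap\fg^S_0)\varphi$. Your proposed fix (``the tangent-space inclusion propagates uniformly'') does not explain why the BCH correction terms such as $(\ad^*(X_{k_0}))^2\varphi$ or $\ad^*(X_{k_0})\varphi'_{j}$ lie in $\ad^*(\fg^Z_{>0}\cap\fg^S_0)\varphi$; these are second-order in $\ad^*$ and there is no a~priori reason they should be first-order expressions in $\varphi$. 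A toy model to keep in mind: the set $\{(x,x^2)\}\subset\K^2$ with weights $(1,2)$ is $\mathbb{G}_m$-stable, smooth at $0$, with tangent line the $x$-axis, yet it is not contained in that line. You need an additional ingredient---for instance a bigraded $\sl_2$-triple $(e,h,f_\varphi)$ with $e,h\in\fg^Z_0$ and the transversality of the Slodowy slice $\varphi+(\fg^*)^{e}$ to the orbit, which forces the $\fg^e$-component of $\varphi'$ to vanish---to turn the group statement into the linear one. Without such an argument the $X$ half of the lemma is not established.
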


\section{Proof of Theorems \ref{thm:min-rep}, \ref{thm:G0min} and \ref{thm:ntm-rep}}
\label{sec:ABC}
\setcounter{lemma}{0}

For the whole section we assume that ${\bf G}$ is split and the Dynkin diagram of $\fg$ is simply-laced, i.e. all the connected components have types $A,D,$ or $E$. As in \S\ref{subsec:ThB}, let, for any root $\delta$, $\fg^*_{\delta}$ denote the corresponding root-subspace of $\fg^*$ and $\fg^{\times}_{\delta}$ the set of non-zero elements of this subspace.

\begin{lem}\label{lem:RootConj}
If $[\fg,\fg]$ is simple then any two roots are Weyl-conjugate.
\end{lem}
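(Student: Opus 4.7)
The plan is to reduce the statement to simple roots and then use the connectedness of the Dynkin diagram together with the simply-laced assumption.

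First, I would recall the standard fact that every root in a (reduced) root system is Weyl-conjugate to a simple root. One way to see this is via the simple transitivity of the Weyl group action on Weyl chambers: given any root $\gamma$, the hyperplane $\gamma^{\perp}$ is a wall of some Weyl chamber $\mathcal{C}$, with $\gamma$ the corresponding outward normal, and the unique Weyl element mapping $\mathcal{C}$ to the fixed positive chamber then sends $\gamma$ to a simple root.

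Second, I would show that any two simple roots $\alpha$, $\beta$ of $\fg$ that are adjacent in the Dynkin diagram are Weyl-conjugate. In the simply-laced case adjacency means $\langle \alpha, \beta^{\vee}\rangle = \langle \beta, \alpha^{\vee}\rangle = -1$. A direct computation then gives
\begin{equation}
s_{\beta}(\alpha) = \alpha - \langle \alpha, \beta^{\vee}\rangle \beta = \alpha + \beta, \qquad s_{\alpha}(\alpha+\beta) = -\alpha + s_{\alpha}(\beta) = -\alpha + (\beta+\alpha) = \beta,
\end{equation}
so $s_{\alpha}s_{\beta}(\alpha) = \beta$.

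Third, because $[\fg,\fg]$ is simple its Dynkin diagram is connected, so any two simple roots are linked by a chain of adjacent pairs; composing the Weyl elements from the previous step realises any simple root as a Weyl-conjugate of any other. Combining the two steps, any two roots of $\fg$ lie in a common Weyl orbit. I do not anticipate a genuine obstacle here; the only thing that must be checked carefully is that the adjacency argument really uses the simply-laced hypothesis (in non-simply-laced types the above computation yields $\alpha$ conjugate only to roots of its own length), which is automatic from our standing assumption that $\fg$ is of type $A$, $D$, or $E$.
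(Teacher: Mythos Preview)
Your proof is correct and follows exactly the same two-step strategy as the paper: reduce to simple roots, then use connectedness of the simply-laced Dynkin diagram. The paper's proof is a one-line sketch of this; you have simply supplied the explicit adjacency computation $s_{\alpha}s_{\beta}(\alpha)=\beta$ that the paper leaves implicit.
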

\begin{proof}
Any root is Weyl-conjugate to a simple root, and any two simple roots in a connected simply-laced diagram are Weyl-conjugate.
\end{proof}

\begin{cor}\label{cor:RootPairClasses}
For any root $\delta$, any $\varphi\in \fg^{\times}_{\delta}$ lies in a minimal orbit.
\end{cor}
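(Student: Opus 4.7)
The plan is to first reduce to the case where $[\fg, \fg]$ is simple. Since a root $\delta$ belongs to a unique simple component $\fg_j$, any $\varphi \in \fg^{\times}_\delta$ is supported on that component, and by Lemma~\ref{lem:MinProd} it suffices to prove that $\varphi$ lies in a minimal orbit of $\fg_j^*$. So I may assume $\fg$ simple.

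Next, I would use Lemma~\ref{lem:RootConj} to conjugate $\delta$ to the highest root $\alp_{\max}$ by a Weyl group element. Because $\bfG$ is split, every Weyl element is represented by an element of $N_{\bfG(\K)}(T(\K))$, hence (via the fixed section) by an element $\gamma \in \Gamma$. Thus $\Ad^*(\gamma)\varphi \in \fg^{\times}_{-\alp_{\max}}$ (switching signs under the Killing identification). Moreover, since $T$ is $\K$-split, the character $-\alp_{\max}$ surjects $T(\K) \onto \K^\times$, so any two nonzero elements of $\fg^*_{-\alp_{\max}}$ are $T(\K)$-conjugate, and in particular $\Ad^*(\Gamma)\varphi$ contains the whole of $\fg^{\times}_{-\alp_{\max}}$.

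It then remains to verify that the nilpotent element corresponding (by Killing duality) to a highest root vector generates the minimal complex nilpotent orbit. This is classical for simply-laced simple Lie algebras: the adjoint representation is irreducible of highest weight $\alp_{\max}$, the highest-weight vector is annihilated by all positive root spaces, and its orbit closure is the unique nonzero nilpotent orbit of minimal dimension. Alternatively, it follows from Lemma~\ref{lem:BCsmall}, since the highest root vector generates an $\mathfrak{sl}_2$-subalgebra of Bala--Carter type $A_1$ and any such element is minimal.

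I do not anticipate any serious obstacle here; the only point requiring a small amount of care is the rationality issue, namely that Weyl group representatives and the torus cocharacter used to rescale within $\fg^*_{-\alp_{\max}}$ can be taken in $\Gamma$, which follows from the standing assumption that $\bfG$ is $\K$-split together with the choice of section $\bfG(\K) \to \Gamma$.
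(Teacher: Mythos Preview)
Your approach is correct and follows the paper's implicit reasoning (the statement is listed as a corollary of Lemma~\ref{lem:RootConj} with no separate proof). However, you do more work than necessary: since ``lies in a minimal orbit'' in this paper means the \emph{complex} orbit of $\varphi$ is minimal, you do not need step~3 (the torus rescaling to put all of $\fg^{\times}_{-\alp_{\max}}$ into a single $\Gamma$-orbit) nor the rationality discussion at the end. It suffices to observe that after base change to $\C$, Lemma~\ref{lem:RootConj} conjugates $\varphi$ to a highest-root vector, and the latter is the standard representative of the minimal nilpotent orbit. Also, your parenthetical ``switching signs under the Killing identification'' is off: under the paper's conventions the Killing pairing is covariant, so $\fg^*_{\delta}$ corresponds to $\fg_{\delta}$, not $\fg_{-\delta}$ --- but this has no effect on the argument.
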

{
\begin{cor}\label{lem:2RootConj}
Assume that $\fg$ is simple. 
\begin{enumerate}[(i)]
\item If $\fg$ is of type $A$ or $E$ then any two pairs of orthogonal roots are Weyl-conjugate.
\item If $\fg$ is of type $D_n$ with $n\geq 5$ then any  pair of orthogonal roots is Weyl-conjugate to exactly one of the pairs $(\alp_1,\alp_3)$ and $(\alp_{n-1},\alp_n)$.
\item If $\fg$ is of type $D_4$ then any  pair of orthogonal roots is Weyl-conjugate to exactly one of the pairs $(\alp_1,\alp_3)$, $(\alp_1,\alp_{4})$ and $(\alp_{3},\alp_4)$.
\end{enumerate}
\end{cor}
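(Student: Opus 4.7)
The plan is to use Lemma \ref{lem:RootConj} to fix one root of the pair, and then reduce the classification of orthogonal pairs to Weyl orbits on the orthogonal root subsystem of a single root.

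Since the Weyl group $W$ of $\fg$ acts transitively on roots by Lemma \ref{lem:RootConj}, any pair $(\alpha,\beta)$ of orthogonal roots is $W$-conjugate to a pair of the form $(\alpha_1,\beta')$ for a chosen simple root $\alpha_1$. The $W$-orbits on such ordered pairs are thus in bijection with the orbits of $\mathrm{Stab}_W(\alpha_1) = W(\alpha_1^{\perp})$ on the roots of the orthogonal subsystem $\alpha_1^{\perp} = \{\gamma\in\Phi : \langle\gamma,\alpha_1\rangle = 0\}$. Passing from ordered to unordered pairs merges no orbits, since in each case below a double sign-change or a transposition in $W$ realizes the swap $(\alpha,\beta)\mapsto(\beta,\alpha)$ within each orbit.

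Next I would compute $\alpha_1^{\perp}$ in each simply-laced simple type. For $A_n$ with $\alpha_1 = e_1-e_2$, a direct calculation gives $\alpha_1^{\perp}\cong A_{n-2}$, whose Weyl group is transitive on its roots; this gives (i) in type $A$. For $E_6, E_7, E_8$, the orthogonal complement of any (equivalently, the highest) root is $A_5, D_6, E_7$ respectively, as is standard via the extended Dynkin diagram; each is simply laced with a single Weyl orbit of roots, giving (i) in type $E$. For $D_n$ with $\alpha_1 = e_1-e_2$, the orthogonal roots are $\pm(e_1+e_2)$ together with the $D_{n-2}$ subsystem on $e_3,\dots,e_n$, so $\alpha_1^{\perp}\cong A_1\times D_{n-2}$ (with $D_2\cong A_1\times A_1$ when $n=4$). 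For $n\geq 5$ this yields two orbits, represented by $(\alpha_1,e_1+e_2)$ and $(\alpha_1,\alpha_3)$; the pure permutation $(1,n{-}1)(2,n)\in W(D_n)$ conjugates the first to $(\alpha_{n-1},\alpha_n)$. For $n=4$, $\alpha_1^{\perp}\cong A_1^3$ yields three orbits, represented by $(\alpha_1,e_1+e_2), (\alpha_1,\alpha_3), (\alpha_1,\alpha_4)$; the permutation $(1,3)(2,4)$ conjugates the first to $(\alpha_3,\alpha_4)$.

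The main obstacle is verifying the ``exactly one'' clauses, i.e.\ that the listed orbits are genuinely distinct. For $D_n$ with $n\geq 5$ this is immediate, since the two $W(\alpha_1^{\perp})$-orbits on $\alpha_1^{\perp}$ have different sizes ($2$ versus $2(n-2)(n-3)$) and so cannot be $W$-conjugate. For $D_4$ the three candidate orbits have the same size and are permuted by triality; the crucial point is that triality is an outer automorphism, not an element of $W(D_4)$. Concretely, $W(\alpha_1^{\perp})\cong(\Z/2)^3$ preserves each of the three orthogonal $A_1$ summands of $\alpha_1^{\perp}$, so the three orbits remain distinct under $W(D_4)$, completing parts (ii) and (iii).
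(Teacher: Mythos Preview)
Your proof is correct, and the approach is essentially the same strategy as the paper's for types $A$ and $E$ (fix one root, act by the stabilizer on the orthogonal complement), but genuinely different for type $D$.

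For type $D$, the paper works directly in $\eps$-coordinates: it first conjugates any orthogonal pair to a pair of positive roots, then classifies such pairs into two combinatorial types --- those sharing both indices, $(\eps_i+\eps_j,\eps_i-\eps_j)$, versus those with four distinct indices --- and shows explicitly which simple pairs represent each type. Your argument instead reduces uniformly to the structure of $\alpha_1^\perp$ as a root system ($A_1\times D_{n-2}$, resp.\ $A_1^3$), invokes the identification $\mathrm{Stab}_W(\alpha_1)=W(\alpha_1^\perp)$, and reads off the orbit count from the component decomposition. Your route is more conceptual and avoids the case split on index overlap; the paper's is more elementary and self-contained, needing no appeal to the theorem that the stabilizer of a vector in a reflection group is the reflection subgroup on the orthogonal hyperplane.

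Two minor comments. First, your sentence invoking triality in the $D_4$ case is a red herring: the actual argument is the next sentence (the three $A_1$ factors of $\alpha_1^\perp$ are preserved by $W(\alpha_1^\perp)$, hence give distinct $W$-orbits on ordered pairs), and triality being outer plays no role once you have that. Second, the size comparison for $D_n$, $n\geq 5$, is unnecessary: distinctness of the $W(\alpha_1^\perp)$-orbits on $\alpha_1^\perp$ already gives distinctness of the $W$-orbits on ordered pairs by the bijection you set up, and your swap argument then transfers this to unordered pairs.
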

\begin{proof}
In types $A$ and $E$, we apply Lemma \ref{lem:RootConj}, and assume that both pairs include the highest root.
Since the diagram consisting of roots orthogonal to the highest one is still connected, the stabilizer of the highest root acts transitively on it.

In type $D_n$ we use the standard realization of roots as 
\begin{equation}
\{\pm \eps_i\pm \eps_j,\}
\end{equation}
where $\eps_i$ denotes the unit vector in $\R^n$. The Weyl group acts by permutation of the indices, and even number of sign changes. The usual choice of simple roots is 
\begin{equation}
\alp_1:=\eps_1-\eps_2\,,\quad \dots\,,\quad \alp_{n-1}:=\eps_{n-1}-\eps_n\,,\quad \alp_n:=\eps_{n-1}+\eps_n
\end{equation}
Using reflections, we can conjugate any pair of orthogonal roots to a pair of orthogonal {positive} roots. The  pairs of orthogonal {positive} roots have one of the two forms
\begin{enumerate}
\item $(\eps_i+\eps_j,\eps_i-\eps_j) \text{ or }(\eps_i-\eps_j,\eps_i+\eps_j)$, with $i<j$.
\item  $(\eps_i\pm\eps_j,\eps_k\pm\eps_l)$ with $i<j$ and $k<l$ all distinct.
\end{enumerate}
We can conjugate any pair of type (1) to $(\alp_{n-1},\alp_n)=(\eps_{n-1}-\eps_n,\eps_{n-1}+\eps_n)$.
For $n\geq 5$, any pair of type (2) is conjugate to $(\alp_{1},\alp_3)=(\eps_{1}-\eps_2,\eps_{3}-\eps_4)$.
For $D_4$ we have two non-conjugate pairs of type (2): $(\alp_1,\alp_3)=(\eps_1-\eps_2,\eps_3-\eps_4)$ or $(\alp_1,\alp_4)=(\eps_1-\eps_2,\eps_3+\eps_4)$. 
It is easy to see that one cannot conjugate a pair of type (1) into a pair of type (2).
\end{proof}

We remark that in type $D_n$, the pairs  $(\alp_1,\alp_3)$ and $(\alp_{n-1},\alp_n)$ correspond to two distinct next-to-minimal orbits, given by the partitions  $2^41^{2n-8}$ and $31^{2n-3}$ respectively.

\begin{cor}\label{cor:2RootConjSimple}
 Any pair of orthogonal roots in $\fg$ is Weyl-conjugate to a pair of orthogonal simple roots.
\end{cor}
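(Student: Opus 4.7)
The plan is to reduce the statement to the case where $[\fg,\fg]$ is simple and then read it off from the classification in Corollary~\ref{lem:2RootConj}. Every root of $\fg$ lies in a unique simple component of $[\fg,\fg]$, and the Weyl group of $\fg$ decomposes as the direct product of the Weyl groups of the simple components. Therefore, given an orthogonal pair $(\alpha,\beta)$, if $\alpha$ and $\beta$ lie in different simple components, one conjugates each of them independently (using Lemma~\ref{lem:RootConj} inside each component) to simple roots of those components, and the result is a pair of orthogonal simple roots of $\fg$. This reduces the problem to the case where $\alpha$ and $\beta$ lie in the same simple component, i.e.\ to the case where $[\fg,\fg]$ is simple.

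For the simple case I would go through the three Dynkin types separately using Corollary~\ref{lem:2RootConj}. In type $A_n$ with $n\ge 3$, the unique Weyl orbit of orthogonal pairs contains the pair of simple roots $(\alpha_1,\alpha_3)$; for $n\le 2$ there are no orthogonal pairs of roots and the claim is vacuous. In type $D_n$ with $n\ge 5$, the two Weyl orbits of orthogonal pairs are already represented by the pairs of simple roots $(\alpha_1,\alpha_3)$ and $(\alpha_{n-1},\alpha_n)$, and in $D_4$ the three orbits are represented by the simple pairs $(\alpha_1,\alpha_3)$, $(\alpha_1,\alpha_4)$, $(\alpha_3,\alpha_4)$ exhibited in Corollary~\ref{lem:2RootConj}(iii). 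In each type $E_6, E_7, E_8$, Corollary~\ref{lem:2RootConj}(i) states that there is a single Weyl orbit of orthogonal pairs, and this orbit contains the pair of simple roots $(\alpha_1,\alpha_3)$, which are orthogonal in the Bourbaki numbering.

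Combining the two reductions, every orthogonal pair of roots in $\fg$ is Weyl-conjugate to a pair of orthogonal simple roots, which is exactly the claim. The main point to be careful about is that the Weyl group of $\fg$ acts componentwise, so one can treat each simple factor independently; beyond this, the proof is essentially a tabulation and is rendered routine by Corollary~\ref{lem:2RootConj} and Lemma~\ref{lem:RootConj}. I do not anticipate a substantial obstacle.
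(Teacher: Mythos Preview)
Your proposal is correct and follows essentially the same approach as the paper: split into the case where the two roots lie in different simple components (handled by Lemma~\ref{lem:RootConj} applied to each component) and the case where they lie in the same component (handled by Corollary~\ref{lem:2RootConj}). Your added remark that in types $A$ and $E$ the unique orbit contains an explicit pair of orthogonal simple roots such as $(\alpha_1,\alpha_3)$ makes explicit a small point the paper leaves tacit.
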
 
\begin{proof}
If $[\fg,\fg]$ is not simple and the roots lie in different simple components this follows from Lemma \ref{lem:RootConj} by conjugating each of them to a simple root.  If the roots lie in the same component, this follows from Corollary \ref{lem:2RootConj}.
\end{proof}
}

\subsection{Proof of Theorem \ref{thm:min-rep}}

Throughout the subsection fix a simple root $\alp$.
Define $S_{\alp}\in \fh$ by $\alp(S_{\alp})=2$ and $\gamma (S_{\alp})=0$ for any other simple root $\gamma$. 
 
As mentioned in the introduction, if a Fourier coefficient $\cF_{S,\varphi}$ is a Whittaker coefficient, i.e. $N_{S,\varphi}$ is the unipotent radical of a Borel subgroup, we will denote it by $\cW_{S,\varphi}$, where we may drop the $S$ if it corresponds to a fixed choice of Borel subgroup and simple roots. In other words, we define $S_\Pi \in \fh$ by $S_\Pi(\gamma)=2$ for any simple root $\gamma$ and write $\mathcal{W}_{S_\Pi,\varphi} = \mathcal{W}_\varphi$. 
 
\begin{lem}\label{lem:1_1}
    If $\eta$ is a minimal automorphic function and $\varphi\in \fg^{\times}_{-\alp}$ then
$\cF_{S_{\alp},\varphi}[\eta] = \mathcal{W}_{\varphi}[\eta]$.
\end{lem}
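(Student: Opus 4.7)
The plan is to express $\cW_\varphi[\eta]$ as an integral over $[N_L]$ of translates of $\cF_{S_\alpha,\varphi}[\eta]$, and then show that the integrand is constant using the minimality of $\eta$.

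First I would invoke Theorem \ref{thm:IntTrans} with the pair of Whittaker pairs $(S_\Pi,\varphi)$ and $(S_\alpha,\varphi)$. Both are Whittaker pairs because $\alpha(S_\Pi)=\alpha(S_\alpha)=2$ and $\varphi\in\fg^*_{-\alpha}$ has $\ad^*$-weight $-2$ under either element of $\fh$. Since $S_\Pi,S_\alpha$ both lie in $\fh$ they commute, and for any positive root $\gamma$ the $(S_\Pi{-}S_\alpha)$-weight equals $2(\mathrm{ht}(\gamma)-c)$, where $c$ is the coefficient of $\alpha$ in $\gamma$; this is $\geq 0$, so $\fg^{S_\Pi}_{\geq 1}\subseteq \fg^{S_\Pi-S_\alpha}_{\geq 0}$ and $(S_\Pi,\varphi)$ dominates $(S_\alpha,\varphi)$. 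The vanishing $\fg^{S_\Pi}_1=\fg^{S_\alpha}_1=0$ is automatic by integrality of heights, and $\fv:=\fg^{S_\Pi}_{>1}\cap\fg^{S_\alpha}_{<1}$ is the span of positive root spaces with zero coefficient of $\alpha$, i.e.\ $\fn_L$. Since $\varphi\in\WS(\eta)$ by minimality, Theorem \ref{thm:IntTrans} (read over the compact quotient $[N_L]$; equivalently obtained directly by Fubini along $N=N_L\ltimes U_\alpha$, since $\chi_\varphi|_{N_L}=1$) yields
\begin{equation*}
\cW_\varphi[\eta](g) \;=\; \intl_{[N_L]}\cF_{S_\alpha,\varphi}[\eta](n_L g)\,dn_L.
\end{equation*}

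Next I would establish the key stabilizer property: $N_L(\K)$ fixes $\varphi$ under the coadjoint action. For $Y\in\fg_\beta$ with $\beta$ a positive root of $\fl_\alpha$, the element $\ad^*(Y)\varphi$ has weight $-\alpha+\beta$; since $\beta$ has coefficient $0$ of $\alpha$ and non-negative coefficients of the other simple roots, the weight $-\alpha+\beta$ has coefficient $-1$ of $\alpha$ and non-negative coefficients elsewhere, hence has mixed signs and is not a root. Thus $\ad^*(\fn_L)\varphi=0$, so $\Ad^*(\gamma_L)\varphi=\varphi$ for all $\gamma_L\in N_L(\K)$. Combining this with Lemma \ref{lem:conjugation-translation} and the fact that $N_L\subset L_\alpha$ centralizes $S_\alpha$, the function $n_L\mapsto \cF_{S_\alpha,\varphi}[\eta](n_L g)$ descends to $[N_L]$.

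Finally I would Fourier expand the resulting function on $[N_L]$, using an iterated abelian decomposition along the lower central series of $N_L$ in the non-abelian case (cf.\ \eqref{eq:NAFE}). Each non-trivial Fourier mode unwinds, via the preceding integral representation, to a degenerate Whittaker-type coefficient attached to an element $\varphi+\psi$ with $\psi$ a non-zero element of a negative root space of $\fl_\alpha$; by Proposition \ref{prop:domin} such a coefficient is controlled by the neutral Whittaker coefficient $\cW_{\varphi+\psi}[\eta]$. For any such $\psi$ the functional $\varphi+\psi$ is supported on at least two distinct negative root spaces and is principal in a Levi subalgebra of rank $\geq 2$, so it lies in a nilpotent orbit strictly above the minimal one. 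Minimality of $\eta$ together with Corollary \ref{cor:domin} then forces all these coefficients to vanish, so only the trivial Fourier mode survives. Hence $\cF_{S_\alpha,\varphi}[\eta](n_L g)$ is constant in $n_L\in[N_L]$, and evaluating at $n_L=e$ (together with Tamagawa normalization $\vol([N_L])=1$) yields $\cF_{S_\alpha,\varphi}[\eta](g) = \cW_\varphi[\eta](g)$. The main technical obstacle is the non-abelian bookkeeping: one has to check that every character appearing in the iterated derived-series expansion of $N_L$ corresponds under Theorem \ref{thm:IntTrans} to a nilpotent strictly larger than the minimal orbit, which ultimately reduces to the root-support counting sketched above.
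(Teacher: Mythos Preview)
Your domination direction is reversed. With $H=S_\Pi$ and $S=S_\alpha$, Definition~\ref{def:dominate} requires $\fg_\varphi\cap\fg^{S_\Pi}_{\geq 1}\subseteq\fg^{S_\alpha-S_\Pi}_{\geq 0}$; your computation shows positive roots satisfy $(S_\Pi-S_\alpha)(\gamma)=2(\mathrm{ht}(\gamma)-c)\geq 0$, i.e.\ $(S_\alpha-S_\Pi)(\gamma)\leq 0$, which is the wrong sign. In fact the inclusion fails: any simple root $\beta\neq\alpha$ not adjacent to $\alpha$ has $\fg_\beta\subset\fg_\varphi\cap\fg^{S_\Pi}_{\geq 1}$ but $(S_\alpha-S_\Pi)(\beta)=-2$. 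The paper instead observes that $(S_\alpha,\varphi)$ dominates $(S_\Pi,\varphi)$: for $\gamma$ with $\alpha$-coefficient $c\geq 1$ one has $(S_\Pi-S_\alpha)(\gamma)=2(\mathrm{ht}(\gamma)-c)\geq 0$. With this orientation $\fv=\fg^{S_\alpha}_{>1}\cap\fg^{S_\Pi}_{<1}=\{0\}$ (positive roots versus non-positive roots), so Theorem~\ref{thm:IntTrans} gives $\cF_{S_\alpha,\varphi}[\eta]=\cW_\varphi[\eta]$ immediately, with no integral and no further Fourier analysis. Note also that had your direction been correct, the $V$ in Theorem~\ref{thm:IntTrans} would be the adelic group $N_L(\A)$, not the compact quotient $[N_L]$.

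Your fallback Fubini argument for $\cW_\varphi[\eta](g)=\int_{[N_L]}\cF_{S_\alpha,\varphi}[\eta](n_Lg)\,dn_L$ is fine on its own, and the constancy-via-Fourier-expansion idea is sound in spirit, but the justification you give is incomplete: you assert that each non-trivial mode ``unwinds'' to a Whittaker-pair coefficient $\cF_{S',\varphi+\psi}$, yet you never produce the semisimple element $S'$ making $N_{S',\varphi+\psi}$ equal to the relevant subgroup of $N$. Without this, Corollary~\ref{cor:domin} does not apply directly. This gap can be closed with additional bookkeeping, but the correct use of domination makes the entire detour unnecessary.
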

\begin{proof} 
We have $\fg^S_{1}=\{0\}=\fg^{S_{\alp}}_{\geq 1}\cap \fg^S_{<1}$, which implies the lemma by Theorem \ref{thm:IntTrans} 
\end{proof}

Let $L_{\alp}$ denote the Levi subgroup of the parabolic subgroup $P_{\alp}$ of $G$.    
\begin{lem}\label{lem:WeylMin}
Any root $\delta$ with $\delta(S_{\alp})=-2$ can be conjugated to $-\alp$ using the Weyl group of $L_{\alp}$. 
\end{lem}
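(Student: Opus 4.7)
The condition $\delta(S_\alp)=-2$ is equivalent to saying that $\delta$ has coefficient $-1$ on $\alp$ when expanded in the simple-root basis; since any root is either all non-negative or all non-positive in these coefficients, $\delta$ is automatically a negative root and thus has the form
\[
\delta = -\alp - \sum_{\beta\in \Pi\setminus\{\alp\}} c_\beta \beta, \qquad c_\beta\in\Z_{\geq 0}.
\]
My plan is to induct on the height $h(\delta) := \sum_\beta c_\beta$. The base case $h(\delta)=0$ is $\delta=-\alp$ and is immediate.

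For the inductive step, the crux is the following subclaim: whenever $h(\delta)>0$, there exists $\beta\in \Pi\setminus\{\alp\}$ with $c_\beta>0$ and $\langle \delta,\beta\rangle<0$. Granting this, the simply-laced hypothesis (together with $\delta\neq\pm\beta$, which holds because the $\alp$-coefficients of $\delta$ and $\beta$ disagree) forces $\langle \delta,\beta^\vee\rangle=-1$, so the reflection $s_\beta\in W(L_\alp)$ sends $\delta$ to $\delta+\beta$. The $\beta$-coefficient in the new expansion drops by $1$ while the other coefficients are unchanged, so $h(s_\beta\delta)=h(\delta)-1$; the inductive hypothesis then provides $w\in W(L_\alp)$ with $w(s_\beta\delta)=-\alp$, and $ws_\beta\in W(L_\alp)$ works for $\delta$.

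The hardest step will be verifying the subclaim, since this is where the simply-laced structure enters decisively. Normalize the Killing form so that every root has squared length $2$, and compute
\[
2=\langle \delta,\delta\rangle = -\langle \delta,\alp\rangle - \sum_\beta c_\beta \langle \delta,\beta\rangle.
\]
If every $\beta\in\Pi\setminus\{\alp\}$ with $c_\beta>0$ satisfied $\langle \delta,\beta\rangle\geq 0$, the sum on the right would be non-negative, forcing $\langle \delta,\alp\rangle\leq -2$. In a simply-laced root system $\langle \delta,\alp\rangle\in\{-2,-1,0,1,2\}$, with the extreme values $\pm 2$ attained only for $\delta=\pm\alp$, so $\langle\delta,\alp\rangle\leq -2$ would force $\delta=-\alp$, contradicting $h(\delta)>0$. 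Hence a suitable $\beta$ exists and the induction closes. In a doubly- or triply-laced setting this final implication fails, so the simply-laced assumption in the lemma is essential for this argument.
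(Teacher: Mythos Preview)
Your proof is correct. The height-induction argument is clean and the key inequality step is well justified: the identity $2=\langle\delta,\delta\rangle=-\langle\delta,\alp\rangle-\sum_\beta c_\beta\langle\delta,\beta\rangle$ together with the simply-laced bound $|\langle\delta,\alp\rangle|\leq 2$ (with equality only for $\delta=\pm\alp$) does exactly what you need, and the reflection $s_\beta$ visibly lowers the height by one since it acts as $\delta\mapsto\delta+\beta$.

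Your route is genuinely different from the paper's. The paper argues via representation theory: it identifies the set of roots $\{\delta:\delta(S_\alp)=2\}$ with the weights of the first internal Chevalley module $V_\alp=\fu_\alp/[\fu_\alp,\fu_\alp]$, proves this module is irreducible with lowest weight $\alp$, and then checks case-by-case (using the classification of abelian roots in Table~\ref{tab:QA}) that $V_\alp$ is a \emph{minuscule} representation of $L_\alp$, whence the Weyl group of $L_\alp$ acts transitively on its weights. Your argument is more elementary and self-contained: it stays entirely inside the root system, needs no classification or case analysis, and makes the role of the simply-laced hypothesis completely transparent. The paper's approach, on the other hand, yields structural information (irreducibility and the minuscule property of $V_\alp$) that it reuses elsewhere, for instance in the proof of Lemma~\ref{lem:Fa}.
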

\begin{proof}
We can assume that $\fg$ is simple.
This statement can be proved using the language of minuscule representations, i.e., representations such that the Weyl group has a single orbit on the weights of the representation.  By {\cite[\S VIII.3]{Bou}
these are the fundamental representations corresponding to the abelian roots (see Table \ref{tab:QA}). 

It suffices to show that the representation of the Levi $L_\alpha$ on the first internal Chevalley module $V_{\alp}:=\fu_\alpha/[\fu_\alpha,\fu_\alpha]$
is minuscule. These modules are explicitly computed in \cite[\S 5]{MS}; and this can be checked case-by-case. For completeness we give a  conceptual argument.

We claim first that $V_{\alp}$ is irreducible with lowest weight $\alp$. Evidently $\alp$ is a weight of $V_{\alp}$ with multiplicity one. Also any positive root $\beta$ of $L_{\alp}$ involves only simple roots different from $\alp$, and thus $\alp-\beta$ is not a root. Hence $\alp$ is a lowest weight  of $V_{\alp}$. On the other hand, any weight of $V_{\alp}$ is of the form $\alp+\gamma$, where $\gamma$ is a sum of positive roots from $L_{\alp}$. Thus $\alp$ is the unique lowest weight of $V_{\alp}$.

The Dynkin diagram of $L_{\alp}$ is obtained from that of $G$ by removing $\alp$, and each component has exactly one simple root adjacent to $\alp$, which is easily checked to be an abelian root for the component. Thus the corresponding fundamental representations are minuscule, and thus so is their tensor product $W_{\alp}$. However, $W_{\alp}$ has highest weight $-\alp$, since $\langle -\alp,\beta\rangle$ is 1 if $\beta$ is adjacent to $\alp$ and zero otherwise. It follows that $V_{\alp}\simeq W_{\alp}^*$, and hence $V_{\alp}$ is minuscule.}
\end{proof}

\begin{cor}\label{cor:EasyMin}Let $R$ denote the  set of minimal elements in $(\fg^*)^{S_{\alp}}_{-2}$. 
\begin{enumerate}[(i)]
\item $R= (L_{\alp}\cap \Gamma)(\fg^{\times}_{-\alp})$. \label{it:Min1}
\item \label{it:Min2} $R\cap \left (\fg_{-\alp}^{\times}+\bigoplus_{\eps\in \Psi_\alpha}\fg^*_{-\eps}\right )=\fg_{-\alp}^{\times}$, where 
\begin{equation}\label{=Psia}
\Psi_\alpha :=\{ \textnormal{ root } \eps \mid \langle \eps, \alp \rangle \leq 0, \eps(S_\alp) = 2 \}\,.
\end{equation}
\end{enumerate}
\end{cor}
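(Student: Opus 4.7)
My plan is to apply Lemma \ref{lem:SameOrbit3} with $S=S_\alpha$ and the auxiliary element $Z:=S_\alpha-S_\Pi$. This $Z$ satisfies $\alpha(Z)=0$ and $\beta(Z)=-2$ for every simple root $\beta\neq\alpha$, hence every root $\delta$ with $\delta(S_\alpha)=2$ has $\delta(Z)\leq 0$ with equality only for $\delta=\alpha$. Consequently, under $\ad^*(Z)$ the root space $\fg^*_{-\alpha}$ sits in $(\fg^*)^Z_0$ while every other weight space of $(\fg^*)^{S_\alpha}_{-2}$ sits in $(\fg^*)^Z_{>0}$; moreover, $\fg^Z_{>0}\cap \fg^{S_\alpha}_0 = \fg^Z_{>0}\cap\fl_\alpha$ consists exactly of the negative root spaces of $\fl_\alpha$.

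For part \eqref{it:Min1}, the inclusion $\supseteq$ is immediate from Corollary \ref{cor:RootPairClasses} together with the facts that $L_\alpha$ centralizes $S_\alpha$ and that coadjoint conjugation preserves orbits. For the reverse inclusion, I would take $\varphi\in R$, pick a nonzero component $\varphi_\delta\in\fg^\times_{-\delta}$ of $\varphi$, and use Lemma \ref{lem:WeylMin} to obtain a Weyl element $w\in W(L_\alpha)$ with $w\delta=\alpha$, lifted to $n_w\in L_\alpha\cap\Gamma$ by splitness. Then $\Ad^*(n_w)\varphi=\varphi_0+\varphi'$ with $\varphi_0\in\fg^\times_{-\alpha}$ and $\varphi'\in(\fg^*)^Z_{>0}\cap(\fg^*)^{S_\alpha}_{-2}$ by the observations above. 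Since both $\varphi_0$ and $\varphi_0+\varphi'$ are minimal, they are $\bfG(\C)$-conjugate, so Lemma \ref{lem:SameOrbit3} produces $v\in\Exp(\fg^Z_{>0}\cap\fg^{S_\alpha}_0)\subseteq L_\alpha\cap\Gamma$ with $v\varphi_0=\varphi_0+\varphi'=\Ad^*(n_w)\varphi$. Hence $\varphi=\Ad^*(n_w^{-1}v)\varphi_0\in(L_\alpha\cap\Gamma)(\fg^\times_{-\alpha})$.

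For part \eqref{it:Min2}, inclusion $\supseteq$ follows from part \eqref{it:Min1}. For the reverse, suppose $\varphi=\varphi_0+\varphi'\in R$ with $\varphi_0\in\fg^\times_{-\alpha}$ and $\varphi'\in\bigoplus_{\eps\in\Psi_\alpha}\fg^*_{-\eps}$; the claim is $\varphi'=0$. By Lemma \ref{lem:SameOrbit3} there is $X\in\fg^Z_{>0}\cap\fg^{S_\alpha}_0$ with $\ad^*(X)\varphi_0=\varphi'$. Decomposing $X=\sum_\mu X_\mu$ over its components in the negative root spaces of $\fl_\alpha$, each $\ad^*(X_\mu)\varphi_0$ lies in $\fg^*_{\mu-\alpha}$, so a nonzero contribution to $\varphi'$ would require $\mu-\alpha=-\eps$ for some $\eps\in\Psi_\alpha$, i.e.\ $\nu:=\eps-\alpha$ must be a root of $\fl_\alpha$. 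The main obstacle is to rule this out. The key observation is that $\nu$ is then a root with $n_\alpha(\nu)=0$, so $\nu\neq\pm\alpha$; the simply-laced hypothesis then forces $\langle\nu,\alpha\rangle\in\{-1,0,1\}$, whence $\langle\eps,\alpha\rangle=2+\langle\nu,\alpha\rangle\geq 1$, contradicting $\eps\in\Psi_\alpha$. Thus every term in the decomposition of $\ad^*(X)\varphi_0$ vanishes in the $\varphi'$-direction, and $\varphi'=0$.
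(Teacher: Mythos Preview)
Your proof is correct. Part \eqref{it:Min1} is essentially identical to the paper's argument: the paper also picks a semisimple element in $\fh$ that vanishes on $\alpha$ and is negative on the other simple roots (your $Z=S_\alpha-S_\Pi$ is a concrete instance), applies Lemma~\ref{lem:WeylMin} to arrange a nonzero $\fg^*_{-\alpha}$-component, and then invokes Lemma~\ref{lem:SameOrbit3}.

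For part \eqref{it:Min2} you take a genuinely different route. The paper argues via the Slodowy slice: completing the Killing-dual $f\in\fg_{-\alpha}$ of $\varphi_0$ to an $\sl_2$-triple $(e,h,f)$ with $e\in\fg_\alpha$, one checks that $\varphi'\in(\fg^*)^e$ because $\alpha-\eps$ is not a root for $\eps\in\Psi_\alpha$; then $\varphi_0+\varphi'$ lies in the slice $\varphi_0+(\fg^*)^e$, which meets the minimal orbit only at $\varphi_0$, forcing $\varphi'=0$. You instead reuse Lemma~\ref{lem:SameOrbit3} to produce $X$ with $\ad^*(X)\varphi_0=\varphi'$ and then rule out each root-space summand by the inner-product computation $\langle\eps,\alpha\rangle=2+\langle\eps-\alpha,\alpha\rangle\geq 1$. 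Both arguments hinge on the same root-theoretic fact (that $\eps-\alpha$ cannot be a root when $\langle\eps,\alpha\rangle\leq 0$), but the paper packages it as transversality of the Slodowy slice, which is more conceptual and avoids re-invoking Lemma~\ref{lem:SameOrbit3}; your approach is more self-contained in that it uses only the tools already employed in part \eqref{it:Min1}. One minor point: your application of Lemma~\ref{lem:SameOrbit3} tacitly uses that $\varphi_0$ and $\varphi_0+\varphi'$ lie in the same complex orbit; this holds because all the roots $\eps\in\Psi_\alpha$ have $\alpha$-coefficient one and hence lie in the simple component of $\alpha$, where the complex minimal orbit is unique.
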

\begin{proof}
\eqref{it:Min1}
 Let $z$ be a generic element of $\fh$ that is 0 on $\alp$ and negative on other positive roots. Decompose  $(\fg^*)^{S_{\alp}}_{-2}=\oplus_{i=0}^k V_{k}$ by eigenvectors of $z$, with eigenvalues $0=t_0<t_1<\dots<t_k$. Note that $V_0=\fg^*_{-\alp}$. Let $X\in  (\fg^*)^{S_{\alp}}_{-2}$ be a minimal element and $X=\sum_i X_i$ its decomposition by eigenvalues of $z$.  By Lemma \ref{lem:WeylMin} we can assume, by replacing $X$ by its $L_{\alp}\cap \Gamma$-conjugate, that $X_0\neq 0$. By Lemma \ref{lem:SameOrbit3},  $X$ is conjugate to $X_0$ using $\Exp((\fl_{\alp})^{z}_{>0})\subset L_{\alp}\cap \Gamma$ .

\eqref{it:Min2} Let $Y=Y'+Y''\in R$, where $Y'\in \fg_{-\alp}^{\times}$ and $Y''\in   \bigoplus_{\eps\in \Psi_\alpha}\fg^*_{-\eps}$.
Identify $Y'$ with some $f\in \fg_{-\alp}$ using the Killing form, and complete $f$ to an $\sl_2$-triple $e,h,f$ with $e\in \fg_{\alp}$. Then $Y''\in (\fg^*)^e$, since for every root $\eps\in \Psi_{\alp},$ $\alp-\eps$ is not a root. Thus $Y$ belongs to the Slodowy slice $Y'+(\fg^*)^e$, that is transversal to the orbit of $Y'$. Since the orbit of $Y$ is minimal,  $Y'$ must lie in the same orbit and thus $Y''=0$. 
\end{proof}

{
\begin{lem}\label{lem:min2minGeo}
Let $\fl \subset \fg$ be a $\K$-Levi subalgebra, and let $\cO$ be the minimal nilpotent orbit in $\fg$. Then $\cO\cap \fl$ is either empty or the minimal orbit of $\fl$.
\end{lem}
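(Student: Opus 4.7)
The idea is to take $X \in \cO \cap \fl$ nonzero and show that it must be a nonzero root vector of $\fl$, which by Corollary~\ref{cor:RootPairClasses} and Lemma~\ref{lem:MinProd} forces it to lie in a minimal orbit of $\fl$. The input we will exploit is the well-known fact that the minimal orbit of a simply-laced simple Lie algebra is characterized by having a very small Jacobson--Morozov grading: for a triple $(X,H,Y)$ with $X$ in the minimal orbit of $\fg$, the eigenvalues of $\ad H$ on $\fg$ lie in $\{-2,-1,0,1,2\}$ with a one-dimensional top piece $\fg^H_{2}=\K X$. This follows by Weyl-conjugating $H$ to a dominant representative and identifying it with the coroot of the highest root, using that dimensions of $\ad H$-eigenspaces are conjugation-invariant.

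First, I would apply Jacobson--Morozov inside $\fl$ to the nilpotent element $X$ to obtain an $\sll_2$-triple $(X,H,Y)$ with $H, Y \in \fl$. Viewing this triple as living in $\fg$, the minimality of the $\fg$-orbit of $X$ gives $\fg^H_{\geq 2}=\K X$ as recalled above. Intersecting with $\fl$ yields
\begin{equation}
    \K X \subseteq \fl^H_{\geq 2} \subseteq \fg^H_{\geq 2} = \K X,
\end{equation}
so $\fl^H_{\geq 2}=\fl^H_2=\K X$ is one-dimensional.

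Next, I would pick a Cartan subalgebra $\fh_\fl$ of $\fl$ containing $H$; this exists since $\fl$ is reductive over $\K$ of characteristic zero and $H$ is semisimple. Because $\fh_\fl$ commutes with $H$, the adjoint action of $\fh_\fl$ preserves the eigenspace $\fl^H_2=\K X$, hence acts on the line $\K X$ by a character $\gamma\colon \fh_\fl\to \K$. This character $\gamma$ cannot be zero, since otherwise $X$ would lie in the centralizer of $\fh_\fl$, which equals $\fh_\fl$ itself, contradicting that $X$ is nilpotent. Thus $\gamma$ is a root of $\fl$ and $X$ is a nonzero scalar multiple of the root vector $e_\gamma$ of $\fl$.

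Finally, the root $\gamma$ belongs to exactly one simple component $\fl_j$ of $[\fl,\fl]$, so $X\in \fl_j$, and Corollary~\ref{cor:RootPairClasses} applied inside $\fl_j$ shows that $X$ lies in the minimal $\fl_j$-orbit. By Lemma~\ref{lem:MinProd} this $\fl_j$-orbit, viewed inside $\fl$, is a minimal orbit of $\fl$, completing the proof. The only potential subtlety is checking that the $\K$-dimension statement $\dim_\K \fg^H_{\geq 2}=1$ is inherited from the complex one; this is automatic because $\fg(\K)\otimes_\K \C=\fg(\C)$ and the eigenspace decomposition is compatible with scalar extension.
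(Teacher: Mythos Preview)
Your proof is correct and takes a genuinely different route from the paper's.

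The paper argues by contradiction via a Slodowy slice: assuming $\cO\cap\fl$ contains a non-minimal element, one finds a minimal $\fl$-orbit $\cO_\fl$ in its closure, takes an $\sll_2$-triple $(e,h,f)$ in $\fl$ with $f\in\cO_\fl$, and observes that the slice $f+\fl^e$ must meet $\cO$ at some $f+X$ with $X\neq 0$. Since $\fl^e\subset\fg^e$ and $\cO$ is minimal in $\fg$, this contradicts transversality of $f+\fg^e$ to the $G$-orbit of $f$.

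Your approach is instead a direct structural computation: you build the Jacobson--Morozov triple for $X$ inside $\fl$, use the one-dimensionality of $\fg^H_{\geq 2}$ characterizing the minimal orbit to force $\fl^H_{\geq 2}=\K X$, and then observe that any Cartan of $\fl$ through $H$ must act on this line by a nonzero character, exhibiting $X$ as a root vector of $\fl$. This is more elementary---it avoids Slodowy slice transversality entirely---and has the pleasant byproduct of actually identifying $X$ as a root vector, which is stronger information than the paper extracts. The paper's argument, on the other hand, is shorter and invokes only a standard off-the-shelf fact about Slodowy slices. Both proofs establish the element-wise statement (every $X\in\cO\cap\fl$ lies in a minimal $\fl$-orbit), which is precisely what is used in the application to Theorem~\ref{thm:min-rep}\eqref{it:min0phi}.
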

\begin{proof}
Suppose the contrary. Let $\cO_{\fl}$ denote the minimal orbit of $\fl$. Then $\cO_{\fl}$ lies in the Zariski closure of $\cO\cap \fl$. Thus there exists an $\sl_2$ triple $(e,h,f)$ in $\fl$ such that $f\in \cO_{\fl}$, and the Slodowy slice $f+\fl^e$ to $\cO_{\fl}$ at $f$ intersects $\cO$. Namely, there exists a non-zero $X\in  \fl^e$ with $f+X \in \cO$. This contradicts the minimality of $\cO$, since $f+\fl^e$ is transversal to the orbit of $f$.
\end{proof}

}

\begin{proof}[Proof of Theorem~\ref{thm:min-rep}]
Part \eqref{it:min0} follows from Proposition \ref{prop:domin} and the minimality of $\eta$.

Part \eqref{it:min-min} follows from Corollary \ref{cor:EasyMin}\eqref{it:Min1}, and Lemmas \ref{lem:1_1} and \ref{lem:conjugation-translation}.

{
For part \eqref{it:min0phi}, suppose that there exists a Whittaker pair $(H,\psi)$ for $L_{\alp}$ with $\psi\neq 0$ such that $\cF_{H,\psi}[\cF_{S_{\alp},0}[\eta]]\neq 0$. 
Then, for $T$ big enough, we have $\cF_{H,\psi}[\cF_{S_{\alp},0}[\eta]]=\cF_{H+TS_{\alp},\psi }[\eta]$. Thus, the orbit of $\psi$ is minimal in $\fg^*$ and thus, by Lemma \ref{lem:min2minGeo} also in $\fl_{\alp}^*$. }
\end{proof}

\subsection{Proof of Theorem \ref{thm:G0min}}
Let $\eta$ be a minimal automorphic function. 
 
As above, for any simple root $\alp$ let $L_\alpha$ be the Levi subgroup of $P_\alpha$. {Let $Q_{\alp}\subset L_{\alp}$ be the parabolic subgroup  with Lie algebra $(\fl_{\alp})^{\alp^{\vee}}_{\leq 0}$.
 \begin{lemma}\label{lem:LineStab}
The stabilizer in $L_{\alp}$ of the line $\fg^*_{-\alp}$ as an element of the projective space of $\fg^*$ is $Q_{\alp}$.
\end{lemma}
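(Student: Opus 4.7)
The plan is to reduce to a Lie algebra computation using the Killing form and then pass to the group level by a self-normalizing parabolic argument. First, since the Killing form is $\fg$-invariant, the map $Z\mapsto \varphi_Z:=\kappa(Z,\cdot)$ gives a $\Gamma$-equivariant isomorphism $\fg\IsoTo \fg^*$ sending $\fg_\beta$ to $\fg^*_\beta$ for each root $\beta$. In particular, stabilizing $\fg^*_{-\alpha}$ as a projective line is the same as stabilizing $\fg_{-\alpha}$ under the adjoint action of $L_\alpha$, which is more convenient to analyze.

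Next I would compute the Lie algebra $\fs$ of this stabilizer by decomposing $\fl_\alpha = \fh \oplus \bigoplus_{\gamma\in\Phi(\fl_\alpha)} \fg_\gamma$. For $h\in \fh$ and $0\neq Z\in \fg_{-\alpha}$ we have $[h,Z]=-\alpha(h)Z\in\fg_{-\alpha}$, so $\fh\subseteq \fs$. For a root $\gamma$ of $\fl_\alpha$ we have $[\fg_\gamma,\fg_{-\alpha}]\subseteq \fg_{\gamma-\alpha}$, and since $\gamma\neq 0$ and the root spaces are one-dimensional, $\fg_\gamma\subseteq \fs$ if and only if $[\fg_\gamma,\fg_{-\alpha}]=0$, i.e.\ $\gamma-\alpha$ is not a root of $\fg$. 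Because $\fg$ is simply-laced and $\gamma\neq\pm\alpha$ (as $\gamma\in\fl_\alpha$), the $\alpha$-string through $\gamma$ has length at most two, so $\langle \alpha^\vee,\gamma\rangle\in\{-1,0,1\}$. Writing this value as $p-q$ for the $\alpha$-string $[-p,q]$ and using $p+q\leq 1$ shows that $\gamma-\alpha$ is a root iff $p\geq 1$ iff $\langle\alpha^\vee,\gamma\rangle=1$. Hence
\begin{equation}
\fs \;=\; \fh \oplus \bigoplus_{\substack{\gamma\in\Phi(\fl_\alpha)\\ \langle\alpha^\vee,\gamma\rangle\leq 0}} \fg_\gamma \;=\; (\fl_\alpha)^{\alpha^\vee}_{\leq 0} \;=\; \Lie(Q_\alpha).
\end{equation}

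Finally, to pass from Lie algebras to groups, note that $Q_\alpha$ is the parabolic subgroup of $L_\alpha$ associated to the cocharacter $\alpha^\vee$, hence connected and self-normalizing in $L_\alpha$. Let $S\subseteq L_\alpha$ denote the full stabilizer of the line $\fg^*_{-\alpha}$. Then $S$ is a closed subgroup, $Q_\alpha\subseteq S$, and $\Lie S=\Lie Q_\alpha$, so $Q_\alpha$ is the identity component of $S$. Since $S$ normalizes its identity component, $S\subseteq N_{L_\alpha}(Q_\alpha)=Q_\alpha$, giving $S=Q_\alpha$.

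The main obstacle is the compatibility of sign conventions: one must verify that the Killing form identification really sends $\fg_{-\alpha}$ to $\fg^*_{-\alpha}$ (not $\fg^*_{+\alpha}$), and that the direction of the inequality in $(\fl_\alpha)^{\alpha^\vee}_{\leq 0}$ matches $\langle\alpha^\vee,\gamma\rangle\leq 0$ obtained from the $\alpha$-string criterion. Once the bookkeeping is pinned down via a single worked example (e.g.\ $A_2$), the rest of the argument is mechanical.
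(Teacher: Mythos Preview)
Your proof is correct and follows the same approach as the paper: identify the Lie algebra of the stabilizer via the root-string criterion ``$\gamma-\alpha$ is a root iff $\langle\alpha^\vee,\gamma\rangle>0$'' to obtain $(\fl_\alpha)^{\alpha^\vee}_{\leq 0}$, then pass to the group. The paper's version is a terse two-line argument that simply asserts the last step; your explicit use of the self-normalizing property of parabolics to go from equal Lie algebras to equal groups is a welcome elaboration of a point the paper leaves implicit.
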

\begin{proof}
For any root $\varepsilon$, $\varepsilon(\alp^{\vee})\leq 0$ if and only if $\varepsilon-\alp$ is not a root. Thus the Lie algebra of the stabilizer of $\fg^*_{\alp}$ is the parabolic subalgebra $(\fl_{\alp})_{\leq 0}^{\alp^\vee}$ of $\fl_{\alp}$. Thus the stabilizer is $Q_{\alp}$.
\end{proof}
 }

Let $\Gamma_{\alpha}:=(L_{\alp}\cap \Gamma)/({Q}_{\alp}\cap \Gamma)$.

\begin{prop}\label{prop:0_1}
Let $\alp$ be a (simple) abelian root. Then
\begin{equation}\label{=EasyMin}
\eta(g) =\cF_{S_{\alp},0}[\eta](g) +\sum_{\gamma\in \Gamma_\alpha}  \sum_{\varphi\in\fg^{\times}_{-\alp}}\cW_{\varphi}[\eta](\gamma g).
\end{equation}
\end{prop}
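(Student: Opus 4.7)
\medskip

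\textbf{Proof plan for Proposition~\ref{prop:0_1}.} The plan is to Fourier expand $\eta$ along the (abelian) unipotent radical $U_\alpha$ and then collapse the sum using Theorem~\ref{thm:min-rep} together with Corollary~\ref{cor:EasyMin}. Since $\alpha$ is abelian, every positive root involving $\alpha$ does so with coefficient one, hence $\fu_\alpha = \fg^{S_\alpha}_{2}$ and $\fg^{S_\alpha}_{1} = 0$. The formula \eqref{eq:N_Sphi} then gives $\fn_{S_\alpha,\varphi} = \fg^{S_\alpha}_{>1} = \fu_\alpha$ for every $\varphi \in (\fg^*)^{S_\alpha}_{-2}$, so $N_{S_\alpha,\varphi} = U_\alpha$ and the Pontryagin dual of $[U_\alpha]$ is parameterised, via the fixed additive character $\chi$ of $\A/\K$, by $\varphi \in (\fg^*)^{S_\alpha}_{-2}$. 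Therefore the abelian Fourier expansion yields
\begin{equation}\label{eq:abelian-expansion-plan}
    \eta(g) \;=\; \sum_{\varphi \in (\fg^*)^{S_\alpha}_{-2}} \cF_{S_\alpha,\varphi}[\eta](g).
\end{equation}

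Next I would discard the terms with $\varphi\neq 0$ that are not minimal: by Theorem~\ref{thm:min-rep}\eqref{it:min0} each such Fourier coefficient vanishes on the minimal function $\eta$. This reduces \eqref{eq:abelian-expansion-plan} to the $\varphi=0$ term plus a sum over the set $R$ of minimal elements of $(\fg^*)^{S_\alpha}_{-2}$. By Corollary~\ref{cor:EasyMin}\eqref{it:Min1}, $R = (L_\alpha\cap\Gamma)\cdot \fg^\times_{-\alpha}$. Combining Lemma~\ref{lem:conjugation-translation} (applied with an element $\gamma \in L_\alpha \cap \Gamma$, which commutes with $S_\alpha$) and Lemma~\ref{lem:1_1}, for any $\gamma \in L_\alpha \cap \Gamma$ and $\psi \in \fg^\times_{-\alpha}$ one has
\begin{equation}
    \cF_{S_\alpha,\,\Ad^*(\gamma^{-1})\psi}[\eta](g) \;=\; \cF_{S_\alpha,\psi}[\eta](\gamma g) \;=\; \cW_\psi[\eta](\gamma g),
\end{equation}
so each minimal $\varphi$ rewrites in the desired Whittaker form.

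The core step that I expect to be the main obstacle is the bijectivity of the parameterisation
\begin{equation}
    \Gamma_\alpha \times \fg^\times_{-\alpha} \;\longrightarrow\; R, \qquad (\gamma,\psi)\longmapsto \Ad^*(\gamma^{-1})\psi,
\end{equation}
and the corresponding well-definedness of the inner sum on representatives of cosets. Surjectivity is exactly Corollary~\ref{cor:EasyMin}\eqref{it:Min1}. For injectivity, if $\Ad^*(\gamma_1^{-1})\psi_1 = \Ad^*(\gamma_2^{-1})\psi_2$ with $\psi_i \in \fg^\times_{-\alpha}$, then $\gamma_2\gamma_1^{-1}\in L_\alpha\cap\Gamma$ must preserve the line $\fg^*_{-\alpha}$ setwise, which by Lemma~\ref{lem:LineStab} places it in $Q_\alpha\cap\Gamma$; thus $\gamma_1,\gamma_2$ represent the same coset in $\Gamma_\alpha$ and $\psi_1 = \psi_2$ follows by fixing a representative. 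For well-definedness, if $\gamma$ is replaced by $\gamma q$ with $q \in Q_\alpha\cap\Gamma$, then $q$ acts on $\fg^\times_{-\alpha}$ by a scalar, so the substitution $\psi \mapsto \Ad^*(q^{-1})\psi$ is a bijection of $\fg^\times_{-\alpha}$ and leaves the inner sum $\sum_{\psi\in \fg^\times_{-\alpha}} \cW_\psi[\eta](\gamma g)$ invariant. Putting everything together yields
\begin{equation}
    \eta(g) \;=\; \cF_{S_\alpha,0}[\eta](g) + \sum_{\gamma\in\Gamma_\alpha}\sum_{\psi\in\fg^\times_{-\alpha}} \cW_\psi[\eta](\gamma g),
\end{equation}
as required.
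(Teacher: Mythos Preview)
Your proof is correct and follows essentially the same approach as the paper: Fourier expand along the abelian $U_\alpha$, discard non-minimal nonzero terms, use Corollary~\ref{cor:EasyMin}\eqref{it:Min1} to parameterise the minimal terms by $\Gamma_\alpha\times\fg^\times_{-\alpha}$, and then apply Lemma~\ref{lem:1_1}. The paper's version is terser and does not spell out the bijectivity check or the well-definedness of the inner sum on cosets; your explicit verification of these points via Lemma~\ref{lem:LineStab} is a welcome addition but not a different route.
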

\begin{proof}
By definition of an abelian root, the group $U_{\alp}$ is abelian. Decompose $\eta$ into Fourier series on $U_{\alp}$. The coefficients in the Fourier series will be given by $\cF_{S_{\alp},\varphi'}[\eta]$ with $\varphi'\in(\fg^*)^{S_{\alp}}_{-2}$. Note that this coefficient vanishes unless $\varphi'$ is minimal or zero, and that by Corollary \ref{cor:EasyMin}, all minimal $\varphi'\in (\fg^*)^{S_{\alp}}_{-2}$ can be conjugated into $\fg^{\times}_{-\alp}$ using $L_{\alp}\cap \Gamma$. Thus we have
 \begin{equation}
        \begin{split}
            \eta(g) &= \sum_{\varphi'\in(\fg^*)^{S_{\alp}}_{-2}}\cF_{S_{\alp},\varphi'}[\eta](g) =\cF_{S_{\alp},0}[\eta](g) + \sum_{\gamma\in\Gamma_\alpha}  \sum_{\varphi\in \fg^{\times}_{-\alp}}\cF_{S_{\alp},\varphi}[\eta](\gamma g)\,.
        \end{split}
    \end{equation}
Lemma \ref{lem:1_1}
and the minimality of $\eta$ imply that $\cF_{S_{\alp},\varphi}[\eta](\gamma g)=\cW_{\varphi}[\eta](\gamma g)$.
\end{proof}

\begin{proof}[Proof of Theorem~\ref{thm:G0min}]
The proof is  by induction on the rank of $G$, that we denote by $n$. The base case of rank 1 group is the classical Fourier series decomposition.
For the induction step {let us show that
\begin{equation}\label{=minStep}
\eta=\cF_{S_{\beta_n},0}[\eta]+C_n[\eta]
\end{equation}

For that purpose, assume first that the root $\alp:=\beta_n$ is abelian.  By Proposition \ref{prop:0_1} we have 
\begin{equation}\label{=A1}
\eta(g) =\cF_{S_{\alp},0}[\eta](g) +\sum_{\gamma\in\Gamma_\alpha}  \sum_{\varphi\in \fg^{\times}_{\alp}}\cW_{\varphi}[\eta](\gamma g)=\cF_{S_{\alp},0}[\eta](g)+A_n[\eta](g)=\cF_{S_{\alp},0}[\eta](g)+C_n[\eta](g)\,.
\end{equation}

If $\alp:=\beta_n$ is a Heisenberg root then by Proposition \ref{prop:Heis} we have
\begin{align}\label{=DiffMinInt1}
\eta(g)&=\sum_{\varphi\in (\fg^*)^{S_{\alp}}_{-2}}\cF_{S_{\alp},\varphi}[\eta](g)+\sum_{\varphi\in \fg^{\times}_{-\alp}}\sum_{\omega\in \Omega_{\alp}}\sum_{ \psi\in \bigoplus_{\eps\in \Psi_\alpha}\fg^*_{-\eps}}\cF_{S_{\alp},\varphi+\psi}[\eta]({\omega \gamma_{n}} g)\nonumber\\
&=\cF_{S_{\alp},0}[\eta](g)+A_n[\eta](g)+B_{n}[\eta](g)=\cF_{S_{\alp},0}[\eta](g)+C_n[\eta](g)\,.
\end{align}

Formula \eqref{=minStep} in now established. 
By Theorem \ref{thm:min-rep}\eqref{it:min0phi},  $\cF_{S_{\alp},0}[\eta]$ is  a minimal automorphic function on $L_{\alp}$. 
As before, let $S_\Pi\in \fh$ denote the element that is 2 on all positive roots. Note that for any $\varphi\in (\fl_{\alp}^*)^{S_\Pi}_{-2}$, 
we have $\cW'_{\varphi}[\cF_{S_{\alp},0}[\eta]]=\cW_{\varphi}[\eta]$ where the prime denotes a Whittaker coefficient with respect to $L_\alpha$.
This implies that $C_i'[\cF_{S_{\alp},0}[\eta]]=C_i$ for any $i<n$. 
From the induction hypothesis and \eqref{=minStep} we obtain
\begin{equation}\label{=G01easy}
\eta(g) =\cF_{S_{\beta_n},0}[\eta]+C_n=\cW_{0}[\eta](g) +\sum_{i=1}^{n-1}{C}_i+C_n=\cW_{0}[\eta](g) +\sum_{i=1}^{n}{C}_i\,.
\end{equation}}

 \end{proof}

\subsection{Proof of Theorem \ref{thm:ntm-rep}}
\label{sec:pfC}

Suppose that $\operatorname{rk}(\fg)>2$.
Let $\eta$ be a next-to-minimal automorphic function. 
Let $\alp$ be a 
simple root and let $\psi\in \fg^{\times}_{-\alp}$.

{
\begin{lemma}\label{lem:RootOrbs}
Let $\gamma\neq \alp$ be a positive root, and let $\varphi'\in\fg^{\times}_{-\gamma}$. Let $\cO$ denote the orbit of $\psi+\varphi'$. Then 
$\cO$  is minimal if 
 $\langle \alp, \gamma \rangle >0$,   
$\cO$ is next-to-minimal if $\langle \alp, \gamma \rangle =0$ and $\cO$ is neither minimal nor next-to-minimal if $\langle \alp, \gamma \rangle <0$.
\end{lemma}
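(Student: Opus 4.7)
The plan is to split into three cases based on the value of $\langle \alpha, \gamma \rangle$, which in the simply-laced setting takes values in $\{-1, 0, 1\}$ for distinct positive roots $\alpha \neq \gamma$. In each case I would identify the complex orbit of $\psi + \varphi'$ (or equivalently, of the corresponding element of $\fg$ under the Killing form identification) and invoke Lemma~\ref{lem:BCsmall} to classify it. Throughout I would identify $\psi, \varphi'$ with elements $f_\psi \in \fg_{-\alpha}$ and $f_{\varphi'} \in \fg_{-\gamma}$ via the Killing form, and work with the adjoint action.

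For the case $\langle \alpha, \gamma \rangle = 1$, the strategy is to show directly that $\psi + \varphi'$ is $\Gamma$-conjugate to $\psi$, hence minimal by Corollary~\ref{cor:RootPairClasses}. Since $\langle \alpha, \gamma^\vee \rangle = 1$, the difference $\delta := \alpha - \gamma$ is a root. For any $t \in \fg_\delta$, the key computation is that $\exp(t) \cdot (\psi + \varphi') = \psi + \varphi' + [t, \psi]$, where the $[t, \varphi']$ term and higher iterated brackets vanish because $\delta - \gamma = \alpha - 2\gamma$ is not a root -- root strings in simply-laced types have length at most two. Since $\delta + (-\alpha) = -\gamma$ is a root, $[\fg_\delta, \fg_{-\alpha}]$ equals the one-dimensional space $\fg_{-\gamma}$, so I can choose $t$ with $[t, \psi] = -\varphi'$, giving the desired conjugacy.

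For $\langle \alpha, \gamma \rangle = 0$, the roots $\alpha$ and $\gamma$ are orthogonal, so $\alpha \pm \gamma$ are not roots and $\{\pm\alpha, \pm\gamma\}$ generates a root subsystem of type $A_1 \times A_1$. The corresponding Levi $\fl \subset \fg$ has type $A_1 \times A_1$, and $f_\psi + f_{\varphi'}$ is regular in $\fl$ (being the sum of nonzero nilpotents in each $A_1$ factor). Its Bala--Carter label is therefore $A_1 \times A_1$, and by Lemma~\ref{lem:BCsmall} (combined with Lemma~\ref{lem:MinProd} if $\alpha, \gamma$ lie in different simple components of $\fg$) the orbit is next-to-minimal. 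For $\langle \alpha, \gamma \rangle = -1$, the sum $\alpha + \gamma$ is a root and $\{\alpha, \gamma\}$ forms a set of simple roots for an $A_2$ subsystem; the corresponding Levi has type $A_2$, and $f_\psi + f_{\varphi'}$ is a sum of two negative simple root vectors, hence regular in this $A_2$. The Bala--Carter label is $A_2$, which by Lemma~\ref{lem:BCsmall} is neither minimal ($A_1$) nor next-to-minimal ($A_1 \times A_1$).

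The main subtlety is justifying that a regular nilpotent of a Levi of type $A_1 \times A_1$ (respectively $A_2$) genuinely has Bala--Carter label equal to that type, rather than something smaller; equivalently, that such an element is distinguished in its Levi. For $A_1$ and $A_2$ this is immediate (the regular nilpotent is distinguished in any simple factor of type $A$), but a brief justification may be warranted. Together with the closure relations between these orbits and the minimal/next-to-minimal orbits in $\fg$, this fixes the classification.
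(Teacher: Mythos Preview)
Your proof is correct and, in the cases $\langle \alpha, \gamma \rangle \in \{0,-1\}$, essentially identical to the paper's: construct the rank-$2$ Levi $\fl$ determined by $\alpha$ and $\gamma$, identify its type as $A_1 \times A_1$ or $A_2$, observe that $f_\psi + f_{\varphi'}$ is regular nilpotent in $\fl$, and read off the Bala--Carter label via Lemma~\ref{lem:BCsmall}. Your remark about Lemma~\ref{lem:MinProd} covering the case of $\alpha,\gamma$ in different simple components is exactly how the paper reduces to simple $[\fg,\fg]$ at the outset.

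The genuine difference is in the case $\langle \alpha, \gamma \rangle = 1$. The paper treats this case by the same uniform Levi construction, whereas you produce an explicit unipotent $\exp(t)$ with $t \in \fg_{\alpha - \gamma}$ conjugating $\psi + \varphi'$ back to $\psi$. Your root-string argument that $[t, f_{\varphi'}] = 0$ and $\ad(t)^2 f_\psi = 0$ is correct, and the surjectivity of $\ad f_\psi \colon \fg_{\alpha-\gamma} \to \fg_{-\gamma}$ is exactly what is needed. This direct route has two advantages: it yields the conjugacy already over $\K$ (not just $\C$), and it sidesteps a wrinkle in the paper's formulation. Namely, when $\langle \alpha, \gamma \rangle = 1$ the Levi $\fl = \fg^{\fh'}$ (with $\fh' = \ker\alpha \cap \ker\gamma$) is actually of type $A_2$, not $A_1$ as the paper states, and $f_\psi + f_{\varphi'}$ is \emph{not} principal in it but rather lies in its minimal orbit (think of $aE_{21} + bE_{31}$ in $\sll_3$). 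The paper's conclusion is still correct, since the Bala--Carter label of that minimal orbit is $A_1$, but your explicit conjugacy avoids the issue altogether. The trade-off is that the paper's argument is uniform across the three cases, while yours treats the first case by a separate mechanism.
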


\begin{proof}
By Lemma \ref{lem:MinProd} we can assume that $[\fg,\fg]$ is simple. 
Let $\fh' \subset \fh$ be the simultaneous kernel of $\alpha$ and $\gamma$, and let $\fl$ be its centralizer in $\fg$. Then $\fh'$ has codimension at most 2 in $\fh$, hence $\fl$ is a Levi subalgebra of semisimple rank $\leq 2$ whose roots include $\alpha$ and $\gamma$. Note that $\cO\cap \fl$ is a principal nilpotent orbit in $\fl$.
By a straightforward rank 2 calculation we see that $\fl$ has type $A_1$ if  $\langle \alp, \gamma \rangle >0$, type $A_1\times A_1$ if  $\langle \alp, \gamma \rangle =0$ and type $A_2$  if $\langle \alp, \gamma \rangle <0$. The lemma follows now from Lemma \ref{lem:BCsmall}.
\end{proof}
}

\begin{notn}\label{notn:SNTM}
Denote by $\Delta_{\alp}$ the set of simple roots orthogonal to $\alp$.
Define $S\in \fh$ to be 0 on any simple root $\eps\in\Delta_{\alp}$, and 2 on other simple roots. 
\end{notn}

\begin{proposition}\label{prop:1_2}
We have $\cF_{S_{\alp},\psi}[\eta]=\cF_{S,\psi}[\eta]$ {for any $\psi\in \fg^*_{-\alp}$}.
\end{proposition}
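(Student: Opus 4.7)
The plan is to show that $(S_\alp, \psi)$ dominates $(S, \psi)$ in the sense of Definition~\ref{def:dominate} with trivial transfer variety, and then to conclude the equality via Theorem~\ref{thm:IntTrans} or, failing the hypothesis $\psi \in \mathrm{WS}(\eta)$ of that theorem, via a direct unipotent Fourier expansion combined with Corollary~\ref{cor:domin}. Both $(S_\alp, \psi)$ and $(S, \psi)$ are Whittaker pairs since $\alp \notin \Delta_\alp$ gives $\alp(S_\alp) = \alp(S) = 2$, hence $\ad^*(S_\alp)\psi = \ad^*(S)\psi = -2\psi$ for $\psi \in \fg^*_{-\alp}$. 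The eigenvalues of $\ad(S_\alp)$ and $\ad(S)$ on root spaces are all even, so $\fg^{S_\alp}_1 = \fg^S_1 = 0$, and \eqref{eq:N_Sphi} gives $\fn_{S_\alp, \psi} = \fu_\alp$ and $\fn_{S, \psi} = \fg^S_{\geq 2}$. Denoting by $c_\gamma(\eps)$ the coefficient of the simple root $\gamma$ in $\eps$, $\fn_{S, \psi}$ is spanned by positive root spaces $\fg_\eps$ with $\sum_{\gamma \notin \Delta_\alp} c_\gamma(\eps) \geq 1$; the inclusion $\fu_\alp \subseteq \fn_{S, \psi}$ is immediate since $\alp \notin \Delta_\alp$.

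Next I verify the domination and the vanishing of the transfer variety. The elements $S_\alp$ and $S$ commute (both lie in $\fh$); for any positive root $\eps$ with $c_\alp(\eps) \geq 1$ one has $\eps(S - S_\alp) = 2 \sum_{\gamma \notin \Delta_\alp,\, \gamma \neq \alp} c_\gamma(\eps) \geq 0$, so $\fg_\psi \cap \fg^{S_\alp}_{\geq 1} \subseteq \fn_{S_\alp, \psi} \subseteq \fg^{S - S_\alp}_{\geq 0}$, yielding the domination. The transfer variety $\fv := \fg^{S_\alp}_{>1} \cap \fg^S_{<1}$ vanishes: positive roots in $\fu_\alp$ already satisfy $\eps(S) \geq 2$, and negative roots have $c_\alp(\eps) \leq 0$. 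Thus $V = \{e\}$, and if $\psi$ lay in $\mathrm{WS}(\eta)$ the desired equality would follow at once from Theorem~\ref{thm:IntTrans}.

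Since $\eta$ is next-to-minimal and $\psi$ is minimal (Corollary~\ref{cor:RootPairClasses}), $\psi \notin \mathrm{WS}(\eta)$, so I argue directly. The inclusion $[\fn_{S, \psi}, \fu_\alp] \subseteq \fu_\alp$ shows that $N_{S_\alp, \psi}$ is normal in $N_{S, \psi}$ with abelian quotient $M$, and $\chi_\psi|_M = 1$ since $\fm \cap \fg_\alp = 0$. Fubini therefore gives
\begin{equation*}
    \cF_{S, \psi}[\eta](g) = \int_{[M]} \cF_{S_\alp, \psi}[\eta](mg)\,dm,
\end{equation*}
and Fourier-expanding the integrand on $[M]$, each nonzero mode equals $\int_{[N_{S, \psi}]} \eta(ng)\,\chi_{\psi + \lambda'}(n)^{-1}\,dn$ for some nonzero $\lambda' \in \fm^*$, identified with an element of $\bigoplus_\gamma \fg^*_{-\gamma}$ summed over the positive roots $\gamma$ in $\fm$.

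To see each such mode vanishes, pick any $\gamma$ with $(\lambda')_\gamma \neq 0$. Since $\gamma$ is a root of $\fm$, $c_\alp(\gamma) = 0$ while $\gamma$ has a positive coefficient at some simple neighbor of $\alp$, forcing $\langle \alp, \gamma \rangle \leq -1$. Lemma~\ref{lem:RootOrbs} then places $\psi + (\lambda')_\gamma$ in an orbit of Bala--Carter type $A_2$, strictly above next-to-minimal; adding the remaining components of $\lambda'$ cannot decrease the orbit closure, so $\psi + \lambda'$ likewise lies in an orbit above next-to-minimal. Splitting the Fourier expansion along the $S$-grading of $\fm$ reduces to the case where $\lambda'$ is $S$-homogeneous of weight $-2$, so that $(S, \psi + \lambda')$ is a bona fide Whittaker pair; Lemma~\ref{lem:domin}(i) dominates it by a neutral pair, and next-to-minimality of $\eta$ together with Corollary~\ref{cor:domin} gives vanishing. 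Only the trivial mode survives, so $\cF_{S_\alp, \psi}[\eta](mg) = \cF_{S, \psi}[\eta](g)$ for every $m \in [M]$; evaluating at $m = e$ finishes the proof. The main obstacle is this last orbit-closure step, which requires a careful comparison of $\psi + \lambda'$ with its $A_2$-type sub-pattern and the reduction of the possibly $S$-inhomogeneous character $\chi_{\psi + \lambda'}$ to one coming from a genuine Whittaker pair.
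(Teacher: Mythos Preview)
Your setup---domination of $(S,\psi)$ by $(S_\alpha,\psi)$ with $\fg^{S_\alpha}_1=\fg^S_1=0$ and trivial transfer variety---is exactly the paper's, which at this point simply invokes Theorem~\ref{thm:IntTrans}. You are right that the hypothesis $\psi\in\mathrm{WS}(\eta)$ of that theorem (as stated here) fails for next-to-minimal $\eta$ and minimal $\psi$, so a direct argument is warranted. Your Fourier-expansion approach is correct in outline and in fact simpler than you think: every root $\gamma$ of $\fm$ already has $\gamma(S)=2$, since $c_\alpha(\gamma)=0$ and the simply-laced constraint $\langle\gamma,\alpha\rangle=-\sum_{\beta}c_\beta(\gamma)\in\{-1,0,1\}$ (sum over simple $\beta$ adjacent to $\alpha$) forces $\sum_\beta c_\beta(\gamma)=1$. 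So $(S,\psi+\lambda')$ is automatically a Whittaker pair for every $\lambda'\in\fm^*$, and no ``splitting along the $S$-grading'' is needed. The same computation gives $\langle\gamma_1+\gamma_2,\alpha\rangle=-2$ for any two roots $\gamma_1,\gamma_2$ of $\fm$, which is impossible for a root other than $-\alpha$; this is the missing justification that the quotient $M$ is abelian.

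The genuine gap is the orbit-closure step: ``adding the remaining components of $\lambda'$ cannot decrease the orbit closure'' is false as a general principle and you offer no argument for it here. A correct replacement is the following degeneration. Pick any $\gamma_0$ in the support of $\lambda'$ and set $Z=4\alpha^\vee+2\gamma_0^\vee-3S_\alpha\in\fh$. Then $\alpha(Z)=\gamma_0(Z)=0$, while for every other $\gamma$ in the support one has $\gamma(Z)=-4+2\langle\gamma,\gamma_0\rangle\leq-2$. Letting $t\to-\infty$ in $\Ad^*(\exp(tZ))(\psi+\lambda')$ shows that $\psi+\lambda'_{\gamma_0}$ lies in the Zariski closure of the orbit of $\psi+\lambda'$. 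By Lemma~\ref{lem:RootOrbs} the former has Bala-Carter type $A_2$, so $\psi+\lambda'$ lies in an orbit not contained in the closure of any next-to-minimal orbit; Corollary~\ref{cor:domin} then gives $\cF_{S,\psi+\lambda'}[\eta]=0$, and your argument concludes.
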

\begin{proof}
Note that $S_{\alp}$ dominates $S$, and that $\fg^{S_{\alp}}_1=\fg^S_1=\fg^{S_{\alp}}_{>1}\cap\fg^S_{<1}=\{0\}.$ Thus the statement follows from Theorem \ref{thm:IntTrans}.
\end{proof}

Let $G'\subset G$ be the Levi subgroup given by $\Delta_{\alp}$.
\begin{prop}\label{prop:2to1}
The restriction
$\cF_{S,\psi}[\eta]|_{G'}$ is a minimal or a trivial automorphic function on $G'$.
\end{prop}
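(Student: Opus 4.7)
To prove the proposition, we verify that the Whittaker support of $\theta := \cF_{S,\psi}[\eta]|_{G'}$, viewed as an automorphic function on $G'$, consists only of the zero orbit and minimal orbits of $\fg'$. Equivalently, we must show that $\cF^{\fg'}_{h,\tau}[\theta] \equiv 0$ for every neutral Whittaker pair $(h,\tau)$ on $\fg'$ in which $\tau$ is nonzero and non-minimal. Fix such $(h,\tau)$. Since $h$ arises from an $\sl_2$-triple inside $\fg'$, it lies in the span of the coroots $\gamma^\vee$ for $\gamma\in\Delta_\alpha$, and since $\alpha(\gamma^\vee)=0$ for each such $\gamma$ we have $\alpha(h)=0$, so $[h,S]=0$. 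A direct verification then shows that $(h+S,\psi+\tau)$ is a Whittaker pair on $\fg$: the cross actions $\ad^*(h)\psi$ and $\ad^*(S)\tau$ vanish, while $\ad^*(S)\psi=-2\psi$ and $\ad^*(h)\tau=-2\tau$ by hypothesis.

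The geometric key step is to show that the complex orbit of $\psi+\tau$ in $\fg^*$ is not contained in the closure of any next-to-minimal orbit. By Bala--Carter applied to $\fg'(\C)$, $\tau$ is distinguished in some complex Levi $\fl'\subseteq\fg'(\C)$ of semisimple rank at least $2$ (this rank is at least $2$ precisely because $\tau$ is non-minimal and nonzero). Since $\alpha$ is orthogonal to $\Delta_{\fl'}\subseteq\Delta_\alpha$, the subalgebra $\fl:=\fl'\oplus\sl_2^{(\alpha)}$ is a Levi of $\fg(\C)$ of semisimple rank at least $3$, and $\psi+\tau$ is distinguished in $[\fl,\fl]$. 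By Lemmas~\ref{lem:BCsmall} and~\ref{lem:MinProd}, every next-to-minimal orbit of $\fg$ corresponds to a Bala--Carter label whose Levi has rank exactly $2$; consequently the orbit of $\psi+\tau$ lies strictly above every next-to-minimal orbit in the closure order (indeed, its closure properly contains the next-to-minimal orbit of $\psi+\tau_{\min}$ obtained by degenerating $\tau$ to a minimal element $\tau_{\min}\in\fg'$). Corollary~\ref{cor:domin} then yields $\cF_{h+S,\psi+\tau}[\eta]\equiv 0$.

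Finally, we connect $\cF^{\fg'}_{h,\tau}[\theta]$ to $\cF_{h+S,\psi+\tau}[\eta]$. Unfolding the iterated integral gives
\begin{equation}
\cF^{\fg'}_{h,\tau}[\theta](g) = \intl_{[U_\alpha \rtimes N'_{h,\tau}]} \eta(vg)\,\chi_{\psi+\tau}(v)^{-1}\, dv,
\end{equation}
where $U_\alpha=N_{S,\psi}$ is normalized by $N'_{h,\tau}\subseteq G'$ (being in the Levi), and the character $\chi_\psi$ is invariant under $N'_{h,\tau}$-conjugation because, in a simply-laced root system, the sum of two orthogonal roots is never a root (so $\psi([X,Y])=0$ whenever $X\in\fn'_{h,\tau}$ and $Y\in\fu_\alpha$). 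A tower argument using Theorem~\ref{thm:IntTrans}, combined if necessary with a Fourier expansion along the root directions contained in $N_{h+S,\psi+\tau}$ but not in $U_\alpha\rtimes N'_{h,\tau}$ (each expansion term corresponding to a character whose derivative has orbit at least as large as that of $\psi+\tau$, and hence vanishing by the same Bala--Carter argument), realizes $\cF^{\fg'}_{h,\tau}[\theta]$ in terms of $\cF_{h+S,\psi+\tau}[\eta]$, whose vanishing was established above. The main obstacle is precisely this third step: because the unipotent subgroups $U_\alpha\rtimes N'_{h,\tau}$ and $N_{h+S,\psi+\tau}$ differ on root directions of negative $\alpha$-coefficient whose $h$-weight is sufficiently large, the propagation of vanishing demands a careful root-by-root analysis, not a single invocation of Theorem~\ref{thm:IntTrans}.
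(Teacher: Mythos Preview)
Your overall strategy matches the paper's, and your geometric step (Bala--Carter in place of the paper's Slodowy-slice Lemma~\ref{lem:minmin}) is a legitimate alternative. Two issues remain, one cosmetic and one substantive.

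First, the identification $U_\alpha = N_{S,\psi}$ is false: $S$ is $2$ on $\alpha$ \emph{and} on every simple root adjacent to $\alpha$, so $N_{S,\psi}=\Exp(\fg^S_{\geq 2})$ is the nilradical of the parabolic with Levi $G'$, which strictly contains $U_\alpha$. Your subsequent claims (normalization by $N'_{h,\tau}$, invariance of $\chi_\psi$) hold for $N_{S,\psi}$, so this is a labeling error rather than a fatal one.

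The substantive gap is the one you flag yourself in step~3. Your Whittaker pair $(h+S,\psi+\tau)$ does \emph{not} in general satisfy $N_{h+S,\psi+\tau}=N_{S,\psi}\rtimes N'_{h,\tau}$: a root $\gamma$ with $\gamma(S)\geq 2$ but $\gamma(h)$ very negative may fall out of $\fn_{h+S,\psi+\tau}$, and conversely. Your proposed remedy --- a ``tower argument'' plus further Fourier expansions along the discrepancy --- is not worked out, and it is not clear how to make it go through without essentially redoing the analysis. The paper bypasses the difficulty in one line: set $Z:=S-S_\alpha$ (so $Z$ vanishes on $\Delta_\alpha\cup\{\alpha\}$ and is $2$ on adjacent simple roots) and use the pair $(S+TZ+h,\psi+\tau)$ for $T\gg 0$. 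For large $T$, every root $\gamma$ outside $\fg'$ with $\gamma\neq\pm\alpha$ has $\gamma(Z)\neq 0$, so its $(S+TZ+h)$-weight is dominated by $T\gamma(Z)$; one checks directly that $\fn_{S+TZ+h,\psi+\tau}=\fn_{S,\psi}\oplus\fn'_{h,\tau}$, whence $\cF^{\fg'}_{h,\tau}[\theta]=\cF_{S+TZ+h,\psi+\tau}[\eta]$ on the nose, and Corollary~\ref{cor:domin} finishes. No root-by-root analysis is needed --- the trick is to choose the right semisimple element, not $h+S$.
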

For the proof we will need the following geometric lemma.
\begin{lem}\label{lem:minmin}
Let $\varphi'\in {\lie g'}^*$ be nilpotent such that $\varphi'+\psi$ belongs to a next-to-minimal orbit in $\fg^*$. Then $\varphi'$ belongs to the minimal orbit of ${\lie g'}^*$.
\end{lem}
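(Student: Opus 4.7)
The plan is to combine the structure theory of simply-laced root systems with the Bala--Carter classification of nilpotent orbits, reducing the claim to a comparison of Bala--Carter labels inside a carefully chosen Levi subalgebra that contains both $\psi$ and $\varphi'$.

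First I would establish a structural fact. Because $\fg$ is simply-laced and every root $\gamma$ of $\lie g'$ is orthogonal to $\alpha$, the vector $\alpha+\gamma$ cannot be a root: its squared length would be $4\neq 2$. Hence $[\fg_{\pm\alpha},\lie g']=0$, so the $\sl_2$-copy $\sl_2^{(\alpha)}:=\fg_{\alpha}\oplus\K h_\alpha\oplus\fg_{-\alpha}$ commutes with $\lie g'$ inside $\fg$, and $\fm:=\sl_2^{(\alpha)}+\lie g'$ is the standard Levi subalgebra of $\fg$ with simple roots $\{\alpha\}\cup\Delta_\alpha$, having semisimple part $\sl_2^{(\alpha)}\oplus[\lie g',\lie g']$.

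Next I would handle the trivial case and then apply Bala--Carter. If $\varphi'=0$ then $\psi+\varphi'=\psi$ is minimal, contradicting the assumption, so $\varphi'\neq 0$. Identify $\psi\leftrightarrow f\in\fg_{-\alpha}$ and $\varphi'\leftrightarrow f'\in[\lie g',\lie g']$ via the Killing form, and let $\fl'\subset\lie g'$ be the Bala--Carter Levi of $\varphi'$ in $\lie g'$. After conjugating $\varphi'$ by an element of $\bfG'(\K)$, which fixes $\psi$ since $\bfG'$ commutes with $\sl_2^{(\alpha)}$, I may assume $f'$ is distinguished in $\fl'$. Then $\fl:=\sl_2^{(\alpha)}\oplus\fl'$ is a Levi of $\fg$, and $f+f'$ is distinguished in $\fl$ as a sum of distinguished nilpotents in commuting semisimple ideals. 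Consequently the Bala--Carter label of $\bfG\cdot(\psi+\varphi')$ is $A_1\times \fl'$ (viewed as a type of Levi subalgebra, up to conjugacy).

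To conclude, by Lemmas \ref{lem:BCsmall} and \ref{lem:MinProd} the next-to-minimal orbits of $\fg$ have Bala--Carter label $A_1\times A_1$. Comparing types forces $\fl'\cong A_1$, and applying the same lemmas to $\lie g'$ yields that $\varphi'$ lies in the minimal orbit of $\lie g'^*$.

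The main obstacle is the additivity of the Bala--Carter label along a direct sum of commuting semisimple ideals which is used in the second paragraph. This is a standard consequence of the Bala--Carter classification (distinguishedness in a Levi-direct-sum is equivalent to distinguishedness in each summand, and BC labels are unique up to $\bfG$-conjugacy), but in the non-simple setting of $\fg$ it requires care, which is precisely where Lemma \ref{lem:MinProd} enters in the final step.
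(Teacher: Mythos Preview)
Your argument is correct in substance and takes a genuinely different route from the paper. The paper argues by contradiction via Slodowy slices: if $\varphi'$ were not minimal in $\lie g'^*$, one picks a minimal $\psi'\in\lie g'^*$ in the closure of its orbit so that (after conjugation) $\varphi'$ lies in the Slodowy slice at $\psi'$; then $\psi+\psi'$ is next-to-minimal in $\fg^*$ and $\psi+\varphi'$ lies in its Slodowy slice but differs from it, forcing $\psi+\varphi'$ into a strictly larger orbit. Your approach instead computes the Bala--Carter Levi of $\psi+\varphi'$ directly as having semisimple type $A_1\times[\fl',\fl']$ and reads off $[\fl',\fl']\cong A_1$ from the hypothesis. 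The Slodowy-slice argument is more elementary---it needs only transversality and the fact that two orthogonal minimal elements sum to a next-to-minimal one---while your approach is more structural and systematic but invokes the full Bala--Carter classification.

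Two minor imprecisions are worth cleaning up. First, $\sl_2^{(\alpha)}$ does not commute with all of $\lie g'$: the torus of $\lie g'$ acts nontrivially on $\fg_{\pm\alpha}$. What is true, and what you actually need, is that $\sl_2^{(\alpha)}$ commutes with $[\lie g',\lie g']$, so the semisimple part of $\fm$ splits as claimed. Correspondingly, conjugation by $\bfG'(\K)$ only fixes $\psi$ up to a nonzero scalar; either observe that this is harmless (a scalar multiple of $\psi$ is still in $\fg^\times_{-\alpha}$, and the argument goes through verbatim), or conjugate instead by the derived subgroup, which genuinely fixes $\psi$. Second, $\sl_2^{(\alpha)}\oplus\fl'$ is not a direct sum as written, since both pieces meet the Cartan; it is cleaner to define $\fl$ as the standard Levi with simple roots $\{\alpha\}\cup\Theta$, where $\Theta$ is the simple-root set of $\fl'$, and then note $[\fl,\fl]=\sl_2^{(\alpha)}\oplus[\fl',\fl']$.
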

\begin{proof}
Clearly $\varphi'\neq 0$. If the orbit of $\varphi'$ is not minimal then it belongs to the Slodowy slice of some element $\psi'$ of the minimal orbit of $(\fg')^*$. 
Then $\varphi'+\psi'$ belongs to a next-to-minimal orbit of $\fg^*$, and $\varphi'+\psi$ belongs to the Slodowy slice of $\varphi'+\psi'$ and thus lies in an orbit that is higher than next-to-minimal. 
\end{proof}

\begin{proof}[Proof of Proposition \ref{prop:2to1}]
Let $Z:=S-S_{\alp}$. Note that $Z$ vanishes on simple roots in $\Delta_{\alp}$ and on $\alp$ and is 2 on other simple roots. Suppose that there exists a Whittaker pair $(H,\psi')$ with $\psi'\neq 0$ such that $\cF_{H,\psi'}[\cF_{S,\psi}[\eta]]\neq 0$. 
Then, for $T$ big enough, we have $$\cF_{H,\psi'}[\cF_{S,\psi}[\eta]]=\cF_{S+TZ+H,\psi+\psi'}[\eta].$$ By Proposition~\ref{prop:domin} and Lemma~\ref{lem:minmin}, $\psi$ lies in the minimal orbit of $\lie g'^*$.
\end{proof}

\begin{lemma}[See \S \ref{subsec:Geo} below]\label{lem:AppGeo}
For any next-to-minimal element $\varphi\in (\fg^*)^{S_{\alp}}_{-2}$, there exist $\gamma_0\in L_{\alp}\cap \Gamma$ and a positive root $\beta$ orthogonal to $\alp$ {\it s.t.} $\Ad^{*}(\gamma_{0})\varphi \in  \fg^{\times}_{-\alp}+ \fg^{\times}_{-\beta}\subset \fg^*_{-\alp}\oplus \fg^*_{-\beta}$.
\end{lemma}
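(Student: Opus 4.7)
}

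The plan is to adapt the strategy of Corollary \ref{cor:EasyMin}\eqref{it:Min1} to the next-to-minimal setting, with an extra reduction step to peel off the higher-weight components. First I would reduce to the case where $[\fg,\fg]$ is simple: since $S_\alp$ is trivial on every simple component not containing $\alp$, the space $(\fg^*)^{S_\alp}_{-2}$ lies entirely in the component of $\alp$. By Lemma \ref{lem:MinProd} the orbit of $\varphi$ is next-to-minimal in that component, so I may assume $\fg$ simple. I then choose a generic $z\in\fh$ with $\alp(z)=0$ and $\gamma(z)<0$ for every other simple root $\gamma$, and decompose $(\fg^*)^{S_\alp}_{-2}=\bigoplus_{i=0}^k V_i$ by $z$-eigenvalues $0=t_0<t_1<\cdots<t_k$, noting $V_0=\fg^*_{-\alp}$. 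Since $\varphi\neq 0$, some root component $\varphi_\delta\in\fg^*_{-\delta}$ is non-zero, and by Lemma \ref{lem:WeylMin} I may conjugate by the Weyl group of $L_\alp$ to bring $\delta$ to $\alp$, so $\varphi=\psi+\varphi'$ with $\psi\in\fg^{\times}_{-\alp}$ and $\varphi'\in\bigoplus_{i\ge 1}V_i$.

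The core step is to conjugate $\varphi$ by an element of $\Exp(\fl_\alp^z_{>0})\subset L_\alp\cap\Gamma$, combined with further Weyl elements of $L_\alp$ fixing $\psi$, so as to reduce $\varphi'$ to a single root-space vector. I plan to iterate Lemma \ref{lem:SameOrbit3} with $S=S_\alp$ and $Z=z$: at each step, writing the current form as $\varphi_{\mathrm{low}}+\varphi_{\mathrm{high}}$ with $\varphi_{\mathrm{low}}\in\bigoplus_{i\le j}V_i$ and $\varphi_{\mathrm{high}}\in\bigoplus_{i>j}V_i$, if $\varphi_{\mathrm{low}}$ lies in the same complex $\bfG(\C)$-orbit as $\varphi$, then the lemma supplies $v\in\Exp(\fl_\alp^z_{>0})\subset L_\alp\cap\Gamma$ with $v(\varphi_{\mathrm{low}})=\varphi_{\mathrm{low}}+\varphi_{\mathrm{high}}=\varphi$, and $v^{-1}$ conjugates $\varphi$ back to $\varphi_{\mathrm{low}}$. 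Using the fact (from the Bala--Carter description of next-to-minimal orbits as principal nilpotents in $A_1\times A_1$ Levis) that one may choose the truncation points so that each $\varphi_{\mathrm{low}}$ remains next-to-minimal and complex-conjugate to $\varphi$, I expect to end with $\varphi$ replaced by $\psi+\xi$ for a single $\xi\in V_{j^*}$. One further Weyl reflection in the centralizer of $\alp^\vee$ in the Weyl group of $L_\alp$ will put $\xi$ into a single root space $\fg^{\times}_{-\beta}$.

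Finally, once $\varphi$ has been brought to the form $\psi+\xi$ with $\psi\in\fg^\times_{-\alp}$ and $\xi\in\fg^\times_{-\beta}$ for some positive root $\beta$, Lemma \ref{lem:RootOrbs} will force $\langle\alp,\beta\rangle=0$: if $\langle\alp,\beta\rangle>0$ the orbit would be minimal, and if $\langle\alp,\beta\rangle<0$ it would be strictly larger than next-to-minimal, contradicting our assumption on $\varphi$. Hence $\beta\perp\alp$ as required, and the resulting conjugating element is the desired $\gamma_0\in L_\alp\cap\Gamma$.

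The hard part will be the iterative reduction: verifying at each stage that the chosen truncation $\varphi_{\mathrm{low}}$ is actually in the same $\bfG(\C)$-orbit as $\varphi$ (and in particular remains next-to-minimal) so that Lemma \ref{lem:SameOrbit3} is applicable. I anticipate this will require analyzing the Slodowy slice at $\psi$ inside $(\fg^*)^{S_\alp}_{-2}$ and exploiting the rank-one nature of the $L_\alp$-action on the residual part (together with the explicit table of quasi-abelian roots in Table \ref{tab:QA}), possibly with a small case analysis when $\alp$ is a Heisenberg root and the Chevalley module $(\fg^*)^{S_\alp}_{-2}$ fails to be irreducible or minuscule.
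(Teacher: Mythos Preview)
Your overall architecture is right---Weyl-conjugate so that $\alpha$ lies in the support, strip higher components via Lemma~\ref{lem:SameOrbit3}, then invoke Lemma~\ref{lem:RootOrbs} to force $\beta\perp\alpha$---but the iterative truncation in the middle is where the plan breaks, and you have correctly flagged this as the hard part. The difficulty is real: with a generic $z$ each $V_i$ is one-dimensional, and there is no reason any proper truncation $\varphi_{\mathrm{low}}=\psi+\varphi_1+\cdots+\varphi_j$ should lie in the next-to-minimal orbit. For instance if $\varphi_1\in\fg^*_{-\epsilon}$ with $\langle\alpha,\epsilon\rangle=-1$, then $\psi+\varphi_1$ is strictly larger than next-to-minimal by Lemma~\ref{lem:RootOrbs}, while $\psi$ alone is minimal, so no intermediate truncation is in the same orbit and the iteration cannot start. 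Your suggested fallbacks (Slodowy slice analysis, case work on Heisenberg roots) are not what is needed.

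The paper sidesteps this entirely by a two-stage reduction with \emph{specific} (non-generic) grading elements. First take $z=\alpha^\vee-S_\alpha$. Then any root component $\varphi_\epsilon$ with $\langle\alpha,\epsilon\rangle=1$ satisfies: $\varphi_\alpha+\varphi_\epsilon$ is \emph{minimal} by Lemma~\ref{lem:RootOrbs}, hence complex-conjugate to $\varphi_\alpha$ alone, so Lemma~\ref{lem:SameOrbit3} applies with the orbit check being trivial. An induction (Lemma~\ref{lem:minV}) strips all such components, leaving the support inside $\{\alpha\}\cup\Psi_\alpha=\{\alpha\}\cup\{\epsilon:\langle\alpha,\epsilon\rangle\le 0,\ \epsilon(S_\alpha)=2\}$. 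Second, one shows the remaining support contains some $\beta\perp\alpha$: otherwise every surviving $\epsilon\neq\alpha$ has $\langle\alpha,\epsilon\rangle=-1$, and taking any such $\omega$ and $Z'=\alpha^\vee+\omega^\vee-\tfrac12 S_\alpha$ forces $\varphi_\alpha+\varphi_\omega$ into the closure of the orbit of $\varphi$, contradicting Lemma~\ref{lem:RootOrbs}. With $\beta$ in hand, set $Z=\alpha^\vee+\beta^\vee-S_\alpha$; then $\alpha(Z)=\beta(Z)=0$ while $\epsilon(Z)<0$ for every other $\epsilon$ in the support, so $\varphi_\alpha+\varphi_\beta$ lies in the closure of the orbit of $\varphi$, hence equals it (both next-to-minimal), and a \emph{single} application of Lemma~\ref{lem:SameOrbit3} finishes. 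The point is that the orbit verification is never done ``by hand'': in stage one it reduces to the minimal case, and in stage two the grading is chosen so that the two-term truncation is automatically in the closure.
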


{
\begin{remark} 
The above lemma only establishes that any next-to-minimal $\varphi$ can be mapped to two orthogonal root spaces by $L_\alpha\cap \Gamma$. However, the action of $L_{\alpha}\cap\Gamma$ is often even transitive on $(\fg^*)_{-2}^{S_\alpha}$, giving a single orbit. One can show that this happens in all cases except for:
\begin{itemize}
\item $A_3$ and node $\alpha_2$.
\item $D_4$ and nodes $\alpha_1$, $\alpha_3$, $\alpha_4$ (all related by triality).
\item $D_n$ and when the two orthogonal roots $(\alpha,\beta)$ are Weyl conjugate under $D_n$ to $(\alpha_{n-1},\alpha_n)$, see Corollary~\ref{lem:2RootConj}, corresponding to the orbit $31^{2n-3}$. This happens for $n\geq 4$ always for node $\alpha_1$ as well as for nodes $\alpha_ i$ with $2\leq i \leq n-2$ if $\varphi$ belongs to that orbit.
\end{itemize}
For instance, for $A_3$ and node $\alpha_2$ one has that next-to-minimal are $\varphi\in \fg_{-\alpha_1}^\times + \fg_{-\alpha_3}^\times$. The torus element for node $i$ scales elements in $\fg_{-\alpha_i}^\times$ by rational squares ($i=1,3$) while keeping the other space unchanged. The torus element for node $2$ scales both spaces by rational elements in the same way and so one cannot use the torus action in $L_{\alpha_2}\cap \Gamma$ to arrive at a unique representative. The other cases can be seen to reduce to the same phenomenon.

For $A_n$ with $n\geq 4$ and all exceptional cases there is a unique rational representative for next-to-minimal nilpotents in $(\fg^*)_{-2}^{S_\alpha}$.
\end{remark}
}

\begin{proof}[Proof of Theorem~\ref{thm:ntm-rep}] 
Part \eqref{itm:larger} follows from Proposition \ref{prop:domin}, since $\eta$
is a next-to-minimal {function}.

For part \eqref{itm:ntm}, by Lemma \ref{lem:AppGeo} 
we may assume $\varphi\in \fg_{-\alp}^{\times}+ \fg_{-\beta}^{\times}$ for some positive roots $\beta$ orthogonal to $\alp$. By Corollary \ref{cor:2RootConjSimple}, one can conjugate the pair of roots $(\alp,\beta)$  to a pair of orthogonal simple roots $(\alp',\alp'')$, using the Weyl group. Let $\fa$ be the joint kernel of $\alp'$ and $\alp''$ in $\fh$, and let $z\in \fa$ be a generic rational semi-simple element. Let $S_T:=\alp'^{\vee}+\alp''^{\vee}+Tz$ for $T\gg 0\in \Q$, where $\alpha'^{\vee}$ and $\alpha''^{\vee}$ are the dual co-roots. 
Since no linear combination of $\alp'^{\vee}$ and $\alp''^{\vee}$ lies in $\fa$, $S_T$ is a generic element of $\fa$  and thus for $T$ big enough, $\fg^{S_T}_{\geq 2}$ is a Borel subalgebra of $\fg$ that contains $\fh$. Thus it is conjugate under the Weyl group to our fixed Borel subalgebra. The statement follows now from Theorem \ref{thm:IntTrans}.
We note that different choices for $z$ may give $V$ of different dimensions.

For part \eqref{it:Pt2}, 
Proposition \ref{prop:1_2} implies $\cF_{S_{\alp},\psi}[\eta]=\cF_{S,\psi}[\eta]$.
By Proposition \ref{prop:2to1}, $\eta':=\cF_{S,\psi}[\eta]|_{G'}$ is a minimal or a trivial automorphic function on $G'$. The statement follows now from Theorem \ref{thm:G0min} applied to $\eta'$ together with the fact that its Whittaker coefficients, obtained by integration over the maximal unipotent subgroup $N'=N\cap G'$, are, in fact, equal to the Whittaker coefficients $\cW_{\varphi+\psi}[\eta]$ due to the extra integral present in the definition of $\eta'$.

{Part \eqref{it:ntm0phi} is proven very similarly to Theorem \ref{thm:min-rep}\eqref{it:min0phi}.}
\end{proof}

{

\subsection{Proof of Lemma \ref{lem:AppGeo}}\label{subsec:Geo}

Let $\alp$ be a simple root. We assume that $\fg$ is simple.

Note that in simply-laced root systems orthogonal roots are strongly orthogonal, and thus the sum of two roots is a root if and only if they have scalar product $-1$. 
Also, for any two non-proportional roots, the scalar product is in $\{-1,0,1\}$.
 For any root $\eps$ we  denote by $\eps^\vee$ the coroot given by the scalar product with $\eps$.

\begin{notn}\label{not:Fa}
Denote 
$z:={\alp}^\vee-S_{\alp}$ and $\fu_z:=(\fl_{\alp})^{z}_{{>}0}$, {and $U_{z}:=\Exp(\fu_z)\subset L_{\alp}\cap \Gamma$}.
\end{notn}
Note that $\fu_z=(\fl_{\alp})^{{\alp}^\vee}_{1}$ and $\lie g^\times_{-\alp} \subset (\lie g^*)^z_0$.
\begin{lem}\label{lem:minV}
Let $\varphi\in \fg^{\times}_{-\alp}$ and $\psi\in (\fg^*)^{S_{\alp}}_{-2}\cap (\fg^*)^{{\alp}^\vee}_{-1} \subset (\lie g^*)^z_{1}$. Then there exists $v\in U_z$ such that $\Ad^*(v) \varphi =\varphi+\psi$.
\end{lem}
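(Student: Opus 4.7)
The plan is to compute the coadjoint action of $U_z$ on $\varphi$ explicitly, showing that the resulting map from $\fu_z$ to $(\fg^*)^{S_\alpha}_{-2} \cap (\fg^*)^{\alpha^\vee}_{-1}$ is linear and surjective. Since $\fu_z \subset \fl_\alpha$ and $S_\alpha$ acts trivially on $\fl_\alpha$, the group $U_z$ preserves the $S_\alpha$-grading of $\fg^*$. Moreover $\fu_z$ sits in $\alpha^\vee$-weight $+1$, so each application of $\ad^*(X)$ to $\varphi$ raises the $\alpha^\vee$-weight by one. First I would argue that for $X \in \fu_z$ one has $\Ad^*(\exp X)\varphi = \varphi + \ad^*(X)\varphi$ by showing the higher-order terms vanish: the space $(\fg^*)^{S_\alpha}_{-2} \cap (\fg^*)^{\alpha^\vee}_{j}$ is zero for $j \geq 0$. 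Any root $\gamma$ with $\gamma(S_\alpha) = -2$ has $\alpha$-coefficient $-1$ in its simple-root expansion, so $\gamma = -\alpha - \delta$ with $\delta$ a positive root of $\fl_\alpha$ (or zero). Because $\fg$ is simply-laced and $\delta$ has no $\alpha$-component, $\langle \alpha, \delta \rangle \in \{-1, 0, 1\}$, giving $\gamma(\alpha^\vee) \in \{-1, -2, -3\}$, never nonnegative. (An analogous count also shows $\fu_z$ is abelian.)

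Next I would decompose the linear map $X \mapsto \ad^*(X)\varphi$ by weights. On the source side, $\fu_z = \bigoplus_\eta \fg_\eta$ over roots $\eta$ of $\fl_\alpha$ with $\langle \alpha, \eta \rangle = 1$, while the target decomposes as $\bigoplus_\delta \fg^*_{-\alpha-\delta}$ over positive roots $\delta$ of $\fl_\alpha$ with $\langle \alpha, \delta \rangle = -1$. The image of $\fg_\eta$ lands in weight $\eta - \alpha$, matching the target weight $-\alpha - \delta$ exactly when $\eta = -\delta$. This yields a canonical bijection $\delta \leftrightarrow -\delta$ between the two index sets. On each matched pair, the map $\fg_{-\delta} \to \fg^*_{-\alpha-\delta}$ is, up to Killing-form identification, the bracket $e_{-\delta} \mapsto [e_{-\delta}, f_\alpha]$ taken into $\fg_{-\alpha-\delta}$; this is nonzero because $\alpha + \delta$ is a root (as $\langle \alpha, \delta \rangle = -1$). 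Since both $\fg_{-\delta}$ and $\fg^*_{-\alpha-\delta}$ are one-dimensional over $\K$, each block is an isomorphism and the full map is surjective over $\K$.

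Given any $\psi$ in the target, I then choose $X \in \fu_z$ with $\ad^*(X)\varphi = \psi$ and set $v := \exp(X) \in U_z$; by construction $\Ad^*(v)\varphi = \varphi + \psi$. The main step requiring care is the vanishing of higher-order terms in the exponential, which is the weight-theoretic calculation in the first paragraph and relies essentially on the simply-laced hypothesis. Alternatively, one could invoke Lemma~\ref{lem:SameOrbit3} with $Z = z$ and $S = S_\alpha$, but this would require a separate verification that $\varphi + \psi$ lies in the same $\bfG(\C)$-orbit as $\varphi$; the direct route above is cleaner because the root-level bookkeeping handles both the vanishing and the surjectivity at once.
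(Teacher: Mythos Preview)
Your vanishing claim is incorrect, and this breaks the argument. The error is in the line ``so $\gamma = -\alpha - \delta$ with $\delta$ a positive root of $\fl_\alpha$ (or zero)'': while $\delta := -\gamma - \alpha$ has vanishing $\alpha$-coefficient and nonnegative coefficients along the other simple roots, it need not be a \emph{root}, so the simply-laced bound $\langle \alpha, \delta\rangle \in \{-1,0,1\}$ does not apply. Concretely, take $\fg$ of type $A_3$ with $\alpha = \alpha_2$: the root $\eps = \alpha_1 + \alpha_2 + \alpha_3$ has $\alpha_2$-coefficient $1$ and $\langle \alpha_2, \eps\rangle = 0$, so $\fg^*_{-\eps}$ lies in $(\fg^*)^{S_{\alpha}}_{-2} \cap (\fg^*)^{\alpha^\vee}_0$. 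Here $\fu_z = \fg_{-\alpha_1} \oplus \fg_{-\alpha_3}$, and for $X\in\fu_z$ with both components nonzero a direct computation in $\mathfrak{sl}_4$ shows that $\Ad^*(\exp X)\varphi - \varphi$ has, besides its linear part in the target, a nonzero quadratic contribution in $\fg^*_{-\eps}$. So the exponential is not affine in $X$.

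Your root-by-root analysis of the linear map $X \mapsto \ad^*(X)\varphi$ is correct and does give a bijection from $\fu_z$ onto the target, but without the higher-order vanishing this does not yield $\Ad^*(\exp X)\varphi = \varphi + \psi$. The paper proceeds precisely by the route you set aside: for $\psi$ supported in a single root space $\fg^*_{-\eps}$ one first checks via Lemma~\ref{lem:RootOrbs} that $\varphi + \psi$ lies in the same $\bfG(\C)$-orbit as $\varphi$, and then invokes Lemma~\ref{lem:SameOrbit3} to produce the conjugating element of $U_z$. (In that single-root case your linearity argument does work, since then $X$ lies in the single root space $\fg_{\alpha-\eps}$ and $(\ad^* X)^2\varphi$ would land in weight $-(2\eps-\alpha)$, which has squared length $6$ and is never a root.) The general $\psi$ is then handled by induction on a filtration given by a generic $H\in\fh$, peeling off one root component at a time. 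The orbit verification you dismissed is not optional here; it is the mechanism that carries the argument.
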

\begin{proof}
\begin{enumerate}[{Case} 1.]
\item  $\psi\in \fg^{*}_{-\eps}$ for some $\eps$:\\
By the assumption that $\psi \in (\fg^*)^{{\alp}^\vee}_{-1}$ and Lemma~\ref{lem:RootOrbs},
$\varphi+\psi$ is conjugate to $\varphi$ over $\C$. By Lemma~\ref{lem:SameOrbit3}, there exists $v\in U_z$ such that $\Ad^*(v)\varphi=\varphi+\psi$. 

\item General: \\
We can assume $\psi\neq 0$. 
Let $H\in \fh$ be a generic element that has {distinct} negative integer values on all positive roots. 
Note that $\fu_z \subset \lie g^H_{>0}${ and $\psi \in (\lie g^*)^H_{>0}$}.
Decompose $\psi=\sum_{i>0}{\psi_i}$, where $\psi_i\in (\fg^*)^{H}_{i}$. We prove the lemma by descending induction on the minimal $j$ for which $\psi_j\neq 0$. The base of the induction {is $j$ that equals the maximal eigenvalue of $\ad^*(H)$. In this case $\psi=\psi_j$ and we are in }Case 1. For the induction step, let $j$ be minimal with $\psi_j\neq 0$. By Case 1, there exists $v_{1}\in U_{z}$ with $\Ad^*(v_1)\varphi=\varphi-\psi_j$. Then $\Ad^*(v_1)(\varphi+\psi)=\varphi+\sum_{i>j}\psi'_i,$
for some $\psi'_i\in (\fg^*)^{H}_{i}$. By the induction hypothesis, there exists $v_2\in U_{z}$ such that $\Ad^*(v_2)\varphi=\Ad^*(v_1)(\varphi+\psi)$. Take $v:=v_1^{-1}v_2$.
\end{enumerate}
\end{proof}

\begin{proof}[Proof of Lemma \ref{lem:AppGeo}]
Let $\varphi\in (\fg^*)^{S_{\alp}}_{-2}$ be next-to-minimal.
Decompose $\varphi=\sum_{\eps}\varphi_{\eps},$ where $\varphi_{\eps}\in \fg^*_{-\eps}$. Let $F:=\{\eps\, \vert \, \varphi_{\eps}\neq 0\}.$  By Lemma \ref{lem:WeylMin}, we can assume $\alp\in F$. Using Lemma \ref{lem:minV}, we can assume that for any other $\eps \in F$ we have $\langle \alp,\eps \rangle \leq 0$, {\it i.e.} $F\subset \{\alp\}\cup \Psi_{\alp}$, where $\Psi_{\alp}$ is as in \eqref{=Psia}, namely
\begin{equation}
\Psi_\alpha =\{ \text{ root } \eps \mid \langle \eps, \alp \rangle \leq 0, \eps(S_\alp) = 2 \}\,.
\end{equation} 

Assume first that there exists $\beta\in F$ with $(\alp,\beta)=0$, and let $Z:=\alp^\vee+\beta^\vee-S_{\alp}$. Then 
\begin{equation}\label{=ZGeo}
\alp(Z)=\beta(Z)=0,\text{ and }\eps(Z)<0 \text{ for any }\eps\in F\setminus\{\alp,\beta\}.
\end{equation}
 Indeed, $$\eps(Z)=\langle \alp,\eps \rangle +\langle\beta,\eps\rangle-2\leq 0+1-2=-1\,.$$
 By \eqref{=ZGeo}, we see that $\varphi_{\alp}+\varphi_{\beta}$ lies in the closure of the complex orbit $\cO$ of $\varphi$. Now, by Lemma \ref{lem:RootOrbs}, $\varphi_{\alp}+\varphi_{\beta}$ is next-to-minimal, and thus lies in $\cO$.
Thus, Lemma \ref{lem:SameOrbit3} and \eqref{=ZGeo} imply that $\varphi$ is conjugate to  $\varphi_{\alp}+\varphi_{\beta}$ under $L_{\alp}\cap \Gamma$.

Let us now show that $\beta\in F$ with $\langle\alp,\beta\rangle=0$ indeed exists.
Assume the contrary, {\it i.e. } $(\alp,\eps)=-1$ for all $\eps \in F$. 
{Note that $F$ is not empty, since $\varphi$ is not minimal.}
Pick any $\omega\in F$ and let $Z':=\alp^\vee+\omega^\vee-S_{\alp}/2$. Then
\begin{equation}\label{=ZpGeo}
\alp(Z')=\omega(Z')=0,\text{ and }\eps(Z')<0 \text{ for any }\eps\in F\setminus\{\alp,\omega\}.
\end{equation}
 Indeed, $$\eps(Z')=\langle\alp,\eps\rangle+\langle\omega,\eps\rangle-1\leq -1+1-1=-1\,.$$ 
 By \eqref{=ZGeo}, we see that $\varphi_{\alp}+\varphi_{\omega}$ lies in the closure of the complex orbit $\cO$ of $\varphi$, and thus is minimal or next-to-minimal. This contradicts Lemma \ref{lem:RootOrbs} since $\langle\alp,\omega\rangle<0$.

Thus there exists $\beta\in F$ with $\langle\alp,\beta\rangle=0$, and as we showed above $\varphi$ is conjugate to  $\varphi_{\alp}+\varphi_{\beta}$ under $L_{\alp}\cap \Gamma$.
\end{proof}
}

\section{Proof of Theorems \ref{thm:ntm-rep2}, \ref{thm:ntmFull}, \ref{thm:ntmFullSimple} and \ref{thm:ntmNoCusp}}
\label{sec:D}
\setcounter{lemma}{0}

Let $\alp$ be a nice root.
Denote by ${R}$ the set of minimal elements in $ (\mathfrak{g^*})^{S_{\alp}}_{-2}$ and by $X$ the set of next-to-minimal elements in $ (\mathfrak{g^*})^{S_{\alp}}_{-2}$.
Let $\alp_{\max}$ be the highest root of the component of $\fg$ that includes $\alp$. {Recall that $\delta_{\alp}$ denotes $\alp_{\max}$ if $\alp$ is an abelian root, and denotes $\alp_{\max}-\alp-\beta$, where $\beta$ is the only simple root non-orthogonal to $\alp$, if $\alp$ is a nice Heisenberg root. Denote $\delta:=\delta_{\alp}$. See \S \ref{subsec:PfPropFab} below for more details on this $\delta$ in the Heisenberg case.}

We will use the following geometric proposition{s}, that we will prove in \S \ref{subsec:PfPropFab} below.

\begin{proposition}\label{prop:nice}
\begin{enumerate}[(i)]
\item \label{it:nice0}{If $\alp$ is abelian and }$\langle \alp, \alp_{\max}\rangle>0$ then $X$ is empty. 
\item \label{it:niceTran} {If $\langle \alp, \alp_{\max}\rangle=0$ or if $\alp$ is a nice Heisenberg root then} 
$X=(L_{\alp}\cap \Gamma)(\fg^{\times}_{-\alp}+ \fg^{\times}_{-\delta})$.
\end{enumerate}
\end{proposition}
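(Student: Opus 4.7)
The plan is to dispatch part~(i) by classification and to prove the two inclusions in part~(ii) separately. For part~(i), the hypothesis that $\alp$ is abelian with $\langle \alp, \alp_{\max}\rangle>0$ forces, via Table~\ref{tab:QA} and the explicit form of $\alp_{\max}$ in terms of simple roots, the component of $\fg$ containing $\alp$ to be of type $A_n$ with $\alp\in\{\alp_1,\alp_n\}$: in types $D$ and $E$ every abelian root is orthogonal to the corresponding highest root. In this situation $L_{\alp}$ has semisimple part of type $A_{n-1}$ acting on the abelian nilradical $\fu_{\alp}\cong \K^n$ as the standard representation, hence transitively on non-zero vectors; under Killing form duality each non-zero element of $(\fg^*)^{S_\alp}_{-2}$ corresponds to a rank-one nilpotent in $\mathfrak{sl}_{n+1}$ and so lies in the minimal orbit. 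Therefore $X$ is empty.

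For the inclusion $\fg^{\times}_{-\alp}+\fg^{\times}_{-\delta}\subseteq X$ in part~(ii), I will verify that $\delta$ is a positive root orthogonal to $\alp$ with $\delta(S_{\alp})=2$, after which Lemma~\ref{lem:RootOrbs} guarantees that the orbit of any $\varphi+\psi$ with $\varphi\in \fg^{\times}_{-\alp}$ and $\psi\in \fg^{\times}_{-\delta}$ is next-to-minimal. When $\alp$ is abelian this is immediate: $\delta=\alp_{\max}$, orthogonality is the hypothesis, and $\alp_{\max}(S_{\alp})=2$ because $\alp$ has coefficient $1$ in $\alp_{\max}$. When $\alp$ is a nice Heisenberg root, $\alp$ has coefficient $2$ in $\alp_{\max}$ and the identity $\langle \alp,\alp_{\max}\rangle=1$ (verified case-by-case for $E_6,E_7,E_8$) yields both $\langle \alp,\delta\rangle=0$ and $\delta(S_{\alp})=2$; that $\delta=\alp_{\max}-\alp-\beta_{\alp}$ is itself a root will be confirmed by direct inspection in the three relevant types.

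The reverse inclusion $X\subseteq (L_{\alp}\cap\Gamma)(\fg^{\times}_{-\alp}+\fg^{\times}_{-\delta})$ carries the substantive content. Starting from $\varphi\in X$, I would first invoke Lemma~\ref{lem:AppGeo} to produce $\gamma_0\in L_{\alp}\cap\Gamma$ with $\Ad^{*}(\gamma_0)\varphi \in \fg^{\times}_{-\alp}+\fg^{\times}_{-\beta}$ for some positive root $\beta$ satisfying $\beta\perp\alp$ and $\beta(S_{\alp})=2$. It then remains to conjugate further, by an element of $L_{\alp}\cap\Gamma$ stabilizing the line $\fg_{-\alp}$, so as to replace $\beta$ by $\delta$. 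My strategy is to observe that this stabilizer (a parabolic subgroup of $L_{\alp}$) contains representatives for the Weyl subgroup $W'$ of $L_{\alp}$ generated by the simple reflections orthogonal to $\alp$, and to argue that the set of positive roots $\beta'$ with $\beta'\perp\alp$ and $\beta'(S_{\alp})=2$ forms a single $W'$-orbit, necessarily containing $\delta$.

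The main obstacle I foresee is establishing this last transitivity, particularly in the nice Heisenberg case where the set of $\beta'$ in question consists of the weights orthogonal to $\alp$ in the minuscule $L_{\alp}$-module $V_{\alp}=\fu_{\alp}/[\fu_{\alp},\fu_{\alp}]$: the $56$ of $E_7$ when $\fg=E_8$, the half-spin $32$ of $D_6$ when $\fg=E_7$, and the $20$ of $A_5$ when $\fg=E_6$. I plan a case-by-case check using the explicit weight diagrams: identify the orthogonal weight subset as the weights of a specific subrepresentation for the stabilizer of the highest weight, and exhibit an element of $W'$ carrying an arbitrary such weight to $\delta$. The abelian case with $\langle\alp,\alp_{\max}\rangle=0$ will be handled by the same method, and is slightly simpler since there $V_{\alp}$ is itself a minuscule $L_{\alp}$-module.
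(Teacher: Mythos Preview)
Your proposal is correct and follows the same architecture as the paper's proof: part~(i) by reducing to the $A_n$ extreme-node case and the rank-one observation; for part~(ii), Lemma~\ref{lem:AppGeo} to land in $\fg^{\times}_{-\alp}+\fg^{\times}_{-\beta}$ with $\beta\in\Phi_\alp$, and then transitivity of $W(M_\alp)$ on $\Phi_\alp$ to replace $\beta$ by $\delta$.

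The only real difference is how the transitivity is established. You propose to inspect the orthogonal weight subset inside the minuscule $L_\alp$-module $V_\alp$ case by case. The paper (Lemma~\ref{lem:Fa}) avoids this by a reindexing trick: if $\beta_\alp$ is the unique simple root adjacent to $\alp$, then for any root $\eps$ one has $\langle\alp,\eps\rangle=\eps(S_\alp)-\eps(S_{\beta_\alp})/2$, so $\eps\in\Phi_\alp$ if and only if $\eps(S_{\beta_\alp})=4$. Thus $\Phi_\alp$ is exactly the full weight set of the \emph{second} internal Chevalley module $\fg^{S_{\beta_\alp}}_{4}$ viewed as an $M_\alp$-module; in each relevant type this is already known to be minuscule, and transitivity follows at once. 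This is a bit cleaner than extracting the orthogonal slice from $V_\alp$ by hand, though your route would also succeed. The residual type-$A$ case (where $\alp$ is not an extreme node) is handled by the direct check you would expect (Lemma~\ref{lem:An}).
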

Note that this implies  that at most one next-to-minimal orbit can intersect $X$.

{For the next proposition we assume that either  $\langle \alp, \alp_{\max}\rangle=0$ or $\alp$ is a nice Heisenberg root. Recall that in these cases $R_{\alp}$ denotes the parabolic subgroup of $L_{\alp}$  with Lie algebra $(\fl_{\alp})^{\delta}_{\leq 0}$, and let $RQ_{\alp}=R_{\alp}\cap Q_{\alp}.$ Denote further by  $St_{\alp}$ the stabilizer in $L_\alp\cap \Gamma$ of the plane $\fg^*_{-\alp}\oplus\fg^*_{-\delta}$, as an element of the  Grassmanian of planes in $\fg^*$. 
\begin{prop}\label{prop:RQ}
$RQ_{\alp}\cap \Gamma$ is a subgroup of $St_{\alp}$ of index two.
\end{prop}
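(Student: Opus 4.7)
The plan is to first realize $RQ_\alp$ as the simultaneous stabilizer of the two root lines $\fg^*_{-\alp}$ and $\fg^*_{-\delta}$, then bound the index from above by a Lie-algebra argument, and finally produce an explicit Weyl swap to bound the index from below.

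First I would apply Lemma \ref{lem:LineStab} to identify $Q_\alp$ as the stabilizer in $L_\alp$ of the line $\fg^*_{-\alp}$; the same argument applied to the root $\delta$ identifies $R_\alp$ as the stabilizer of $\fg^*_{-\delta}$. Hence $RQ_\alp=R_\alp\cap Q_\alp$ is the simultaneous stabilizer of both lines, and in particular $RQ_\alp\cap\Gamma\subset St_\alp$.

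Next I would show $St_\alp/(RQ_\alp\cap\Gamma)\hookrightarrow\Z/2\Z$ by exhibiting the action of $St_\alp$ on the two-element set $\{\fg^*_{-\alp},\fg^*_{-\delta}\}$. The key geometric input is that $\alp-\delta$ is not a root of $L_\alp$: for abelian $\alp$ with $\langle\alp,\alp_{\max}\rangle=0$ the $\alp$-string through $\alp_{\max}$ has length $1-\langle\alp_{\max},\alp^\vee\rangle=1$, so $\alp_{\max}-\alp$ is not a root; for nice Heisenberg $\alp$, direct inspection in each of $E_6,E_7,E_8$ shows that the positive representative of $\pm(\alp-\delta)=\pm(2\alp+\beta-\alp_{\max})$ has simple-root coefficients exceeding those of the highest root of $L_\alp$, hence is not a root of $L_\alp$. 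Since any root space $\fg_\gamma$ in the Lie algebra of the algebraic stabilizer of $P:=\fg^*_{-\alp}\oplus\fg^*_{-\delta}$ in $\bfL_\alp$ that acts nontrivially on $P$ would force $\gamma=\pm(\alp-\delta)$, the identity component of this algebraic stabilizer acts on $P$ only through $T$, preserving each line, and so lies in $RQ_\alp$. The finite quotient $St_\alp/(RQ_\alp\cap\Gamma)$ then acts faithfully on $\{\fg^*_{-\alp},\fg^*_{-\delta}\}$ and embeds into $S_2$.

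Finally I would exhibit an element of $St_\alp\setminus(RQ_\alp\cap\Gamma)$ via a Weyl-group representative. For abelian $\alp$ with $\langle\alp,\alp_{\max}\rangle=0$, Lemma \ref{lem:WeylMin} shows $\fu^*_\alp$ is a minuscule $L_\alp$-representation with extremal weights $-\alp$ and $-\alp_{\max}=-\delta$; the longest element $w_0^{L_\alp}$ of $W(L_\alp)$ is an involution swapping these two weights, and a Chevalley lift to $N_{L_\alp}(T)\cap\Gamma$ gives the required element in $St_\alp$ mapping nontrivially to $\Z/2\Z$. For nice Heisenberg $\alp$ the abelianization $\fu^{(1)*}_\alp$ is again a minuscule $L_\alp$-representation, and by transitivity of $W(L_\alp)$ on its weights there exists a Weyl element sending $-\alp$ to $-\delta$, which one then arranges into an involution by a short Weyl-word manipulation. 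I expect the main obstacle to be exactly this last construction in the Heisenberg case: since $-\delta$ is neither the highest nor the lowest weight of $\fu^{(1)*}_\alp$, the longest element $w_0^{L_\alp}$ does not suffice, and producing an involution in $W(L_\alp)$ interchanging $-\alp$ and $-\delta$ requires case-by-case verification in $E_6,E_7,E_8$ using the minuscule weight combinatorics.
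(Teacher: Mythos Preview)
Your identification of $RQ_\alp$ as the simultaneous stabilizer of the two lines is correct and matches the paper, as does your strategy for the lower bound via an involutive Weyl element swapping $\alp$ and $\delta$; the paper's Lemma~\ref{lem:wLa} carries out exactly the case-by-case construction you anticipate (with $w=w_0^{L_\alp}$ in the abelian case and $w=s_\beta w_0^{L_\alp}$ in the Heisenberg case, checking $w_0^{L_\alp}(\beta)=-\beta$ in $E_6,E_7,E_8$).

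The gap is in your upper-bound argument. Your Lie-algebra computation only constrains the identity component of the algebraic stabilizer of the plane $P$; it does \emph{not} show that an arbitrary element $g\in St_\alp$ sends the line $\fg^*_{-\alp}$ to one of the two lines $\fg^*_{-\alp}$ or $\fg^*_{-\delta}$. A priori $g$ could map $\fg^*_{-\alp}$ to some other line inside $P$, and then your claimed action of $St_\alp$ on the two-element set $\{\fg^*_{-\alp},\fg^*_{-\delta}\}$ is not defined. Knowing that the connected component preserves each line says nothing about what a non-identity component does to these particular lines inside $P$.

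The paper closes this gap with a different, intrinsic argument: the union $\fg^*_{-\alp}\cup\fg^*_{-\delta}$ equals $\{0\}$ together with the set of \emph{minimal} elements of $P$ (nonzero elements of a single root space are minimal by Corollary~\ref{cor:RootPairClasses}, while elements of $\fg^\times_{-\alp}+\fg^\times_{-\delta}$ are next-to-minimal by Lemma~\ref{lem:RootOrbs} since $\alp\perp\delta$). Since minimality is a $\Gamma$-invariant notion, $St_\alp$ must preserve this union of two lines; linearity then forces $St_\alp$ to permute them, giving the homomorphism $St_\alp\to S_2$ with kernel $RQ_\alp\cap\Gamma$. You should replace your connected-component argument with this orbit-theoretic one.
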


}

\subsection{{Proof of Theorem \ref{thm:ntm-rep2}}}

Let $\eta$ be a next-to-minimal automorphic function on $G$.   

Suppose first that $\alp$ is an abelian root, {\it i.e.} the nilradical $U_{\alp}$ of the maximal parabolic $P_{\alp}$ is abelian. 
Using Fourier transform on $U_{\alp}$ we obtain 
\begin{align}\label{=ntm1easy}
\eta(g) = \mathcal{F}_{S_{\alp},0}[\eta](g)  + \sum_{\varphi\in  {R}}\mathcal{F}_{S_\alpha, \varphi}[\eta](g) + \sum_{\varphi\in X} \mathcal{F}_{S_{\alp}, \varphi}[\eta](g)\,.
\end{align}

By Corollary \ref{cor:EasyMin}, ${R}=(L_{\alp}\cap \Gamma)(\fg^{\times}_{-\alp})$. {By Lemma \ref{lem:LineStab}, $Q_{\alp}$ is the stabilizer in $L_{\alp}$ of the line $\fg_{-\alp}^*$ (as a point in the projective space of $\fg^*$).}
Thus 
\begin{equation}\label{=Q}
\sum_{\varphi \in {R}}\mathcal{F}_{S_\alpha, \varphi}[\eta](g)=\sum_{\gamma \in \Gamma_{\alp}}\sum_{ \varphi_0\in \fg^{\times}_{-\alp}}\mathcal{F}_{S_\alpha, \varphi_0}[\eta](\gamma g)\,,
\end{equation}
{where $\Gamma_{\alp}$ denotes the quotient of $L_{\alp}\cap \Gamma$ by} {$Q_{\alp}\cap \Gamma$.}
If $\langle \alp, \alp_{\max}\rangle>0$  then by Proposition \ref{prop:nice}, $X$ is empty. This implies part (i) of Theorem \ref{thm:ntm-rep2}.
Let us now assume $\langle \alp, \alp_{\max}\rangle= 0$  and prove part (ii) of Theorem \ref{thm:ntm-rep2}.  
By Proposition \ref{prop:nice}, $X=(L_{\alp}\cap \Gamma)(\fg^{\times}_{-\alp}+ \fg^{\times}_{-\delta})$. 
{Recall that we denote  by $\Lambda_{\alp}$ the quotient of $L_{\alp}{\cap \Gamma}$ by $RQ_{\alp}\cap \Gamma$.
By Proposition \ref{prop:RQ} we have }
 \begin{equation}\label{=DAbNTM}
 \sum_{\varphi\in X} \mathcal{F}_{S_{\alp}, \varphi}[\eta](g)={\frac{1}{2}}\sum_{\gamma\in \Lambda_{\alp}}\sum_{\varphi\in \fg^{\times}_{-\alp}}\sum_{\psi\in \fg^{\times}_{-\alp_{\max}}}\cF_{S_{\alp},\varphi+\psi}[\eta](\gamma g)\,.
 \end{equation}
From \eqref{=ntm1easy}, \eqref{=Q} and \eqref{=DAbNTM}, we obtain

 \begin{equation}\label{=DAb}
 \eta(g)= \mathcal{F}_{S_{\alp},0}[\eta]+
 \sum_{\gamma \in \Gamma_{\alp}}\sum_{\varphi\in \fg^{\times}_{-\alp}}\mathcal{F}_{S_\alpha, \varphi}[\eta](\gamma g)  +{\frac{1}{2}}
\sum_{\gamma\in \Lambda_{\alp}}\sum_{\varphi\in \fg^{\times}_{-\alp}}\sum_{\psi\in \fg^{\times}_{-\alp_{\max}}}\cF_{S_{\alp},\varphi+\psi}[\eta](\gamma g){=\cA\,,}
 \end{equation}
as required.

Suppose now that $\alp$ is a nice Heisenberg root.
{Let $\gamma_{\alp}$ be a representative of the Weyl group element  $s_\alp s_{\alp_{\max}}s_{\alp},$ where $s_{\alp}$ and $s_{\alp_{\max}}$ denote the corresponding reflections. Since $\langle \alp, \alp_{\max}\rangle =1$, }$\gamma_{\alp}$ conjugates $\alp$ to $\alp_{\max}$. Thus, by Proposition \ref{prop:Heis}, \begin{equation}\label{=DiffNTMInt1}
\eta(g)=\sum_{\varphi\in (\fg^*)^{S_{\alp}}_{-2}}\cF_{S_{\alp},\varphi}[\eta]({g})+\sum_{\varphi\in \fg^{\times}_{-\alp}}\sum_{\omega\in \Omega_{\alp}}\sum_{ \psi\in \bigoplus_{\eps\in \Psi_\alpha}\fg^*_{-\eps}}\cF_{S_{\alp},\varphi+\psi}[\eta](\omega \gamma _{\alp}g)\,.
\end{equation}
We call the first sum the \emph{abelian} term, and the second sum the \emph{non-abelian} term. In the same way as above we obtain 
{
 \begin{align}\label{=DAb2}
\sum_{\varphi\in (\fg^*)^{S_{\alp}}_{-2}}\cF_{S_{\alp},\varphi}[\eta](g) =\cA
  \end{align}    

}

To determine the non-abelian term we will need a further geometric statement.
{Recall that $M_{\alp}\subset L_{\alp}$ denotes} the Levi subgroup generated by the roots orthogonal to $\alp$. {Note that $M_{\alp}$ is the standard Levi subgroup of the parabolic $Q_{\alp}$ of $L_{\alp}$. 
\begin{lemma}\label{lem:Malp}
The group $M_{\alp}\cap R_{\alp}\cap \Gamma$ is the stabilizer in $M\cap \Gamma$ of the line $\fg^*_{-\delta}$, and of the plane $\fg^{*}_{-\alp}\oplus \fg^*_{-\delta}$.
\end{lemma}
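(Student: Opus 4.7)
The plan is to establish both statements by first identifying the stabilizer of $\fg^*_{-\delta}$ in $L_{\alp}$ and then showing that the stabilizer of the plane in $M_{\alp}\cap \Gamma$ automatically stabilizes each line separately.

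For the line claim, I will imitate the proof of Lemma~\ref{lem:LineStab}: compute the Lie algebra of the stabilizer of $\fg^*_{-\delta}$ in $\fl_{\alp}$ as the span of $\fh$ together with those root spaces $\fg_\varepsilon \subset \fl_{\alp}$ for which $\ad^*(X_\varepsilon)\fg^*_{-\delta} \subseteq \fg^*_{-\delta}$, equivalently for which $\delta - \varepsilon$ is neither zero nor a root. Since $\delta$ has $\alp$-coefficient $1$ (because $\alp_{\max}$ has $\alp$-coefficient $2$ in the Heisenberg setting) while every root of $\fl_{\alp}$ has $\alp$-coefficient $0$, the nonvanishing condition is automatic, and in the simply-laced setting the nonroot condition reduces to $\varepsilon(\delta^\vee) \leq 0$. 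The Lie algebra is therefore $(\fl_{\alp})^{\delta^\vee}_{\leq 0} = \Lie(R_{\alp})$, and since $R_{\alp}$ is connected the stabilizer is exactly $R_{\alp}$; intersecting with $M_{\alp} \cap \Gamma$ gives the first claim.

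For the plane claim, one inclusion is immediate: $M_{\alp} \subset Q_{\alp}$ stabilizes $\fg^*_{-\alp}$ by Lemma~\ref{lem:LineStab}, and $R_{\alp}$ stabilizes $\fg^*_{-\delta}$ by the step above, so $M_{\alp} \cap R_{\alp}$ stabilizes $V := \fg^*_{-\alp} \oplus \fg^*_{-\delta}$. For the reverse inclusion, the key observation is that $\fg^*_{-\alp}$ and $\fg^*_{-\delta}$ lie in distinct isotypic components of $\fg^*$ under the identity component $Z(M_{\alp})^0$ of the center of $M_{\alp}$. The two characters correspond to the classes of $\alp$ and $\delta$ in $\fh^*$ modulo the span of roots of $M_{\alp}$, and $\alp - \delta = 2\alp + \beta - \alp_{\max}$ has $\beta$-coefficient $1 - c_\beta$, where $c_\beta$ denotes the coefficient of $\beta$ in $\alp_{\max}$. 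By direct inspection of $E_6$, $E_7$, $E_8$ (Bourbaki tables), one has $c_\beta = 3$ in each nice Heisenberg case, so this coefficient equals $-2 \neq 0$; since $\beta$ does not lie in the span of simple roots of $M_{\alp}$, the two characters are distinct.

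To conclude, any $g \in M_{\alp}$ commutes with $Z(M_{\alp})^0$ and hence preserves the isotypic decomposition of $\fg^*$. If additionally $g \cdot V = V$, then $g \cdot \fg^*_{-\delta}$ lies in the intersection of $V$ with the $-\delta$-isotypic, which is exactly $\fg^*_{-\delta}$ by the distinctness above. Therefore $g \in M_{\alp} \cap R_{\alp}$, completing the proof after taking $\Gamma$-points. The only place where Heisenberg-specific data of $\alp_{\max}$ enters is the distinctness of the two central characters; everything else is a direct reprise of the line-stabilizer argument together with the containment $M_{\alp} \subset Q_{\alp}$.
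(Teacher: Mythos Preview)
Your proof is correct. The line-stabilizer argument is essentially what the paper intends by ``Lemma~\ref{lem:LineStab} applied to the root $\delta$'': you have unpacked that argument for the non-simple root $\delta$, correctly noting that $\delta$ has nonzero $\alp$-coefficient so that $\varepsilon\neq\delta$ for every root $\varepsilon$ of $\fl_\alp$, after which the simply-laced pairing criterion gives the Lie algebra $(\fl_\alp)^{\delta^\vee}_{\le 0}$. One small remark: your phrase ``since $R_\alp$ is connected the stabilizer is exactly $R_\alp$'' should really read that the stabilizer is a closed subgroup containing the parabolic $R_\alp$ with the same Lie algebra, hence equal to it; but this is standard.

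For the plane claim your argument is genuinely different from the paper's. The paper invokes Proposition~\ref{prop:RQ}, which says $RQ_\alp\cap\Gamma$ has index two in the full plane-stabilizer $St_\alp\subset L_\alp\cap\Gamma$, and then observes that $M_\alp\cap R_\alp$ is a parabolic subgroup of $M_\alp$: since any index-$2$ overgroup of a parabolic would normalize it, and parabolics are self-normalizing, the intersection $St_\alp\cap M_\alp\cap\Gamma$ cannot be strictly larger than $M_\alp\cap R_\alp\cap\Gamma$. Your route instead bypasses Proposition~\ref{prop:RQ} entirely by noting that $Z(M_\alp)^0$ separates the two lines as isotypic components; the case check $c_\beta=3$ in $E_6,E_7,E_8$ is valid, though you could shorten it by observing that $\alp^\vee$ already lies in $\Lie Z(M_\alp)$ (every root of $M_\alp$ is orthogonal to $\alp$) and satisfies $\alp(\alp^\vee)=2$, $\delta(\alp^\vee)=0$. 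The paper's route is shorter given that Proposition~\ref{prop:RQ} is already available and needed elsewhere, while yours is more self-contained and does not rely on the existence of the Weyl involution in Lemma~\ref{lem:wLa}.
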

\begin{proof}
The first assertion follows from Lemma \ref{lem:LineStab} applied to the root $\delta$. The second one follows from Proposition \ref{prop:RQ}, since $M_{\alp}\cap R_{\alp}$ is a parabolic subgroup of $M_{\alp}$.
\end{proof}}

Denote by $\Chi$ the set of next-to-minimal elements in $\fg^{\times}_{-\alp}+ \bigoplus_{\eps\in \Psi_{\alp}}\fg^*_{-\eps}.$

\begin{prop}[See \S \ref{subsec:PfPropFab} below]\label{prop:Fab}
$\Chi=(M_{\alp}\cap \Gamma)(\fg^{\times}_{-\alp}+ \fg^{\times}_{-\delta})$. 
\end{prop}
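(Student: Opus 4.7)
The plan is to prove both inclusions in Proposition~\ref{prop:Fab}.

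For the inclusion $(M_\alpha \cap \Gamma)(\fg^\times_{-\alpha} + \fg^\times_{-\delta}) \subseteq \Chi$, a direct root-theoretic computation gives $\delta \in \Psi_\alpha$ and $\delta \perp \alpha$: using $c_\alpha(\alpha_{\max}) = 2$ for a nice Heisenberg root one obtains $\delta(S_\alpha) = 2$, and using $\langle \alpha_{\max}, \alpha \rangle = 1$ (easily checked from Bourbaki's highest-root expansions in each of $E_6, E_7, E_8$) one obtains $\langle \delta, \alpha \rangle = 0$. Lemma~\ref{lem:RootOrbs} then shows every element of $\fg^\times_{-\alpha} + \fg^\times_{-\delta}$ is next-to-minimal. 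Since $W(M_\alpha)$ fixes $\alpha$ pointwise (its roots being orthogonal to $\alpha$), it preserves the partition of $\{c_\alpha = 1\}$ roots by the value of $\langle \cdot, \alpha \rangle$, so the $W(M_\alpha)$-orbit of $\delta$ stays within $\Psi_\alpha$, yielding the inclusion.

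For the reverse inclusion, let $\xi = \varphi + \psi \in \Chi$ with $F := \{\varepsilon : \psi_\varepsilon \neq 0\}$, and proceed in three steps. First, I would show $F$ contains some $\varepsilon_0$ with $\langle \varepsilon_0, \alpha \rangle = 0$: the only root in $\Psi_\alpha$ with $\langle \varepsilon, \alpha \rangle = -1$ is $\alpha_{\max} - \alpha$ (such $\varepsilon$ must satisfy $c_\beta(\varepsilon) = 3 = c_\beta(\alpha_{\max})$, so $\alpha_{\max} - \varepsilon$ is a positive root with $c_\alpha = 1, c_\beta = 0$; since removing $\beta$ disconnects $\alpha$ from the rest of the Dynkin diagram, this forces $\alpha_{\max} - \varepsilon = \alpha$). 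If no such $\varepsilon_0$ existed, then $F \subseteq \{\alpha_{\max} - \alpha\}$ and by Lemma~\ref{lem:RootOrbs} the orbit of $\xi$ would be strictly larger than next-to-minimal, contradicting $\xi \in \Chi$. Second, the set $\{\varepsilon \in \Psi_\alpha : \langle \varepsilon, \alpha \rangle = 0\}$ is a single $W(M_\alpha)$-orbit, as read off from the $M_\alpha$-decomposition of the Heisenberg representation: $56|_{E_6} = 1 + 27 + \overline{27} + 1$ for $G = E_8$, $32|_{A_5} = \wedge^{\mathrm{even}} \C^6 = 1 + 15 + 15 + 1$ for $G = E_7$, and $20 = \wedge^3(\C^3 \oplus \C^3) = 1 + 9 + 9 + 1$ for $G = E_6$; the middle constituent is a minuscule-type irreducible module whose weights form a single Weyl orbit, so a Weyl representative in $M_\alpha \cap \Gamma$ suffices to arrange $\delta \in F$. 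Third, choose a rational $Z \in \fh$ in the simultaneous kernel of $\alpha$ and $\delta$ generic enough so that $\varepsilon(Z) < 0$ for every $\varepsilon \in F \setminus \{\delta\}$ and $\gamma(Z) \leq 0$ for every positive root $\gamma$ with $c_\alpha(\gamma) = 0, c_\beta(\gamma) > 0$; the latter condition guarantees $\fg^Z_{>0} \cap \fg^{S_\alpha}_0 \subseteq \fm_\alpha$, and Lemma~\ref{lem:SameOrbit3} then produces $v \in M_\alpha \cap \Gamma$ with $\Ad^*(v)(\varphi + \psi_\delta) = \xi$.

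The principal obstacle is the simultaneous satisfiability of the inequalities on $Z$ in the third step. Unlike in Lemma~\ref{lem:AppGeo}, where conjugation by all of $L_\alpha$ was permitted and the natural choice $Z = \alpha^\vee + \delta^\vee - S_\alpha$ worked, the restriction to $M_\alpha$ excludes root spaces like $\fg_\beta$ from the conjugating group, necessitating a more refined $Z$. I expect this to succeed by direct case-by-case verification in each of $E_6, E_7, E_8$ (for instance in $E_8$ a choice such as $Z = -\alpha_6^\vee - 2\alpha_7^\vee - \alpha_8^\vee$ satisfies $\alpha(Z) = \delta(Z) = 0$ and drives $\alpha_7$, $\alpha_6 + \alpha_7$, and the other relevant roots to negative values); failing that, the fallback is to apply Lemma~\ref{lem:SameOrbit3} iteratively, removing one $W(M_\alpha)$-orbit of support roots at a time.
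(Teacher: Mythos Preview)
Your first two steps for the hard inclusion match the paper's approach (the paper uses Lemma~\ref{lem:ea} and Lemma~\ref{lem:Fa} for these). The gap is in step 3.

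Your claim that the one-sided condition ``$\gamma(Z)\le 0$ for every positive root $\gamma$ with $c_{\alp}(\gamma)=0$, $c_{\beta}(\gamma)>0$'' guarantees $\fg^{Z}_{>0}\cap\fg^{S_{\alp}}_{0}\subseteq\fm_{\alp}$ is false: whenever such a $\gamma$ satisfies $\gamma(Z)<0$ strictly, the negative root $-\gamma$ has $(-\gamma)(Z)>0$ and $c_{\beta}(-\gamma)\neq 0$, so $\fg_{-\gamma}$ lies in $\fg^{Z}_{>0}\cap\fg^{S_{\alp}}_{0}$ but not in $\fm_{\alp}$. If you strengthen the condition to an equality $\gamma(Z)=0$ for all such $\gamma$, then starting from $\beta(Z)=0$ and walking along the connected Dynkin diagram of $\fl_{\alp}$ forces every simple root of $\fl_{\alp}$ to have $Z$-value $0$; combined with $\alp(Z)=0$ this gives $Z=0$, which is useless. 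Concretely, the element $\gamma=\alp_{\max}-\alp\in\Psi_{\alp}$ can lie in $F$, and since $\gamma(Z)=\beta(Z)$ (use $\alp_{\max}=\alp+\beta+\delta$ and $\alp(Z)=\delta(Z)=0$) you are forced into $\beta(Z)<0$, which immediately puts $\fg_{-\beta}\not\subset\fm_{\alp}$ into the conjugating group. Your proposed $E_8$ element $Z=-\alp_6^{\vee}-2\alp_7^{\vee}-\alp_8^{\vee}$ also fails the basic constraint: one computes $\delta(Z)=1\neq 0$. The iterative fallback does not help either, since $M_{\alp}$ commutes with $\alp^{\vee}$ and $\fg^*_{-\gamma}$ is the full $\alp^{\vee}$-eigenspace of eigenvalue $1$ inside $(\fg^*)^{S_{\alp}}_{-2}$, so $M_{\alp}$ can only rescale the $\gamma$-component and never kill it.

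The paper circumvents this by choosing the specific $Z=\beta^{\vee}+\tfrac12 S_{\alp}$, applying Lemma~\ref{lem:SameOrbit3} to get $X\in(\fl_{\alp})^{Z}_{>0}$ with $\ad^*(X)x_{0}=x-x_{0}$, and then proving by a short root-theoretic cancellation argument (using Lemmas~\ref{lem:ea} and~\ref{lem:lam}) that the solution $X$ of \emph{minimal support} already lies in $\fm$. Since $\fm^{Z}_{>0}=\fm^{Z}_{1}$, this forces the $Z$-degree-$2$ part of $x$ (in particular $x_{\gamma}$) to vanish, after which a second application of the same argument finishes the proof. The point you are missing is precisely this minimal-support cancellation step; no choice of $Z$ alone can replace it.
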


{Recall that $\cM_{\alp}$ denotes}
the quotient of $M_{\alp}\cap \Gamma$ {by $M_{\alp}\cap R_{\alp}\cap \Gamma$}.
 By Theorem \ref{thm:ntm-rep}\eqref{itm:larger}, Proposition \ref{prop:Fab}, Lemma \ref{lem:Malp}  and {Corollary \ref{cor:EasyMin}\eqref{it:Min2}} 
 we have, for any $\omega \in \Omega_\alp$,
\begin{multline}\label{=NA}
\sum_{\varphi\in \fg^{\times}_{-\alp}}\sum_{ \psi\in \bigoplus_{\eps\in \Psi_\alpha}\fg^*_{-\eps}}\cF_{S_{\alp},\varphi+\psi}[\eta](\omega sg)=\\\sum_{\varphi\in \fg^{\times}_{-\alp}}\cF_{S_{\alp},\varphi}[\eta](\omega sg)+\sum_{\gamma'\in \cM_{\alp}} \sum_{\varphi\in \fg^{\times}_{-\alp}}\sum_{\psi\in \fg^{\times}_{-\delta}}\cF_{S_{\alp},\varphi+\psi}[\eta](\gamma '\omega \gamma_{\alp} g)\,.
\end{multline}
From (\ref{=DiffNTMInt1}), (\ref{=DAb2}) and  (\ref{=NA}), we obtain
{
\begin{multline}\label{=D}
\eta{(g)}= \cA+
\sum_{\omega\in \Omega_\alp}\left(\sum_{\varphi\in \fg^{\times}_{-\alp}}\cF_{S_{\alp},\psi}[\eta](\omega \gamma_{\alp} g)+\sum_{\gamma'\in \cM_{\alp}}\sum_{\varphi\in \fg^{\times}_{-\alp}}\sum_{\psi\in \fg^{\times}_{-\delta}}\cF_{S_{\alp},\varphi+\psi}[\eta](\gamma '\omega \gamma_{\alp} g)\right ),
\end{multline}}
as required.
 \proofend

\subsection{Proof of Theorems \ref{thm:ntmFull}, \ref{thm:ntmFullSimple} and \ref{thm:ntmNoCusp}}\label{subsec:PfEFG} \begin{proof}[Proof of Theorem \ref{thm:ntmFull}]
{We proceed by induction on the rank of $\fg$. The base case is rank one, that has no next-to-minimal forms, and the statement vacuously holds.
For the induction step, }
let $\eta$ be a next-to-minimal automorphic function. Let $I=\{\beta_1,\dots,\beta_n\}$ be a convenient quasi-abelian enumeration of the roots of $\fg$. {Denote $\alp:=\beta_n$.}
Theorem \ref{thm:ntm-rep} provides the expressions for all the terms in the right-hand side of the expressions in Theorem \ref{thm:ntm-rep2}, except the constant term. 
{By Theorem \ref{thm:ntm-rep}\eqref{it:ntm0phi},} the restriction of the constant term $ \mathcal{F}_{S_{\alp},0}[\eta](g)$ to the Levi subgroup $L_{\alp}$ is next-to-minimal or minimal or trivial. Thus, we can obtain the expressions for the constant term by Theorem \ref{thm:G0min} and the induction hypothesis.
{Applying  Theorem \ref{thm:ntm-rep}\eqref{it:Pt2} to $\cF_{S_{\alp},\varphi}$ for any $\varphi\in \fg^{\times}_{-\alp}$, we get, in the notation of Theorem \ref{thm:ntm-rep}, $\gamma_0=1$, $\psi=\varphi$, $C^{\psi}_j[\eta]=A^{\varphi}_j[\eta]$ for all $1\leq j\leq n-1$, and}
\begin{equation}
\mathcal{F}_{S_\alpha, \varphi}[\eta](\gamma g)=\cW_{\varphi}[\eta](\gamma g)+\sum_{j=1}^{n-1}A^{\varphi}_j[\eta](\gamma g)\,.
\end{equation}
Thus
\begin{equation}\label{=DAbMin}
\sum_{\gamma \in \Gamma_{n}}\sum_{ \varphi\in \fg^{\times}_{-\alp}}\mathcal{F}_{S_\alpha, \varphi}[\eta](\gamma g)=A_n+\sum_{ j\bot n}A_{nj}\,.
\end{equation}

Further, for any $\varphi\in \fg^{\times}_{-\alp}$ and   
$\psi\in \fg^{\times}_{-\alp_{\max}}$, Theorem \ref{thm:ntm-rep}(\ref{itm:ntm})  provides an expression for {$\cF_{S_\alpha,\varphi+\psi}[\eta]$}.
This expression implies 
\begin{equation}\label{=DAbNTMCor}
\sum_{\gamma\in \Lambda_{\alp}}\sum_{\varphi\in \fg^{\times}_{-\alp}}\sum_{\psi\in \fg^{\times}_{-\alp_{\max}}}{\cF_{S_\alpha,\varphi+\psi}[\eta](\gamma g)}=A_{nn} \, .
\end{equation}
 Assume first that $\alp:=\beta_n$ is an abelian root. Then, using Theorem \ref{thm:ntm-rep2},  (\ref{=DAbMin}), (\ref{=DAbNTMCor}) and the induction hypothesis, we obtain
\begin{equation}\label{=DAbCor}
\eta= \mathcal{F}_{S_{\alp},0}[\eta]+A_n+\sum_{j\bot n}A_{nj}+A_{nn}=\cW_{0}[\eta]+\sum_{i=1}^n(A_i+A_{ii}+\sum_{j<i, j\bot i}A_{ij}),
\end{equation}
as required. 

Suppose now that $\alp$ is a nice Heisenberg root. Then we need to add the expressions for the non-abelian term in  \eqref{=D}. These are also provided by Theorem \ref{thm:ntm-rep}. Namely, 
\begin{align}
\label{=corNTMNonAb1}
\sum_{\omega\in \Omega_n} \sum_{\varphi\in \fg^{\times}_{-\alp}}\cF_{S_{\alp},\psi}[\eta](\omega \gamma_{{n}}g)&=B_n+\sum_{j \bot n}B_{nj}\,,\\ 
\label{=corNTMNonAb2}
\sum_{\omega\in \Omega_{{n}}}\sum_{\gamma'\in \cM_{\alp}}\sum_{\varphi\in \fg^{\times}_{-\alp}}\sum_{\psi\in \fg^{\times}_{-\delta_\alp}}\cF_{S_{\alp},\varphi+\psi}[\eta](\gamma '\omega \gamma_{{n}} g)&=B_{nn}\,.
\end{align}
The theorem follows now from Theorem \ref{thm:ntm-rep2}, and   (\ref{=DAbMin})--(\ref{=corNTMNonAb2}).
\end{proof}
Theorem \ref{thm:ntmFullSimple}  follows in a similar way, but without using the induction, and  omitting some terms that vanish.
  
{
\begin{proof}[Proof of Theorem \ref{thm:ntmNoCusp}]
{Suppose the contrary. Embed $\pi$ into the cuspidal spectrum and let } $\eta\neq0\in \pi$. By Lemma, \ref{lem:GlobLoc}, $\eta$ is either minimal or next-to-minimal. If $\fg$ has a component of type $E_8$ we let $G'\subset G$ be the subgroup corresponding to this component. Otherwise we let $G':=G$. Let $\eta'$ be the restriction of $\eta$ to $G'$. Note that $\eta'$ is still  minimal or next-to-minimal, and that it is cuspidal in the sense that the constant term of $\eta'$ with respect to the unipotent radical of any proper parabolic subgroup of $G'$ vanishes. Thus, for any two simple roots $\eps_1,\eps_2$ and any $\varphi \in \fg^*_{\eps_1}\oplus\fg^*_{\eps_2}$,
the Whittaker coefficient $\cW_{\varphi}[\eta']$ vanishes identically. 
Since all the terms in the right-hand sides of Theorems \ref{thm:G0min} and \ref{thm:ntmFull} is obtained from such Whittaker coefficients by summation, integration, and shift of the argument, we obtain from those theorems that $\eta'$ vanishes identically. This implies $\eta(1)=0$. Replacing $\eta$ in the argument above by its right shifts, we obtain $\pi=0${, reaching  a contradiction}.
\end{proof}
}
\subsection{Proof of geometric propositions }\label{subsec:PfPropFab}

{In this subsection we assume that $\fg$ is simple, since for  Propositions \ref{prop:nice}, \ref{prop:RQ}, and \ref{prop:Fab} it is enough to consider this case.}

{
\subsubsection{Proof of Proposition \ref{prop:RQ}}
\label{subsec:PfPropStab}

\begin{lemma}\label{lem:wLa}
There exists $w$ in the Weyl group of $L_{\alp}$ such that $w^2=1$ and $w(\alp)=\delta_{\alp}$.
\end{lemma}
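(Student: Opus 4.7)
The plan is to realize $w$ as either a single reflection, or the product of two commuting orthogonal reflections, in $W(L_\alp)$. First I would compute $\langle \alp, \delta_\alp\rangle$. In the abelian case $\delta_\alp = \alp_{\max}$, and since $\alp_{\max}$ is the highest root with $\alp \neq \alp_{\max}$, the pairing lies in $\{0,1\}$. In the Heisenberg case, the Heisenberg property forces $\langle \alp, \alp_{\max}\rangle = 1$ (since $\alp_{\max} - \alp$ must be a root in order for $\fg_{\alp_{\max}}$ to lie in $[\fu_\alp,\fu_\alp]$), while $\langle \alp, \beta_\alp\rangle = -1$, giving $\langle \alp, \delta_\alp\rangle = 1 - 2 - (-1) = 0$; the same type of computation shows $|\delta_\alp|^2 = 2$ so that $\delta_\alp$ is indeed a root. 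Crucially, in both cases $\delta_\alp$ has the same $\alp$-coefficient as $\alp$ (namely $1$), so $\delta_\alp - \alp$ lies in the root lattice of $L_\alp$.

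In Case A, where $\langle \alp, \delta_\alp\rangle = 1$, the vector $\gamma := \delta_\alp - \alp$ is itself a root (being the sum of two simply-laced roots at angle $120^\circ$), lies in $L_\alp$, and the reflection $w := s_\gamma$ is an involution with $w(\alp) = \alp - \langle \alp, \gamma\rangle\gamma = \alp + \gamma = \delta_\alp$.

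In Case B, where $\langle \alp, \delta_\alp\rangle = 0$, the vector $\delta_\alp - \alp$ has squared length $4$ and is not a root. The plan is to exhibit two positive roots $\gamma_1, \gamma_2$ of $L_\alp$ with $\gamma_1 + \gamma_2 = \delta_\alp - \alp$ and $\langle \alp, \gamma_i\rangle = -1$ for $i=1,2$. The orthogonality $\langle\gamma_1,\gamma_2\rangle = 0$ is then automatic from $|\delta_\alp - \alp|^2 = 4 = |\gamma_1|^2 + |\gamma_2|^2$, so in the simply-laced setting $s_{\gamma_1}$ and $s_{\gamma_2}$ commute; their product $w := s_{\gamma_1} s_{\gamma_2}$ is an involution with $w(\alp) = \alp + \gamma_1 + \gamma_2 = \delta_\alp$.

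The main obstacle is the existence of $\gamma_1, \gamma_2$ in Case B. By Table~\ref{tab:QA}, the cases to check are the abelian roots orthogonal to $\alp_{\max}$ (namely $\alp_1,\alp_{n-1},\alp_n$ in $D_n$ for $n\geq 4$, $\alp_1,\alp_6$ in $E_6$, and $\alp_7$ in $E_7$) together with the three nice Heisenberg roots ($\alp_2$ in $E_6$, $\alp_1$ in $E_7$, $\alp_8$ in $E_8$). In each case $\gamma_1,\gamma_2$ can be written down explicitly in a standard realization; for instance in $D_n$ with $\alp = \alp_1 = e_1 - e_2$ and $\delta_\alp = e_1 + e_2$, one takes $\gamma_1 = e_2 - e_3$ and $\gamma_2 = e_2 + e_3$, both roots of the $D_{n-1}$-type factor $L_{\alp_1}$. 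Alternatively, one could give a uniform argument by appealing to the fact, implicit in the proof of Lemma~\ref{lem:WeylMin}, that the $L_\alp$-module $V_\alp^*$ is minuscule: in a minuscule orbit any two weights at squared distance $4$ differ by a sum of two orthogonal roots, yielding $\gamma_1,\gamma_2$ with the required properties.
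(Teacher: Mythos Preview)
Your argument is correct (via the case-by-case route in Case~B) but follows a genuinely different strategy from the paper's. The paper constructs $w$ uniformly: in the abelian case it takes $w=w_0$, the longest element of $W(L_\alp)$, using that $V_\alp$ is irreducible with lowest weight $\alp$ and highest weight $\alp_{\max}=\delta_\alp$, so that $w_0$ necessarily swaps them; in the Heisenberg case it takes $w=s_{\beta_\alp}w_0$, computes $s_{\beta_\alp}w_0(\alp)=\delta_\alp$, and then verifies involutivity by checking case-by-case (for $E_6,E_7,E_8$) that $w_0(\beta_\alp)=-\beta_\alp$. Your approach instead realizes $w$ explicitly as a product of at most two reflections, splitting by the value of $\langle\alp,\delta_\alp\rangle$ rather than by abelian versus Heisenberg. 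This is more elementary---no structural facts about $w_0$ are needed---at the cost of having to exhibit the orthogonal decomposition $\delta_\alp-\alp=\gamma_1+\gamma_2$ in Case~B. One remark: once you have orthogonal roots $\gamma_1,\gamma_2$ of $L_\alp$ summing to $\delta_\alp-\alp$, the condition $\langle\alp,\gamma_i\rangle=-1$ is automatic, since $\langle\alp,\gamma_1+\gamma_2\rangle=-2$ while each $\langle\alp,\gamma_i\rangle\in\{-1,0,1\}$ (as $\gamma_i\neq\pm\alp$). Your minuscule-orbit sketch (that any two weights at squared distance~$4$ admit an intermediate weight at distance~$2$ from each) is plausible and true in the cases at hand, but it is not a standard citable fact, so the case-check remains the load-bearing part of Case~B---comparable in scope to the paper's case-check that $-w_0$ fixes $\beta_\alp$.
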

\begin{proof}
We can assume that $\fg$ is simple.
If $\alp$ is abelian, we take $w$ to be $w_0$, where $w_0$ is the longest element in the Weyl group of $L_{\alp}$. Since $\alp$ is the lowest weight of the first internal Chevalley $L_{\alp}$-module $\fn_{\alp}$, and $\delta_{\alp}=\alp_{\max}$ is its highest weight, $w_0(\alp)=\delta_{\alp}$.

If $\alp$ is Heisenberg, we take $w$ to be $s_{\beta}w_0$, where $\beta$ is the only root attached to $\alp$. In this case, the highest weight of $\fn_{\alp}$ is $\alp_{\max}-\alp$, while the lowest weight is still $\alp$. Thus, $w_0(\alp)=\alp_{\max}-\alp$. Since $\alp$ is a Heisenberg root, $\beta$ is orthogonal to $\alp_{\max}$. Thus, $s_{\beta}(\alp_{\max}-\alp)=\alp_{\max}-\alp-\langle\alp_{\max}-\alp,\beta\rangle\beta=\alp_{\max}-\alp-\beta=\delta_{\alp}$.
To prove that $w$ is an involution, we will show that $w_0(\beta)=-1$. To see this, we apply the well known fact that $-w_0$ is a graph automorphism of the Dynkin diagram. For $\fg$ of type $E_8$, we have $\alp=\alp_8$, $L_{\alp}$ is of type $E_7$, and the Dynkin diagram has no automorphisms. For $\fg$ of type $E_7$, we have $\alp=\alp_1$, $L_{\alp}$ is of type $D_6$, and $w_0$ is known to be $-1$. In the remaining case of $\fg$ of type $E_6$, we have $\alp=\alp_2$, $\beta=\alp_4$, $L_{\alp}$ is of type $A_5$, and $-w_0$ induces the non-trivial graph automorphism, which however fixes $\beta$.
\end{proof}

\begin{proof}[Proof of Proposition \ref{prop:RQ}]
We first note that  $St_{\alp}$ preserves the union of coordinate axis in $\fg^*_{-\alp}\oplus\fg^*_{-\delta}$, since this union is also the union of $\{0\}$ with the set of minimal elements in $\fg^*_{-\alp}\oplus\fg^*_{-\delta}$.
Since the action of $St_{\alp}$ on $\fg^*_{-\alp}\oplus\fg^*_{-\delta}$ is linear, any $g\in St_{\alp}$ either preserves the line $\fg^*_{-\alp}$ or sends all its elements to elements of $\fg^*_{-\delta}$. Thus, by Lemma \ref{lem:wLa}, exactly one of the elements $\{g, w_0g\}$ preserves both lines $\fg^*_{-\alp}$ and $\fg^*_{-\delta}$. By Lemma \ref{lem:LineStab}, $Q_{\alp}\cap \Gamma$ is the stabilizer of the line $\fg^*_{-\alp}$. By the same lemma applied to $\delta$, $R_{\alp}\cap \Gamma$ is the stabilizer of the line $\fg^*_{-\delta}$.
Thus, $RQ_{\alp}\cap \Gamma$ is the joint stabilizer of both lines, and has index two in $St_{\alp}$.
\end{proof}

}
\subsubsection{Preparation lemma}\label{subsubsec:prep} Assume $\fg$ is not of type $A_n$, and let $\alp$ be a quasi-abelian root. If $\fg$ is of type $D_n$ we assume further that $\alp$ is an abelian root. By Table \ref{tab:QA}, these assumptions imply that $\alp$ corresponds to an extreme node in the Dynkin diagram, {\it i.e.} there exists a unique simple root $\beta$ not orthogonal to $\alp$.
Thus $M_{\alp}=L_{\alp}\cap L_{\beta}$. 
Denote 
\begin{equation}
    \label{eq:Phi_a}
\Phi_{\alp}:=\{ \text{ root } \eps \mid \langle \eps, \alp \rangle = 0, \,\eps(S_\alp) = 2 \}\,.
\end{equation}

\begin{lemma}\label{lem:Fa}
The Weyl group of $M_{\alp}$  acts transitively on $\Phi_{\alp}$. 
\end{lemma}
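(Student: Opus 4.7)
The plan is to identify $\Phi_{\alp}$ with a specific subset of weights of a minuscule representation and reduce the lemma to a decomposition statement. First I would verify that $\Phi_{\alp}$ coincides with the set of weights of the $L_{\alp}$-module $V_{\alp} := \fu_{\alp}/[\fu_{\alp},\fu_{\alp}]$ having $c_{\beta}(\varepsilon) = 2$: the condition $\varepsilon(S_{\alp}) = 2$ forces the coefficient of $\alp$ in $\varepsilon$ to equal $1$, so $\varepsilon$ is a weight of $V_{\alp}$, and since $\alp$ is orthogonal to every simple root other than $\beta$ in a simply-laced root system, the orthogonality $\langle \varepsilon, \alp\rangle = 0$ becomes $c_{\beta}(\varepsilon) = 2$.

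The key structural input is that $V_{\alp}$ is a minuscule $L_{\alp}$-module in every case covered by the hypothesis. For abelian $\alp$ this is essentially the content of the proof of Lemma~\ref{lem:WeylMin}; for Heisenberg $\alp$ in types $E$ (the only Heisenberg cases permitted, since type $A$ is excluded and the Heisenberg root in type $D$ is excluded by hypothesis) it is verified directly from the explicit highest weight. Because the simple roots of $M_{\alp}$ are orthogonal to $\alp$ and in particular have zero $\beta$-coordinate, the value $c_{\beta}$ is invariant under the $W_{M_{\alp}}$-action; moreover, the restriction of a minuscule $L_{\alp}$-module to $M_{\alp}$ decomposes as a direct sum of irreducible minuscule $M_{\alp}$-modules, each of which has a single $W_{M_{\alp}}$-orbit of weights and a single value of $c_{\beta}$. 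Thus $\Phi_{\alp}$ is a union of some of these summands, and transitivity on $\Phi_{\alp}$ reduces to the claim that exactly one such summand has $c_{\beta} = 2$.

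This last claim I would verify case-by-case using Table~\ref{tab:QA}, the cases being $D_n$ with $\alp \in \{\alp_1, \alp_{n-1}, \alp_n\}$ and each quasi-abelian $\alp$ in $E_6, E_7, E_8$. Each relevant $V_{\alp}$ is a standard minuscule module (vector representation, half-spin, an exterior power, the $27$-dimensional module of $E_6$, or the $56$-dimensional module of $E_7$) whose Levi branching to $M_{\alp}$ is explicit. The $c_{\beta} = 2$ piece is in each case the unique irreducible $M_{\alp}$-summand, and it contains $\delta_{\alp}$ as its highest weight. For instance, in $D_n$ with $\alp = \alp_n$ one has $V_{\alp} = \Lambda^{2}\C^{n}$ for $A_{n-1}$, which under $M_{\alp} = A_{n-3}\times A_1$ decomposes as $\Lambda^{2}\C^{n-2} \oplus (\C^{n-2}\otimes\C^{2}) \oplus \Lambda^{2}\C^{2}$ with $c_{\beta}$-grading $2, 1, 0$, so $\Phi_{\alp} = \Lambda^{2}\C^{n-2}$; the case $\alp = \alp_1$ in $D_n$ gives $\Phi_{\alp} = \{\alp_{\max}\}$, a singleton; and in $E_6$ with $\alp = \alp_1$, $V_{\alp}$ is the half-spin of $D_5$, which restricts to $A_4$ as $\C \oplus \Lambda^{2}\C^{5} \oplus \Lambda^{4}\C^{5}$ with $c_{\beta}$-grading $0, 1, 2$, so $\Phi_{\alp} = \Lambda^{4}\C^{5}$.

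The main obstacle is the tedium of carrying out the branching in each case, especially the Heisenberg case in $E_8$ where $V_{\alp_8}$ is the $56$-dimensional minuscule $E_7$-module; however, each required branching is a standard Levi restriction of a minuscule module, and the $c_{\beta} = 2$ piece is consistently a single minuscule $M_{\alp}$-irreducible.
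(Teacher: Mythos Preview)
Your proposal is correct and follows a route closely related to, but organized differently from, the paper's. Both proofs ultimately identify $\Phi_{\alp}$ as the weight set of an irreducible minuscule $M_{\alp}$-module and finish with a case-by-case check of minusculeness. The difference is in which parent Levi is used to reach that identification. You work inside the \emph{first} Chevalley module $V_{\alp}=\fu_{\alp}/[\fu_{\alp},\fu_{\alp}]$ of $L_{\alp}$, restrict it to $M_{\alp}$, grade by $c_{\beta}$, and check case-by-case that the $c_{\beta}=2$ piece is a single minuscule irreducible. The paper instead exploits the identity $\langle\alp,\eps\rangle=\eps(S_{\alp})-\eps(S_{\beta})/2$ to recognize $\Phi_{\alp}$ as exactly the weight set of the \emph{second} Chevalley module $\fg^{S_{\beta}}_{4}$ for $L_{\beta}$; this module is irreducible by \cite{MS}, and since every weight is orthogonal to $\alp$ the simple reflection $s_{\alp}$ fixes each one, so $W_{L_{\beta}}$-transitivity collapses to $W_{M_{\alp}}$-transitivity. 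The paper then gives a uniform description of the lowest weight $\theta$ (the highest root of a canonically embedded $D$-type subdiagram) before the case list.

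What each buys: the paper's route gives a uniform conceptual description of $\Phi_{\alp}$ as a known object and avoids computing Levi branchings---the case list reduces to reading off a single fundamental weight of $M_{\alp}$ (the nodes adjacent to the embedded $D$-diagram). Your route stays inside the familiar first Chevalley module and uses only the general fact that restricting a minuscule module to a Levi yields minuscule pieces, at the cost of carrying out an explicit branching in each case. Both are sound; the paper's identification via $\eps(S_{\beta})=4$ is the cleaner shortcut.
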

\begin{proof}
 
By the defining property of minuscule representations, it is enough to show that $\Phi_\alpha$ corresponds to the set of weights of a minuscule representation of $M_\alpha$.
Note that for any root $\eps,\, \langle \alp,\eps\rangle = \eps(S_{\alp})-\eps(S_{\beta})/2$. Thus $\eps\in\Phi_{\alp}$ if and only if $\eps(S_{\beta})=4$. In other words, $\Phi_{\alp}$ is the set of roots of the $L_{\beta}$-module $\fg^{S_{\beta}}_{4}$, described in \cite[\S 5]{MS} where it is called  the second internal Chevalley module, therefore we have to show that the second internal Chevalley modules that arise are minuscule.

The second Chevalley module for the node $\beta$ is given by all roots of $\fg$ with coefficient $2$ along $\beta$. This can never happen for $\fg$ of type $A$, so the second Chevalley module is trivial. For types $D$ and $E$, and $\alpha$ an extreme node of the Dynkin diagram, not necessarily nice, the second Chevalley module for the adjacent $L_\beta$ is irreducible~\cite{MS}. This irreducible representation can be found uniformly by finding the lowest root $\theta$ of $\fg$ with coefficient $2$ along $\beta$.  This root $\theta$ is equal to the highest root of the smallest $D$-type diagram that can be embedded in the diagram of $\fg$ such that $\beta$ is the second node (in Bourbaki enumeration) of that $D$-type diagram. With this characterization $\theta$ is zero on torus elements $\gamma^\vee$ for all simple roots different from $\beta$ and the set of nodes $I$ directly attaching to the embedded $D$-type diagram. The root $\theta$ is $-1$ on the generators $\alpha_i^\vee$ ($i\in I$), thus making the restriction of $\theta$ a lowest weight of $M_\beta$. In particular, $\theta$ is trivial on $\alpha^\vee$ and by inspection one finds the following list of modules $\pi$ of $M_\alpha$ when $\alpha$ is nice. The same information is also illustrated in Table~\ref{tab:minuscule}.
\\[0.5\baselineskip]
Case $D_n$, $\alp=\alp_1$, $\beta=\alp_2$, $I=\emptyset$,  $\pi$=1-dimensional representation of $M_{\alpha}\cong D_{n-2}$.\\
Case $D_n$, $\alp=\alp_{n-1}$ (or $\alp=\alpha_n$), {$\beta=\alp_{n-2}$},  $I=\{n-4\}$, $\pi$=exterior square of the standard representation of $M_{\alpha}\cong A_{n-3}$.\\
{Case $E_6$, $\alp=\alp_2$, $\beta=\alp_4$, $I=\{1,6\}$, $\pi$=tensor product of the vector representation with the contragredient vector representation of $M_{\alpha}\cong A_2\times A_2$.}\\
Case $E_6$, $\alp=\alp_1\, (\alp_6)$, $\beta=\alp_3\,(\alp_5)$, $I=\{6\}\, (\{1\})$, $\pi$=standard representation of $M_{\alpha}\cong A_4$. \\
Case $E_7$, $\alp=\alp_1$, $\beta=\alp_3$, $I=\{6\}$, $\pi$=exterior square of $M_{\alpha}\cong A_5$.\\
Case $E_7$, $\alp=\alp_7$, $\beta=\alp_6$, $I=\{1\}$, $\pi$=standard representation of $M_{\alpha}\cong D_5$.\\
Case $E_8$: $\alp=\alp_8$, {$\beta=\alp_7$}, $I=\{1\}$, $\pi$=$27$-dimensional representation of $M_\alpha\cong E_6$.
\\[0.5\baselineskip]
All the modules listed are minuscule by~\cite[\S VIII.3]{Bou}. On the weights $\Phi_\alpha$ of such modules, the action of the Weyl group of $M_\alpha$ is transitive.
\end{proof}

\begin{table}[tpb]
    \centering
    \caption{Diagrammatic list of all Levi subgroups $M_\alpha$ and second internal Chevalley modules $\pi$ as a fundamental representation of $M_\alpha$ determined by a set $I$ of filled nodes. The extreme node $\alpha$ and its neighbouring node $\beta$ appear with a dotted pattern while $M_\alpha$ is obtained from the remaining, solid part of the diagram.}
    \label{tab:minuscule}
    \tikzstyle{dot}=[circle, draw, thick, fill=white, inner sep=2pt]
    \tikzstyle{line}=[thick, every node/.append style={dot}]
    \tikzstyle{hidden}=[densely dotted]
    \tikzstyle{marked}=[fill=black]
    \tikzstyle{fade}=[thick, dash pattern=on 7pt off 1pt on 1pt off 1pt on 1pt off 1pt on 1pt off 1pt, every node/.append style={dot, solid}]
    \begin{tabular}{ll@{\hskip 1em}l}
        Case $D_n$ &
        \begin{tikzpicture}[scale=0.75, baseline={(0,-.125)}]
            \draw[line, hidden] (0,0)  node {} -- (1,0) node {} -- (2,0);
            \draw[line] (2,0) node {} -- (3,0);
            \draw[fade] (3,0) node {} -- (4,0);
            \draw[line] (5,-0.5) node {} -- (4,0) node {} -- (5,0.5) node {};
        \end{tikzpicture} 
        &
        \begin{tikzpicture}[scale=0.75, baseline={(0,-.125)}]
            \draw[fade] (0,0) node {} -- (1,0);
            \draw[line] (1,0) node {} -- (2,0) node[marked] {} -- (3,0);
            \draw[line, hidden] (3,0) node[solid] {} -- (4,0);
            \draw[line, hidden] (5,0.5) node {} -- (4,0) node {} -- (5,-0.5) node[solid] {};
        \end{tikzpicture}
        \\
        Case $E_6$ &
        \begin{tikzpicture}[scale=0.75, baseline={(0,-.125)}]
            \draw[line, hidden] (1,0) -- (2,0);
            \draw[line, hidden] (3,0) -- (2,0) node {} -- (2,1) node {};
            \draw[line] (0,0) node[marked] {} -- (1,0) node{};
            \draw[line] (3,0) node {} -- (4,0) node[marked]{};
        \end{tikzpicture}
        &
        \begin{tikzpicture}[scale=0.75, baseline={(0,-.125)}]
            \draw[line, hidden] (0,0) node {} -- (1,0) node{} -- (2,0);
            \draw[line] (4,0) node[marked] {} -- (3,0) node{} -- (2,0) node {} -- (2,1) node {};
        \end{tikzpicture}
        \\
        Case $E_7$ &
        \begin{tikzpicture}[scale=0.75, baseline={(0,-.125)}]
            \draw[line, hidden] (0,0) node {} -- (1,0) node{} -- (2,0);
            \draw[line] (5,0) node {} -- (4,0) node[marked]{}-- (3,0) node{} -- (2,0) node {} -- (2,1) node {};
        \end{tikzpicture}
        &
        \begin{tikzpicture}[scale=0.75, baseline={(0,-.125)}]
            \draw[line] (0,0) node[marked] {} -- (1,0) node {} -- (2,0);
            \draw[line,hidden] (5,0) node {} -- (4,0) node{}-- (3,0);
            \draw[line] (3,0) node{} -- (2,0) node {} -- (2,1) node {};
        \end{tikzpicture} 
        \\
        Case $E_8$ &
        \begin{tikzpicture}[scale=0.75, baseline={(0,-.125)}]
            \draw[line] (0,0) node[marked] {} -- (1,0) node {} -- (2,0);
            \draw[line,hidden] (6,0) node {} -- (5,0) node{} -- (4,0);
            \draw[line] (4,0) node{} -- (3,0) node{} -- (2,0) node {} -- (2,1) node {};
        \end{tikzpicture}
                   &
    \end{tabular}
\end{table}

\subsubsection{Proof of Proposition \ref{prop:nice}}
\label{subsec:PfPropNTM}

{
Let $\alp$ be a nice root, {\it i.e.} an abelian root for any $\fg$, or a Heisenberg root in types $E_6,E_7,E_8$.  Let $X$ denote the set of next-to-minimal elements in $(\fg^*)^{S_{\alp}}_{-2}$.

\begin{lemma}\label{lem:An}
Assume $\fg$ is of type $A_n$, and $\alp=\alp_k$ in the Bourbaki enumeration with $k\notin \{1,n\}$.  Then the stabilizer of $\alp$ in the Weyl group of $L_{\alp}$ acts transitively on $\Phi_{\alp}$.
\end{lemma}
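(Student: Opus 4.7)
The plan is to make everything explicit in the standard coordinate realization of type $A_n$, identify $\Phi_{\alp}$ as a concrete product set, and then observe that the relevant stabilizer subgroup of the Weyl group acts on that product set componentwise and transitively in each factor.

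First, I would realize $\fh^*$ as the hyperplane $\{(x_1,\dots,x_{n+1}) : \sum x_i=0\}$ in $\R^{n+1}$ with simple roots $\alp_i=e_i-e_{i+1}$. In these coordinates the positive roots are the $e_i-e_l$ with $i<l$, and a positive root has $\eps(S_{\alp_k})=2$ exactly when the coefficient of $\alp_k$ in $\eps$ equals $1$, i.e.\ when $i\le k<l$. A quick rank-two calculation then gives $\langle e_i-e_l,\,\alp_k\rangle = [i=k]+[l=k+1]-[i=k+1]-[l=k]$, so combined with $i\le k<l$ the orthogonality condition forces $i<k$ and $l>k+1$. Consequently
\[
\Phi_{\alp_k}=\{e_i-e_l : 1\le i\le k-1,\ k+2\le l\le n+1\},
\]
a product set of size $(k-1)(n-k)$, which is nonempty precisely under the hypothesis $k\notin\{1,n\}$.

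Next, I would identify the Weyl group of $L_{\alp_k}$, whose simple roots are $\{\alp_1,\dots,\alp_{k-1},\alp_{k+1},\dots,\alp_n\}$, with $S_k\times S_{n-k+1}$ permuting the coordinate blocks $\{1,\dots,k\}$ and $\{k+1,\dots,n+1\}$ separately. The element $\alp_k=e_k-e_{k+1}$ straddles the two blocks, so its stabilizer in $W(L_{\alp_k})$ consists of permutations fixing both $k$ and $k+1$; this is $S_{k-1}\times S_{n-k}$ acting on $\{1,\dots,k-1\}$ and $\{k+2,\dots,n+1\}$ respectively.

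Finally, the induced action of $S_{k-1}\times S_{n-k}$ on $\Phi_{\alp_k}$ is the obvious product action on the pairs $(i,l)$, which is transitive, proving the lemma. There is no real obstacle here; the only point that requires mild care is the bookkeeping verifying that the stabilizer of $\alp_k$ really is all of $S_{k-1}\times S_{n-k}$ (no further constraints), which is immediate from the block structure, and the nonemptiness/nontriviality of $\Phi_{\alp_k}$ which is exactly why the extreme cases $k\in\{1,n\}$ are excluded from the statement.
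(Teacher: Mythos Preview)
Your proof is correct and follows essentially the same approach as the paper: both use the standard $\varepsilon$-coordinate realization of $A_n$, identify $\Phi_{\alp_k}$ with the roots $e_i-e_l$ for $i<k$ and $l>k+1$, and observe that the stabilizer of $\alp_k$ in $W(L_{\alp_k})\cong S_k\times S_{n-k+1}$ is $S_{k-1}\times S_{n-k}$ acting independently on the two index sets. Your version simply spells out more of the bookkeeping.
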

\begin{proof}
In the $\eps$ notation we have $\alp=\eps_k-\eps_{k+1}$, and $\Phi_{\alp}$ consists of all the roots $\eps_i-\eps_j$ with $i<k<k+1<j$. The stabilizer of $\alp$ in the Weyl group of $L_{\alp}$ permutes all $i<k$ and all $j>k+1$ independently.
\end{proof}

\begin{proof}[Proof of Proposition \ref{prop:nice}]
{\eqref{it:nice0}
If $\alp$ is abelian and $\langle \alp, \alp_{\max}\rangle> 0$, then $\fg$ is of type $A_n$, and $\alp$ is either $\alp_1$ or $\alp_n$ in the Bourbaki enumeration. In both cases, $(\fg^*)^{S_{\alp}}_{-2}\setminus \{0\}$ is given by trace pairing with rank one matrices, and thus has only minimal orbit and $X=\emptyset$.

\eqref{it:niceTran}
We have $\delta \in \Phi_{\alp}$ and Lemma \ref{lem:RootOrbs} implies that $X$ is non-empty. Further, by Lemma \ref{lem:AppGeo}, any $\varphi \in X$ can be conjugated by $L_{\alp}\cap \Gamma$ into $\fg^{\times}_{-\alp}+\fg^{\times}_{-\omega}$ for some $\omega\in \Phi_{\alp}$. By Lemmas \ref{lem:Fa} and \ref{lem:An}, we can assume $\omega=\delta$.}
\end{proof}
}

\subsubsection{Proof of Proposition \ref{prop:Fab} }
By the assumption of the proposition, $\alp$ is a nice Heisenberg root. 
In other words, $\alp$ is a Heisenberg root, and $\fg$ is of type $E_n$ for $n\in \{6,7,8\}$. 
Recall that 
\begin{equation}
\Psi_{\alp}=\{ \text{ root } \eps \mid \langle \eps, \alp \rangle \leq 0, \eps(S_\alp) = 2 \}\,.
\end{equation}
 and that $\Chi$ denotes the set of next-to-minimal elements in $\fg^{\times}_{-\alp}+ \bigoplus_{\eps\in \Psi_{\alp}}\fg^*_{-\eps}.$ Let $\alp_{\max}$ denote the maximal root of $\fg$.
Since $\alp$ is a Heisenberg root, $\langle \alp, \alp_{\max}\rangle =1$ and thus $\gamma:=\alp_{\max}-\alp$ is a root. 

\begin{lemma}\label{lem:ea}
\begin{enumerate}[(i)]
\item $ \Psi_{\alp}=\Phi_{\alp}\cup \{\gamma\}$. \label{it:Fab}
\item For any $\eps \in \Psi_{\alp}$, $\eps - \alp$ is not a root. \label{it:eaUR}
\end{enumerate}
\end{lemma}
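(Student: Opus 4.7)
The plan is to exploit two elementary facts about the ambient root system. First, since $\fg$ is simply-laced, any two non-proportional roots satisfy $\langle \eps, \alp \rangle \in \{-1, 0, 1\}$, and for such $\eps$, the element $\eps + \alp$ (resp.\ $\eps - \alp$) is a root if and only if $\langle \eps, \alp \rangle = -1$ (resp.\ $+1$). Second, the hypothesis that $\alp$ is a Heisenberg simple root is precisely the statement that $\alp_{\max}$ has coefficient $2$ of $\alp$, that $\alp_{\max}$ is the unique positive root with that property, and that $\langle \alp_{\max}, \alp \rangle = 1$. These two inputs, together with the observation that $\eps(S_\alp) = 2$ is equivalent to $\alp$ appearing with coefficient $1$ in $\eps$, will reduce everything to bookkeeping.

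For part \eqref{it:Fab}, I would first verify that $\gamma \in \Psi_\alp \setminus \Phi_\alp$: from $\alp_{\max}(S_\alp) = 4$ we get $\gamma(S_\alp) = 2$, and $\langle \gamma, \alp \rangle = \langle \alp_{\max}, \alp \rangle - \langle \alp, \alp \rangle = 1 - 2 = -1$. This establishes the inclusion $\Phi_\alp \cup \{\gamma\} \subseteq \Psi_\alp$. For the reverse inclusion, pick any $\eps \in \Psi_\alp \setminus \Phi_\alp$. Since $\langle \eps, \alp \rangle \leq 0$ and $\langle \eps, \alp \rangle \neq 0$, the simply-laced fact forces $\langle \eps, \alp \rangle = -1$, whence $\eps + \alp$ is a root. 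Because $\eps(S_\alp) = 2$, the coefficient of $\alp$ in $\eps$ equals $1$, so $\eps + \alp$ has coefficient $2$ of $\alp$. By the Heisenberg uniqueness recalled above, $\eps + \alp = \alp_{\max}$, i.e.\ $\eps = \gamma$.

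For part \eqref{it:eaUR}, suppose toward contradiction that $\eps \in \Psi_\alp$ and that $\eps - \alp$ is a root. One first rules out $\eps = \pm \alp$: the value $\eps(S_\alp) = 2$ excludes $\eps = -\alp$, and $\langle \alp, \alp \rangle = 2 \not\leq 0$ excludes $\eps = \alp$. Hence $\eps$ is not proportional to $\alp$, so the simply-laced fact applied to the root $\eps - \alp$ gives $\langle \eps, \alp \rangle = 1$, contradicting the defining inequality $\langle \eps, \alp \rangle \leq 0$ of $\Psi_\alp$.

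There is no serious obstacle here; the only nontrivial ingredient is the characterization of Heisenberg simple roots (uniqueness of $\alp_{\max}$ as a positive root of $\alp$-coefficient $2$), which is already built into the setup of the paper via Table~\ref{tab:QA} and Proposition~\ref{prop:Heis}.
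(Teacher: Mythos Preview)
Your proof is correct and follows essentially the same approach as the paper's own argument. The paper's version is terser---for part \eqref{it:Fab} it only proves the nontrivial inclusion (observing that any $\eps\in\Psi_\alp\setminus\Phi_\alp$ gives a root $\eps+\alp$ with $(\eps+\alp)(S_\alp)=4$, hence $\eps+\alp=\alp_{\max}$), and for part \eqref{it:eaUR} it simply records $\langle\eps,-\alp\rangle\geq 0$---but the underlying root-system facts you spell out are exactly what is being invoked.
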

\begin{proof}
\eqref{it:Fab} For any $\eps \in \Psi_{\alp} \setminus \Phi_{\alp}$, $\alp + \eps$ is a root and $(\alp+\eps)(S_{\alp})=4$. Since $\alp$ is a Heisenberg root, this implies $\alp+\eps =\alp_{\max}$.\\
\eqref{it:eaUR} $\langle \eps,-\alp\rangle \geq 0$  by definition of $\Psi_{\alp}$.
\end{proof}

As in \S\ref{subsubsec:prep}, let $\beta$ be the unique simple root  not orthogonal to $\alp$.
Note that  $\langle \beta, \gamma\rangle=-\langle \beta, \alp \rangle=1$ and thus $\delta:=\gamma - \beta$ is a root. 
\begin{lemma}\label{lem:lam}
Let $\lambda$ be a root with $\lambda(S_{\alp})=0$. Then 
\begin{enumerate}[(i)]
\item \label{it:albet} $\langle \lambda,\alp\rangle \cdot\langle \lambda, \beta \rangle \leq 0$.
\item \label{it:lambet} If $\langle \lambda,\alp\rangle \neq 0$ and $\delta+\lambda \in  \Psi_{\alp}$ then $\lambda=\beta$.
\end{enumerate}

\end{lemma}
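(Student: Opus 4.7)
In the simply-laced $\fg$, the inner product $\langle\cdot,\cdot\rangle$ (normalized so all roots have squared length $2$) takes values in $\{-1,0,1\}$ on pairs of non-proportional roots, and the hypothesis $\lambda(S_\alpha)=0$ says $\lambda$ is a root of the Levi $L_\alpha$.

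\textbf{Part (i).} Suppose toward contradiction that $\langle\lambda,\alpha\rangle$ and $\langle\lambda,\beta\rangle$ are non-zero of the same sign; replacing $\lambda$ by $-\lambda$ if necessary, assume both equal $-1$. Then the $\alpha$-string through $\lambda$ forces $\lambda+\alpha$ to be a root, and since its $\alpha$-coefficient is $1$ it is not proportional to $\beta$, so
\[
\langle\lambda+\alpha,\beta\rangle = \langle\lambda,\beta\rangle+\langle\alpha,\beta\rangle = -1+(-1)=-2
\]
contradicts the above bound.

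\textbf{Part (ii).} Since $\alpha$ is Heisenberg, $\alpha_{\max}-\alpha$ is a root, whence $\langle\alpha_{\max},\alpha\rangle=1$ and therefore $\langle\delta,\alpha\rangle=\langle\alpha_{\max}-\alpha-\beta,\alpha\rangle=1-2-(-1)=0$. The assumption $\delta+\lambda\in\Psi_\alpha$ then gives $\langle\lambda,\alpha\rangle\leq 0$, and combined with $\langle\lambda,\alpha\rangle\neq 0$ it forces $\langle\lambda,\alpha\rangle=-1$. Unwinding, $\lambda$ is a positive root of $L_\alpha$ whose simple-root expansion has $\beta$-coefficient equal to $1$, and $\delta+\lambda$ is a root of $\fg$ with $\alpha$-coefficient $1$, hence a weight of the first internal Chevalley $L_\alpha$-module $V_1:=\fg^{S_\alpha}_1$.

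For $\fg$ of type $E_6$, $E_7$ or $E_8$ with $\alpha$ Heisenberg, $V_1$ is a well-known irreducible $L_\alpha$-module ($\Lambda^3\C^6$ for $L_\alpha=A_5\subset E_6$, the half-spin for $L_\alpha=D_6\subset E_7$, and the $56$-dimensional representation for $L_\alpha=E_7\subset E_8$), and its highest weight is $\gamma=\alpha_{\max}-\alpha$, as one checks by verifying dominance $\langle\alpha_{\max}-\alpha,\sigma\rangle\geq 0$ for every simple root $\sigma$ of $L_\alpha$ using $\langle\alpha_{\max},\sigma\rangle\geq 0$ and $\langle\alpha,\sigma\rangle\leq 0$. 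Applying to the weight $\delta+\lambda$ the standard fact that the highest weight of any $L_\alpha$-module minus any other weight is a non-negative integer combination of simple roots of $L_\alpha$, we obtain
\[
\gamma-(\delta+\lambda) = \beta-\lambda \in \sum_{\sigma\text{ simple in }L_\alpha}\Z_{\geq 0}\,\sigma.
\]
The coefficient of $\beta$ here is $0$, and the coefficient of any simple root $\sigma\neq\beta$ of $L_\alpha$ equals the negative of its coefficient in the positive root $\lambda$, hence $\leq 0$; for the combination to be non-negative, all such coefficients of $\lambda$ must vanish, forcing $\lambda=\beta$. The one non-formal input is the identification of $V_1$ just recalled, which is a standard case-by-case fact about the three Heisenberg parabolics in type $E$ and matches the Chevalley-module analysis already used in the proof of Lemma~\ref{lem:Fa}.
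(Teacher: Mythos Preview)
Your proof is correct apart from a notational slip: the first internal Chevalley module is $\fg^{S_\alpha}_2$, not $\fg^{S_\alpha}_1$ (in the paper's convention $\alpha(S_\alpha)=2$, so the $S_\alpha$-eigenvalue equals twice the $\alpha$-coefficient, and $\fg^{S_\alpha}_1=0$). This does not affect the argument, since the module you actually use---roots with $\alpha$-coefficient $1$---is the correct one.

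For part~(i) your argument is essentially the paper's with the roles of $\alpha$ and $\beta$ interchanged: the paper adds $\beta$ to $\lambda$ and obtains $\langle\alpha,\lambda+\beta\rangle=-2$, while you add $\alpha$ and obtain $\langle\beta,\lambda+\alpha\rangle=-2$. Both yield the same contradiction.

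For part~(ii) you take a genuinely different route. The paper's proof is a two-line application of Lemma~\ref{lem:ea}\eqref{it:Fab}: since $\langle\alpha,\delta+\lambda\rangle<0$ forces $\delta+\lambda\in\Psi_\alpha\setminus\Phi_\alpha=\{\gamma\}$, one gets $\lambda=\gamma-\delta=\beta$ immediately. Your argument instead invokes the irreducibility of the first Chevalley module and the highest-weight partial order, reducing to the observation that $\gamma-(\delta+\lambda)=\beta-\lambda$ must be a non-negative combination of $L_\alpha$-simple roots while its $\beta$-coefficient vanishes. This is correct but substantially heavier machinery; the only gain is avoiding Lemma~\ref{lem:ea}, whose proof is itself a one-line root computation. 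The paper's approach is more economical and does not require the case-by-case identification of the Chevalley modules for $E_6,E_7,E_8$ that you list (even though the irreducibility is indeed established uniformly in Lemma~\ref{lem:WeylMin}).
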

\begin{proof}
\eqref{it:albet} Suppose the contrary. Then, $\lambda \notin \{\pm \alp, \pm\beta\}$. Also, replacing $\lambda$ by $-\lambda$ if needed, we may assume that $\langle \lambda,\alp\rangle =\langle \lambda, \beta \rangle =-1$. Thus $\lambda + \beta$ is a root and $\langle \alp, \lambda + \beta \rangle =-2$. Thus $\lambda+\beta = -\alp$. This contradicts $(\lambda+\beta)(S_{\alp})=0$.\\
\eqref{it:lambet} Since $\delta+\lambda \in  \Psi_{\alp}$,  $\langle \alp, \delta + \lam\rangle\leq 0$. But $\langle \alp,\delta\rangle =0$ and   $\langle \alp,\lam\rangle\neq 0$, thus $\langle \alp, \delta + \lam\rangle< 0$ and thus $\delta + \lam\in \Psi_{\alp} \setminus \Phi_{\alp}$. By Lemma \ref{lem:ea}\eqref{it:Fab}, this implies $\delta +\lam = \gamma$ and thus $\lam=\beta$.
\end{proof}

Recall that $\mathfrak{X}$ denotes the set of next-to-minimal elements in $\fg^{\times}_{-\alp}+ \bigoplus_{\eps\in \Psi_{\alp}}\fg^*_{-\eps}$.
{As before, for any root $\eps$ let $\eps^\vee$ denote the coroot given by the scalar product with $\eps$.}
 Note that $M_{\alp}\cap \Gamma$ preserves $\fX$, since $\Psi_{\alp}$ is the set of roots on which $S_{\alp}-{\alp}^\vee$ is at least 2 and $S_{\alp}$ is 2, and $M_{\alp}$ is the joint centralizer of ${\alp}^\vee$ and $S_{\alp}$. {For the rest of this section let
\begin{equation}
Z:={\beta}^\vee+2^{-1}S_{\alp}\,. 
\end{equation}
Note that} $\alp(Z)=\delta(Z)=0$.

\begin{lem}
\begin{enumerate}[(i)]
\item \label{it:epsZ} Let $\eps\neq \delta \in \Psi_{\alp}$. Then $\eps(Z)\in \{1,2\}$.
\item \label{it:minZ} The maximal eigenvalue of $Z$ on $\fg$ is $2$.
\end{enumerate}
\end{lem}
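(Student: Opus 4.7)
The plan is to exploit the decomposition $\Psi_\alpha = \Phi_\alpha \cup \{\gamma\}$ established in Lemma~\ref{lem:ea}\eqref{it:Fab} for part~\eqref{it:epsZ}, and then argue part~\eqref{it:minZ} by a case analysis on the coefficient $c_\alpha(\varepsilon)$ of $\alpha$ in a root $\varepsilon$, using the fact that $\alpha$ is Heisenberg (so $c_\alpha \leq 2$ and equality forces $\varepsilon = \alpha_{\max}$).

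For part~\eqref{it:epsZ}, I would begin by noting that for any root $\varepsilon$ with $\varepsilon(S_\alpha)=2$ we have
\begin{equation*}
\varepsilon(Z) = \langle \varepsilon, \beta\rangle + \tfrac{1}{2}\varepsilon(S_\alpha) = \langle \varepsilon, \beta\rangle + 1.
\end{equation*}
If $\varepsilon = \gamma$, then $\langle\gamma,\beta\rangle = \langle \alpha_{\max}-\alpha,\beta\rangle = 0 - (-1) = 1$, giving $\gamma(Z)=2$. If $\varepsilon \in \Phi_\alpha \setminus\{\delta\}$, then $\varepsilon$ is not proportional to $\beta$ (as $\beta \notin \Phi_\alpha$), so $\langle \varepsilon,\beta\rangle \in\{-1,0,1\}$ in our simply-laced setting. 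The only thing to rule out is $\langle \varepsilon,\beta\rangle = -1$: in that case $\varepsilon + \beta$ is a root, lies in $\Psi_\alpha$ (since $(\varepsilon+\beta)(S_\alpha)=2$ and $\langle \varepsilon+\beta, \alpha\rangle = 0 + \langle \beta,\alpha\rangle = -1 \leq 0$), and even lies in $\Psi_\alpha \setminus \Phi_\alpha$, so by Lemma~\ref{lem:ea}\eqref{it:Fab} one must have $\varepsilon+\beta = \gamma$, i.e.\ $\varepsilon = \delta$, contradicting the hypothesis. Hence $\langle\varepsilon,\beta\rangle \in \{0,1\}$ and $\varepsilon(Z)\in\{1,2\}$.

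For part~\eqref{it:minZ}, I would write $\varepsilon(Z) = \langle \varepsilon,\beta\rangle + c_\alpha(\varepsilon)$ for any root $\varepsilon$, and split on $c_\alpha(\varepsilon) \in \{-2,-1,0,1,2\}$. The crucial input is that $\alpha$ is a Heisenberg simple root of a simply-laced $\mathfrak{g}$, so the unique root with $c_\alpha = 2$ is $\alpha_{\max}$ (it spans the centre of the Heisenberg $\mathfrak{u}_\alpha$); for it $\langle\alpha_{\max},\beta\rangle = 0$ and $\varepsilon(Z) = 2$. If $c_\alpha(\varepsilon)=1$, then $\varepsilon \neq \pm\beta$ (since $c_\alpha(\pm\beta)=0$), so $\langle\varepsilon,\beta\rangle\leq 1$ and $\varepsilon(Z)\leq 2$. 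If $c_\alpha(\varepsilon)=0$ and $\varepsilon\neq \beta$, then $\langle\varepsilon,\beta\rangle\leq 1$ and $\varepsilon(Z)\leq 1$; for $\varepsilon = \beta$ one has $\varepsilon(Z) = 2$. If $c_\alpha(\varepsilon) \leq -1$, then $\langle\varepsilon,\beta\rangle \leq 2$ with equality only for the excluded $\varepsilon = \beta$, so $\langle\varepsilon,\beta\rangle \leq 1$ and $\varepsilon(Z) \leq 0$. The bound $\varepsilon(Z)\leq 2$ thus holds uniformly, with equality attained (e.g.\ at $\beta$, $\alpha_{\max}$, $\gamma$), proving \eqref{it:minZ}.

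The argument is essentially a combinatorial check, and I do not expect any serious obstacle; the only subtle point is part~\eqref{it:minZ}, where the use of the Heisenberg hypothesis to pin down $c_\alpha = 2$ to the single root $\alpha_{\max}$ is what prevents $\langle\varepsilon,\beta\rangle + c_\alpha$ from exceeding 2.
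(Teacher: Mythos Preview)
Your proof is correct and follows essentially the same approach as the paper's. For part~\eqref{it:epsZ} both arguments rule out $\langle\varepsilon,\beta\rangle=-1$ by showing $\varepsilon+\beta$ would land in $\Psi_\alpha\setminus\Phi_\alpha=\{\gamma\}$, forcing $\varepsilon=\delta$; for part~\eqref{it:minZ} both bound $\mu(Z)=\langle\mu,\beta\rangle+c_\alpha(\mu)$ by observing that each summand is at most $1$ except at $\mu=\beta$ and $\mu=\alpha_{\max}$ respectively, which are then checked directly---your case split on $c_\alpha$ just makes this slightly more explicit.
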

\begin{proof}
\eqref{it:epsZ} Suppose the contrary. Since $\eps(S_{\alp})=2$ and $\eps\neq \pm \beta$, this implies $\langle \beta,\eps\rangle =-1$, and thus $\eps+\beta$ is a root. Then $\langle \alp, \eps+\beta\rangle <0$, and by Lemma \ref{lem:ea}\eqref{it:Fab} $\eps+\beta=\gamma$. Thus $\eps=\gamma-\beta=\delta$, contradicting the assumption.\\
\eqref{it:minZ} We have to show that for any root $\mu$, $\mu(Z)\leq 2$. If $\mu =\alp_{\max}$ then $\mu(2^{-1}S_{\alp})=2$ and $\mu({\beta}^\vee)=0$. If $\mu=\beta$ then $\mu(2^{-1}S_{\alp})=0$ and $\mu({\beta}^\vee)=2$. For any other $\mu$, $\max(\mu(2^{-1}S_{\alp}),\mu({\beta}^\vee))\leq 1$.
\end{proof}

We are now ready to prove Proposition \ref{prop:Fab}. Let $x\in \fX$ and decompose it to a sum of root covectors $x=x_{\alp}+\sum_{\eps\in \Psi_{\alp}}x_{\eps}$ with $x_{\eps}\in \fg^*_{-\eps}$. Let $F:=\{\eps \in \Psi_{\alp}\, \vert x_{\eps}\neq 0\}$. By Lemma \ref{lem:RootOrbs} $F$ intersects $\Phi_{\alp}$ and thus, by Lemma \ref{lem:WeylMin}, we can assume $\delta\in F$. 
 Decompose $x=x_0+x_1+x_2$ with $x_i\in (\fg^*)^Z_{i}$. We have $x_0=x_{\alp}+x_{\delta}$.
Applying Lemma \ref{lem:SameOrbit3} to $S:=S_{\alp}$ and $Z$, we obtain that there exists a nilpotent $X\in (\fl_{\alp})^Z_{>0}$ with 
\begin{equation}\label{=X}
    \ad^*(X)(x_{0})=x_1+x_{2}\,.
\end{equation} 

Decompose $X$ to a sum of root vectors $X=\sum_{\lam\in \Psi} X_{\lam}$, $X_{\lam}\neq 0\in \fg_{-\lam}$, where $\Psi$ is some set of roots. 
Choose some $X\in (\fl_{\alp})^Z_{>0}$ satisfying \eqref{=X} such that the cardinality of $\Psi$ is minimal possible.
\begin{lemma}
$X\in \fm:=\textnormal{Lie}(M_{\alp})$.
\end{lemma}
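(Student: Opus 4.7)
The plan is to argue by contradiction using the minimality of $|\Psi|$: assuming some $\lam_0\in\Psi$ satisfies $\langle\lam_0,\alp\rangle\neq 0$, I will construct $X'\in(\fl_{\alp})^Z_{>0}$ with $\ad^*(X')(x_0)=x_1+x_2$ and $|\Psi(X')|<|\Psi|$, contradicting the choice of $X$. The first observation is that for any $\lam\in\Psi$ (so $\lam\in\fl_{\alp}$), the condition $X_\lam\in(\fl_{\alp})^Z_{>0}$ forces $\langle\lam,\beta\rangle<0$, since $\lam(Z)=\langle\lam,\beta\rangle$. Combining this sign with Lemma~\ref{lem:lam}(i), which yields $\langle\lam_0,\alp\rangle\cdot\langle\lam_0,\beta\rangle\leq 0$, gives $\langle\lam_0,\alp\rangle\geq 0$; since it is nonzero and the root system is simply-laced, $\langle\lam_0,\alp\rangle=1$. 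In particular $\lam_0+\alp$ is not a root, so $\ad^*(X_{\lam_0})(x_\alp)=0$ and $X_{\lam_0}$ can act on $x_0$ only via $x_\delta$, which happens iff $\langle\lam_0,\delta\rangle=-1$.

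If $\langle\lam_0,\delta\rangle\neq -1$, then $X_{\lam_0}$ annihilates $x_0$ and $X'=X-X_{\lam_0}$ provides the desired contradiction; this already handles, for instance, the case $\lam_0=-\beta$. Otherwise $\ad^*(X_{\lam_0})(x_\delta)$ is a nonzero vector of weight $-(\lam_0+\delta)$. Since $\lam_0\neq\beta$ (because $\langle\beta,\beta\rangle=2$ while $\langle\lam_0,\beta\rangle<0$), Lemma~\ref{lem:lam}(ii) gives $\lam_0+\delta\notin\Psi_{\alp}$, so this weight lies outside the support of $x_1+x_2$ and the contribution must be cancelled by some other term in $\ad^*(X)(x_0)$.

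The only candidate cancelling term is $\ad^*(X_{\lam'})(x_\alp)$ with $\lam'=\lam_0+\delta-\alp$. Because $\langle\lam_0+\delta,\alp\rangle=1$, the element $\lam'=s_\alp(\lam_0+\delta)$ is a root, and $\lam'(S_\alp)=0$ places it in $\fl_{\alp}$. If $\lam'\notin\Psi$ the cancellation fails outright, already a contradiction. If $\lam'\in\Psi$, a short scalar-product calculation gives $\langle\lam',\alp\rangle=-1$ and $\langle\lam',\delta\rangle=1$, so $\ad^*(X_{\lam'})(x_\delta)=0$ while $\ad^*(X_{\lam'})(x_\alp)$ is precisely what cancels $\ad^*(X_{\lam_0})(x_\delta)$. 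Consequently $\ad^*(X_{\lam_0}+X_{\lam'})(x_0)=0$, and $X'=X-X_{\lam_0}-X_{\lam'}$ decreases $|\Psi|$ by two while preserving the defining equation, contradicting the minimality of $\Psi$. The main delicate point is identifying this unique Weyl-reflected partner $\lam'$ and using scalar-product bookkeeping to rule out any other contributions at the weight $-(\lam_0+\delta)$.
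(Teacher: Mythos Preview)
Your proof is correct. In fact, your elaborate second case (where $\langle\lam_0,\delta\rangle=-1$ forces a cancelling partner $\lam'$) is vacuous: since $\alp$ is Heisenberg, every root $\lam_0$ of $L_\alp$ is orthogonal to $\alp_{\max}$, so
\[
\langle\lam_0,\delta\rangle=\langle\lam_0,\alp_{\max}-\alp-\beta\rangle=0-1-\langle\lam_0,\beta\rangle\geq 0
\]
because $\langle\lam_0,\beta\rangle\leq -1$. Hence $\ad^*(X_{\lam_0})(x_\delta)=0$ always, and your first case already gives the contradiction: $X_{\lam_0}$ annihilates $x_0$, so $X'=X-X_{\lam_0}$ works with $|\Psi(X')|<|\Psi|$.

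Your approach genuinely differs from the paper's. You exploit the minimality of $|\Psi|$ by \emph{removing} root components that annihilate $x_0$; the paper instead (with the opposite sign outcome $\langle\lam,\alp\rangle=-1$) argues that $[X_\lam,x_\alp]\neq 0$ lands at a weight outside $\Psi_\alp$ and then shows no cancelling $[X_\mu,x_\delta]$ can exist, a direct contradiction that does not overtly invoke minimality. Given the convention $X_\lam\in\fg_{-\lam}$ stated just before the lemma, your sign computation $\langle\lam_0,\beta\rangle<0$ is the correct one, and your removal argument is the one that actually fits this setup. The simplification above (the second case never occurs) brings it down to a two-line proof.
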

\begin{proof}
Since $X\in(\fl_{\alp})^Z_{>0}$, we have $\langle \beta, \lam \rangle > 0$ {for any $\lambda\in \Psi$}. 
{Suppose by way of contradiction $X\notin \fm$. Then $\langle \alp, \lam \rangle \neq 0$ for some $\lam \in \Psi$. Fix such $\lambda$.}
Then Lemma \ref{lem:lam}\eqref{it:albet} implies $\langle \alp, \lam \rangle < 0$ and thus $\langle \alp, \lam \rangle =-1$ and thus {$\lambda+\alp$ is a root and} $[X_{\lam},x_{\alp}]\neq 0$. By Lemma \ref{lem:ea}\eqref{it:eaUR}, $\alp+\lam \notin\Psi_{\alp}$, and thus this term has to be canceled by $[X_{\mu}, x_{\delta}]$ for some $\mu \in \Psi$. Thus $\mu=\alp+\lam-\delta$ is a root and thus $\langle\alp+\lam,\delta\rangle=1$. But  this contradicts$$\langle\alp+\lam,\delta\rangle=\langle \lam, \delta\rangle=\langle\lam,\alp_{\max}-\alp-\beta\rangle=0+1-\langle \lam,\beta\rangle \leq 0\,.$$
\end{proof}

Thus $X\in \fm^Z_{>0}$. But $\fm^Z_{>0}=\fm^Z_{1}$. Thus $\ad^*(X)(x_0)\in (\fg^*)^Z_{1}$ and thus $x_2=0$ and $\ad^*(X)x_0=x_1$. 
Let 
\begin{equation}\label{=y}
y:=\Exp(-X)x-  x_{0}=-\ad^*(X)(x_1)+1/2(\ad^*(X))^2({x_0}).
\end{equation}
The right-hand side of \eqref{=y} has only {these} two terms because $X\in \fg^Z_{1}$,
$x\in \fg^Z_{\geq 0}$, and  $\fg=\fg^Z_{\leq 2}$.
Since $\ad^*(X)$ {raises} the $Z$-eigenvalues by 1, we get that
$y\in (\fg^*)^Z_{2}$.
Note that all the roots of $y$ still lie in $\Psi_{\alp}\setminus \{\delta\}$, since $X\in \fm$. Thus $x_{0}+y \in \fX$. By the same argument as above, there exists $Y\in \fm^Z_{1}$ such that $\ad^*(Y)(x_{0})=y$.
However, $\ad^*(Y)(x_{0})\in (\fg^*)^Z_{1}$ and thus $y=0$. 
Thus $\Exp(-X)x=x_0=x_{\alp}+x_{\delta},$ i.e.
 we can conjugate $x$ using $\Exp(-X)\in M_{\alp}\cap \Gamma$ into $\fg^{\times}_{-\alp}+ \fg^{\times}_{-\delta}$.  This proves  Proposition \ref{prop:Fab}.
\proofend
\begin{remark}\label{rmk:alpha2}
The assumption that  $G_{\alp}$ is not of type $D_n$  is necessary, since in type $D_n$ the Heisenberg root is $\alp_2$ and the set $\Phi_{\alp_2}\subset \Psi_{\alp_2}$ intersects both complex next-to-minimal orbits. Indeed, let $\lam:=\alp_1+\alp_2+\alp_3$ and $\mu:=\alp_2+2\sum_{i=3}^{n-2}\alp_i+\alp_{n-1}+\alp_n$. Then $\fg^{\times}_{-\alp}+\fg^{\times}_{-\lam}$ belongs to the orbit given by the partition $2^41^{n-4}$, and $\fg^{\times}_{-\alp}+\fg^{\times}_{-\mu}$ belongs to the orbit given by the partition $31^{n-3}$. To see this note that in the $\eps$ notation we have $\alp=\eps_2-\eps_3$, $\lam=\eps_1-\eps_4$ and $\mu=\eps_2+\eps_3$. 
\end{remark}

\section{Detailed examples} \label{sec:examples}
In this section we will illustrate how to use the framework introduced above to compute certain Fourier coefficients in detail, many of which are of particular interest in string theory. In particular, we will in \S\ref{sec:D5} show examples for $D_5$ with detailed steps and deformations that reproduce the results of Theorems~\ref{thm:min-rep}, \ref{thm:G0min} and \ref{thm:ntm-rep}, while in the following sections we will illustrate how to apply these theorems in different examples.

As in previous sections we will here often identify $\varphi \in \lie g^*$ with its Killing form dual $f_\varphi \in \lie g$. Since we have also seen that it is convenient to specify a Cartan element $S \in \lie h$ by how the simple roots $\alpha_i$ act on $S$ we will make use of the fundamental coweights $\omega_j^\vee \in \lie h$ satisfying $\alpha_i(\omega_j^\vee) = \delta_{ij}$. 

\subsection{\texorpdfstring{Examples for $D_5$}{Examples for D5}}\label{sec:D5}
In the following examples we will consider $G = \Spin_{5,5}(\A)$ with $\Gamma = \Spin_{5,5}(\K)$. 
We use the conventional Bourbaki labelling of the roots shown in Figure~\ref{fig:D5-labels}.
The complex nilpotent orbits for $D_5$ are labeled by certain integer partitions of $10$ with a partial ordering illustrated in the Hasse diagram of Figure~\ref{fig:D5-orbits} where $\Oh_{1^{10}}$ is the trivial orbit and $\Oh_{2^21^6}$ the minimal orbit.  Note that this ordering is based on the closure on complex orbits and not on the partial ordering that we introduced in~\cite{Part1}. There is no unique next-to-minimal orbit and both $\Oh_{2^41^2}$ and $\Oh_{31^7}$ {can} occur as {Whittaker supports of automorphic forms} arising in string theory. These two complex orbits are usually denoted $(2A_1)'$ and $(2A_1)''$ in Bala--Carter notation~\cite{CM} with $2A_1$ indicating two orthogonal simple roots and the primes distinguish the two possible pairs (up to Weyl conjugation, see Lemma~\ref{cor:2RootConjSimple}).

We will focus on examples of importance in string theory. In particular we consider expansions in the string perturbation limit associated to the maximal parabolic subgroup $P_{\alpha_1}$ and the decompactification limit associated to $P_{\alpha_5}$ discussed in section~\ref{sec:string}. The Fourier coefficients computed in \eqref{eq:D5-min-rank1} and \eqref{eq:D5-ntm-rank2} below have previously been computed for particular Eisenstein series in \cite{GMV} equations (4.84) and (4.88) respectively; although 
with very different methods using theta lifts. While the Fourier coefficient \eqref{eq:D5-min-rank1} for a minimal automorphic form is readily checked to be of the same form as \cite[(4.84)]{GMV}, the comparison between Fourier coefficient \eqref{eq:D5-ntm-rank2} for a next-to-minimal automorphic form and \cite[(4.88)]{GMV} is a bit more intricate and will be discussed further in Remark \ref{rmk:D5-ntm-rank-2} below.

\begin{figure}[ht]
    \begin{minipage}[t]{.5\textwidth}
\begin{center}
    \begin{tikzpicture}[scale=0.75]
        \tikzstyle{dot}=[circle, draw, thick, fill=white, inner sep=2pt]
        \tikzstyle{line}=[thick, shorten >=-2pt, shorten <=-2pt]
        \node (n1) at (0, 0) [dot, label=below:1] {};
        \node (n2) at (1, 0) [dot, label=below:2] {};
        \node (n3) at (2, 0) [dot, label=below:3] {};
        \node (n4) at (2, 1) [dot, label=above:5] {};
        \node (n5) at (3, 0) [dot, label=below:4] {};

        \begin{scope}[on background layer] 
            \draw [line] (n1) -- (n2) -- (n3) -- (n5);
                        \draw [line] (n3) -- (n4);
        \end{scope}

    \end{tikzpicture}
\end{center}
\caption{\label{fig:D5-labels}Root labels used for $D_5$.}
\end{minipage}%
\begin{minipage}[t]{.5\textwidth} 
\begin{center}
\begin{tikzpicture}[thick, scale=0.9]
    \node(A1) at (0,0) {$1^{10}$};
    \node(A2) at (0,1) {$2^21^6$};
    \node(A3) at (-1,2) {$2^41^2$};
    \node(A4) at (+1,2) {$31^7$};
    \node(A5) at (0,3) {$32^21^3$};
    \node(A6) at (0,4) {$3^21^4$};
    \node(A7) at (-1,5) {$3^22^2$};
    \node(A8) at (+1,5) {$51^5$};
    \node(A9) at (0,6) {$3^31$};
    \node(A10) at (-1,7) {$4^21^2$};
    \node(A11) at (+1,7) {$52^21$};
    \node(A12) at (0,8) {$531^2$};
    \node(A13) at (+1,9) {$5^2$};
    \node(A14) at (-1,9) {$71^3$};
    \node(A15) at (0,10) {$73$};
    \node(A16) at (0,11) {$91$};
    \foreach \x/\y in {A1/A2, A2/A3, A2/A4, A3/A5, A4/A5, A5/A6, A6/A7, A6/A8, A7/A9, A9/A10, A9/A11, A8/A11, A10/A12, A11/A12, A12/A13, A12/A14, A13/A15, A14/A15, A15/A16}
        \draw[->,>=stealth] (\x) to (\y);
\end{tikzpicture}
\end{center}
\caption{\label{fig:D5-orbits}Hasse diagram of nilpotent orbits for $D_5$ with respect to the closure ordering on complex orbits. There are two non-special orbits given by $32^21^3$ and $52^21$. }
\end{minipage}
\end{figure}

\newcommand{\DFive}[5]{{
    \begin{tikzpicture}[scale=0.1]
        \tikzstyle{w}=[circle, draw, fill=white, inner sep=1pt]
        \tikzstyle{b}=[circle, draw, fill=black, inner sep=1pt]
        \tikzstyle{g}=[circle, draw, fill=red, inner sep=1pt]
        \tikzstyle{line}=[thick, shorten >=-1pt, shorten <=-1pt]
        \node (n1) at (0, 0) [#1] {};
        \node (n2) at (1, 0) [#2] {};
        \node (n3) at (2, 0) [#3] {};
        \node (n5) at (3, 0) [#4] {};
        \node (n4) at (2, 1) [#5] {};

    \end{tikzpicture}
}}

\subsubsection{Minimal representation}
    We will start with considering a minimal automorphic function $\eta_\textnormal{min}$ on $G = \Spin_{5,5}(\A)$. Such a minimal automorphic form can for instance be obtained as a residue of a maximal parabolic Eisenstein series~\cite{GRS2,GMV,FGKP}. We will compute the Fourier coefficients of $\eta_\textnormal{min}$ with respect to the unipotent radical of the maximal parabolic subgroup $P_{\alpha_1}$ associated to the root $\alpha_1$, which is the string perturbation limit discussed in \S\ref{sec:string} and the corresponding Levi subgroup $L_{\alpha_1}$ has semisimple part of type $D_4$. 

    We may describe such Fourier coefficients by Whittaker pairs $(S_{\alpha_1}, \varphi)$ where $S_{\alpha_1} ={2} \omega_1^\vee$ and $\varphi \in \lie (\lie g^*)^{S_{\alpha_1}}_{-2}$. Indeed, the associated Fourier coefficient $\mathcal{F}_{S_{\alpha_1}, \varphi}$ is then the expected period integral over $N_{S_{\alpha_1}, \varphi} = U_{\alpha_1}$, the unipotent radical of $P_{\alpha_1}$, where we recall that $N_{S_{\alpha_1}, \varphi}$ is given by \eqref{eq:N_Sphi}. 
    \begin{equation}
        \mathcal{F}_{S_{\alpha_1}, \varphi}[\eta_\textnormal{min}](g) := \intl_{(U_{\alpha_1} \cap \Gamma) \bs U_{\alpha_1}} \eta_\textnormal{min}(ug) \varphi(u)^{-1} \, du \,.
    \end{equation}
    As in previous sections we will use the shorthand notation $[U] = (U\cap \Gamma) \bs U$ for the compact quotient of a unipotent subgroup $U$.
    
    Since $\eta_\textnormal{min}$ is minimal, Theorem~\ref{thm:min-rep}(\ref{it:min0}) gives that $\mathcal{F}_{S_{\alpha_1}, \varphi}[\eta_\textnormal{min}]$ is non-vanishing only if $\varphi \in \Oh_\textnormal{min} = \Oh_{2^21^6}$ or $\varphi = 0$. We will now consider the former. The latter can be computed using Theorem~\ref{thm:G0min} with $G$ of type $D_4$ or the results from \cite{MWCov} for Eisenstein series.
    
    By Corollary~\ref{cor:EasyMin}(\ref{it:Min1}), $\varphi \in \Oh_\textnormal{min}$ can be conjugated to $\varphi' = \Ad^*(\gamma_0) \varphi \in \lie g^\times_{-\alpha_1}$ by an element $\gamma_0 \in L_{\alpha_1} \cap \Gamma$. This conjugation leaves the integration domain invariant, or, equivalently, we may use Lemma~\ref{lem:conjugation-translation} to obtain 
    \begin{equation}
        \mathcal{F}_{S_{\alpha_1}, \varphi}[\eta_\textnormal{min}](g) = \mathcal{F}_{S_{\alpha_1}, \varphi'}[\eta_\textnormal{min}](\gamma_0 g) \, .
    \end{equation}

    The unipotent radical $U_{\alpha_1}$ is a subgroup of the unipotent radical $N$ of our fixed Borel subgroup, and we may make further Fourier expansions along the complement of $U_{\alpha_1}$ in $N$. Of these Fourier coefficients, only the constant term survives since such non-trivial characters, combined with $\varphi'$ are in a larger orbit than $\Oh_\textnormal{min}$ and therefore do not contribute according to 
    Corollary~\ref{cor:domin}. By repeating these arguments, or equivalently use Lemma~\ref{lem:1_1} based on a special case of 
    Theorem~\ref{thm:IntTrans} (where $V$ is trivial), we obtain that
    \begin{equation}
        \label{eq:D5-min-rank1}
        \mathcal{F}_{S_{\alpha_1}, \varphi}[\eta_\textnormal{min}](g) = \mathcal{W}_{\varphi'}[\eta_\textnormal{min}](\gamma_0 g) :=  \intl_{(N \cap \Gamma) \bs N} \eta_\textnormal{min}(n \gamma_0 g) \varphi'(n)^{-1} \, dn \, ,
    \end{equation}
{confirming Theorem \ref{thm:min-rep}\eqref{it:min-min} for this case.}

\subsubsection{Next-to-minimal representations}

    Let $\eta_\textnormal{ntm}$ be a next-to-minimal automorphic form on $G = \Spin_{5,5}(\A)$. {Since there are two next-to-minimal orbits for $D_5$ there are two cases to consider. We begin with automorphic forms associated with the next-to-minimal orbit $\WS(\eta_\textnormal{ntm}) = \{ \Oh_{31^7} \}$ that has dimension $16$, also known as $(2A_1)'$ in Bala-Carter notation.}  Let also $P_{\alpha_1} = L_{\alpha_1} U_{\alpha_1}$ be the maximal parabolic subgroup of $G$ with respect to the simple root $\alpha_1$ such that the Levi subgroup $L_{\alpha_1}$ has semisimple part of type $D_4$. Automorphic forms with the above Whittaker support can for example be obtained as generic elements of the degenerate principal series of maximal parabolic Eisenstein series associated with $P_{\alpha_1}$. 
    
    We will now compute the Fourier coefficients of $\eta_\textnormal{ntm}$ with respect to $U_{\alpha_1}$ using Theorem~\ref{thm:ntm-rep}. These are described by Whittaker pairs $(S_{\alpha_1},\varphi)$ where $S_{\alpha_1} ={2} \omega_1^\vee$ and $\varphi \in (\lie g^*)^{S_{\alpha_1}}_{-2}$. The case $\varphi = 0$ can be treated using Theorem~\ref{thm:ntm-rep2} with $G$ of type $D_4$. According to Theorem~\ref{thm:ntm-rep} or Corollary~\ref{cor:domin}, we are thus left with $\varphi$ being minimal or next-to-minimal where the latter in this case only gives non-vanishing Fourier coefficients for $\varphi \in \Oh_{31^7}$ and not $\Oh_{2^41^2}$.

    A minimal element $\varphi = \varphi_\textnormal{min} \in \Oh_\textnormal{min} = \Oh_{2^21^6}$ can be conjugated to some standard form $\psi = \Ad^*(\gamma_\textnormal{min})\varphi_\textnormal{min} \in \lie g^\times_{-\alpha_1}$ where $\gamma_\textnormal{min} \in L_{\alpha_1} \cap \Gamma$ using Corollary~\ref{cor:EasyMin}(\ref{it:Min1}). From Lemma~\ref{lem:conjugation-translation} we then have that
    \begin{equation}
        \cF_{S_{\alpha_1}, \varphi_\textnormal{min}}[\eta_\textnormal{ntm}](g) = \cF_{S_{\alpha_1}, \psi}[\eta_\textnormal{ntm}](\gamma_\textnormal{min}g)\,.
    \end{equation}

        Let $I^{(\perp \alpha_1)} = (\beta_1, \beta_2, \beta_3) := (\alpha_5,\alpha_4,\alpha_3)$ and $L_{i}$ be the Levi subgroup of $G$ obtained from a subsequence of simple roots $(\beta_1, \ldots, \beta_{i})$ of $I^{(\perp \alpha_1)}$. Each semisimple part of $L_i$ has simple components of type $A$ for which all simple roots are abelian according to Table \ref{tab:QA}, and thus $I^{(\perp \alpha_1)}$ is an abelian enumeration.
    Using Therorem~\ref{thm:ntm-rep}(\ref{it:Pt2}) we obtain
    \begin{equation}
        \label{eq:D5-expansion}
        \cF_{S_{\alpha_1}, \varphi_\textnormal{min}}[\eta_\textnormal{ntm}](g) = \cW_{\psi}[\eta_\textnormal{ntm}](\gamma_\textnormal{min}g) + \sum_{i=1}^3 C^\psi_i[\eta_\textnormal{ntm}](\gamma_\textnormal{min}g) 
    \end{equation}
    where
    \begin{equation}
        C^\psi_i[\eta_\textnormal{ntm}](\gamma_\textnormal{min}g) = A^\psi_i[\eta_\textnormal{ntm}](\gamma_\textnormal{min}g) = \sum_{\gamma \in \Gamma_{i-1}} \sum_{\varphi' \in \lie g^\times_{-\beta_i}} \cW_{\psi + \varphi'}[\eta_\textnormal{ntm}](\gamma \gamma_\textnormal{min} g) \, .
    \end{equation}
    As explained in Section~
    \ref{subsec:ThB}, $\Gamma_{i-1}$ is defined as follows. {Let $Q_{i-1}$ denote the parabolic subgroup of $L_{i-1}$ given by the restriction of $\beta_i^{\vee}$ to  $L_{i-1}$. Then $Q_{i-1}$ is the stabilizer in $L_{i-1}$ of the root space $\lie g^*_{-\beta_i}$ of $L_i$. Then $\Gamma_{i-1} = (L_{i-1} \cap \Gamma) / (Q_{i-1} \cap \Gamma)$ with $\Gamma_0 = \{1\}$.}
    Concretely, we may take the representatives 
    \begin{equation}
        \Gamma_0 = \Gamma_1 = \{1\} \qquad \Gamma_2 = \{1\} \cup w_4 \Exp(\lie g_{-\alpha_4}) \cup w_5 \Exp(\lie g_{-\alpha_5}) \cup w_4 w_5 \Exp(\lie g_{-\alpha_4} \oplus \lie g_{-\alpha_5}) 
    \label{eq:GD5}
    \end{equation}
    where $w_i$ is a representative in $\Gamma$ of the simple reflection corresponding to the simple root $\alpha_i$. The last equality in~\eqref{eq:GD5} is the Bruhat decomposition of $\Gamma_2$ and is isomorphic to $\mathbb{P}^1(\K) \times \mathbb{P}^1(\K)$.

    Let us now consider next-to-minimal characters $\varphi = \varphi_\textnormal{ntm} \in (\lie g^*)^{S_{\alpha_1}}_{-2}$ instead.
    By Proposition~\ref{prop:nice},  { $\varphi_\textnormal{ntm}$ can be conjugated using $L_\alpha \cap \Gamma$ into $\lie g^\times_{-\alpha_1} + \lie g^\times_{-\alpha_\text{max}}$}. In fact, $\varphi_\textnormal{ntm} \in \Oh_{31^7}$ since $\lie g^\times_{-\alpha_1} + \lie g^\times_{-\alpha_\text{max}}$ can be Weyl reflected to $\lie g^\times_{-\alpha_4} + \lie g^\times_{-\alpha_5}$ which are known to be in $\Oh_{31^7}$.
    Indeed, {by Corollary~\ref{cor:RootPairClasses}  there} is a Weyl word $w$ that moves the roots $\alpha_1$ and $\alp_\text{max}$ to two orthogonal simple roots, and from the proof of the Corollary we have that these roots have to be $\alpha_4$ and $\alpha_5$.
    
    Lemma~\ref{lem:conjugation-translation} together with Theorem~\ref{thm:ntm-rep}\eqref{itm:ntm} for any of these choices give
    \begin{equation}
        \label{eq:D5-ntm-rank2}
        \begin{split}
            \cF_{S_{\alpha_1}, \varphi_\textnormal{ntm}}[\eta_\textnormal{ntm}](g) &= \cF_{S_{\alpha_1}, \Ad^*(\gamma_\textnormal{ntm})\varphi_\textnormal{ntm}}[\eta_\textnormal{ntm}](\gamma_\textnormal{ntm} g) \\
            &= \intl_{V} \cW_{\Ad^*(w\gamma_\textnormal{ntm})\varphi_\textnormal{ntm}}[\eta_\textnormal{ntm}](v w \gamma_\textnormal{ntm} g) \, dv
        \end{split}
    \end{equation}
    with $V = \Exp(\lie v)(\A)$ where $\lie v = \lie g_{-\alpha_{3}} \oplus \lie g_{-\alpha_{2} - \alpha_{3}} \oplus \lie g_{-\alpha_{1} - \alpha_{2} - \alpha_{3}}$.
    
    \begin{remark}
        \label{rmk:D5-ntm-rank-2}
        We may now revisit the comparison between \eqref{eq:D5-ntm-rank2} and the Fourier coefficient \cite[(4.88)]{GMV} for a particular Eisenstein series. The latter is expressed in of double divisor sums and a single Bessel function. Specifying to the same Eisenstein series in \eqref{eq:D5-ntm-rank2}, the Whittaker coefficient on the right-hand side resolves to a product of two (single) divisor sums and two Bessel functions (see for example \cite{FGKP}). We expect that the non-compact adelic integral in \eqref{eq:D5-ntm-rank2} will allow us to relate the two expressions, something that will require further investigation.
    \end{remark}
    
    Lastly, we will consider the other next-to-minimal orbit $\Oh_{2^41^2}$ of dimension 20 and Bala-Carter label $(2A_1)''$. That is, consider $\eta_\text{ntm}$ such that $\operatorname{WS}(\eta_\text{ntm}) = \{ \Oh_{2^41^2}\}$. Such an automorphic form can, for example, be obtained as generic elements of the degenerate principal series of maximal parabolic Eisenstein series associated with $P_{\alpha_4}$ or $P_{\alpha_5}$. We showed above that all the next-to-minimal elements in $(\lie g^*)^{S_{\alpha_1}}_{-2}$ are in $\Oh_{31^7}$ and thus the corresponding next-to-minimal Fourier coefficients $\cF_{S_{\alpha_1},\varphi}[\eta_\text{ntm}]$ would vanish.

    Therefore, we will here consider another parabolic subgroup $P_{\alpha_5} = L_{\alpha_5} U_{\alpha_5}$ associated with the root $\alpha_5$ such that $L_{\alpha_5}$ has semi-simple part of type $A_4$. Let $S_{\alpha_5} = {2}\omega^\vee_5$ and $\varphi_\text{ntm}$ a next-to-minimal element in $(\lie g^*)^{S_{\alpha_5}}_{-2}$. {By Proposition~\ref{prop:nice},  there exists} $\gamma_\text{ntm} \in L_{\alpha_5} \cap \Gamma$ such that {$\Ad^*(\gamma_\text{ntm}) \varphi_\text{ntm} \in \lie g^\times_{-\alpha_5} + \lie g^\times_{-\alpha_\text{max}}$}.

    Furthermore, by Corollary~\ref{cor:2RootConjSimple} there exists a Weyl word $w_{ij}$, and simple roots $\alpha_i$ and $\alpha_j$ such that $\Ad^*(w_{ij}\gamma_\text{ntm}) \varphi_\text{ntm} \in \lie g^\times_{-\alpha_i} + \lie g^\times_{-\alpha_j}$ with the possible choices listed in \eqref{eq:Vij} below, up to interchanging the two roots. For any (and therefore all) such choices of simple roots $\alpha_i$ and $\alpha_j$ it is known that $\lie g^\times_{-\alpha_i} + \lie g^\times_{-\alpha_j} \subset \Oh_{2^41^2}$ and thus $\varphi_\text{ntm} \in \Oh_{2^41^2}$.

    For any of the choices, Lemma~\ref{lem:conjugation-translation} together with Theorem~\ref{thm:ntm-rep}\eqref{itm:ntm} gives
    \begin{equation}
        \begin{split}
            \cF_{S_{\alpha_5}, \varphi_\textnormal{ntm}}[\eta_\textnormal{ntm}](g) &= \cF_{S_{\alpha_5}, \Ad^*(\gamma_\textnormal{ntm})\varphi_\textnormal{ntm}}[\eta_\textnormal{ntm}](\gamma_\textnormal{ntm} g)\\ 
            &= \intl_{V_{ij}} \cW_{\Ad^*(w_{ij}\gamma_\textnormal{ntm})\varphi_\textnormal{ntm}}[\eta_\textnormal{ntm}](v w_{ij} \gamma_\textnormal{ntm} g) \, dv
        \end{split}
    \end{equation}
    where $V_{ij} = \Exp(\lie v_{ij})(\A)$ and $\lie v_{ij} = \lie v_{ji}$ can be read from the following table using the notation $\alpha_{m_1m_2m_3m_4m_5} = \sum_{i=1}^5 m_i \alpha_i$.
    \begin{equation}
        \label{eq:Vij}
        \begin{tabular}{lll} \toprule
            $\alpha_i$ & $\alpha_j$ \,\,\, & $\lie v_{ij} = \lie v_{ji}$ \\ \midrule
            $\alpha_{1}$ & $\alpha_{3}$ & $\lie g_{-\alpha_{00010}} \oplus \lie g_{-\alpha_{01000}} \oplus \lie g_{-\alpha_{01110}} \oplus \lie g_{-\alpha_{01111}}$ \\
            $\alpha_{1}$ & $\alpha_{4}$ & $\lie g_{-\alpha_{01000}} \oplus \lie g_{-\alpha_{01100}} \oplus \lie g_{-\alpha_{01101}}$ \\
            $\alpha_{1}$ & $\alpha_{5}$ & $\lie g_{-\alpha_{00100}} \oplus \lie g_{-\alpha_{00110}} \oplus \lie g_{-\alpha_{01100}} \oplus \lie g_{-\alpha_{01110}} \oplus \lie g_{-\alpha_{01211}}$ \\
            $\alpha_{2}$ & $\alpha_{4}$ & $\lie g_{-\alpha_{00100}} \oplus \lie g_{-\alpha_{00101}}$ \\
            $\alpha_{2}$ & $\alpha_{5}$ & $\lie g_{-\alpha_{00100}} \oplus \lie g_{-\alpha_{10000}} \oplus \lie g_{-\alpha_{00110}} \oplus \lie g_{-\alpha_{11100}} \oplus \lie g_{-\alpha_{11110}} \oplus \lie g_{-\alpha_{11211}}$ \\ \bottomrule
        \end{tabular} 
    \end{equation}

    As one can see from the above table, the size of $V$ depends strongly on the choice of representative roots. The smallest choice is obtained in the fourth row.

\subsection{An \texorpdfstring{$E_8$-example}{E8-example}}
\label{sec:E8ex}
In this section we will illustrate our general results in the context of automorphic forms on $E_8$. We will give the complete Fourier expansion in the minimal and next-to-minimal representations along a Heisenberg parabolic subgroup{, see {Proposition \ref{prop:Heis}} 
for a general discussion of such expansions}. We also discuss relations with related results in the literature. 

\subsubsection{The explicit Fourier expansions of \texorpdfstring{$\eta_{\textnormal{min}}$}{eta_min} and \texorpdfstring{$\eta_{\textnormal{ntm}}$}{eta_ntm}}

We will now illustrate Theorems~\ref{thm:G0min},~\ref{thm:ntmFull} and \ref{thm:ntmFullSimple} in the case of $E_8$. According to theorems~\ref{thm:G0min} and ~\ref{thm:ntmFull} the general structure of the expansions of automorphic forms $\eta_{\textnormal{min}}$ and $\eta_{\textnormal{ntm}}$ attached to the minimal and next-to-minimal representation of $E_8$ are given by 
\begin{eqnarray}
\eta_{\textnormal{min}}&=&\mathcal{F}_{S_\alpha , 0}[\eta_{\textnormal{min}}] + A_n +B_n,
\\
\eta_{\textnormal{ntm}}&=&\mathcal{F}_{S_\alpha , 0}[\eta_{\textnormal{ntm}}] + A_n +A_{nn}+ \sum_{\substack{j<n \\ j\bot n}}A_{nj}+B_n+B_{nn} +\sum_{\substack{j<n \\ j\bot n}}B_{nj},
\end{eqnarray}
where the notation and the definitions of the individual terms are given in sections \ref{subsec:ThB} and \ref{subsec:EFG}. 

To illustrate this more explicitly we now pick the Bourbaki enumeration as in {Theorem \ref{thm:ntmFullSimple}} that is quasi-abelian for $E_8$. Let $P=LU$ be the Heisenberg parabolic of $E_8$, with Levi $L=E_7\times GL_1$ and unipotent $U$ a 57-dimensional Heisenberg group with one-dimensional center $C=[U,U]$. This corresponds to expanding with respect to the Heisenberg root $\alpha=\alpha_8$. In its full glory the  expansion now amounts to the following expression in the minimal case
\begin{equation}
\label{eq:e8minExplicit}
\eta_{\textnormal{min}}(g)=\mathcal{F}_{S_{\alpha_8} , 0}[\eta_{\textnormal{min}}] (g)+\sum_{\gamma\in\Gamma_7}\sum_{\varphi\in\mathfrak{g}_{-\alpha_8}^\times}\mathcal{W}_{\varphi}[\eta_{\textnormal{min}}](\gamma g)+\sum_{\omega\in \Omega_8} \sum_{\varphi\in \mathfrak{g}_{-\alpha_8}^{\times}}\mathcal{W}_\varphi[\eta_{\textnormal{min}}](\omega \gamma_8g)\,,
\end{equation}
and for the next-to-minimal representation we have a slightly more complicated expression
\begin{align}
\label{eq:e8ntmExplicit}
\eta_{\textnormal{ntm}}(g) &=\mathcal{F}_{S_{\alpha_8}, 0}(g)+\underbrace{\sum_{\gamma\in\Gamma_7}\sum_{\varphi\in\mathfrak{g}_{-\alpha_8}^\times}\mathcal{W}_{\varphi}(\gamma g)}_{A_8}
+\sum_{j=1}^{6} \underbrace{\sum_{\gamma'\in \Gamma_7}\sum_{\varphi\in \mathfrak{g}_{-\alpha_8}^{\times}} \sum_{\gamma \in \Gamma_{j-1}}\sum_{\psi\in \mathfrak{g}_{-\alpha_j}^{\times}}\mathcal{W}_{\varphi+\psi}(\gamma \gamma' g)}_{A_{8j}} \nonumber
\\  \nonumber
&\quad+ \underbrace{\frac{1}{2}\sum_{\tilde{\gamma}\in \Lambda_{\alpha_8}}\sum_{\varphi\in \mathfrak{g}_{-\alpha_8}^{\times}}\sum_{\psi\in \mathfrak{g}_{-\delta_8}^{\times}}\int_{V_{g_8}} \mathcal{W}_{{\Ad^*(g_8)(\varphi+\psi)}}(vg_8\tilde{\gamma}g)dv}_{A_{88}}
+\underbrace{\sum_{\omega\in \Omega_8} \sum_{\varphi\in \mathfrak{g}_{-\alpha_8}^{\times}}\mathcal{W}_\varphi(\omega \gamma_8g)}_{B_8}
\\ \nonumber
&\quad+ \underbrace{\sum_{\omega\in \Omega_8} \sum_{\tilde{\gamma}\in \mathcal{M}_{\alpha_8}}\sum_{\varphi\in \mathfrak{g}_{-\alpha_8}^{\times}}\sum_{\psi\in \mathfrak{g}_{-\delta_8}^{\times}}\int_{V_{g_8}} \mathcal{W}_{{\Ad^*(g_8)(\varphi+\psi)}}(v{g_8}\tilde{\gamma}\omega \gamma_{8} g)dv}_{B_{88}}
\\ 
& \quad +\sum_{j=1}^6\underbrace{\sum_{\omega\in \Omega_8}\sum_{\varphi\in \mathfrak{g}_{-\alpha_8}^{\times}}\sum_{\gamma \in \Gamma'_{j-1}}\sum_{\psi\in \mathfrak{g}_{-\alpha_j}^\times}\mathcal{W}_{\varphi+\psi}(\gamma \omega \gamma_{8} g)}_{B_{8j}}\,,
\end{align}
where all coefficients are evaluated for the automorphic form $\eta=\eta_{\textnormal{ntm}}$. The elements $g_8$ and $\gamma_8$ are defined in \S\ref{subsec:EFG} and \S\ref{subsec:ThB}, respectively,

As discussed in section~\ref{intro} the expansion can be separated into an abelian contribution and a non-abelian contribution. The form of the expansion given above reflects this structure, as we now explain in more detail. We focus on the next-to-minimal case as this is the more complicated case. 
  
 Let $\psi_U$ be a unitary character on $U(\mathbb{A})$, trivial on $U(\K)$. It is supported only on the abelianization $U^{\text{ab}}=C\backslash U$. The \emph{abelian contribution} to the Fourier expansion is then given by the constant term with respect to the center of the Heisenberg group
\begin{equation}
\int_{C(\K)\backslash C(\mathbb{A})}\eta_\textnormal{ntm}(zg)dz
\end{equation} 
which can be expanded into a Fourier sum of the form $\sum_{\psi_U}$ where we sum over all characters $\psi_U$.  The first term in the expansion $\mathcal{F}_{S_{\alp_8},0}[\eta_\textnormal{ntm}](g)$ is the constant term of $\eta_\textnormal{ntm}$ with respect to $U$, i.e. corresponding to the contribution with trivial character $\psi_U$. The abelian part, corresponding to terms labelled $A$, of the non-trivial Fourier coefficients is made up of the second, third and fourth terms on the right-hand side of equation~\eqref{eq:e8ntmExplicit}. The first of these is attached to the minimal orbit $\mathcal{O}_{\textnormal{min}}$ while the last two are attached to $\mathcal{O}_{\textnormal{ntm}}$. These coefficients are not sufficient to recreate the entire automorphic form $\eta_\textnormal{ntm}$; we also need to consider the contributions from non-trivial characters on the center $C$. Let $\psi_C$ be a \emph{non-trivial} character on $C(\mathbb{A})$, trivial on $C(\K)$. The non-abelian contribution to the Fourier expansion is then given schematically by 
\begin{equation}
\sum_{\psi_C} \int_{C(\K)\backslash C(\mathbb{A})} \eta_\textnormal{ntm}(zg)\psi_C(z)^{-1} dz.
\end{equation}
This makes up the remaining three terms in equation~(\ref{eq:e8ntmExplicit}), corresponding to terms labelled $B$. {We note that the non-abelian terms contain the transformation $\gamma_8$ mapping $\alpha_{\textnormal{max}}$ to $\alpha_8$, signalling the fact they come originally from a non-trivial character on the center of the Heisenberg group.} The first one represents the contribution from $\mathcal{O}_{\textnormal{min}}$, while the last two (bottom line) capture the contribution from $\mathcal{O}_{\textnormal{ntm}}$.
\subsubsection{Comparison with related results in the literature}\label{subsec:compar}
Various works have determined similar Fourier coefficients of small representations in special cases and we now briefly compare our results to them, with a particular emphasis on the $E_8$ expansions. 

We begin with the example of a minimal automorphic form $\eta$ on $E_8$ with the expansion determined in~\eqref{eq:e8minExplicit}, that was also studied by Ginzburg--Rallis--Soudry \cite{GRS} and by Kazhdan--Polishchuk \cite{KazhdanPolishchuk}. 

In \cite{GRS}, Ginzburg--Rallis--Soudry showed that the constant term of $\eta_{\textnormal{min}}$ with respect to the center $C$ of the Heisenberg unipotent $U$ of $E_8$ was given by a single Levi (i.e. $E_7$) orbit of a Fourier coefficient $\mathcal{F}_{\psi_{\alpha_8}}$ on $U$, where $\psi_{\alpha_8}$ is a character on $U$ supported only on the single simple root $\alpha_8$. This corresponds precisely to the second term in~\eqref{eq:e8minExplicit}. Our results generalize this by also determining  $\mathcal{F}_{\psi_{\alpha_8}}$ explicitly in terms of  Whittaker coefficients $\mathcal{W}_\varphi[\eta_{\textnormal{min}}]$. 

In \cite{KazhdanPolishchuk}, the authors give an explicit form of the full non-abelian Fourier expansion of $\eta$ with respect to $U$ and our result (\ref{eq:e8minExplicit}) is perfectly consistent with theirs. Kazhdan and Polishchuk have, however, a different approach, where they first determine the local contributions (spherical vectors) to the Fourier coefficients and then assemble them together into a global automorphic functional. To connect the two results one must therefore evaluate the Whittaker coefficients in~(\ref{eq:e8minExplicit}) and extract their contributions at each local place. For the abelian terms, this has in fact already been done in \cite{GKP} and by combining those results with ours one achieves perfect agreement with \cite{KazhdanPolishchuk}. It remains to evaluate explicitly the Whittaker coefficient in the last term of equation~(\ref{eq:e8minExplicit}), corresponding to $B_8$, and extract its Euler product. It would be of particular interest to see if one can reproduce the cubic phase in the spherical vectors of \cite{KazhdanPolishchuk} in this way.

Next we turn to the Fourier expansion of an $E_8$ automorphic form in the next-to-minimal representation given in~\eqref{eq:e8ntmExplicit} that has been studied previously by Bossard--Pioline~\cite{Bossard:2016hgy}. According to the discussion in \S\ref{sec:string} the decomposition in~\eqref{eq:e8ntmExplicit} corresponds to the decompactification limit and an expression for the abelian part of the Fourier expansion for the next-to-minimal spherical Eisenstein series on $E_8$ was given in~\cite[Eq.~(3.15)]{Bossard:2016hgy}  that we reproduce here for convenience
\begin{align}
\label{eq:E8dec}
\eta&= \mathcal{F}_{S_{\alp},0}[\eta] + 16\pi \xi(4) R^4 \sum_{\substack{\Gamma \in \mathcal{L}_\alpha \\ \Gamma\times\Gamma=0}} \sigma_{8}(\Gamma)  \frac{K_4(2\pi  R |Z(\Gamma)|)}{|Z(\Gamma)|^{4}} e^{2\pi i \langle \Gamma, a\rangle}\nonumber\\
    &\quad + 16\pi \xi(3) R  \sum_{\substack{\Gamma \in \mathcal{L}_\alpha \\ \Gamma\times\Gamma=0}} \sigma_2(\Gamma) (\gcd \Gamma)^2\eta^{E_6}_{\textnormal{min}} \frac{K_1(2\pi R |Z(\Gamma)|)}{|Z(\Gamma)|^3} e^{2\pi i \langle \Gamma, a\rangle}\\
    &\quad + 16 \pi R^{-5}  \!\!\!\!\!\! \sum_{\substack{\Gamma \in \mathcal{L}_\alpha \\ \Gamma\times\Gamma\neq 0, \, I_4'(\Gamma)=0}} \sum_{n|\Gamma}  n^{d+1}  \sigma_{3}\big({\tfrac{\Gamma\times\Gamma}{n^2}}\big)  \frac{B_{5/2,3/2}(R^2 |Z(\Gamma)|^2, R^2 \sqrt{\Delta(\Gamma)})}{\Delta(\Gamma)^{3/4}} e^{2\pi i \langle \Gamma, a\rangle} + \ldots\nonumber\,.
\end{align}
Here, explicit coordinates on $E_8/(\operatorname{Spin}_{16}/\ints_2)$ adapted to the $E_7$ parabolic are used. Specifically, $R$ is a coordinate for the $\GL_1$ factor in the Levi and $a$ denotes (axionic) coordinates on the $56$-dimensional abelian part of the unipotent. $\mathcal{L}_\alpha$ is a lattice in this $56$-dimensional representation of $E_7$ and the coordinates on the $E_7$ factor of the Levi enter implicitly through the functions $Z(\Gamma)$ and $\Delta(\Gamma)$. We do not require their precise form for the present comparison. $K_s$ denotes the modified Bessel function and $\eta^{E_6}_{\textnormal{min}}$ a spherical vector in the minimal representation of $E_6$.

We now establish that~\eqref{eq:E8dec} and~\eqref{eq:e8ntmExplicit} are compatible. The Fourier expansion in~\eqref{eq:E8dec} is written in terms of sums over charges $\Gamma$ in the integral lattice $\mathcal{L}_\alpha$ in the $56$-dimensional unipotent and thus resembles structurally~\eqref{eq:e8ntmExplicit} above as the space $(\mathfrak{g^*})^{S_{\alp}}_{-2}$ represents the space of characters on this unipotent. The Fourier mode for a `charge' $\Gamma$ is given by $e^{2\pi i \langle \Gamma,a\rangle}$ and is the character on $(\mathfrak{g})^{S_{\alp}}_{2}$. Besides the constant term $\mathcal{F}_{S_{\alp},0}[\eta]$ there is a sum over characters in the minimal and next-to-minimal orbits within $(\mathfrak{g^*})^{S_{\alp}}_{-2}$; the last term in our~\eqref{eq:e8ntmExplicit} is a non-abelian term that was not determined in~\cite{Bossard:2016hgy}. 

Minimal characters correspond to charges $\Gamma$ such that they satisfy the (rank-one) condition $\Gamma\times\Gamma=0$ in the notation of~\cite{Bossard:2016hgy} and looking at~\eqref{eq:E8dec} we see that there are two contributions from such charges. These correspond exactly to the two terms $A_{8}$ and $A_{8j}$ in the first line of our~\eqref{eq:e8ntmExplicit}: The first term $A_8$ represents the purely minimal charges while the second term $A_{8j}$ in our equation is the second line of~\eqref{eq:E8dec} where a minimal charge is combined with a minimal automorphic form on $E_6$. Expanding this minimal automorphic form on $E_6$ leads to Whittaker coefficients of the form $\mathcal{W}_{\varphi+\psi}$ as they are given in the third term of the of the first line in~\eqref{eq:e8ntmExplicit}, i.e. corresponding to $A_{8j}$. The sums over $j$, $\Gamma_{j-1}$ and $\mathfrak{g}_{-\beta_j}^{\times}$ in our expression correspond to the $E_7$ orbits of such charges $\Gamma$. The term $A_{88}$ in our formula~\eqref{eq:e8ntmExplicit} contains a non-compact integral over Whittaker coefficient $\mathcal{W}_{\varphi+\psi}$ and corresponds to the last line in~\eqref{eq:E8dec} where a similar integrated Whittaker coefficient $B_{5/2,3/2}$ appears. The non-abelian terms with $B$-labels in the last line of~\eqref{eq:e8ntmExplicit} have not been determined in~\cite{Bossard:2016hgy} and are given by the ellipses in~\eqref{eq:E8dec}.

\newcommand{\etalchar}[1]{$^{#1}$}
\def\cprime{$'$}
\providecommand{\href}[2]{#2}\begingroup\raggedright

\endgroup

\begin{thebibliography}{AGK{\etalchar{+}}18}
\bibitem[AGK{\etalchar{+}}18]{Ahlen:2017agd}
O.~Ahl\'{e}n, H.~P.~A. Gustafsson, A.~Kleinschmidt, B.~Liu, and D.~Persson,
  ``{Fourier coefficients attached to small automorphic representations of
  ${\mathrm{SL}}_n(\mathbb{A})$},''
  \href{http://dx.doi.org/10.1016/j.jnt.2018.03.022}{{\em J. Number Theory}
  {\bfseries 192} (2018) 80--142},
  \href{http://arxiv.org/abs/1707.08937}{{\ttfamily arXiv:1707.08937
  [math.RT]}}.

\bibitem[BD01]{BryDel}
J.-L. Brylinski and P.~Deligne, ``Central extensions of reductive groups by
  {$\bold K_2$},'' \href{http://dx.doi.org/10.1007/s10240-001-8192-2}{{\em
  Publ. Math. Inst. Hautes \'{E}tudes Sci.} no.~94, (2001) 5--85}.

\bibitem[BKN{\etalchar{+}}10]{BKNPP}
L.~Bao, A.~Kleinschmidt, B.~E.~W. Nilsson, D.~Persson, and B.~Pioline,
  ``{Instanton corrections to the universal hypermultiplet and automorphic
  forms on SU(2,1)},'' \href{http://dx.doi.org/10.4310/CNTP.2010.v4.n1.a5}{{\em
  Commun. Num. Theor. Phys.} {\bfseries 4} (2010) 187--266},
\href{http://arxiv.org/abs/0909.4299}{{\ttfamily arXiv:0909.4299 [hep-th]}}.

\bibitem[Bou75]{Bou}
N.~Bourbaki, {\em \'{E}l\'{e}ments de math\'{e}matique. {F}asc. {XXXVIII}:
  {G}roupes et alg\`ebres de {L}ie. {C}hapitre {VII}: {S}ous-alg\`ebres de
  {C}artan, \'{e}l\'{e}ments r\'{e}guliers. {C}hapitre {VIII}: {A}lg\`ebres de
  {L}ie semi-simples d\'{e}ploy\'{e}es}.
\newblock Actualit\'{e}s Scientifiques et Industrielles, No. 1364. Hermann,
  Paris, 1975.

\bibitem[BP17]{Bossard:2016hgy}
G.~Bossard and B.~Pioline, ``{Exact $\nabla^4 R^4$ couplings and helicity
  supertraces},'' \href{http://dx.doi.org/10.1007/JHEP01(2017)050}{{\em JHEP}
  {\bfseries 01} (2017) 050},
\href{http://arxiv.org/abs/1610.06693}{{\ttfamily arXiv:1610.06693 [hep-th]}}.

\bibitem[CM93]{CM}
D.~H. Collingwood and W.~M. McGovern, {\em Nilpotent orbits in semisimple {L}ie
  algebras}.
\newblock Van Nostrand Reinhold Mathematics Series. Van Nostrand Reinhold Co.,
  New York, 1993.

\bibitem[CS80]{CasselmanShalika}
W.~Casselman and J.~Shalika, ``The unramified principal series of {$p$}-adic
  groups. {II}. {T}he {W}hittaker function,'' {\em Compositio Math.} {\bfseries
  41} no.~2, (1980) 207--231.

\bibitem[DGV15]{DHoker:2015gmr}
E.~D'Hoker, M.~B. Green, and P.~Vanhove, ``On the modular structure of the
  genus-one type {II} superstring low energy expansion,''
  \href{http://dx.doi.org/10.1007/jhep08(2015)041}{{\em J. High Energy Phys.}
  no.~8, (2015) 041, front matter+66}.

\bibitem[FGKP18]{FGKP}
P.~Fleig, H.~P.~A. Gustafsson, A.~Kleinschmidt, and D.~Persson,
  \href{http://dx.doi.org/10.1017/9781316995860}{{\em Eisenstein series and
  automorphic representations}}, vol.~176 of {\em Cambridge Studies in Advanced
  Mathematics}.
\newblock Cambridge University Press, Cambridge, 2018.
\newblock \href{http://arxiv.org/abs/1511.04265}{{\ttfamily arXiv:1511.04265
  [math.NT]}}.

\bibitem[FKP14]{FKP2013}
P.~Fleig, A.~Kleinschmidt, and D.~Persson, ``{Fourier expansions of Kac-Moody
  Eisenstein series and degenerate Whittaker vectors},''
  \href{http://dx.doi.org/10.4310/CNTP.2014.v8.n1.a2}{{\em Commun. Num. Theor.
  Phys.} {\bfseries 08} (2014) 41--100},
\href{http://arxiv.org/abs/1312.3643}{{\ttfamily arXiv:1312.3643 [hep-th]}}.

\bibitem[GG97]{Green:1997tv}
M.~B. Green and M.~Gutperle, ``{Effects of D instantons},''
  \href{http://dx.doi.org/10.1016/S0550-3213(97)00269-1}{{\em Nucl. Phys.}
  {\bfseries B498} (1997) 195--227},
\href{http://arxiv.org/abs/hep-th/9701093}{{\ttfamily arXiv:hep-th/9701093
  [hep-th]}}.

\bibitem[GG98]{Green:1997tn}
M.~B. Green and M.~Gutperle, ``{D Particle bound states and the D instanton
  measure},'' \href{http://dx.doi.org/10.1088/1126-6708/1998/01/005}{{\em JHEP}
  {\bfseries 01} (1998) 005},
\href{http://arxiv.org/abs/hep-th/9711107}{{\ttfamily arXiv:hep-th/9711107
  [hep-th]}}.

\bibitem[GGK{\etalchar{+}}]{Part1}
D.~Gourevitch, H.~P.~A. Gustafsson, A.~Kleinschmidt, D.~Persson, and S.~Sahi,
  ``A reduction principle for {F}ourier coefficients of automorphic forms,''
  \href{http://arxiv.org/abs/1811.05966v3}{{\ttfamily arXiv:1811.05966v3
  [math.NT]}}.

\bibitem[GGS]{GGS:support}
R.~{Gomez}, D.~{Gourevitch}, and S.~{Sahi}, ``{Whittaker supports for
  representations of reductive groups},'' {\em Annales de l'Institut Fourier} ,
  \href{http://arxiv.org/abs/1610.00284}{{\ttfamily arXiv:1610.00284
  [math.RT]}}. {To appear}.

\bibitem[GGS17]{GGS}
R.~Gomez, D.~Gourevitch, and S.~Sahi, ``Generalized and degenerate {W}hittaker
  models,'' \href{http://dx.doi.org/10.1112/S0010437X16007788}{{\em Compos.
  Math.} {\bfseries 153} no.~2, (2017) 223--256}.

\bibitem[Gin06]{Ginz}
D.~Ginzburg, ``Certain conjectures relating unipotent orbits to automorphic
  representations,'' \href{http://dx.doi.org/10.1007/BF02777366}{{\em Israel J.
  Math.} {\bfseries 151} (2006) 323--355}.

\bibitem[Gin14]{MR3161096}
D.~Ginzburg, ``Towards a classification of global integral constructions and
  functorial liftings using the small representations method,''
  \href{http://dx.doi.org/10.1016/j.aim.2013.12.007}{{\em Adv. Math.}
  {\bfseries 254} (2014) 157--186}.

\bibitem[GKP16]{GKP}
H.~P.~A. Gustafsson, A.~Kleinschmidt, and D.~Persson, ``{Small automorphic
  representations and degenerate Whittaker vectors},''
  \href{http://dx.doi.org/10.1016/j.jnt.2016.02.002}{{\em J. Number Theor.}
  {\bfseries 166} (2016) 344--399},
\href{http://arxiv.org/abs/1412.5625}{{\ttfamily arXiv:1412.5625 [math.NT]}}.

\bibitem[GMV15]{GMV}
M.~B. Green, S.~D. Miller, and P.~Vanhove, ``{Small representations, string
  instantons, and Fourier modes of Eisenstein series},''
  \href{http://dx.doi.org/10.1016/j.jnt.2013.05.018}{{\em J. Number Theor.}
  {\bfseries 146} (2015) 187--309},
\href{http://arxiv.org/abs/1111.2983}{{\ttfamily arXiv:1111.2983 [hep-th]}}.

\bibitem[GRS97]{GRS2}
D.~Ginzburg, S.~Rallis, and D.~Soudry, ``On the automorphic theta
  representation for simply laced groups,''
  \href{http://dx.doi.org/10.1007/BF02773635}{{\em Israel J. Math.} {\bfseries
  100} (1997) 61--116}.

\bibitem[GRS11]{GRS}
D.~Ginzburg, S.~Rallis, and D.~Soudry,
  \href{http://dx.doi.org/10.1142/9789814304993}{{\em The descent map from
  automorphic representations of {${\rm GL}(n)$} to classical groups}}.
\newblock World Scientific Publishing Co. Pte. Ltd., Hackensack, NJ, 2011.

\bibitem[GS05]{MR2123125}
W.~T. Gan and G.~Savin, ``On minimal representations definitions and
  properties,'' \href{http://dx.doi.org/10.1090/S1088-4165-05-00191-3}{{\em
  Represent. Theory} {\bfseries 9} (2005) 46--93 (electronic)}.

\bibitem[GS18]{GS_survey}
D.~{Gourevitch} and S.~{Sahi}, ``{Generalized and degenerate Whittaker
  quotients and Fourier coefficients},''
  \href{http://arxiv.org/abs/1808.00890}{{\ttfamily 1808.00890 [math.RT]}}.

\bibitem[GV06]{Green:2005ba}
M.~B. Green and P.~Vanhove, ``Duality and higher derivative terms in {M}
  theory,'' \href{http://dx.doi.org/10.1088/1126-6708/2006/01/093}{{\em J. High
  Energy Phys.} no.~1, (2006) 093, 37}.

\bibitem[Har77]{Hartshorne}
R.~Hartshorne, {\em Algebraic geometry}.
\newblock Springer-Verlag, New York-Heidelberg, 1977.
\newblock Graduate Texts in Mathematics, No. 52.

\bibitem[HT95]{Hull:1994ys}
C.~M. Hull and P.~K. Townsend, ``{Unity of superstring dualities},''
  \href{http://dx.doi.org/10.1016/0550-3213(94)00559-W}{{\em Nucl. Phys.}
  {\bfseries B438} (1995) 109--137},
  \href{http://arxiv.org/abs/hep-th/9410167}{{\ttfamily arXiv:hep-th/9410167
  [hep-th]}}.
[,236(1994)].

\bibitem[JLS16]{JLS}
D.~Jiang, B.~Liu, and G.~Savin, ``Raising nilpotent orbits in wave-front
  sets,'' \href{http://dx.doi.org/10.1090/ert/490}{{\em Represent. Theory}
  {\bfseries 20} (2016) 419--450}.

\bibitem[KP04]{KazhdanPolishchuk}
D.~Kazhdan and A.~Polishchuk, ``Minimal representations: spherical vectors and
  automorphic functionals,'' in {\em Algebraic groups and arithmetic},
  pp.~127--198.
\newblock Tata Inst. Fund. Res., Mumbai, 2004.
\newblock \href{http://arxiv.org/abs/arXiv:math/0209315}{{\ttfamily
  arXiv:math/0209315}}.

\bibitem[KS15]{KobayashiSavin}
T.~Kobayashi and G.~Savin, ``Global uniqueness of small representations,''
  \href{http://dx.doi.org/10.1007/s00209-015-1481-0}{{\em Math. Z.} {\bfseries
  281} no.~1-2, (2015) 215--239}.

\bibitem[KV98]{Kostov:1998pg}
I.~K. Kostov and P.~Vanhove, ``{Matrix string partition functions},''
  \href{http://dx.doi.org/10.1016/S0370-2693(98)01381-1}{{\em Phys. Lett.}
  {\bfseries B444} (1998) 196--203},
\href{http://arxiv.org/abs/hep-th/9809130}{{\ttfamily arXiv:hep-th/9809130
  [hep-th]}}.

\bibitem[Mat87]{Mat}
H.~Matumoto, ``Whittaker vectors and associated varieties,''
  \href{http://dx.doi.org/10.1007/BF01404678}{{\em Invent. Math.} {\bfseries
  89} no.~1, (1987) 219--224}.

\bibitem[MNS00]{Moore:1998et}
G.~W. Moore, N.~Nekrasov, and S.~Shatashvili, ``{D particle bound states and
  generalized instantons},''
  \href{http://dx.doi.org/10.1007/s002200050016}{{\em Commun. Math. Phys.}
  {\bfseries 209} (2000) 77--95},
\href{http://arxiv.org/abs/hep-th/9803265}{{\ttfamily arXiv:hep-th/9803265
  [hep-th]}}.

\bibitem[MS12]{MS}
S.~D. Miller and S.~Sahi, ``Fourier coefficients of automorphic forms,
  character variety orbits, and small representations,''
  \href{http://dx.doi.org/10.1016/j.jnt.2012.05.032}{{\em J. Number Theory}
  {\bfseries 132} no.~12, (2012) 3070--3108}.

\bibitem[MW87]{MW}
C.~M\oe{}glin and J.-L. Waldspurger, ``Mod\`eles de {W}hittaker
  d\'{e}g\'{e}n\'{e}r\'{e}s pour des groupes {$p$}-adiques,''
  \href{http://dx.doi.org/10.1007/BF01200363}{{\em Math. Z.} {\bfseries 196}
  no.~3, (1987) 427--452}.

\bibitem[MW95]{MWCov}
C.~M\oe{}glin and J.-L. Waldspurger,
  \href{http://dx.doi.org/10.1017/CBO9780511470905}{{\em Spectral decomposition
  and {E}isenstein series}}, vol.~113 of {\em Cambridge Tracts in Mathematics}.
\newblock Cambridge University Press, Cambridge, 1995.
\newblock Une paraphrase de l'\'{E}criture [A paraphrase of Scripture].

\bibitem[{\DJ}ok98]{Dok}
D.~{\v{Z}}. {\DJ}okovi{\'c}, ``Note on rational points in nilpotent orbits of
  semisimple groups,''
  \href{http://dx.doi.org/10.1016/S0019-3577(97)87565-9}{{\em Indag. Math.
  (N.S.)} {\bfseries 9} no.~1, (1998) 31--34}.

\bibitem[Per12]{Pe}
D.~Persson, ``{Automorphic instanton partition functions on Calabi-Yau
  threefolds},'' \href{http://dx.doi.org/10.1088/1742-6596/346/1/012016}{{\em
  J. Phys. Conf. Ser.} {\bfseries 346} (2012) 012016},
\href{http://arxiv.org/abs/1103.1014}{{\ttfamily arXiv:1103.1014 [hep-th]}}.

\bibitem[Pio10]{P}
B.~Pioline, ``{$R^4$ couplings and automorphic unipotent representations},''
  \href{http://dx.doi.org/10.1007/JHEP03(2010)116}{{\em JHEP} {\bfseries 03}
  (2010) 116},
\href{http://arxiv.org/abs/1001.3647}{{\ttfamily arXiv:1001.3647 [hep-th]}}.

\bibitem[PP09]{PP}
B.~Pioline and D.~Persson, ``{The automorphic NS5-brane},''
  \href{http://dx.doi.org/10.4310/CNTP.2009.v3.n4.a5}{{\em Commun. Num. Theor.
  Phys.} {\bfseries 3} (2009) 697--754},
\href{http://arxiv.org/abs/0902.3274}{{\ttfamily arXiv:0902.3274 [hep-th]}}.

\bibitem[PS79]{PiatetskiShapiro}
I.~I. Piatetski-Shapiro, ``Multiplicity one theorems,'' in {\em Automorphic
  forms, representations and {$L$}-functions ({P}roc. {S}ympos. {P}ure {M}ath.,
  {O}regon {S}tate {U}niv., {C}orvallis, {O}re., 1977), {P}art 1}, Proc.
  Sympos. Pure Math., XXXIII, pp.~209--212.
\newblock Amer. Math. Soc., Providence, R.I., 1979.

\bibitem[Ros95]{Ros}
W.~Rossmann, ``Picard-{L}efschetz theory and characters of a semisimple {L}ie
  group,'' \href{http://dx.doi.org/10.1007/BF01884312}{{\em Invent. Math.}
  {\bfseries 121} no.~3, (1995) 579--611}.

\bibitem[Sha74]{Shalika}
J.~A. Shalika, ``The multiplicity one theorem for {${\rm GL}_{n}$},''
  \href{http://dx.doi.org/10.2307/1971071}{{\em Ann. of Math. (2)} {\bfseries
  100} (1974) 171--193}.

\bibitem[Var14]{Var}
S.~Varma, ``On a result of {M}oeglin and {W}aldspurger in residual
  characteristic 2,'' \href{http://dx.doi.org/10.1007/s00209-014-1292-8}{{\em
  Math. Z.} {\bfseries 277} no.~3-4, (2014) 1027--1048}.

\bibitem[Wei18]{Weis}
M.~H. Weissman, ``L-groups and parameters for covering groups,''
  \href{http://dx.doi.org/10.24033/ast.1044}{{\em Ast\'{e}risque} no.~398,
  (2018) 33--186}. L-groups and the Langlands program for covering groups.

\end{thebibliography}

\end{document}